\newcommand{\itemref}[2]{\ref{#1}~(\ref{#2})}
\newtheorem{thm}{Theorem}[section]
\newtheorem{prop}[thm]{Proposition}
\newtheorem{lemma}[thm]{Lemma}
\newtheorem{cor}[thm]{Corollary}
\theoremstyle{definition}
\newtheorem{defn}[thm]{Definition}
\newtheorem{remark}[thm]{Remark}
\newtheorem{example}[thm]{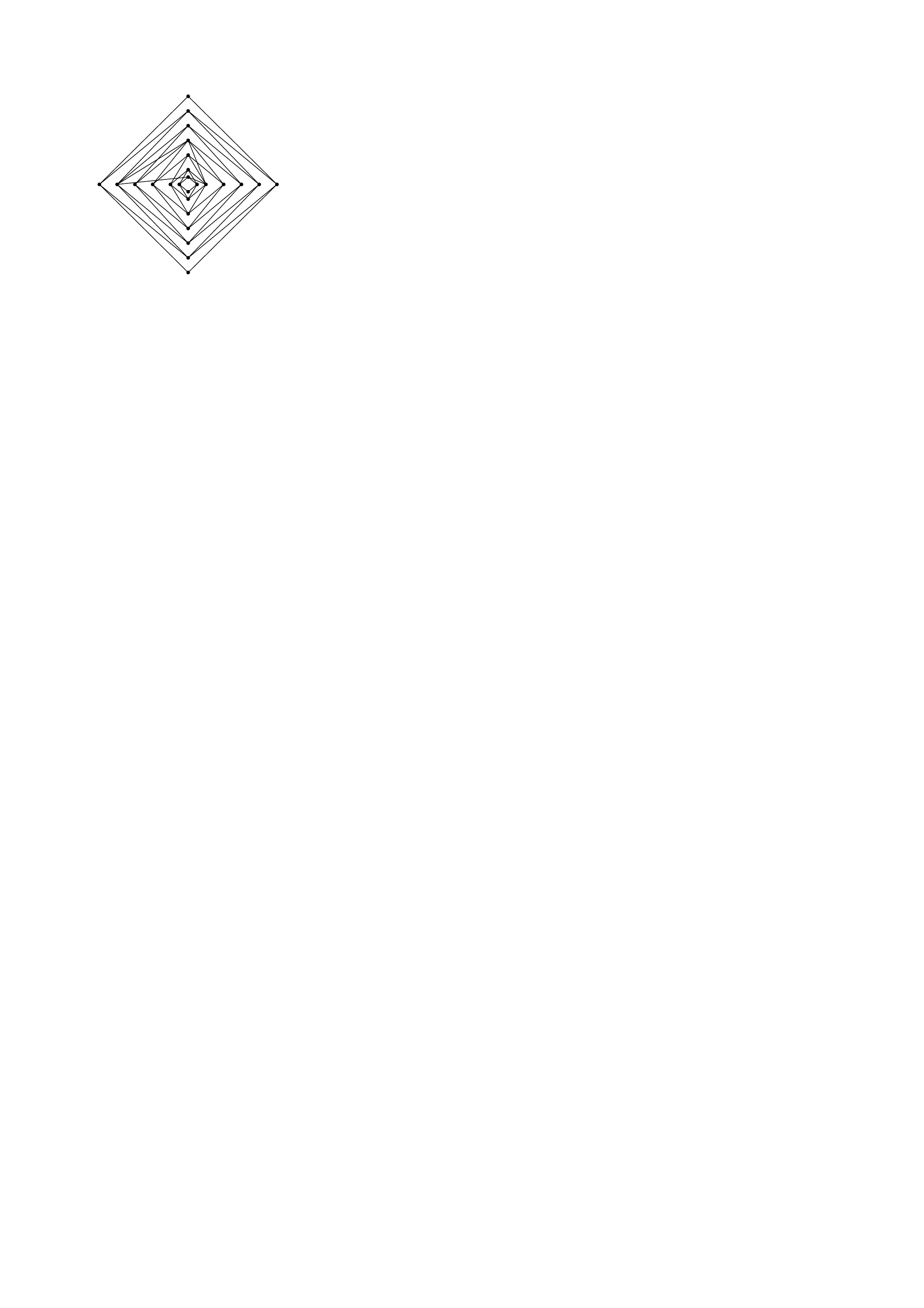}
\newtheorem{ques}[thm]{Question}
\newtheorem*{thm*}{Theorem}
\newtheorem*{prop*}{Proposition}
\newtheorem*{cor*}{Corollary}
\newcommand{\wallmap}{f}
\newcommand{\field}[1]{\mathbb{#1}}
\newcommand{\Z}{\field{Z}}
\newcommand{\R}{\field{R}}
\newcommand{\N}{\field{N}}
\newcommand{\embed}{\hookrightarrow}
\newcommand{\relMorse}[2]{(\partial_M{#2},{#1})}
\newcommand{\Graph}{\Delta}
\newcommand{\pathlist}{\bar}
\newcommand{\open}[1]{\breve{#1} }
\newcommand{\morse}{\partial_M}
\newcommand{\graphclass}{\mathcal C}
\newcommand{\graphzero}{\Delta_1}
\newcommand{\graphone}{\Delta_2}
\newcommand{\B}{\mathcal B}
\newcommand{\Tblock}{\mathcal T}
\newcommand{\base}{p}
\newcommand{\Space}{\Sigma}
\newcommand{\vis}{\partial}
\newcommand{\rand}[1]{\partial #1} 
\newcommand{\as}[1]{#1(\infty)}
\newcommand{\typeB}{C}
\newcommand{\typeAf}{A}
\newcommand{\typeAinf}{B}
\newcommand{\wallepsilon}{d_{\mathcal{W}}}
\newcommand{\W}[1]{W_{#1}}
\newcommand{\inv}[1]{#1^{-1}}
\newcommand{\davislambda}{\Sigma_{\Delta}}
\newcommand{\Cayley}[2]{Cay(#1,#2)}
\newcommand{\daviszero}{\Sigma_{\Delta_1}}
\newcommand{\davisone}{\Sigma_{\Delta_2}}
\definecolor{amethyst}{rgb}{0.6, 0.4, 0.8}
\newcommand{\hide}[1]{}
\title{Right-angled Coxeter groups with totally disconnected Morse boundaries}
\author {Annette Karrer}
\begin{document}

\begin{abstract}
		This paper introduces a new class of right-angled Coxeter groups with totally disconnected Morse boundaries.	We construct this class recursively by examining how the Morse boundary of a right-angled Coxeter group changes if we glue a graph to its defining graph. More generally, we present a method to construct amalgamated free products of CAT(0) groups with totally disconnected Morse boundaries that act geometrically on CAT(0) spaces that have a treelike block decomposition.
\end{abstract}

\maketitle

\vskip.2in



 \section{Introduction}
This paper presents new examples of right-angled Coxeter groups that have totally disconnected Morse boundaries. These examples arise from a more general construction of CAT(0) spaces with treelike block decompositions that have totally disconnected Morse boundaries. 
 \subsection{Motivation} 

 \begin{figure}[h]
 	\includegraphics{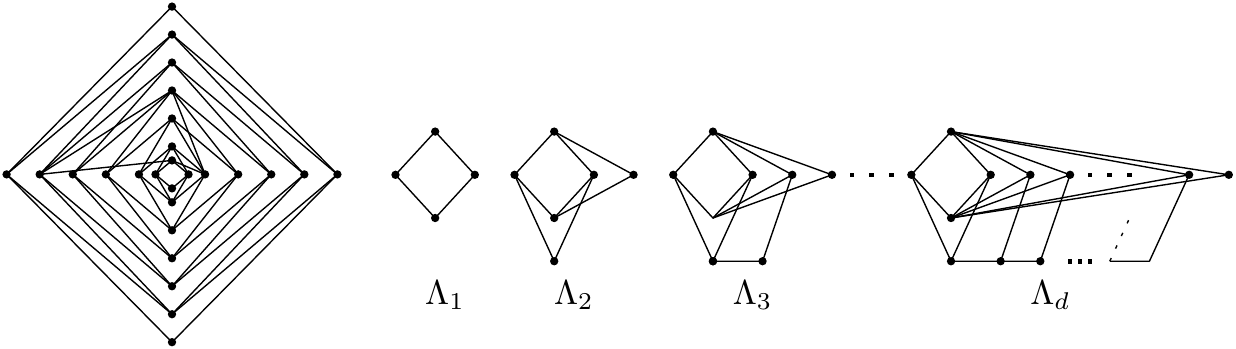}
 			\caption{Graphs that were studied in \cite[Ex. 7.7]{Tran_hierarchically} and \cite[Fig. 5.1]{Dani_diver}. We show that they correspond to RACGs with totally disconnected Morse boundaries. }
 	\label{fig}
 \end{figure}
 The \textit{Morse boundary} $\morse \Sigma$ of a proper geodesic metric space $\Sigma$ is a quasi-isometry invariant defined by Cordes \cite{Cordes_properMorse}. It generalizes the \textit{contracting boundary} introduced by Charney--Sultan \cite{CharSul} in the CAT(0) case. If $\Sigma$ is a proper, geodesic hyperbolic space, its Morse boundary $\morse \Sigma$ coincides with the Gromov boundary. In general, $\morse \Sigma$ is a topological space consisting of equivalence classes of Morse geodesic rays, i.e. geodesic rays that behave similar to geodesic rays in hyperbolic spaces. 
 
 The \textit{Morse boundary} $\morse G$ of a finitely generated group $G$ is the Morse boundary of a proper geodesic metric space on which $G$ acts geometrically, ie. properly and cocompactly by isometries. 
If every geodesic ray bounds a half-flat, e.g. in higher-rank lattices, then $\morse G$ is empty. However, there is a large class of non-hyperbolic finitely generated groups with non-empty Morse boundaries.

 Interesting examples arise among right-angled Coxeter groups (RACGs) and right-angled Artin groups (RAAGs).
 Each such group is defined by a finite, simplicial graph, its \textit{defining graph} and acts geometrically on an associated CAT(0) cube complex. 
 Charney--Cordes--Sisto  \cite{Artingroups} showed:
 \begin{thm}[Charney--Cordes--Sisto] \label{Artingroups}
The Morse boundary of every RAAG is totally disconnected. It is empty, a Cantor space, an $\omega$-Cantor space or consists of two points. 
\end{thm}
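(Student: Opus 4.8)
The plan is to split into cases according to the shape of the defining graph $\Gamma$, dispose by elementary means of the cases where $\morse A_\Gamma$ is empty, two points, or a Cantor set, and then concentrate the real work on the remaining case, proving total disconnectedness uniformly inside the $\CAT(0)$ cube complex $X$ on which $A_\Gamma$ acts geometrically (the universal cover of the Salvetti complex). If $\Gamma$ is empty, $A_\Gamma$ is trivial and $\morse A_\Gamma=\emptyset$; if $\Gamma$ is a single vertex, $A_\Gamma\cong\Z$ and $\morse A_\Gamma$ is two points. If $\Gamma$ is a nontrivial join, then $A_\Gamma$ is a direct product of two infinite groups, $X$ is a product of two unbounded cube complexes, and every geodesic ray lies in a product region, so $\morse A_\Gamma=\emptyset$. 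If $\Gamma$ is edgeless with at least two vertices, $A_\Gamma$ is a nonabelian free group, hence hyperbolic, so all rays are uniformly Morse and $\morse A_\Gamma$ equals the Gromov boundary $\partial_\infty A_\Gamma$, a Cantor set. This leaves the case in which $\Gamma$ has an edge but is not a join; here $A_\Gamma$ admits a rank-one isometry of $X$ (Caprace--Sageev rank rigidity, since $X$ is irreducible — equivalently $\Gamma$ is not a join — and $A_\Gamma$ acts essentially on it), so $\morse A_\Gamma\neq\emptyset$, and it remains to show that $\morse A_\Gamma$ is totally disconnected and homeomorphic to the $\omega$-Cantor space.

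For total disconnectedness I would use the hyperplanes of $X$. Fix a base vertex $x_0$; no vertex lies on a hyperplane, so $x_0$ lies in an open half-space of every hyperplane $\hat h$, and since a $\CAT(0)$ geodesic crosses each hyperplane at most once, $\hat h$ partitions $\morse X$ into the classes $W_{\hat h}^-$ represented by a ray from $x_0$ that never meets $\hat h$, and the classes $W_{\hat h}^+$ represented by a ray that crosses $\hat h$ once and then stays in the far open half-space. The set $W_{\hat h}^-$ consists of the rays contained in the closed convex set $\overline{\hat h^{\,-}}$, hence is closed on every Morse stratum $\morse^N X$, so $W_{\hat h}^-$ is closed; the content is to show $W_{\hat h}^+$ is also closed for a suitably chosen separating hyperplane. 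Given distinct $\xi,\eta\in\morse X$ represented by $N$--Morse rays $\gamma,\delta$ from $x_0$, these rays eventually cross disjoint families of hyperplanes, and I would argue, using that the sub-rays of $\gamma$ are uniformly contracting, that one can choose a hyperplane $\hat h$ separating the tail of $\gamma$ from the tail of $\delta$ whose crossing time is uniformly bounded over $\morse^N X$: an $N$--Morse ray crossing $\hat h$ must reach $\hat h$ through the convex neighbourhood $N_{d(x_0,\hat h)}(\hat h)$, and the contracting constant caps how long it can dwell there before crossing. For such an $\hat h$ a limit of rays crossing $\hat h$ before a fixed time again crosses $\hat h$, so $\{W_{\hat h}^+,W_{\hat h}^-\}$ is a clopen partition of $\morse X$ separating $\xi$ from $\eta$, whence $\morse X$ is totally separated and in particular totally disconnected. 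I expect this choice of a uniformly--quickly--crossed separating hyperplane to be the main obstacle: it is exactly where the Morse hypothesis must enter quantitatively, and one may be forced to replace a single hyperplane by a finite family, or to pass to a hyperplane crossing the carrier of another when $\gamma$ and $\delta$ both track a hyperplane whose own geometry is treelike.

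Finally, to identify the homeomorphism type in the remaining case: each nonempty Morse stratum $\morse^N A_\Gamma$ is compact and metrizable by the general theory of Cordes, is totally disconnected by the previous step, and — one checks — has no isolated points, a non-join $\Gamma$ providing enough rank-one directions that any $N$--Morse ray is approximated within its stratum by small modifications splicing detours into other rank-one directions; so each nonempty stratum is a Cantor set. Because $\Gamma$ has an edge, $A_\Gamma$ contains $\Z^2$, so it is not hyperbolic, and I would make this quantitative by exhibiting for every $N$ a Morse ray that fellow-travels a flat for time of order $N$, showing that the exhausting chain $\morse^1 A_\Gamma\subseteq\morse^2 A_\Gamma\subseteq\cdots$ is strictly increasing with each stratum closed and nowhere dense in the next. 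A direct limit of such a chain of Cantor sets is unique up to homeomorphism — this is the topological characterization of the $\omega$-Cantor space — so $\morse A_\Gamma$ is the $\omega$-Cantor space, and the four possibilities for $\morse A_\Gamma$ are exactly the ones listed: $\emptyset$, two points, a Cantor space, and the $\omega$-Cantor space.
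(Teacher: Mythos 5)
You should first be aware that the paper does not prove \cref{Artingroups} at all: it is quoted from Charney--Cordes--Sisto, and the only argument in the paper that touches it is \cref{RAAGThm}, which reproves the total-disconnectedness part merely for defining graphs in the class $\mathcal C'$, by a genuinely different route (splittings over star/join subgroups, the induced treelike block decomposition of the Salvetti complex, the cutset and loneliness properties, and \cref{Cor1Artin}). Your plan is instead a direct hyperplane-separation argument in the $\CAT(0)$ cube complex, which is closer in spirit to the original proof you are asked to reproduce, but as written it has a genuine gap exactly at its core. The crux is your claim that for a suitable separating hyperplane $\hat h$ the crossing time of $N$-Morse rays is uniformly bounded, so that $W^+_{\hat h}$ is closed in each stratum $\morse^N X$; you flag this yourself as "the main obstacle" and never prove it. The statement is not a formal consequence of contraction alone: in a general $\CAT(0)$ cube complex a Morse ray can fellow-travel a hyperplane for arbitrarily long (this is precisely the RACG phenomenon this paper is about, where walls can carry Morse directions), so the step must use the specific RAAG structure, namely that the carrier of the hyperplane dual to a generator $v$ lies in a coset of the star subgroup $A_{\mathrm{lk}(v)}\times\langle v\rangle$, a convex product region with both factors unbounded whenever $\mathrm{lk}(v)\neq\emptyset$. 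One then needs a quantitative lemma of the form "an $N$-Morse geodesic cannot remain in a $D$-neighborhood of such a product region for time longer than $f(N,D)$" (compare the projection argument in \cref{Morse_conn_of_type_Ainf} and Charney--Sultan's Proposition 3.7), and also the companion fact that no hyperplane contains a ray that is Morse in $X$, which you need even to see that a Morse ray either misses $\hat h$ or crosses it transversally so that $\{W^\pm_{\hat h}\}$ is a genuine partition. None of this is in your sketch, so the total-disconnectedness claim is not yet established.

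There are two further problems. First, your identification of the homeomorphism type is not sound as stated: the claim that every nonempty stratum $\morse^N A_\Gamma$ is a Cantor set fails because your "splice in a detour" approximants generally have a worse Morse gauge and therefore leave the stratum, so you cannot rule out isolated points stratum by stratum, and likewise the nowhere-density of $\morse^N$ in $\morse^{N'}$ for your chosen exhaustion is unsubstantiated; the efficient route is the one the paper itself uses for RACGs in \cref{subsec1spaces}: verify $\sigma$-compactness, exhibit a Cantor subspace via a stable free subgroup, and invoke \cref{thm1.4}, together with non-hyperbolicity of $A_\Gamma$ when $\Gamma$ has an edge (a $\Z^2$ subgroup) to exclude the Cantor case. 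Second, your initial case analysis leaks: for RAAGs the Morse boundary is empty for \emph{every} join of two nonempty graphs (\cref{emptyRaags}), not only for non-trivial joins, and as written graphs such as a single edge, a clique on at least two vertices, or any cone/suspension with a clique factor are covered by none of your cases --- they have an edge, are not non-trivial joins, and being joins they are reducible, so the rank-one/irreducibility argument of your residual case does not apply to them. This is easy to repair (replace "non-trivial join" by "join of two nonempty graphs" throughout), but it must be repaired, since otherwise the dichotomy "empty versus rank-one" that the rest of your argument rests on is not established.
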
   

If a RACG has totally disconnected Morse boundary, its Morse boundary is also homeomorphic to one of the spaces listed in the theorem above by Theorem 1.4 in \cite{Artingroups} (see \cref{subsec1spaces}). But in contrast to RAAGs, it is often difficult to determine whether a RACG has totally disconnected Morse boundary or not as many different topological spaces arise as Morse boundaries of RACGs.

 \subsection{RACGs with totally disconnected Morse boundaries}
 \label{subsecRACGIntro}
 	The right-angled Coxeter group (RACG) associated to a finite, simplicial graph $\Delta=(V,E)$ is the group 
 	\[W_\Delta =\langle V\mid v^2=1~\forall v \in V, uv=vu ~\forall ~\{u,v\} \in E \rangle.\]

 The group $W_\Delta$ acts geometrically on an associated CAT(0) cube complex $\Sigma_{\Delta}$, its \textit{Davis complex}.
 Hence, the Morse boundary of $W_\Delta$, denoted by $\morse W_\Delta$, is the Morse boundary of $\Sigma_{\Delta}$. 
 For instance, if $\Delta$ is a $4$-cycle, then $\Sigma_{\Delta}$ is isometric to $\R^2$ and has empty Morse boundary. If $\Delta$ is a $5$-cycle, then $\Sigma_{\Delta}$ is quasi-isometric to the hyperbolic plane and $\morse \Sigma_{\Delta}$ is a circle. If we glue a $4$-cycle to a cycle of length at least $5$ so that the $4$-cycle contains a non-adjacent vertex-pair of the  other cycle as in \cref{fig_cycles},
 \begin{figure}[h]
 	\centering
 	\includegraphics{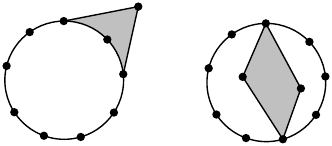} 	
 	\caption{Two cycles with glued $4$-cycles (filled gray).}
 	\label{fig_cycles}
 \end{figure} 
then the corresponding Davis complex has totally disconnected Morse boundary (see \cref{lem:charsultgraph}).
 On the other hand, if a graph $\Delta$ contains an induced cycle $C$ of length at least $5$ without such a glued $4$-cycle, then $\morse \Sigma_\Delta$ contains a circle~\cite[Cor 7.12]{Tran}, \cite[Prop. 4.9]{Genevois}. See also \cite{Behrstock} and \cite[Thm 7.5]{Tran_hierarchically}. 
 Tran conjectured \cite{Tran}[Conj. 1.14] that the non-existence of such a cycle $C$ implies that the associated Davis complex has totally disconnected Morse boundary. This was disproved in \cite{graeber2020surprising}. The problem, which right-angled Coxeter groups have totally disconnected Morse boundaries turns out to be difficult and is still open.
 
  In this paper, we present a new class of right-angled Coxeter groups with totally disconnected Morse boundaries by examining the following question:

 \begin{ques}\label{Q2}
 	Suppose that $\Delta$ is a finite, simplicial graph that can be decomposed into two distinct proper induced subgraphs $\Delta_1$ and $\Delta_2$ with the intersection graph $\Lambda=\Delta_1 \cap \Delta_2$. Are there conditions in terms of $\Delta_1$, $\Delta_2$ and $\Lambda$ implying that $\morse \Sigma_\Delta$ is totally disconnected?
 \end{ques}

\cref{Q2} is inspired by an example of Charney--Sultan~\cite[Sec. 4.2]{CharSul}: 
Let $\bar \Graph$ be the graph in \cref{fig:exCharneySultan}.	 
Charney--Sultan show that $\Sigma_{\bar\Delta}$ has totally disconnected Morse boundary. For the proof, they decompose $\bar\Graph$ into two induced subgraphs $\bar{\Delta}_1$ and $\bar{\Delta}_2$ pictured in~\cref{fig:exCharneySultan}.\begin{figure}[h]
	\centering
	\includegraphics{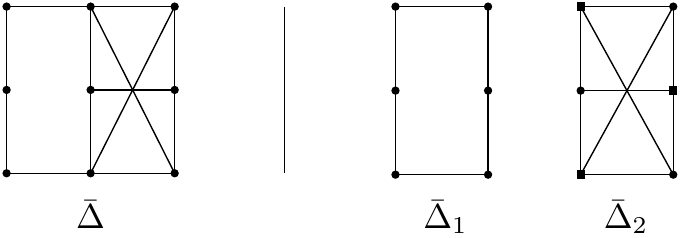}	
\caption{Left: the defining graph of a RACG studied of Charney--Sultan~\cite[Sec. 4.2]{CharSul}. Right: decomposition of the graph.}
	\label{fig:exCharneySultan}
\end{figure} 
	
 Since $\bar{\Delta}_1$ and $\bar{\Delta}_2$ are induced subgraphs of $\bar \Delta$, their corresponding Davis complexes $\Sigma_{\bar{\Delta}_1}$ and $\Sigma_{\bar{\Delta}_2}$ are isometrically embedded in $\Sigma_{\bar \Delta}$. Contrary to the case of visual boundaries, this does not imply that the Morse boundaries $\morse \Sigma_{\bar{\Delta}_1}$ and $\morse \Sigma_{\bar{\Delta}_2}$ are topologically embedded in $\morse \Sigma_{\bar \Delta}$.
\begin{defn}
	Let $\Sigma$ be a proper geodesic metric space and $B \subseteq \Sigma$. 
	We denote by $\relMorse{\Sigma}{B}$ the \textit{relative Morse boundary of $B$ in $\Sigma$}, i.e. the subset of $\morse \Sigma$ that consists of all equivalence classes of geodesic rays in $B$ that are Morse in the ambient space $\Sigma$. 
\end{defn} 

 For instance, if $\Sigma = \R^2$ and $B$ is the $x$-axis, then $\morse B= \{- \infty, + \infty\}$ but $\relMorse{\Sigma}{B} = \emptyset$. 
 If we endow $\relMorse{\Sigma}{B}$ with the subspace topology of $\morse B$ and $\morse\Sigma$, we  obtain two topological spaces that might be distinct (see \cref{exNotcontinuous}). If $B$ is closed and convex, the second topology is finer than the first one (see \cref{blocklemma}). 
 Charney--Sultan use this observation implicitly. They show  in \cite[Sec. 4.2, p. 114-115]{CharSul} that the relative Morse boundaries $\relMorse{\Sigma_{\bar \Delta}}{\Sigma_{\bar{\Delta}_1}}$ and $\relMorse{\Sigma_{\bar \Delta}}{\Sigma_{\bar{\Delta}_2}}$ endowed with the subspace topology of $\morse\Sigma_{\bar{\Delta}_1}$ and $\morse\Sigma_{\bar{\Delta}_2}$ are totally disconnected and conclude that $\morse \Sigma_\Delta$ is totally disconnected. An essential ingredient of their proof is that $\morse \Sigma_{\Delta_2} = \emptyset$.

We generalize this approach using the keyobservation that the intersection graph $\bar{\Delta}_1 \cap \bar{\Delta}_2$ lies in a subgraph of $\bar\Delta$ that corresponds to a RACG with empty Morse boundary (namely $\bar \Delta_2$). RACGs with empty Morse boundary can be characterized in terms of the following definitions.

\begin{defn}
	A graph is a \textit{clique} if every pair of vertices is linked by an edge.
 A graph $\Delta$ is a \textit{join} of two vertex disjoint graphs $\Graph_1$ and $\Graph_2$ if $\Delta$ is obtained from $\Delta_1$ and $\Delta_2$ by linking each vertex of $\Delta_1$ with each vertex of $\Delta_2$. If neither $\Graph_1$ nor $\Graph_2$ is a clique, then $\Delta$ is a \textit{non-trivial join}. 
\end{defn}
For instance, the graph $\bar{\Delta}_2$ is a non-trivial join of two graphs each consisting of three vertices. 
Corollary B in \cite{Sageev_Caprace} implies 
\begin{lemma}[Caprace--Sageev]
	\label{lem:con_empty}
	A RACG has empty Morse boundary if and only if its defining graph is a clique or a non-trivial join.
\end{lemma}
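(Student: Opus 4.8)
The plan is to reduce \cref{lem:con_empty} to the cited Corollary B of Caprace--Sageev, which (in one standard formulation) states that a RACG $W_\Delta$ acting on the Davis complex $\Sigma_\Delta$ has the property that every geodesic ray bounds a flat half-plane precisely when $\Delta$ is a clique or a non-trivial join. So the work is to translate ``every geodesic ray bounds a flat half-plane'' into ``Morse boundary is empty''. First I would recall that in a CAT(0) space, a geodesic ray is Morse if and only if it is contracting (by Charney--Sultan), and that a contracting ray cannot bound a flat half-plane: if $\gamma$ bounded a flat half-plane, the nearest-point projection of the parallel rays onto $\gamma$ would have unbounded diameter image from bounded sets, contradicting the contracting property. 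Hence if every geodesic ray in $\Sigma_\Delta$ bounds a flat half-plane, then no geodesic ray is Morse, so $\morse \Sigma_\Delta = \morse W_\Delta = \emptyset$.

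For the converse I would argue the contrapositive: if $\Delta$ is neither a clique nor a non-trivial join, then by Corollary B there exists a geodesic ray in $\Sigma_\Delta$ that does \emph{not} bound a flat half-plane, and I must upgrade this to the existence of a \emph{Morse} ray. This is where a little care is needed, since ``does not bound a flat half-plane'' is a priori weaker than ``Morse''. The cleanest route is to invoke the Rank Rigidity theorem for CAT(0) cube complexes (Caprace--Sageev): the Davis complex $\Sigma_\Delta$ is a finite-dimensional CAT(0) cube complex on which $W_\Delta$ acts geometrically, so either $\Sigma_\Delta$ decomposes as a nontrivial product (which on the graph level corresponds exactly to $\Delta$ being a join) or $W_\Delta$ contains a rank-one isometry. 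A rank-one isometry has a contracting, hence Morse, axis, whose endpoints lie in $\morse \Sigma_\Delta$; and if $\Sigma_\Delta$ is a single point (the clique case) the boundary is trivially empty. Combining: $\morse\Sigma_\Delta = \emptyset$ forces $\Sigma_\Delta$ to be a point or a nontrivial product, i.e. $\Delta$ a clique or a join; and one checks that if the join is trivial (one side a clique), one of the factors is a bounded Davis complex of a clique, so the product is quasi-isometric to the other factor and the argument recurses, forcing a genuine non-trivial join.

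I would organize the write-up as: (1) state the precise form of Caprace--Sageev Corollary B being used; (2) the easy direction via ``contracting $\Rightarrow$ not bounding a half-plane''; (3) the hard direction via Rank Rigidity and rank-one isometries, together with the elementary observation that $\Sigma_{\Gamma_1 * \Gamma_2} \cong \Sigma_{\Gamma_1} \times \Sigma_{\Gamma_2}$ and that $\Sigma_\Delta$ is bounded iff $\Delta$ is a clique. The main obstacle is precisely step (3): making sure that the dichotomy ``flat half-plane vs.\ Morse ray'' is genuinely exhaustive here rather than leaving a gray zone of rays that are neither, which is why leaning on Rank Rigidity for CAT(0) cube complexes (rather than only on the literal statement of Corollary B) is the safe move. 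If the paper prefers to cite Corollary B in a form that already phrases the conclusion in terms of rank-one isometries or Morse rays, then step (3) collapses and the lemma is essentially immediate from the cited result together with the standard equivalence of Morse and contracting in CAT(0) spaces.
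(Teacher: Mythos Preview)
Your proposal is correct and matches the paper's approach: the paper does not give its own proof but simply states that the lemma follows from Corollary~B in \cite{Sageev_Caprace}, and your write-up is precisely the standard unpacking of that deduction via rank rigidity and the Morse/contracting equivalence. The paper even uses the rank-one-isometry formulation of Corollary~B explicitly later (in the proof of the corollary in \cref{subsec1spaces}), confirming that your route through rank-one isometries is exactly the intended one.
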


 We are now able to formulate the main result of this paper. 
 \begin{thm}
 	\label{thm1}
 Suppose that $\Delta$ is a finite, simplicial graph that can be decomposed into two distinct proper induced subgraphs $\Delta_1$ and $\Delta_2$ with the intersection graph $\Lambda=\Delta_1 \cap \Delta_2$.
 Suppose that $\Lambda$ is a clique or contained in a non-trivial join of two induced subgraphs of $\Graph$.
 	Then every connected component of $\morse \Sigma_\Delta$ is either	
 	\begin{enumerate}
 		\item a single point; or
 		\item homeomorphic to a connected component of $\relMorse{\Sigma_{\Delta}}{\Sigma_{\Delta_i}}$ endowed with the subspace topology of $\morse \Sigma_{\Delta}$ where $i \in \{1,2\}$.
 	\end{enumerate} 	
 \end{thm}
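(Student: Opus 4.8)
The plan is to reduce everything to a statement about how Morse geodesic rays in $\Sigma_\Delta$ interact with the walls associated to the decomposition. The key geometric picture is that $\Delta = \Delta_1 \cup \Delta_2$ induces an amalgamated product decomposition $W_\Delta = W_{\Delta_1} *_{W_\Lambda} W_{\Delta_2}$, which yields a treelike block decomposition of $\Sigma_\Delta$: the copies of $\Sigma_{\Delta_1}$ and $\Sigma_{\Delta_2}$ translated by $W_\Delta$ are the \emph{blocks}, glued along copies of $\Sigma_\Lambda$, which play the role of \emph{walls}, and the nerve of this covering is a tree (the Bass--Serre tree of the splitting). First I would set up this block decomposition carefully and invoke \cref{blocklemma} to get that, on each block $B$ (a translate of some $\Sigma_{\Delta_i}$), the relative Morse boundary $\relMorse{\Sigma_\Delta}{B}$ with the subspace topology from $\morse\Sigma_\Delta$ is finer than, hence maps continuously onto, the same set with the topology from $\morse B$.

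The central point is the role of the hypothesis on $\Lambda$. By \cref{lem:con_empty}, if $\Lambda$ is a clique or a non-trivial join then $W_\Lambda$ has empty Morse boundary; the slightly stronger hypothesis (that $\Lambda$ sits inside a non-trivial join of induced subgraphs of $\Delta$, or is a clique) should guarantee that \emph{no} geodesic ray that eventually stays within a bounded neighborhood of a single wall $\Sigma_\Lambda$-translate can be Morse in $\Sigma_\Delta$ — because near such a wall the ray would be forced into a half-flat coming from the join structure, contradicting the Morse (contracting) property. So I would prove a lemma: every Morse geodesic ray $\gamma$ in $\Sigma_\Delta$ either eventually remains in a single block, or it crosses infinitely many walls; in the latter case it determines a geodesic ray in the Bass--Serre tree, and in the former case its equivalence class lies in $\relMorse{\Sigma_\Delta}{B}$ for some block $B$.

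Now I would analyze connected components of $\morse\Sigma_\Delta$. Take a connected component $\mathcal{K}$ and a point $\xi \in \mathcal{K}$ represented by a Morse ray $\gamma$. If $\gamma$ eventually stays in a block $B$, then $\xi \in \relMorse{\Sigma_\Delta}{B}$; since blocks are closed convex subsets, $\relMorse{\Sigma_\Delta}{B}$ is a subset of $\morse\Sigma_\Delta$, and I would argue that it is in fact \emph{clopen} in a neighborhood of $\xi$ inside $\morse\Sigma_\Delta$ — the obstruction to leaving $B$ being precisely the wall $\Sigma_\Lambda$-translates, which carry no Morse directions, so nearby Morse rays cannot sneak into adjacent blocks without first fellow-travelling a wall for a long time, which (by the previous lemma, quantified) they cannot do while staying close to $\gamma$. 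This clopenness forces $\mathcal K \subseteq \relMorse{\Sigma_\Delta}{B}$, and since $\mathcal K$ is connected it lies in a single connected component of $\relMorse{\Sigma_\Delta}{B}$ with the subspace topology of $\morse\Sigma_\Delta$, giving case (2) after identifying $B$ with a translate of $\Sigma_{\Delta_i}$. If instead $\gamma$ crosses infinitely many walls, I would show the component $\mathcal K$ is a single point: two Morse rays that both cross infinitely many walls and are close in $\morse\Sigma_\Delta$ must track the same tree geodesic, but then near each wall crossing any path in $\morse\Sigma_\Delta$ between them would have to pass through a wall-direction, which is impossible, so $\xi$ is isolated in $\mathcal K$, whence $\mathcal K = \{\xi\}$, case (1).

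The main obstacle I anticipate is the quantitative control in the clopenness argument — making precise the claim that a Morse ray $\gamma'$ representing a boundary point close to $\xi$ cannot transition from block $B$ to an adjacent block without spending uniformly long near the separating wall, and combining this with the uniform Morse gauge of $\xi$ to derive a contradiction. This is where the hypothesis on $\Lambda$ (empty Morse boundary of $W_\Lambda$, i.e. half-flats through every wall direction) must be turned into an effective estimate, presumably using a Morse/divergence characterization and the fact that the block decomposition is treelike so that distinct blocks are genuinely separated by their common wall. The rest — the Bass--Serre setup, applying \cref{lem:con_empty} and \cref{blocklemma} — I expect to be routine.
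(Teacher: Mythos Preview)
Your overall architecture matches the paper's: set up the treelike block decomposition of $\Sigma_\Delta$ coming from the amalgam $W_{\Delta_1} *_{W_\Lambda} W_{\Delta_2}$ (this is \cref{Prop}), use the hypothesis on $\Lambda$ together with \cref{lem:con_empty} to see that no wall carries a geodesic ray that is Morse in $\Sigma_\Delta$, and then split according to whether a representative ray has finite or infinite itinerary in the Bass--Serre tree. Up to here you are aligned with the paper.

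There are, however, two genuine issues. For the finite-itinerary case you propose to show that $\relMorse{\Sigma_\Delta}{B}$ is \emph{clopen} near $\xi$; you correctly flag this as the hard step, and the paper avoids it entirely. Instead the paper proves a \emph{cutset property} (\cref{cor:diffitinerary2}): if a connected component $\kappa$ of $\morse\Sigma_\Delta$ contains two points whose itineraries differ at a wall $W$, then $\kappa$ already contains the class of a geodesic ray lying in $W$. This comes from the fact that removing $W$ cuts $\Sigma_\Delta$ into two closed convex halves, so $\vis\Sigma_\Delta\setminus\vis W$ splits into two relatively clopen pieces, and the Morse topology is finer than the subspace topology of the visual boundary. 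Since walls carry no Morse rays, every connected component consists of points with a single common itinerary; if that itinerary is finite, all basepoint representatives end in one block $B$ and $\kappa\subseteq\relMorse{\Sigma_\Delta}{B}$. No openness of $\relMorse{\Sigma_\Delta}{B}$ is needed, and indeed an arbitrary intersection of the ``$B$-side'' halfspaces over the (possibly infinitely many) walls in $B$ has no reason to be open.

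For the infinite-itinerary case your argument has a real hole. You say two such rays in the same component must track the same tree geodesic, and then that ``any path between them would have to pass through a wall-direction''. But if they share the \emph{same} infinite itinerary there is no separating wall to invoke, so cutset-style reasoning yields nothing. What you are missing is the paper's \emph{loneliness property} (\cref{Morse_conn_of_type_Ainf}): two distinct geodesic rays from the basepoint with the same infinite itinerary cannot both be Morse. Its proof genuinely uses the slim/contracting characterisation of Morse rays---projecting one ray onto the other and using slimness forces infinitely many distinct walls to meet a fixed ball, contradicting properness. This is the one place in the whole argument where the Morse hypothesis is actually consumed, and without it your case (1) does not close.
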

Our study of relative Morse boundaries in \cref{corOpenset} in \cref{subsecMorse3} implies
 \begin{cor}
 	\label{Cor1}
 Suppose that the assumptions of \cref{thm1} are satisfied. 
 If $\relMorse{\Sigma_{\Delta}}{\Sigma_{\Delta_1}}$ and $\relMorse{\Sigma_{\Delta}}{\Sigma_{\Delta_2}}$ equipped with the subspace topology of $\morse \Sigma_{\Delta_1}$ and $\morse \Sigma_{\Delta_2}$ are totally disconnected then 
 $\morse \Sigma_\Delta$ is totally disconnected. 
 \end{cor}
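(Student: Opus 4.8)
The plan is to deduce the corollary directly from \cref{thm1} together with the comparison of topologies mentioned in the introduction (the statement attributed to \cref{blocklemma}: on a closed convex subset $B$, the subspace topology of $\morse\Sigma$ is finer than that of $\morse B$). First I would observe that $\Sigma_{\Delta_i}$ is closed and convex in $\Sigma_\Delta$ — this is because $\Delta_i$ is an induced subgraph, so $\Sigma_{\Delta_i}$ embeds isometrically as a convex subcomplex of the CAT(0) cube complex $\Sigma_\Delta$. Hence the identity map
\[
\left(\relMorse{\Sigma_\Delta}{\Sigma_{\Delta_i}},\ \text{subspace top. of } \morse\Sigma_\Delta\right)
\ \longrightarrow\
\left(\relMorse{\Sigma_\Delta}{\Sigma_{\Delta_i}},\ \text{subspace top. of } \morse\Sigma_{\Delta_i}\right)
\]
is continuous for $i=1,2$. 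A continuous image of a connected set is connected, so every connected component of $\relMorse{\Sigma_\Delta}{\Sigma_{\Delta_i}}$ in the (finer) $\morse\Sigma_\Delta$-topology maps into a single connected component of $\relMorse{\Sigma_\Delta}{\Sigma_{\Delta_i}}$ in the $\morse\Sigma_{\Delta_i}$-topology. Under the hypothesis of the corollary, that latter space is totally disconnected, so its components are singletons; therefore the components of $\relMorse{\Sigma_\Delta}{\Sigma_{\Delta_i}}$ in the $\morse\Sigma_\Delta$-topology are also singletons.

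Next I would invoke \cref{thm1}: since $\Lambda$ is a clique or contained in a non-trivial join of induced subgraphs of $\Delta$, every connected component of $\morse\Sigma_\Delta$ is either a single point, or homeomorphic to a connected component of $\relMorse{\Sigma_\Delta}{\Sigma_{\Delta_i}}$ with the subspace topology of $\morse\Sigma_\Delta$, for some $i\in\{1,2\}$. By the previous paragraph the second alternative also yields a single point. Hence every connected component of $\morse\Sigma_\Delta$ is a single point, i.e. $\morse\Sigma_\Delta$ is totally disconnected.

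I do not expect a serious obstacle here, since the corollary is essentially a formal consequence of \cref{thm1} and the finer/coarser comparison of the two relative-boundary topologies. The only points requiring a word of care are (i) confirming that $\Sigma_{\Delta_i}$ is genuinely closed and convex in $\Sigma_\Delta$ so that \cref{blocklemma} applies, and (ii) making sure that "homeomorphic to a connected component of $\relMorse{\Sigma_\Delta}{\Sigma_{\Delta_i}}$" in alternative (2) of \cref{thm1} really refers to the $\morse\Sigma_\Delta$-subspace topology, which is exactly what is stated. Given these, the argument is the three-line chain: finer topology $\Rightarrow$ identity continuous $\Rightarrow$ components map to components $\Rightarrow$ totally disconnected downstairs forces totally disconnected upstairs, combined with the trichotomy of \cref{thm1}. (One may alternatively phrase the whole thing via \cref{corOpenset}, as the paper suggests, but the topology-comparison route above is the most self-contained.)
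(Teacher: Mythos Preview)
Your proposal is correct and follows essentially the same route as the paper: the paper derives \cref{Cor1} from \cref{thm1} via \cref{corOpenset}, and \cref{corOpenset} in turn is proved exactly by the continuity argument you spell out (using \cref{blocklemma} on the closed convex subcomplex $\Sigma_{\Delta_i}$). You have simply unpacked \cref{corOpenset} inline rather than citing it.
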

In \cref{def:graphclass}, we will define a large class $\mathcal C$ of graphs, that can be built iteratively from pieces to which \cref{Cor1} can be applied.
\begin{cor}
	\label{cor:C-class}
	If $\Delta \in \mathcal C$, then $\morse W_\Delta$ is totally disconnected. 
\end{cor}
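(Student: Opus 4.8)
The plan is to argue by induction on the number of gluing steps in the recursive construction of the class $\mathcal C$ from \cref{def:graphclass}. Throughout I will use that $\Sigma_\Delta$ is the Davis complex on which $W_\Delta$ acts geometrically, so that $\morse W_\Delta = \morse \Sigma_\Delta$, and that for every induced subgraph $\Delta' \subseteq \Delta$ the Davis complex $\Sigma_{\Delta'}$ is isometrically and convexly embedded in $\Sigma_\Delta$; in particular $\morse \Sigma_{\Delta'} = \morse W_{\Delta'}$, and $\relMorse{\Sigma_\Delta}{\Sigma_{\Delta'}}$ makes sense as a subset of $\morse \Sigma_{\Delta'}$.

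For the base case one checks that each atomic building block $\Delta$ permitted by \cref{def:graphclass} has totally disconnected Morse boundary. The simplest such blocks are cliques and non-trivial joins, for which $\morse W_\Delta = \emptyset$ by \cref{lem:con_empty}, and the empty space is vacuously totally disconnected; any other explicit base graphs (for instance cycles with a suitably glued $4$-cycle as in \cref{fig_cycles}, via \cref{lem:charsultgraph}, or the graphs of \cref{fig}) are handled directly.

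For the inductive step, let $\Delta \in \mathcal C$ be obtained from graphs $\Delta_1$ and $\Delta_2$ in $\mathcal C$ (each built in strictly fewer steps, or a base block) by gluing along $\Lambda = \Delta_1 \cap \Delta_2$, where the construction of $\mathcal C$ guarantees that $\Delta_1, \Delta_2$ are distinct proper induced subgraphs of $\Delta$ and that $\Lambda$ is a clique or is contained in a non-trivial join of two induced subgraphs of $\Delta$. By the inductive hypothesis $\morse \Sigma_{\Delta_i} = \morse W_{\Delta_i}$ is totally disconnected for $i = 1, 2$. Since $\relMorse{\Sigma_\Delta}{\Sigma_{\Delta_i}}$ is a subset of $\morse \Sigma_{\Delta_i}$, and since a subspace of a totally disconnected space is totally disconnected (a connected subset of the subspace is a connected subset of the ambient space, hence a single point), the space $\relMorse{\Sigma_\Delta}{\Sigma_{\Delta_i}}$ equipped with the subspace topology of $\morse \Sigma_{\Delta_i}$ is totally disconnected for $i = 1, 2$. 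The hypotheses of \cref{Cor1} are therefore met, and we conclude that $\morse \Sigma_\Delta = \morse W_\Delta$ is totally disconnected, completing the induction.

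The work, and hence the main obstacle, is not in this argument but in \cref{def:graphclass} itself: the class $\mathcal C$ must be set up so that at every stage of the recursion the intersection graph $\Lambda$ genuinely satisfies the separation hypothesis of \cref{thm1} --- a clique, or inside a non-trivial join of induced subgraphs of the \emph{current} graph $\Delta$ rather than merely of $\Delta_1$ or $\Delta_2$ --- and so that the gluing really exhibits $\Delta_1$ and $\Delta_2$ as induced subgraphs of $\Delta$ (equivalently, no edge of $\Delta$ joins $V(\Delta_1)\setminus V(\Lambda)$ to $V(\Delta_2)\setminus V(\Lambda)$), which is what makes the isometric convex embeddings of Davis complexes available. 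Once these combinatorial conditions are built into the definition, the topological content is supplied entirely by \cref{Cor1} together with the fact that total disconnectedness passes to subspaces, and the induction goes through without further effort.
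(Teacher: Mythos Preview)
Your proof is correct and follows essentially the same inductive approach as the paper: base cases handled directly, inductive step via \cref{Cor1} using that subspaces of totally disconnected spaces are totally disconnected. The paper's \cref{def:graphclass} lists five explicit base classes (edgeless graphs, finite trees, Charney--Sultan graphs, cliques, non-trivial joins), each of which has totally disconnected Morse boundary for the reasons you indicate; note that the graphs of \cref{fig} are not themselves base cases but examples assembled from these via the gluing rule.
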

The class $\mathcal C$ is much larger than the class $\mathcal{CFS}_0$ defined in \cref{def:CFS0} below for which 
 \cref{cor:C-class} was established by Nguyen--Tran \cite{Tran_coars_geom}. 
   For instance, the graphs in \cref{fig} are contained in $\mathcal C \setminus\mathcal{CFS}_0$. 
The left graph was studied by Russell--Spriano--Tran~\cite[Ex. 7.7]{Tran_hierarchically}. They asked whether the associated RACG has totally disconnected Morse boundary or not. The other graphs in  \cref{fig} correspond to RACGs with polynomial divergence of arbitrarily high degree~\cite[Sec. 5]{Dani_diver} (see \cref{divlem}). 
In contrast, all graphs in $\mathcal{CFS}_0$ have quadratic divergence.
 \subsection{CAT(0) spaces with a treelike block decomposition that have totally disconnected Morse boundaries}

 Our results concerning RACGs follow from a more general theorem concerning groups acting geometrically on \textit{CAT(0) spaces with treelike block decompositions}. Such spaces were studied in \cite{CrokeKleiner,BenzviFlats,BENZVI,Mooney} since they arise naturally as spaces on which interesting examples of amalgamated free products of CAT(0) groups act geometrically. We will give a precise definition 
 in \cref{def:blockdec} below. For this introduction, it suffices to know that a \textit{block decomposition} $\mathcal B$ of a CAT(0) space $\Sigma$ is a collection of convex, closed subsets of $\Sigma$, called \textit{blocks}, whose union covers $\Sigma$. The non-trivial intersection of a pair of blocks is called a \textit{wall}. The block decomposition is \textit{treelike} if the blocks intersect each other so that we obtain a simplicial tree if we add a vertex for every block and an edge for every pair of blocks that intersect non-trivially.

 \begin{thm}
	\label{thm2}
	Let $\Sigma$ be a proper CAT(0) space with treelike block decomposition $\mathcal B$. 
	If no wall in $\Space$ contains a geodesic ray that is Morse in $\Space$, then every connected component of $\morse \Space$ 	is either 
	\begin{enumerate}
		\item a single point; or
		\item homeomorphic to a connected component of $\relMorse{\Space}{B}$, where $B$ is a block in $\mathcal B$ and $\relMorse{\Space}{B}$ is endowed with the subspace topology of $\morse \Sigma$. 
	\end{enumerate} 	
\end{thm}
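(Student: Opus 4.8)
The plan is to analyze a Morse geodesic ray $\gamma$ in $\Sigma$ by tracking which blocks it passes through, using the treelike structure to get a well-defined (finite or infinite) itinerary of blocks. Since the block decomposition is treelike, the nerve is a tree $T$, and a geodesic segment or ray determines a path in $T$: each time $\gamma$ leaves one block and enters another it must cross the connecting wall, and by convexity of blocks and walls this crossing sequence has no backtracking. So to $\gamma$ we associate either a finite sequence of blocks $B_{i_0}, B_{i_1}, \dots, B_{i_n}$ (with $\gamma$ eventually trapped in the last block) or an infinite ray in $T$. The first dichotomy in the theorem should correspond to rays whose itinerary is infinite (or more precisely, ``escapes to infinity in the tree''): I expect to show such a $\gamma$ is an isolated point of $\morse\Sigma$, i.e. its connected component is a single point. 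The hypothesis that no wall carries a Morse ray of $\Sigma$ is exactly what is needed here — a ray that stays near infinitely many walls, or that limits onto a wall, cannot be Morse unless it is eventually captured by a single block.

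First I would set up the combinatorial bookkeeping: given the treelike block decomposition, prove (or cite, via the earlier \cref{blocklemma}-type results) that any geodesic in $\Sigma$ crosses each wall at most once, so it determines an embedded path in the nerve tree $T$. Second, I would prove the \emph{capture lemma}: a Morse geodesic ray $\gamma$ in $\Sigma$ is eventually contained in a single block $B$. The argument: if not, $\gamma$ crosses infinitely many walls $W_1, W_2, \dots$; using the Morse property (divergence / contracting estimates) together with the fact that walls are convex and that consecutive walls bounding a common block are ``linked'' in the tree, one derives that $\gamma$ fellow-travels a geodesic lying in one of the walls, or that a wall contains a Morse ray — contradicting the hypothesis. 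Here the treelike condition is crucial: it prevents $\gamma$ from ``oscillating'' among blocks in a way that could still be Morse, because there is a unique path in $T$ and each wall is crossed once, so the walls $W_j$ are genuinely being left behind.

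Third, once $\gamma$ is eventually in a block $B$, its tail is a geodesic ray in $B$ that is Morse in $\Sigma$, so $[\gamma] \in \relMorse{\Sigma}{B}$. I then need to compare topologies: by \cref{blocklemma} the subspace topology on $\relMorse{\Sigma}{B}$ coming from $\morse\Sigma$ is finer than the one coming from $\morse B$ — but for the statement I only need to understand connected components inside $\morse\Sigma$. The key point is that $\relMorse{\Sigma}{B}$, with the subspace topology of $\morse\Sigma$, is \emph{open} in $\morse\Sigma$ away from the ``branching'' points: a Morse ray captured by $B$ has a neighborhood in $\morse\Sigma$ consisting of rays that are also (eventually) in $B$, because crossing out of $B$ requires crossing a wall, which moves you a definite distance, contradicting closeness of boundary points at the relevant scale (this is where a contracting-geodesic neighborhood-basis argument enters, and it is essentially the content of the forthcoming \cref{corOpenset}). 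Consequently the connected component of $[\gamma]$ in $\morse\Sigma$ is contained in $\relMorse{\Sigma}{B}$, hence equals a connected component of $\relMorse{\Sigma}{B}$ with the subspace topology of $\morse\Sigma$, which is alternative (2). Rays not captured by any block fall into alternative (1).

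The main obstacle I anticipate is the capture lemma together with making the ``openness of $\relMorse{\Sigma}{B}$ in $\morse\Sigma$'' precise: both require quantitative control relating the Morse gauge of $\gamma$ to the geometry of walls, and one must carefully use the hypothesis ``no wall contains a Morse ray of $\Sigma$'' to rule out the degenerate scenario where $\gamma$ asymptotically aligns with a wall while still changing blocks infinitely often. A secondary technical point is handling the case where $\gamma$ passes through infinitely many blocks but the associated path in $T$ does \emph{not} escape to infinity — this cannot happen for an embedded path in a simplicial tree, so the treelike hypothesis disposes of it, but it is worth stating explicitly. Everything else — associating itineraries, convexity arguments, the finer-topology comparison — is routine given the earlier sections.
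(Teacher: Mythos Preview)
Your capture lemma is false, and this is the central gap. It is \emph{not} true that every Morse geodesic ray in $\Sigma$ is eventually contained in a single block, even under the hypothesis that no wall contains a $\Sigma$-Morse ray. The Croke--Kleiner RAAG (Example~\ref{exCrokeKleiner}) is a counterexample: the Salvetti complex has a treelike block decomposition whose walls are flats (hence contain no Morse rays), the blocks are copies of the universal cover of $F_2\times\Z$ (hence have empty relative Morse boundary), and yet $\morse\Sigma$ is nonempty --- every Morse ray has \emph{infinite} itinerary. Your proposed argument (``$\gamma$ crosses infinitely many walls $\Rightarrow$ $\gamma$ fellow-travels a ray in a wall'') does not go through; a Morse ray can cross a sequence of walls whose nearest points to $\gamma$ march off to infinity along $\gamma$, with no single wall being tracked.

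The paper handles this differently. It does not try to rule out infinite itineraries; instead it shows that a connected component consisting of infinite-itinerary points is a singleton. Two ingredients do the work. First, the \emph{cutset property} (\cref{cor:diffitinerary2}): if a connected component $\kappa$ of $\morse\Sigma$ contains two points with different itineraries, then $\kappa$ contains the class of a geodesic ray lying in a wall. Under the hypothesis of the theorem this is impossible, so every point of $\kappa$ has the \emph{same} itinerary. Second, the \emph{loneliness property} (\cref{Morse_conn_of_type_Ainf}): two distinct Morse rays from the same basepoint cannot share an infinite itinerary. Together these give: if the common itinerary of $\kappa$ is infinite then $|\kappa|=1$; if it is finite, ending at a block $B$, then every representative starting at $p$ ends in $B$, so $\kappa\subseteq\relMorse{\Sigma}{B}$ and $\kappa$ is a connected component of that subspace. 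Note that the finite-itinerary case also runs through the cutset property rather than through any openness statement for $\relMorse{\Sigma}{B}$ --- your proposed openness route may be salvageable, but it is not what the paper does and would need a separate argument.
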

By means of \cref{corOpenset} in \cref{subsecMorse3} we conclude
\begin{cor}
	\label{cor:firstgeneralization}
	Let $\Sigma$ be a proper CAT(0) space with a treelike block decomposition. If no wall contains a geodesic ray that is Morse in $\Space$ and $\relMorse{\Sigma}{B}$  equipped with the subspace topology of $\morse B$ is totally disconnected for every block $B$, then $\morse\Space$ is totally disconnected. 
\end{cor}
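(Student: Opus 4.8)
\textbf{Proof proposal for \cref{cor:firstgeneralization}.}

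The plan is to deduce this corollary from \cref{thm2} together with the relationship between the two topologies on a relative Morse boundary recorded in \cref{blocklemma}. Assume that no wall contains a geodesic ray that is Morse in $\Space$, and that $\relMorse{\Sigma}{B}$, equipped with the subspace topology inherited from $\morse B$, is totally disconnected for every block $B \in \mathcal B$. The goal is to show that every connected component of $\morse \Space$ is a single point. By \cref{thm2}, every connected component of $\morse\Space$ is either a single point — in which case there is nothing to prove — or homeomorphic to a connected component $C$ of $\relMorse{\Space}{B}$ for some block $B$, where $C$ carries the subspace topology of $\morse\Sigma$. So it suffices to show that each such $C$ is a single point.

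First I would fix a block $B$ and compare the two topologies on $\relMorse{\Space}{B}$: the one induced from $\morse B$ (call it $\tau_B$) and the one induced from $\morse \Sigma$ (call it $\tau_\Sigma$). Since a block is by definition a closed convex subset of the CAT(0) space $\Sigma$, \cref{blocklemma} applies and tells us that $\tau_\Sigma$ is finer than $\tau_B$. Consequently, the identity map
\[
\left(\relMorse{\Space}{B},\ \tau_\Sigma\right) \longrightarrow \left(\relMorse{\Space}{B},\ \tau_B\right)
\]
is continuous. A connected component of the finer space maps into a connected component of the coarser space, and by hypothesis the latter is a single point; hence every connected component of $\left(\relMorse{\Space}{B},\ \tau_\Sigma\right)$ is contained in a single point, i.e.\ is itself a single point. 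In particular the component $C$ above is a single point, which completes the argument.

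The main subtlety — and the only place where one must be slightly careful — is the direction of the comparison of topologies: total disconnectedness passes to \emph{finer} topologies but not to coarser ones, so it is essential that $\tau_\Sigma$ refines $\tau_B$ rather than the reverse, and that this is exactly what \cref{blocklemma} provides for closed convex subsets. (One should also note that $\relMorse{\Space}{B} \subseteq \morse B$ as a set, so that the hypothesis is literally a statement about a subspace of $\morse B$, not about $\morse B$ itself; blocks of a treelike block decomposition need not have totally disconnected Morse boundary on their own, only their Morse-in-$\Sigma$ part need be so.) Everything else is a formal consequence of \cref{thm2}. I do not anticipate any genuine obstacle here; the content of the corollary is entirely in \cref{thm2}, and this deduction is the routine topological packaging.
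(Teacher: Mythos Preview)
Your proposal is correct and follows essentially the same route as the paper: the paper deduces the corollary from \cref{thm2} together with \cref{corOpenset}, and \cref{corOpenset} is itself proved exactly as you describe, by invoking \cref{blocklemma} to get a continuous identity map from the $\tau_\Sigma$-topology to the $\tau_B$-topology and then pushing connected components forward. The only cosmetic difference is that the paper packages your topology-comparison step as the separate statement \cref{corOpenset} (which also records the verification of the hypotheses of \cref{blocklemma} in the CAT(0) setting: $\sigma$-compactness of $\morse\Sigma$ and the fact that closed convex subsets of a CAT(0) space contain all asymptotic rays based in them), whereas you invoke \cref{blocklemma} directly.
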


 
  \cref{thm1} is a special case of \cref{thm2}. Indeed, in \cref{Prop} we will show that a Davis complex of a RACG as in \cref{thm1}  admits a treelike block decomposition whose blocks are isometric to $\Sigma_{\graphzero}$ or $\Sigma_{\graphone}$ and  whose walls are isometric to $\Sigma_{\Lambda}$. Because of  \cref{lem:con_empty}, this decomposition satisfies the the conditions of   \cref{thm2}.

\subsection{Beyond RACGs} 
 \cref{thm2} has many applications beyond RACGs. It can be applied to a class of RAAGs partially reproving \cref{Artingroups} (see \cref{RAAGThm}), to surface amalgams and to spaces arising from the equivariant gluing theorem of Bridson--Haefliger~\cite[Thm II.11.18]{BH}. We will finish this paper with a few concrete examples that were studied by Ben-Zvi \cite{BenzviFlats}.
 
 \subsection{Organization of the paper}
\cref{sec:treelike} concerns treelike block decompositions of CAT(0) spaces. In \cref{sec:visprop}, we will prove a cutset property for visual boundaries of CAT(0) spaces with a fixed treelike block decomposition. In \cref{key_Morse}, we will transfer this property to the Morse boundary and study two further keyproperties of Morse boundaries. In \cref{subsec:types}, we will use these three keyproperties to prove \cref{thm2}. In \cref{sec:mainproofs}, we will apply our insights to RACGs. 
 Finally, we close this paper with applications beyond RACGs in \cref{examoplesection}.

\subsection*{Acknowledgment}
This paper is part of my dissertation at the Karlsruhe Institute of Technology and I thank my supervisor Petra Schwer for accompanying the process of my
dissertation. I am grateful for the support of my second supervisor Tobias Hartnick, especially for his help with this paper. I would like to thank Matthew Cordes, Nir Lazarovich, Ivan Levcovitz, Michah Sageev and Emily Stark for everything I learned from them and their helpful advice on this paper and beyond during and after my stay at the Technion in Haifa in 2018. I thank Pallavi Dani, Thomas Ng and my PhD collegues Marius Graeber, Leonid Grau, Julia Heller and Kevin Klinge for helpful discussions. Also, I thank Ruth Charney, Jacob Russell and Hung Cong Tran for their comments and Elia Fioravanti for finding an error in the earlier drafts of this paper. Finally, I acknowledge funding of the Deutsche Forschungsgemeinschaft (DFG 281869850, RTG 2229), the Karlsruhe House of Young Scientists (KHYS), and the Israel Science Foundation (grant no. 1562/19).


 \section{CAT(0) spaces with treelike block decompositions}
 \label{sec:treelike}
 In \cref{sec1}, we will fix notation. \cref{sec:blockdeck} concerns basic properties of CAT(0) spaces with treelike block decompositions. \cref{sec:itin} is about itineraries of geodesic rays in such spaces. 
 \subsection{Notation concerning simplicial graphs}
 \label{sec1}
 For the background of graphs, see \cite{West}.
 For us, a \textit{simplicial graph} $\Graph=(V(\Delta), E(\Delta))$ consists of a set $V(\Delta)$ and set $ E(\Delta)$ of $2$-element subsets of $ V(\Delta)$.
 The elements of $V$ are called \textit{vertices} and the elements of $ E$ are called \textit{edges}. If $\Graph_1$ and $\Graph_2$ are two graphs, then $\Graph_1 \cup \Graph_2$ is the graph whose vertex set is $V(\Graph_1) \cup V(\Graph_2)$ and whose edge set is the set $E(\Graph_1)\cup E(\Graph_2)$. Analogously, $\Graph_1 \cap \Graph_2$ denotes the graph whose vertex set is $V(\Graph_1) \cap V(\Graph_2)$ and whose edge set is the set $E(\Graph_1)\cap E(\Graph_2)$. 
 A \textit{subgraph} $\Graph'$ of a graph $\Graph$ is a graph whose vertex set is contained in $V(\Graph)$ and whose edge set is contained in $E(\Graph)$. The subgraph $\Graph'$ is a \textit{proper} subgraph if it does not coincide with $\Graph$. A graph $\Graph'$ is an \textit{induced subgraph} of a graph $\Graph$ if every edge $e \in E(\Graph)$ whose endvertices are contained in $V(\Delta')$ is contained in $E(\Graph')$. We say in this case that $\Graph'$ is \textit{spanned} by the vertex set $ V'$. 
 
 Two vertices are \textit{adjacent} if they are contained in an edge. The \textit{degree} of a vertex $v$ is the number of vertices that are adjacent to $v$.
Let $v_1,\dots,v_n \in V(\Delta)$ and $e_1,\dots,e_{n-1} \in E(\Delta)$. 
The list $P=(v_1,e_1,v_2,e_2,\dots e_{n-1}v_n)$ is a \textit{finite path} of length $n-1$, if $v_i \in e_i$ for all $i \in \{1,\dots,n-1\}$ and $v_n \in e_{n-1}$. In this case, $v_1$ and $v_n$ are \textit{linked} by a finite path. If $v_1 = v_n$, $P$ is a \textit{closed path}. 
Let $(v_i)_{i \in \N}$ and $(e_i)_{i \in \N}$ be two sequences of vertices and edges in $V(\Delta)$ and $E(\Delta)$ respectively. 
The infinite list $(v_1,e_1,v_2,e_2,\dots e_{n-1}v_n\dots)$ is an \textit{infinite path} if $v_i \in e_i$ for all $i \in \N$. 
Let $(v_i)_{i \in \Z}$ and $(e_i)_{i \in \Z}$ be two sequences of vertices and edges in $V(\Delta)$ and $E(\Delta)$ respectively. 
The bi-infinite list $(\dots, v_{-1},e_{-1},v_0,e_0,v_1,e_1,v_2,e_2,\dots e_{n-1}v_n\dots)$ is a \textit{bi-infinite path} if $v_i \in e_i$ for all $i \in \Z$. 
If we speak of a \textit{path}, we mean a finite, infinite or bi-infinite path. 

 A path $P$ is \textit{geodesic}, if each vertex occurs at most once in $P$. A path $P$ is a \textit{subpath} of a path $P'$ if $P'$ has two vertices $v_1$ and $v_2$ so that $P$ is obtained from $P'$ by removing all vertices and edges that occur  before the vertex $v_1$ or after the vertex $v_2$ in $P'$. The \textit{underlying graph of a path $P$}, denoted by $\pathlist P$, is the graph whose vertex set consists of all vertices in $P$ and whose edge set consists of all edges in $P$.  If $P$ is a geodesic path, each vertex in $\pathlist P$ has degree at most two. 

 A graph is \textit{connected} if every two vertices are linked by a finite path. 
 A \textit{cycle} is a graph with an equal number of vertices and edges whose vertices can be placed around a cycle so that two vertices are adjacent if and only if they appear consecutively along the cycle. A graph is a (simplicial) \textit{tree} if it is connected and does not contain a cycle. An important property of trees is that every geodesic path linking two vertices is unique. 
 
 \subsection{Definitions and basic properties}
\label{sec:blockdeck}

In this subsection, we study CAT(0) spaces that have a \textit{treelike block decomposition}. 
The following considerations are variants of definitions and lemmas in \cite{CrokeKleiner,BenzviFlats,BENZVI,Mooney}. For the background about CAT(0) spaces, see~\cite[Ch. II]{BH}.
\begin{defn}
	\label{def:blockdec}
	Let $\Sigma$ be a CAT(0) space. A collection $\mathcal B$ of closed convex subsets of $\Sigma$ is a \textit {block decomposition} of $\Sigma$ if it satisfies the covering condition 
		\[\Sigma = \bigcup_{B \in \B}B.\] 
	In this case, the elements of $\B$ are called \textit{blocks}. A block decomposition is \textit{non-trivial} if there are at least two blocks. If $\mathcal B$ is a non-trivial block decomposition, the elements of the set \[\mathcal W\coloneqq \{B_1 \cap B_2 \mid B_1, B_2 \in \B \text{ such that } B_1 \neq B_2\}\setminus \{\emptyset\}.\]
 are called \textit{walls}.
 
Let $d: \Sigma \times \Sigma \to \R_{\ge 0}$ be the metric of $\Sigma$. We define the distance of two walls $W_1$, $W_2 \in \mathcal W$ by \[d(W_1,W_2) \coloneqq \inf_{x_1 \in W_1, x_2 \in W_2}{d(x_1,x_2)}.\]
 \end{defn}

\begin{defn}
	The \textit{adjacency graph} of a block decomposition $\mathcal B$ is the simplicial graph whose vertex set is $\mathcal B$ and whose set of edges consists of all pairs of blocks with non-empty intersection.
\end{defn}
Recall that a simplicial tree is a connected simplicial graph that does not contain a cycle.
\begin{defn}\label{deftreelike}
	A block decomposition is called \textit{treelike} if
	\begin{enumerate}
		\item the adjacency graph is a simplicial tree; and if
		\item there exists $\wallepsilon > 0$ so that for all $W_1, W_2 \in \mathcal W$, $W_1 \neq W_2$, we have $d(W_1, W_2)\ge \wallepsilon$ (separating property).\label{epqilon-cond} 
	\end{enumerate} 
\end{defn}
	Let $\Sigma$ be a complete CAT(0) space with treelike block decomposition $\mathcal B$ and $\Tblock$ the corresponding adjacency graph.
The following two properties are important for us.
\begin{lemma}
	If $\Sigma$ is proper, then no ball of finite radius in $\Sigma$ is intersected by infinitely many walls. \label{ball}
\end{lemma}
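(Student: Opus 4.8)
The plan is to argue by contradiction. Suppose some closed ball $\bar B(x_0, R)$ of finite radius is intersected by infinitely many distinct walls $W_1, W_2, \dots \in \mathcal W$; pick a point $y_i \in W_i \cap \bar B(x_0, R)$ for each $i$. Since $\Sigma$ is proper, $\bar B(x_0, R)$ is compact, so after passing to a subsequence we may assume $y_i \to y_\infty$ for some $y_\infty \in \Sigma$. In particular $d(y_i, y_j) \to 0$ as $i, j \to \infty$. But $y_i \in W_i$ and $y_j \in W_j$ with $W_i \neq W_j$ (the walls are pairwise distinct once we pass to a subsequence on which they are genuinely distinct — which we can, since there are infinitely many of them), so $d(y_i, y_j) \ge d(W_i, W_j) \ge \wallepsilon > 0$ by the separating property \ref{epqilon-cond}. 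This contradicts $d(y_i, y_j) \to 0$.

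The only mild subtlety is the bookkeeping about which walls are distinct. We are given infinitely many walls each meeting the ball; among any infinite collection of distinct walls we can certainly extract an infinite subcollection that are pairwise distinct (they already are), and then the Cauchy tail of the convergent subsequence $(y_i)$ forces two of them to be at distance less than $\wallepsilon$. So the separating property does all the real work, and properness supplies the needed compactness. One should be slightly careful that ``intersected by infinitely many walls'' means infinitely many \emph{distinct} members of $\mathcal W$; this is the intended reading, and the argument uses exactly that.

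The main (and essentially only) obstacle is simply ensuring that the separating constant $\wallepsilon$ is applied to a genuinely distinct pair of walls coming arbitrarily close together, which the compactness of $\bar B(x_0, R)$ guarantees. There is no heavy machinery here: the statement is precisely the Bolzano--Weierstrass-type consequence of combining \ref{deftreelike}\ref{epqilon-cond} with properness, and I expect the author's proof to be a two- or three-line version of the contradiction sketched above.
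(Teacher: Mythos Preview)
Your argument is correct and follows essentially the same approach as the paper: both combine properness (hence compactness of the ball) with the separating property \itemref{deftreelike}{epqilon-cond} to rule out infinitely many walls. The only cosmetic difference is that the paper uses a finite cover by balls of radius $\wallepsilon/4$ (each meeting at most one wall) rather than your sequential-compactness contradiction, but the content is the same.
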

\begin{proof}
		Let $B$ be a ball of finite radius. As $\Sigma$ is proper, $B$ is compact and we are able to cover $B$ by finitely many balls of radius $\frac{\wallepsilon}{4}$. By the separating property \itemref{deftreelike}{epqilon-cond}, each ball of radius $\frac{\wallepsilon}{4}$ is intersected by at most one wall. Hence, the number of walls intersecting $B$ is less or equal to the number of the balls that are used for covering $B$.\end{proof}

 \begin{lemma}
	Let $T$ be a (possibly infinite) subtree of $\Tblock$ with vertex set $V$. 
	Then the set $\bigcup_{B \in V(T)}{B}$ is closed and convex.\label{convex}
\end{lemma}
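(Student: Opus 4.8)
The plan is to deduce both assertions from the ``itinerary'' behaviour of geodesics, which I would establish first: a geodesic leaving a block $B$ does so through a single wall of $B$ and into a single adjacent block, so the sequence of blocks met by a geodesic segment is a geodesic edge-path in $\Tblock$ and the segment crosses the intervening walls (consecutive walls being at distance $\ge\wallepsilon$, only finitely many are crossed on a segment of finite length). I will use this mainly in the equivalent form: \emph{each wall $\wall\in\wallset$ is a cut set of $\Space$, separating the union of the blocks in one component of $\Tblock$ minus the edge of $\wall$ from the union of the blocks in the other.} I expect this itinerary fact to be the one genuine obstacle: it is where properness of $\Space$ enters — the separating property \itemref{deftreelike}{epqilon-cond} forbids walls, hence blocks, from accumulating in a bounded region, which is exactly what forces a geodesic to exit a block unambiguously — and it is an analogue of standard facts for Croke--Kleiner type spaces \cite{CrokeKleiner,Mooney}. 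Everything afterwards is formal and uses only that $\Tblock$ is a tree together with the covering and separation conditions.

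\emph{Convexity.} I first reduce to finite $T$: for $x,y$ in blocks $B_x,B_y\in V(T)$, the $\Tblock$-geodesic from $B_x$ to $B_y$ lies in $T$ (a subtree of the tree $\Tblock$) and spans a finite subtree $T_0\subseteq T$, so once the finite case is known, $[x,y]\subseteq\bigcup_{B\in V(T_0)}B\subseteq\bigcup_{B\in V(T)}B$. So let $T$ be finite and induct on $\#V(T)$, the case $\#V(T)\le 1$ being immediate from convexity of blocks. Pick a leaf $B_0$ of $T$ with neighbour $B_1$ and wall $\wall_0=B_0\cap B_1$; by induction applied to the subtree $T\setminus B_0$, the set $Y'\coloneqq\bigcup_{B\in V(T)\setminus\{B_0\}}B$ is closed and convex, and since $\Tblock$ is a tree and $B_0$ a leaf, $B\cap B_0=\emptyset$ for $B\in V(T)\setminus\{B_0,B_1\}$, whence $Y'\cap B_0=\wall_0$. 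Let $H$ (resp. $H'$) be the union of the blocks in the component of $B_0$ (resp. $B_1$) of $\Tblock$ minus the edge $\{B_0,B_1\}$; then $H\cup H'=\Space$, $H\cap H'=\wall_0$, $B_0\subseteq H$ and $Y'\subseteq H'$. Given the cut-set property of $\wall_0$: for $x\in Y'$ and $y\in B_0$ we may assume neither lies in $\wall_0$ (otherwise $x,y$ lie together in $Y'$ or in $B_0$ and we are done), so the geodesic $[x,y]$ runs from $H'\setminus\wall_0$ to $H\setminus\wall_0$ and hence meets $\wall_0$ in a point $z$; then $[x,z]\subseteq Y'$ and $[z,y]\subseteq B_0$ by convexity of $Y'$ and $B_0$, so $[x,y]\subseteq Y'\cup B_0=\bigcup_{B\in V(T)}B$. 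Being a finite union of closed blocks, this union is also closed.

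\emph{Closedness for infinite $T$.} Here it suffices to prove
\[(\star)\qquad\text{every ball of finite radius in }\Space\text{ meets only finitely many blocks,}\]
for then, if $x_k\to x$ with $x_k\in B_k\in V(T)$, the $x_k$ eventually lie in a common ball, so by $(\star)$ the $B_k$ take only finitely many values, so some $B_*\in V(T)$ contains $x_k$ for infinitely many $k$, and $x\in B_*\subseteq\bigcup_{B\in V(T)}B$ since $B_*$ is closed. To prove $(\star)$, fix $p\in\Space$ and $R>0$; by \cref{ball} only finitely many walls meet the closed $R$-ball about $p$, and $p$ itself lies in only finitely many blocks (otherwise pairwise intersections would give infinitely many walls through $p$, contradicting \cref{ball}). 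If a block $B$ meets that $R$-ball, say at $q$, then $[p,q]$ stays within distance $R$ of $p$, and by the itinerary fact its block sequence is a $\Tblock$-path of length $\le R/\wallepsilon+1$ starting at a block containing $p$, whose edges are among the finitely many walls meeting the $R$-ball; since each such wall is the intersection of exactly two blocks, there are finitely many such paths and hence finitely many possibilities for the final block, and for $B$ itself (which equals that block or shares with it a wall meeting the $R$-ball). This gives $(\star)$, and convexity for infinite $T$ was already handled above by the directed-union reduction. The only step left to fill in is the itinerary fact of the first paragraph, which I have flagged as the crux.
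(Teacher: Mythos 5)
Your convexity argument is essentially the paper's: both proofs hinge on the fact that the wall attached to any edge of the tree-path between the two relevant blocks separates the two sides of $\Sigma$, so the connecting geodesic must cross it, and one then splits the geodesic at a point of that wall and finishes by induction plus convexity of the pieces (you induct on the number of vertices of a finite subtree and peel off a leaf; the paper inducts on the length of the tree-path, quoting \cref{passthrough} -- a cosmetic difference). The separation fact you defer as ``the crux'' is exactly Claim 1 inside the proof of \cref{passthrough}, which the paper establishes before \cref{convex}, so relying on it is legitimate; but your diagnosis of where it comes from is off. It does not use properness at all: it follows from the separating property \itemref{deftreelike}{epqilon-cond} alone, via openness of the sets $\open{B}$ (\cref{inneropen}) and the fact that the $\wallepsilon$-neighborhood of a wall $B_1\cap B_2$ stays inside $\open{B_1}\cup\open{B_2}\cup(B_1\cap B_2)$ (\cref{wallneighborhood}); likewise the finite-itinerary statement for geodesic segments (\cref{lem:defitin}) needs no properness, only the $\wallepsilon$-separation to bound the number of walls crossed.

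For closedness your route genuinely differs from the paper's, and this is where there is a real gap relative to the statement as given. The lemma is stated under the standing assumption that $\Sigma$ is complete; properness is an extra hypothesis that appears only in \cref{ball}. The paper proves closedness locally, by showing the complement of the union is open: if $x$ lies in no wall it lies in $\open{B}$ for a unique block $B\notin V(T)$, which is open and disjoint from the union; if $x$ lies in a wall $W=B_1\cap B_2$ with $B_1,B_2\notin V(T)$, then $U_{\wallepsilon}(x)\subseteq\open{B_1}\cup\open{B_2}\cup W$ by \cref{wallneighborhood}, again inside the complement (using \cref{welldef}). No properness and no counting of blocks in balls is needed. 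Your argument instead rests on $(\star)$, which you deduce from \cref{ball} and the itinerary fact, and hence requires $\Sigma$ proper; as written it therefore proves closedness only in the proper case. That suffices for \cref{thm2}, but not for the lemma as stated, which is also used in the visual-boundary setting where only completeness is assumed (e.g.\ in the proof of \cref{cor:diffitinerary}). The repair is cheap: replace the limit-point/counting argument by the local openness argument above, all of whose ingredients you already have.
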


	 We need the following lemmas for proving \cref{convex}.

\begin{lemma}
 The intersection of more than two blocks is empty. In particular, every two distinct walls are disjoint and every point in $\Sigma$ is either contaied in exactly one wall or in exactly one block but not both. \label{1}
\label{2}
\label{welldef}
\end{lemma}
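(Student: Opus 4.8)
\textbf{Proof proposal for \cref{welldef}.}

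The plan is to use the two defining conditions of a treelike block decomposition — that the adjacency graph $\Tblock$ is a simplicial tree, and the separating property \itemref{deftreelike}{epqilon-cond} — together with convexity of blocks. First I would prove that no point of $\Sigma$ lies in three distinct blocks. Suppose $x \in B_0 \cap B_1 \cap B_2$ for pairwise distinct blocks $B_0, B_1, B_2$. Then each pair $\{B_i, B_j\}$ has non-empty intersection (it contains $x$), so in $\Tblock$ the three vertices $B_0, B_1, B_2$ are pairwise adjacent, forming a triangle. A triangle is a cycle, contradicting the assumption that $\Tblock$ is a simplicial tree. Hence every point of $\Sigma$ lies in at most two blocks.

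Next I would derive the statement about walls. A wall is by definition an intersection $B_1 \cap B_2$ of two distinct blocks, so any point in a wall lies in at least two blocks, hence — by the previous paragraph — in exactly two. If $x$ were contained in two distinct walls $W \neq W'$, then $d(W, W') \le d(x,x) = 0$, contradicting the separating property, which requires $d(W,W') \ge \wallepsilon > 0$. So every point lies in at most one wall. Moreover, if $x$ lies in a wall $W = B_1 \cap B_2$, then $x$ lies in exactly the two blocks $B_1$ and $B_2$ and in no others (again by the first paragraph), so $x$ is \emph{not} contained in exactly one block; conversely, if $x$ lies in exactly one block, it lies in no intersection of two distinct blocks and hence in no wall. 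This gives the claimed dichotomy: every point of $\Sigma$ is either in exactly one wall (equivalently, in exactly two blocks) or in exactly one block and no wall.

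Finally, the ``in particular'' clause that two distinct walls are disjoint follows immediately: if $W \neq W'$ shared a point $x$, the separating property would again force $d(W,W') = 0$. I do not anticipate a real obstacle here; the only subtlety is to be careful that a ``wall'' is defined as a non-empty intersection of \emph{two distinct} blocks (so that the phrase ``exactly two blocks'' is the right count), and to invoke the tree condition in the sharp form ``no triangles'' rather than merely ``no embedded cycles of length $\ge 3$'' — but a triangle is a $3$-cycle, so this is automatic. The argument uses neither properness nor completeness of $\Sigma$, only the combinatorial tree condition and the metric separating property.
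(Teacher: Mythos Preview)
Your proof is correct and follows essentially the same route as the paper for the first claim (the triangle/clique argument in $\Tblock$) and for the final dichotomy. The one genuine difference is in how you establish that distinct walls are disjoint: you invoke the separating property \itemref{deftreelike}{epqilon-cond} (if $x \in W \cap W'$ then $d(W,W') = 0 < \wallepsilon$), whereas the paper derives this purely from the tree condition, arguing that a point in two distinct walls $W_1 = B_1 \cap B_2$ and $W_2 = B_3 \cap B_4$ would lie in at least three pairwise distinct blocks. Your argument is shorter and perfectly valid; the paper's version has the mild advantage of showing that disjointness of walls already follows from the adjacency graph being a tree, without appealing to the metric separating hypothesis.
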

\begin{proof}
 If $B_1, \dots,B_k \in \mathcal B$ with $B_1 \cap B_2\cap \dots \cap B_k \neq \emptyset$, then $\mathcal T$ contains a clique on $k$ vertices as subgraph. Since $\mathcal T$ is a tree, $k \le 2$.
 
 We show that distinct walls are disjoint: Indeed, let $W_1 = B_1\cap B_2$ and $W_2= B_3\cap B_4$ be two distinct walls where $B_1$, $B_2$, $B_3$, $B_4 \in \mathcal B$. As $W_1$ and $W_2$ are distinct, there exists $i \in \{1,2\}$ such that $B_i \notin\{B_3, B_4\}$. 
 By definition of walls, $B_1 \ne B_2$ and $B_3 \ne B_4$. As the intersection of more than two blocks is empty and $B_i \cap B_j \neq \emptyset$ where $j \in \{1,2\}$, $j \ne i$, the block $B_i$ does neither intersect $B_3$ nor $B_4$. Thus, $W_1 = B_1 \cap B_2$ does not intersect $W_2 = B_3 \cap B_4$. 

	It remains to show that every $x \in \Sigma$ is either contained in exactly one wall or in exactly one block but not both: Let $x \in \Sigma$. Suppose that $x$ is not contained in exactly one block. Then $x$ is contained in at least two blocks. The intersection of more than two blocks is empty. Thus, $x$ is contained in exactly two blocks. This means, that $x$ is contained in a wall. As distinct walls are disjoint, there exists exactly one wall containing $x$.\end{proof}

\begin{lemma}
	\label{wallconvex}	Each wall is closed and convex and 
 the map \begin{align*}
		\wallmap:~&E(\Tblock) \to W \\
		&\{B_1,B_2\} \mapsto B_1 \cap B_2
	\end{align*}
	is a bijection. \label{bijetion}
\end{lemma}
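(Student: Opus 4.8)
The plan is to treat the two assertions separately, reducing the only non-formal point to \cref{1}. For the convexity claim, observe that by the definition of $\wallset$ every wall $W$ is of the form $W = B_1 \cap B_2$ for two distinct blocks $B_1, B_2 \in \mathcal B$ with non-empty intersection. By \cref{def:blockdec} each block is a closed convex subset of $\Sigma$, and an intersection of closed convex subsets of a CAT(0) space is again closed and convex; hence $W$ is closed and convex. This step needs no further argument.

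Next I would check that $\wallmap$ is well defined and surjective. If $\{B_1, B_2\} \in E(\Tblock)$, then by the definition of the adjacency graph $B_1 \neq B_2$ and $B_1 \cap B_2 \neq \emptyset$, so $B_1 \cap B_2 \in \wallset$ and $\wallmap$ indeed takes values in $\wallset$. Conversely, any $W \in \wallset$ equals $B_1 \cap B_2$ for some distinct blocks with $B_1 \cap B_2 \neq \emptyset$, i.e. $\{B_1, B_2\} \in E(\Tblock)$ and $\wallmap(\{B_1, B_2\}) = W$; thus $\wallmap$ is onto.

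The only point that genuinely requires an argument is injectivity, and here I would invoke \cref{1}. Suppose $\wallmap(\{B_1, B_2\}) = \wallmap(\{B_3, B_4\})$, so that $B_1 \cap B_2 = B_3 \cap B_4$ and this common set $W$ is non-empty, but $\{B_1, B_2\} \neq \{B_3, B_4\}$ as unordered pairs. Since two distinct two-element sets cannot be nested, at least one of $B_1, B_2$ — say $B_i$ — does not belong to $\{B_3, B_4\}$. Choosing any $x \in W$, the point $x$ then lies in the three pairwise distinct blocks $B_i$, $B_3$, $B_4$, contradicting \cref{1}, which states that the intersection of more than two blocks is empty. Hence $\{B_1, B_2\} = \{B_3, B_4\}$, so $\wallmap$ is injective and therefore a bijection. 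I do not anticipate any real obstacle: the convexity part is formal, and the bijectivity part is a short consequence of \cref{1}, the only mild subtlety being the set-theoretic remark that distinct two-element sets are incomparable.
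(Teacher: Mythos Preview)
Your proof is correct and follows essentially the same approach as the paper: closedness and convexity come directly from blocks being closed and convex, surjectivity is immediate from the definitions, and injectivity is reduced to \cref{1} by observing that a point in the common wall would otherwise lie in three distinct blocks. The paper's injectivity argument is slightly more terse (it just says ``each point lies in at most two blocks, hence $\{B_1,B_2\}=\{B_3,B_4\}$''), but the content is identical.
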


\begin{proof}
The intersection of two closed convex sets is closed and convex. Every wall is the intersection of two blocks and by definition, each block is closed and convex. Thus, each wall is closed and convex.
	
	Next, we show that $\wallmap$ is surjective. Let $W$ be a wall. Then there are two blocks $B_1$, $B_2$, $B_1 \ne B_2$ such that $W = B_1 \cap B_2$ and $\mathcal T$ contains the edge $\{B_1,B_2\}$. By definition, $f(\{B_1,B_2\})= B_1 \cap B_2 = W$. Thus, $\wallmap$ is surjective. 
	
	It remains to prove that $\wallmap$ is injective. Let $e_1=\{B_1,B_2\}$ and $e_2=\{B_3,B_4\}$ be two edges in $\mathcal T$ such that $f(e_1) = f(e_2)$. Then $B_1 \cap B_2 \neq \emptyset$ and $B_3\cap B_4 \neq \emptyset$ and $B_1 \cap B_2= B_3 \cap B_4$. 
	By \cref{1}, each point in $\Sigma$ lies in at most two blocks. Hence, $\{B_1,B_2\}$ =$\{B_3,B_4\}$, i.e. $e_1=e_2$.\end{proof}

For avoiding the use of the term "boundary" in two different meanings, we define the \textit{topological frontier} of a set $S$ to be the closure of $S$ minus the interior of $S$. If $x$ is a point in $\Sigma$ and $\epsilon>0$, we denote by $U_\epsilon(x)$ the open $\epsilon$-neighborhood about $x$. If $B$ is a block in a block decomposition of a CAT(0) space with wall-set $\mathcal W$, then $\mathcal W_B$ denotes the set $\{W \in \mathcal W \mid W \subseteq B\}$ and $\open{B}\coloneqq B\setminus \mathcal W_B$.
	\begin{lemma}
 For every $B \in \mathcal B$, the set $\open{B}=B\setminus \mathcal W_B$ is open in $\Sigma$. \label{inneropen}
\end{lemma}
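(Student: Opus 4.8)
The plan is to show that every point $x \in \open{B}$ has an open neighborhood in $\Sigma$ that is contained in $B$, which is enough since $\open{B} \subseteq B$. Fix $x \in \open{B} = B \setminus \mathcal W_B$. By \cref{welldef}, $x$ lies either in exactly one block or in exactly one wall, but not both; since $x \in B$ and $x \notin \mathcal W_B$, the point $x$ lies in no wall at all, hence $x$ belongs to exactly one block, namely $B$, and to no other block $B' \in \mathcal B$.

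First I would use properness together with \cref{ball} to reduce to finitely many obstructions: pick any radius $r > 0$ and let $\bar U_r(x)$ be the closed ball of that radius. By \cref{ball}, only finitely many walls $W_1, \dots, W_n$ meet $\bar U_r(x)$; in particular only finitely many \emph{blocks} $B_1, \dots, B_m$ other than $B$ meet $\bar U_r(x)$, because each such block $B_i$ intersects $B$ in a wall (the adjacency graph is connected, or more directly: if $B_i \cap \bar U_r(x) \neq \emptyset$ then, choosing a geodesic from $x$ into $B_i$ and using convexity of blocks, the geodesic must cross the wall $B \cap B_i$ inside $\bar U_r(x)$, so that wall is one of the $W_j$, and by \cref{bijetion} the wall determines the pair of blocks). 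Now for each $i \in \{1, \dots, m\}$, the set $B_i$ is closed (blocks are closed convex) and $x \notin B_i$, so $\delta_i \coloneqq d(x, B_i) > 0$. Set $\delta \coloneqq \min(r, \delta_1, \dots, \delta_m) > 0$.

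Then I claim $U_\delta(x) \subseteq B$. Indeed, let $y \in U_\delta(x)$. Since $\delta \le r$, $y \in \bar U_r(x)$, so the only blocks that can contain $y$ are $B, B_1, \dots, B_m$ (as those are the only blocks meeting $\bar U_r(x)$, and every point lies in some block by the covering condition). But $d(x, y) < \delta \le \delta_i = d(x, B_i)$ rules out $y \in B_i$ for every $i$, so $y \in B$. Hence $U_\delta(x) \subseteq B$, and since also $x \in \open{B}$ was arbitrary, I would conclude that $\open{B}$ is a union of open balls, hence open in $\Sigma$.

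The main obstacle is the bookkeeping in the reduction step: one must be careful to argue that a block $B_i \neq B$ meeting a ball around $x \in \open B$ necessarily meets $B$ in one of the finitely many walls near $x$, so that \cref{ball} genuinely bounds the number of relevant blocks. This uses convexity of blocks (to force a connecting geodesic to pass through the intersection wall) together with \cref{welldef} and \cref{bijetion}; once this is in place, the estimate $\delta \le d(x, B_i)$ is routine. A minor point to note is the degenerate case where $\mathcal B$ has only one block (or $B$ meets no other block), in which $\open{B} = B = \Sigma$ is trivially open.
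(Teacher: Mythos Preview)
Your approach differs from the paper's and has two genuine gaps.

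First, you invoke \cref{ball}, which carries the extra hypothesis that $\Sigma$ is proper. In the paper, \cref{inneropen} is stated and proved under completeness only, so as written your argument establishes a strictly weaker statement than the lemma.

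Second, and more seriously, the reduction ``only finitely many blocks $B_i \neq B$ meet $\bar U_r(x)$'' is not correctly justified. You assert that a geodesic from $x$ into $B_i$ must cross ``the wall $B \cap B_i$'', but if $B$ and $B_i$ are not adjacent in $\mathcal T$ then $B \cap B_i = \emptyset$ and there is no such wall to cross. What one actually needs is that the geodesic crosses \emph{some} wall of $B$ inside $\bar U_r(x)$; equivalently, that the topological frontier of $B$ lies in $\mathcal W_B$. The paper proves exactly this in \cref{infimum2}, but that lemma \emph{uses} \cref{inneropen}, so appealing to it here would be circular. There is also a small slip in your opening sentence: showing $U_\delta(x) \subseteq B$ is not ``enough since $\open B \subseteq B$''; what your argument really yields is that $U_\delta(x)$ meets no block other than $B$, and it is \emph{that} which gives $U_\delta(x) \subseteq \open B$.

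By contrast, the paper's proof is a direct distance estimate that sidesteps all of this. It sets $\delta = d(x, \mathcal W_B)$ and argues that $\delta > 0$: if $\delta = 0$ there are walls arbitrarily close to $x$, but by the separating property \itemref{deftreelike}{epqilon-cond} any two distinct walls are at distance $\ge \wallepsilon$, so a \emph{single} wall $W$ meets every $U_\epsilon(x)$; since $W$ is closed this forces $x \in W$, a contradiction. No properness and no block-counting are needed.
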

\begin{proof}
	 Let $B$ be a block and $x \in \open{B}= B\setminus \mathcal W_B$. Let $\delta \coloneqq \inf_{t \in \mathbb R_{\ge 0}}\{d(x,y) \mid y \in \mathcal W_B\}$. It suffices to show that $\delta>0$, because then $U_{\frac{\delta}{2}}(x) \subseteq \open B$. Assume for a contradiction that $\delta = 0$. Then for each $\epsilon >0$ there exists a wall $W_\epsilon \in \mathcal{W}$ such that $W \cap U_\epsilon(x) \neq \emptyset$. By the separating property \itemref{deftreelike}{epqilon-cond}, $W_\epsilon= W_{\epsilon'}$ for all $\epsilon, \epsilon' \in (0,\wallepsilon)$. Thus, there exists a wall $W$ such that $U_\epsilon(x) \cap W\neq \emptyset$ for all $\epsilon>0$. Hence, $x$ is a limit point of $W$. By \cref{wallconvex}, $W$ is closed. So, $W$ contains all its limits points. In particular, $x \in W$. But then, $x \notin \open{B}$ -- a contradiction to the choice of $x$.\end{proof}
\begin{lemma}\label{infimum2}\label{infimum3}\label{curvebetweenwalls}
	Let $c:[a,b] \to \Sigma$ be a curve connecting two distinct blocks $B_1$ and $B_2$. Let
	 \begin{align*}
	t_0 \coloneqq \inf\{t \in [a,b]\mid \gamma(t)\notin B_1\} \text{ and } t_1 \coloneqq \inf\{t \in [a,b]\mid \gamma(t) \in B_2\}.
	\end{align*} Then $\gamma(t_0) \in \mathcal W_{B_1}$ and $\gamma(t_1) \in \mathcal W_{B_2}$. If $t_0 \notin B_1 \cap B_2$ or $t_2 \notin B_1 \cap B_2$, then there exists $t' \in (t_0, t_1)$ such that $\gamma(t')$ is not contained in a wall. 
\end{lemma}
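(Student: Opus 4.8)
The plan is to prove the three assertions in turn, using the wall-structure results above: distinct walls are disjoint and closed (\cref{1}, \cref{wallconvex}), the interiors $\open B$ are open in $\Sigma$ (\cref{inneropen}), and distinct walls are uniformly $\wallepsilon$-separated (\itemref{deftreelike}{epqilon-cond}). Write $\gamma$ for the curve, with $\gamma(a)\in B_1$ and $\gamma(b)\in B_2$. For $\gamma(t_0)\in\mathcal W_{B_1}$ I would pass to limits on both sides of $t_0$: by definition of the infimum $\gamma([a,t_0))\subseteq B_1$, so $\gamma(t_0)\in B_1$ since $B_1$ is closed; and unless $\gamma$ never leaves $B_1$ (a degenerate case in which $\gamma(b)\in B_1\cap B_2\in\mathcal W_{B_1}$), there is a sequence $t_n\downarrow t_0$ with $\gamma(t_n)\notin B_1$, hence $\gamma(t_n)\notin\open{B_1}$; as $\open{B_1}$ is open by \cref{inneropen}, its complement is closed and contains $\lim_n\gamma(t_n)=\gamma(t_0)$, so $\gamma(t_0)\in B_1\setminus\open{B_1}=\mathcal W_{B_1}$. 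The argument for $\gamma(t_1)\in\mathcal W_{B_2}$ is symmetric: $\gamma^{-1}(B_2)$ is closed and nonempty (it contains $b$), so $t_1\in\gamma^{-1}(B_2)$ and $\gamma(t_1)\in B_2$; and either $t_1=a$, so that $\gamma(t_1)\in B_1\cap B_2\in\mathcal W_{B_2}$, or the points $t_n:=t_1-\tfrac{1}{n}$ satisfy $\gamma(t_n)\notin B_2\supseteq\open{B_2}$ with $t_n\to t_1$, whence openness of $\open{B_2}$ forces $\gamma(t_1)\in B_2\setminus\open{B_2}=\mathcal W_{B_2}$.

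For the last assertion I would argue by contradiction. Suppose that no $t'$ strictly between $t_0$ and $t_1$ has $\gamma(t')$ outside every wall; combined with the first two assertions, $\gamma$ then maps the closed interval with endpoints $t_0,t_1$ into $\bigcup_{W\in\mathcal W}W$, and I let $K$ denote this compact, connected image. Since the walls are pairwise disjoint by \cref{1}, the nonempty sets $K\cap W$ partition $K$ into relatively closed subsets. The crucial observation is that they are also relatively open: for a fixed wall $W$, the uniform separation \itemref{deftreelike}{epqilon-cond} says every point of any other wall is at distance $\ge\wallepsilon$ from $W$, so the open $\tfrac{\wallepsilon}{2}$-neighbourhood $N$ of $W$ in $\Sigma$ meets no wall other than $W$; since $W\subseteq N$ and every point of $K$ lies in some wall, this gives $K\cap W=K\cap N$, which is open in $K$. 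Hence each $K\cap W$ is clopen in $K$, and connectedness forces $K\subseteq W$ for a single wall $W$; in particular $\gamma(t_0),\gamma(t_1)\in W$.

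It then remains to show $W=B_1\cap B_2$, which contradicts the hypothesis. Since $\gamma(t_0)\in W$, by \cref{1} $W$ is the only wall through $\gamma(t_0)$, so $\gamma(t_0)\in\mathcal W_{B_1}$ forces $W\subseteq B_1$. Write $W=B\cap B'$ with $B\neq B'$. If $B_1\notin\{B,B'\}$ then $B_1\cap B\supseteq W\neq\emptyset$ and $B_1\cap B'\supseteq W\neq\emptyset$, so $B_1$ is adjacent in $\Tblock$ to both $B$ and $B'$, which together with the edge $\{B,B'\}$ gives a triangle in the tree $\Tblock$ -- impossible; hence $B_1\in\{B,B'\}$. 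The same reasoning using $\gamma(t_1)\in\mathcal W_{B_2}$ gives $B_2\in\{B,B'\}$, and $B_1\neq B_2$ then yields $W=B_1\cap B_2$. So $\gamma(t_0),\gamma(t_1)\in B_1\cap B_2$, contradicting the assumption that $\gamma(t_0)$ and $\gamma(t_1)$ are not both in $B_1\cap B_2$; this contradiction produces the desired $t'$ with $\gamma(t')$ not contained in any wall. (The same computation, applied when $t_0=t_1$, shows the hypothesis already forces $t_0\neq t_1$; when the two are distinct one reads the interval between them in the obvious way.)

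The main obstacle is the passage, in the third assertion, from ``$K$ is connected and covered by pairwise disjoint closed walls'' to ``$K$ lies in a single wall'': disjointness and closedness are not enough by themselves, and one genuinely needs the uniform gap $\wallepsilon$ between distinct walls to upgrade the relatively closed partition pieces $K\cap W$ to relatively open ones. Everything else is routine, modulo careful bookkeeping with the conventions for $t_0,t_1$ and the degenerate boundary cases $t_0=a$, $t_1=a$, and ``$\gamma$ never leaves $B_1$''.
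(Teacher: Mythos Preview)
Your proof is correct. The first two assertions are argued essentially as in the paper: both rely on the closedness of blocks and the openness of $\open B$ (\cref{inneropen}); the paper phrases this via the topological frontier of $B_1$ being contained in $\mathcal W_{B_1}$, while you pass to limits directly, but the content is the same.

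For the last assertion you take a genuinely different route. The paper argues in one stroke that $\mathcal W_{B_1}\cap\mathcal W_{B_2}=\{B_1\cap B_2\}$, so under the hypothesis $\gamma(t_0)$ and $\gamma(t_1)$ already lie in two \emph{distinct} walls; since distinct walls are $\wallepsilon$-separated, the curve joining them must leave every wall somewhere in between. Your proof instead runs a contradiction: assuming $\gamma$ stays in walls on the interval, you use that the wall-traces on the connected image $K$ are clopen (disjointness plus the $\wallepsilon$-gap) to force $K$ into a single wall $W$, and then identify $W=B_1\cap B_2$ via the tree structure. Both arguments hinge on the same separating property~\itemref{deftreelike}{epqilon-cond}; the paper's version is quicker but leaves the step ``distance $\geq\wallepsilon$ between the two walls $\Rightarrow$ some intermediate point lies in no wall'' implicit, whereas your connectedness argument makes this fully explicit at the cost of a longer detour through identifying $W$.
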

\begin{proof}
 Suppose that we have already proven that $\gamma(t_0) \in \mathcal W_{B_1}$ and $\gamma(t_1) \in \mathcal W_{B_2}$. By \cref{2}, $\mathcal W_{B_1}\cap \mathcal W_{B_2}=\{B_1\cap B_2\}$ where $B_1 \cap B_2$ might be the empty set. If $t_0 \notin B_1 \cap B_2$ or $t_2 \notin B_1 \cap B_2$, then $\gamma(t_0)$ and $\gamma(t_1)$ are contained in two distinct walls. Then $\gamma([t_0,t_1])$ is a curve connecting two distinct walls. By the separating property \itemref{deftreelike}{epqilon-cond}, the distance of two distinct walls is at least $\wallepsilon$. Thus, $\gamma((t_0,t_1))$ contains a point $x$ that is not contained in a wall. 
 
It remains to prove that $\gamma(t_0)$ lies in $\mathcal W_{B_1}$ and that $\gamma(t_1)$ lies in $\mathcal W_{B_2}$. We focus on proving that $\gamma(t_0)$ lies in $\mathcal W_{B_1}$. Therefore, we observe that the topological frontier of $B_1$ is contained in $\mathcal W_{B_1}$. Indeed, Let $x$ be a point in the topological frontier of $B_1$. As $B_1$ is closed, $x \in B_1$. As the set $\open{B_1}$ is open by \cref{inneropen}, $\open{B_1}$ is contained in the interior of $B_1$. Thus, $x \notin \open{B_1}$. Hence, $x \in B_1 \setminus \open{B_1}$, i.e. $x \in \mathcal W_{B_1}$ and the topological frontier of $B_1$ is contained in $\mathcal W_{B_1}$. 

Since the topological frontier of $B_1$ is contained in $\mathcal W_{B_1}$, it suffices to prove that $\gamma(t_0)$ is contained in the topological frontier of $B_1$. First, we show that $\gamma(t_0)$ is a limit point of $B_1$. Indeed otherwise, there would exist $\epsilon>0$ such that $U_\epsilon(\gamma(t_0)) \cap B_1 = \emptyset$. As $\gamma$ starts in $B_1 \subseteq \Sigma \setminus U_\epsilon(\gamma(t_0))$, $\gamma$ connects a point outside of $U_\epsilon(\gamma(t_0))$ with $\gamma(t_0)$. Thus,
		there would exist $\epsilon'>0$ such that $\gamma((t_0-\epsilon', t_0]) \subseteq U_\epsilon(\gamma(t_0))$. Then $\gamma((t_0-\epsilon ', t_0]) \cap B_1= \emptyset$ -- a contradiction to the choice of $t_0$.
		
 It remains to prove that $\gamma(t_0)$ is not an interior point of $B_1$.	 
		By the choice of $t_0$, for each $\epsilon \in (0,b-t_0)$ there exists $t \in [t_0, t_0+\epsilon)$ so that $\gamma(t)\notin B_1$. Hence, for each $\epsilon \in (0,b-t_0)$, $U_\epsilon(\gamma(t_0))\nsubseteq B_1$ and $\gamma(t_0)$ is not an interior point of $B_1$.
		This completes the proof that $\gamma(t_0) \in \mathcal W_{B_1}$. A similar argumentation shows that $\gamma(t_1)$ lies in $\mathcal W_{B_2}$.\end{proof}

\begin{lemma}
	If $B_1$, $B_2\in \mathcal B$ such that $B_1 \cap B_2 \neq \emptyset$, then the open $\wallepsilon$-neighborhood of $B_1\cap B_2$ is contained in $B_1\cap B_2 \cup \open{B_1} \cup \open{B_2} $. \label{wallneighborhood}	
\end{lemma}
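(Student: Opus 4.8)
The plan is to prove the inclusion pointwise. Fix a point $x$ in the open $\wallepsilon$-neighbourhood of the wall $W\coloneqq B_1\cap B_2$, so that there is a point $w\in W$ with $d(x,w)<\wallepsilon$. I would split into the two cases supplied by \cref{welldef}: either $x$ lies in exactly one wall, or $x$ lies in no wall and hence in exactly one block $B$. In the first case, writing $W'$ for the wall containing $x$, we have $d(W,W')\le d(x,w)<\wallepsilon$, so the separating property \itemref{deftreelike}{epqilon-cond} forces $W'=W$ and therefore $x\in W$, which already lies in the claimed set. In the second case, since $x$ lies in no wall it lies in no wall contained in $B$, so $x\in B\setminus\mathcal W_B=\open{B}$; it then suffices to show $B\in\{B_1,B_2\}$, which yields $x\in\open{B_1}\cup\open{B_2}$.

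To establish $B\in\{B_1,B_2\}$ I would argue by contradiction, assuming $B$ is distinct from both $B_1$ and $B_2$. Let $\gamma$ be the geodesic from $w$ to $x$; it has length $<\wallepsilon$, so any two of its points lie within $\wallepsilon$ of each other. By \cref{welldef}, the point $w\in W$ lies in exactly the two blocks $B_1,B_2$, hence $w\notin B$; so $\gamma$ is a curve connecting the two distinct blocks $B_1$ and $B$, with one endpoint in each. Applying \cref{curvebetweenwalls} to $\gamma$ (with $B_1$ and $B$ playing the roles of its two blocks) and setting $t_1\coloneqq\inf\{t\mid\gamma(t)\in B\}$ --- an infimum that is attained since $B$ is closed --- we get $\gamma(t_1)\in\mathcal W_B$, i.e.\ $\gamma(t_1)$ lies in some wall $W_1\subseteq B$. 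Since $d(w,\gamma(t_1))<\wallepsilon$ with $w\in W$ and $\gamma(t_1)\in W_1$, the separating property once more forces $W_1=W=B_1\cap B_2$. But then $\gamma(t_1)\in B_1\cap B_2\cap B$, an intersection of three distinct blocks, which is empty by \cref{welldef} --- a contradiction. Hence $B\in\{B_1,B_2\}$ and the lemma follows.

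The main obstacle I anticipate is a matter of care rather than of idea: one must make sure $\gamma$ genuinely connects two distinct blocks in the precise sense demanded by \cref{curvebetweenwalls}, the crucial point being that the starting point $w$ of $\gamma$ does not already lie in $B$ --- this is exactly where the ``at most two blocks through each point'' part of \cref{welldef} is used --- and one must note that the infimum $t_1$ is realized, which relies on blocks being closed. Everything else is bookkeeping with the separating property \itemref{deftreelike}{epqilon-cond} together with the fact that a geodesic of length $<\wallepsilon$ has diameter $<\wallepsilon$.
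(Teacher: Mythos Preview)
Your proof is correct and follows essentially the same approach as the paper's: both argue by contradiction that a point of the neighbourhood lying in a third block $B\neq B_1,B_2$ would force the geodesic from $W$ to that point to meet a wall in $B$ (via \cref{curvebetweenwalls}), and then the separating property yields the contradiction. Your explicit case split on whether $x$ lies in a wall and your phrasing of the final contradiction via a three-block intersection (rather than directly via $W'\neq W$) are minor cosmetic variations on the same argument.
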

\begin{proof}
	Let $B_1$, $B_2 \in \mathcal B$ such that $B_1 \cap B_2 \neq \emptyset$. Let $x \in B_1 \cap B_2$. We have to show that $U_{\wallepsilon}(x) \subseteq B_1\cap B_2 \cup \open{B_1} \cup \open{B_2} $. Suppose that this would not be the case. Then there exists a block $B_3 \in \mathcal B \setminus \{B_1,B_2\}$ such that $B_3\cap U \neq \emptyset$. Let $y \in B_3\cap U \neq \emptyset$ and $\gamma$ be the geodesic segment connecting $x$ and $y$. By \cref{infimum3}, $\gamma$ has to pass a wall $W'$ that is contained in $B_3$. As $B_3 \notin \{B_1,B_2\}$, the wall $W'$ does not coincide with $B_1 \cap B_2$. By the separating property \itemref{deftreelike}{epqilon-cond}, $\inf_{y \in W'}{d(x,y)}\ge \wallepsilon$. But this is impossible because $y \in U_{\wallepsilon}(x) $.\end{proof}

\begin{lemma}
		\label{passthrough}
		If $P$ is a path in $\mathcal T$ linking two blocks $B$ and $B'$ and $W$ is a wall corresponding to an edge of $P$, then each curve in $\Sigma$ linking a point in $B$ with a point in $B'$ passes through $W$.
\end{lemma}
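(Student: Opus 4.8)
The plan is to reduce the statement to the case of a single edge and then invoke \cref{curvebetweenwalls}. First I would set up the tree-theoretic picture: since $\mathcal T$ is a simplicial tree, the path $P$ linking $B$ and $B'$ is the unique geodesic between these two vertices, and the wall $W$ corresponds to an edge $e=\{B_1,B_2\}$ of $P$. Removing $e$ from $\mathcal T$ disconnects the tree into two subtrees $T_1\ni B_1$ and $T_2\ni B_2$, with $B$ lying in one of them, say $B\in V(T_1)$, and $B'\in V(T_2)$. By \cref{convex}, the sets $\Sigma_1\coloneqq\bigcup_{B''\in V(T_1)}B''$ and $\Sigma_2\coloneqq\bigcup_{B''\in V(T_2)}B''$ are closed and convex; moreover $\Sigma=\Sigma_1\cup\Sigma_2$ and, crucially, $\Sigma_1\cap\Sigma_2=W$. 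This last equality is the main point to justify: any point in the intersection lies in some block of $T_1$ and some block of $T_2$, hence (by \cref{welldef}, since a point lies in at most two blocks) it lies in $B_1\cap B_2=W$; conversely $W\subseteq\Sigma_1\cap\Sigma_2$ since $B_1\in V(T_1)$ and $B_2\in V(T_2)$.

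Next I would take an arbitrary curve $\gamma:[a,b]\to\Sigma$ with $\gamma(a)\in B\subseteq\Sigma_1$ and $\gamma(b)\in B'\subseteq\Sigma_2$ and argue that it must meet $W$. Consider $t_0\coloneqq\sup\{t\in[a,b]\mid\gamma([a,t])\subseteq\Sigma_1\}$. By continuity of $\gamma$ and closedness of $\Sigma_1$, we have $\gamma(t_0)\in\Sigma_1$, and since $\gamma(b)\in\Sigma_2\setminus\Sigma_1$ (if $\gamma(b)\notin\Sigma_1$) we get $t_0<b$; in either case $\gamma(t_0)$ is a limit of points $\gamma(s)$ with $s>t_0$ not all in $\Sigma_1$, so $\gamma(t_0)$ is a limit point of $\Sigma_2$, hence lies in $\Sigma_2$ as $\Sigma_2$ is closed. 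Therefore $\gamma(t_0)\in\Sigma_1\cap\Sigma_2=W$, and the curve passes through $W$. (If it happens that $\gamma(b)\in\Sigma_1$ as well, then one instead uses that $B'\subseteq\Sigma_2$ forces $\gamma(b)\in\Sigma_1\cap\Sigma_2=W$ directly.)

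The only real obstacle is establishing $\Sigma_1\cap\Sigma_2=W$ cleanly; this rests on \cref{welldef} (no point lies in more than two blocks) together with the tree structure, so it is short once the right lemmas are cited. An alternative, slightly more hands-on route avoids naming $\Sigma_1,\Sigma_2$: induct on the length of $P$, using \cref{curvebetweenwalls} to show that a curve from $B_1$ to $B_2$ (adjacent blocks, $W=B_1\cap B_2$) must pass through $W$ — since $\gamma(t_0)\in\mathcal W_{B_1}$ and $\gamma(t_1)\in\mathcal W_{B_2}$ and these wall-sets meet only in $W$ — and then splicing the given curve at the point where it first leaves $B$ (resp.\ first reaches the next block along $P$). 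I expect the convex-decomposition argument above to be the cleanest to write up, so that is the one I would present.
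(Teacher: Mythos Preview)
Your argument is cleaner than the paper's in one respect: by working directly with the closed half-spaces $\Sigma_1$ and $\Sigma_2$, you avoid the paper's somewhat fiddly construction of auxiliary curves $c_1,c_2$ that walk from $B$ and $B'$ up to the blocks adjacent to $W$. The paper instead proves a local ``Claim~1'' (that $\Sigma\setminus W$ splits any pair of points in $B_1\setminus W$ and $B_2\setminus W$) by showing the two pieces $O_i=\Sigma_i\setminus W$ are open, and then reduces the general case to Claim~1 via those auxiliary curves. Your global decomposition $\Sigma=\Sigma_1\cup\Sigma_2$ with $\Sigma_1\cap\Sigma_2=W$ does both jobs at once.

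There is, however, a genuine circularity you need to address: you invoke \cref{convex} to get that $\Sigma_1$ and $\Sigma_2$ are closed, but in the paper \cref{convex} is proved \emph{using} \cref{passthrough} (look at the convexity half of that proof). So as written your argument is circular. The good news is that you only use the \emph{closedness} conclusion of \cref{convex}, never the convexity, and if you inspect the closedness half of the proof of \cref{convex} you will see it relies only on \cref{welldef}, \cref{inneropen}, and \cref{wallneighborhood}, not on \cref{passthrough}. So you can salvage your argument by either (a) proving closedness of $\Sigma_i$ directly from those three lemmas, or (b) dualising and showing that $O_i=\Sigma_i\setminus W$ is open (this is exactly what the paper does in its Claim~1, using the same ingredients). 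Either way, be explicit that you are not appealing to \cref{convex} as stated.

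One minor point: in your sup argument, be a little more careful when you say ``$\gamma(t_0)$ is a limit of points $\gamma(s)$ with $s>t_0$ not all in $\Sigma_1$.'' What you actually get from the definition of $t_0$ is that for every $t>t_0$ there exists $s\in(t_0,t]$ with $\gamma(s)\notin\Sigma_1$, hence $\gamma(s)\in\Sigma_2$; this gives a sequence in $\Sigma_2$ converging to $\gamma(t_0)$. Spell this out.
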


\begin{proof}
	We will prove the statement in two steps.
	\begin{enumerate}
		\item[Claim 1] If $B_1$ and $B_2$ are two distinct blocks with non-empty intersection $W$, then $\Sigma \setminus W$ decomposes so that each pair of points
		$x_1 \in B_1 \setminus W$, $x_2 \in B_2 \setminus W$ lie in different connected components of $\Sigma \setminus W$.
		 \label{cuttingcondition}
		\item[Proof:] Let $B_1$ and $B_2$ be two blocks with non-empty intersection $W$. Let $x_1 \in B_1 \setminus W$ and $x_2 \in B_2 \setminus W$. Let $e$ be the edge in $\mathcal T$ corresponding to $W$. If we delete the edge $e$, then $\mathcal T$ decomposes into two trees $T_1$ and $T_2$. Let $T_1$ be the tree containing $B_1$ and $T_2$ be the tree containing $B_2$. Let $O_1 \coloneqq \bigcup_{B \in \mathcal V(T_1)}{B} \setminus W$ and $O_2 \coloneqq\bigcup_{B \in \mathcal V(T_1)}{B} \setminus W$. By \cref{welldef}, each point in $\Sigma$ is contained in exactly one block or exactly one wall. Hence, $O_1 \cap O_2 = \emptyset$ and 
		$\Sigma \setminus W = O_1 \overset{.}{\cup} O_2$ such that $x_1 \in O_1$ and $x_2 \in O_2$. 	If $O_1$ and $O_2$ are open, this implies that $x_1$ and $x_2$ lie in distinct connected components of $\Sigma \setminus W$. Thus, it remains to show that $O_1$ and $O_2$ are open.
		
		By symmetry reasons it suffices to prove that $O_1$ is open. Let $x \in O_1$. If there exists $B \in V(T_1)$ such that $x \in \open B$, then there exists $\epsilon>0$ such that $U_\epsilon(x) \in \open{B}$ as $\open{B}$ is open by \cref{inneropen} and $x$ is an interior point of $O_1$.
		
		It remains to consider the case that there is no block $B$ in $V(T_1)$ such that $x \in B$. In that case, there exists a unique wall $\hat W\neq W$ corresponding to an edge in $T_1$ that contains $x$ by \cref{welldef}. Let $ C_1$, $C_2 \in V(T_1)$ such that $\hat W = C_1 \cap C_2$. By \cref{wallneighborhood}, $U_{\wallepsilon}(x) \subseteq \open{C_1} \cup \open{C_2} \cup (C_1 \cap C_2)$. As $C_1$,$C_2 \in V(T_1)$, the union $\open{C_1} \cup \open{C_2} \cup (C_1 \cap C_2)$ is contained in $O_1$. Hence, $x$ is an interior point of $O_1$. 	
	\end{enumerate}

 Now, we prove the lemma. Let $P$ be a path in $\mathcal T$ linking two blocks and $c:[a,b] \to \Sigma$ a curve linking two points in these two blocks. We have to show that $c$ passes through every wall that corresponds to an edge of $P$. 
Assume for a contradiction that there exists a wall $W$ corresponding to an edge in $P$ such that $c([a,b])\cap W = \emptyset$. Let $B$, $B'\in \mathcal B$ such that $W = B \cap B'$.
 It suffices to find two curves $c_1$ and $c_2$ that don't intersect $W$ so that $c_1$ links a point in $B \setminus W$ with $c(a)$ and $c_2$ links $c(b)$ with a point in $B' \setminus W$. Then the concatenation of $c_1$, $c$ and $c_2$ is a curve in $\Sigma \setminus W$ that connects a point in $B \setminus W$ with a pint in $B' \setminus W$. That contradicts Claim 1. 
 
 It remains to find the curves $c_1$ and $c_2$. Let $B_1 = B$ and $P_1=(B_1,W_1,B_2,W_2,\dots,B_k)$ be the unique geodesic path in $\mathcal T$ connecting $B$ with the first block of $P$. If $P_1$ consists of a single vertex, then $B_k = B$ and $c(a) \in B$. Because we assume that $c$ does not intersect $W$, $c(a) \in B \setminus W$. Then the trivial constant curve with value $c(a)$ is the curve $c_1$ we are looking for. 
 
 It remains to study the case where $P_1$ has length at least one. We assume Without loss of generality that $P_1$ does not contain the vertex $B'$ (otherwise, we switch the roles of $B$ and $B'$). Let $x \in W_1$. As distinct walls are disjoint by \cref{2}, $x \in B \setminus W$. Let $w_1 \coloneqq x$. Let $w_j$ be an arbitrarily chosen point in $W_j$ for $j \in \{2,\dots,k-1\}$. Let $w_k\coloneqq c(a)$. Let	 $\hat c_j$ be the geodesic segment connecting $w_j$ with $w_{j+1}$ for $j \in \{0,\dots,k-1\}$. Finally, let $c_1$ be the concatenation of $\hat c_0$, $\hat c_2$, $\dots$, $\hat c_{k-1}$. 
 
 We have to prove that $c_1$ does not intersect $W$. Assume for a contradiction that there exists $j \in \{0,\dots,k-1\}$ such that $W \cap \hat c_j \neq \emptyset$. By the choice of $\{w_j\}_{j \in \{1,\dots,k\}}$, every pair of two consecutive points $w_j$ and $w_{j+1}$ are contained in $B_{j+1}$ for all $j \in \{0,\dots,k-1\}$. As each block is convex, $\hat c_j \subseteq B_{j+1}$ for all $j \in \{0,\dots,k-1\}$. Thus, if $W \cap \hat c_j \neq \emptyset$, then $W \cap B_{j+1} \neq \emptyset$. Recall that $W =B'\cap B_1$. This implies that $B'\cap B_1 \cap B_{j+1}\neq \emptyset$. As $B'$ is not contained in $P_1$, and as $P_1$ is a geodesic path, the three blocks $B'$, $B_1$ and $B_{j+1}$ are distinct. That contradicts \cref{1}. Thus, $c_1$ does not intersect $W$.
 The curve $c_2$ can be defined analogously.\end{proof}

 \begin{proof}[Proof of \cref{convex}]
 	At first, we show that $M\coloneqq\bigcup_{B \in V(T)}{B}$ is convex. Let $x,y \in \bigcup_{B \in V(T)}{B}$. As $\Sigma$ is CAT(0), there exists a unique geodesic segment $\gamma: [a,b]\to \Sigma$ connecting $x$ and $y$. We have to show that $\gamma([a,b])\subseteq M$. Let $B$ and $B' $ be two blocks in $ V(T)$ containing $x$ and $y$ respectively. \\
 	As $T$ is connected, there exists a geodesic path $P$ connecting $B$ and $B'$. As $T$ is a tree, this path is unique. As $T$ is a subtree of $\mathcal T$, $P$ is a path in $\mathcal T$. We show the statement by induction on the length of $P$. 
 	
 	If $P$ consists of a single vertex, $x$ and $y$ lie in a common block $B = B' \subseteq V(T)$. As each block is convex, $\gamma \subseteq B \subseteq M$. 
 	Now suppose that the claim is true if $P$ has length $k-1$, $k\ge 1$. \\
 	Let $P$ be a path of length $k$. Let $W$ be a wall corresponding to an edge in $P$. By \cref{passthrough}, there exists $t\in [a,b]$ such that $\gamma(t) \in W$. Let $B_1$, $B_2 \in \mathcal B$ such that $W = B_1 \cap B_2$. Let $B'' \in \{B_1,B_2\}$ such that the geodesic path $P'$ connecting $B$ with $B''$ does not contain both $B_1$ and $B_2$. As geodesic paths in trees are unique, $P'$ is a subpath of $P$ that is shorter than $P$. By induction hypothesis, $\gamma([a,t]) \subseteq M$. Analogously, $\gamma([t,b]) \subseteq M$. Thus, $\gamma \subseteq \bigcup_{B \in V(P)}{B} \subseteq M$. 
 	
 	It remains to show that $M$ is closed, i.e. we have to show that $\Sigma \setminus M$ is open. 
 	Let $x \in \Sigma \setminus M$. We have to prove that $x$ is an interior point of $\Sigma \setminus M$. \\
 	First suppose that $x$ is not contained in a wall. Then there exists a unique block $B$ containing $x$ by \cref{welldef}. As $x \in \Sigma \setminus M$, $B$ is not contained in $V(T)$. As $x$ is not contained in a wall, $x \in \open{B}$. By \cref{inneropen}, $\open{B}$ is open. Thus, there exists an open neighborhood $U$ about $x$ that is contained in $\open B$. By definition, $\open B$ consists of points that are not contained in any other block than $B$. Thus, $U \subseteq \open B \subseteq \Sigma \setminus M$ and $x$ is an interior point of $\Sigma \setminus M$. 
 	
 	Suppose that $x$ is contained in a wall $W$. It remains to show that $x$ is an interior point. Let $B_1$, $B_2 \in \mathcal B$ such that $W = B_1 \cap B_2$. Since $x \in \Sigma \setminus M$ and $M=\bigcup_{B \in V(T)}{B}$, the blocks $B_1$ and $B_2$ are not contained in $V(T)$. By \cref{1}, each point in $\Sigma$ is contained in at most two blocks. Thus, no point in $W$ is contained in a block $B \in \mathcal B \setminus\{B_1,B_2\}$ and $W \subseteq \Sigma \setminus M$. By \cref{welldef}, $(\open{B_1}\cup\open{B_2}) \cap B= \emptyset$ for all $B \in V(T)$. Hence, $\open{B_1} \cup \open{B_2}\cup W \subseteq \Sigma \setminus M$.
 	By \cref{wallneighborhood}, $U_{\wallepsilon}(x) \subseteq \open{B_1} \cup \open{B_2}\cup W$. 
 	Thus, $x$ is an interior point of $\Sigma \setminus M$.\end{proof}
 
 \begin{remark}
 It is possible to prove the lemmas in this section for geodesic metric spaces $\Sigma$ without assuming that $\Sigma$ is CAT(0). Furthermore, one can omit the assumption that blocks are convex except for the convexity-statement in \cref{wallconvex} and \cref{convex}.
 \end{remark}

\begin{remark}[Criterion for treelike block decompositions]
\label{criteriontreelike}
If a CAT(0) space satisfies the following three conditions introduced by Mooney \cite{Mooney}, then $\Sigma$ is a CAT(0) space with a treelike block decomposition as in \cref{def:blockdec}. 
\newpage
\begin{enumerate}
	\item $\Sigma= \bigcup_{B \in \mathcal B} B$ (covering condition);
	\item every block has a parity $(+)$ or $(-)$ such that two blocks intersect only if they have opposite parity (parity condition);
	\item there is an $\epsilon >0$ such that two blocks intersect if and only if their $\epsilon$-neighborhoods intersect ($\epsilon$-condition).
\end{enumerate}
 Mooney \cite{Mooney} argues that the adjacency graph of $\mathcal B$ is a tree $\mathcal T$. It remains to show that there exists $\wallepsilon > 0$ such that every two distinct walls have distance at least $\wallepsilon$. Let $\wallepsilon \coloneqq 2\epsilon$ and $W_1$, $W_2 \in \mathcal{W}$, $W_1 \neq W_2$. Let $x \in W_1$, $y \in W_2$. We show that $d(x,y)\ge \wallepsilon = 2\epsilon$. Indeed otherwise, $d(x,y)<2\epsilon$. Since $x$ and $y$ are contained in walls, there are two blocks $B_x^1$ and $B_x^2$ containing $x$ such that $W_1 =B_x^1 \cap B_x^2$ and two blocks $B_y^1$ and $B_y^2$ containing $y$ such that $W_2=B_y^1 \cap B_y^2$. By assumption, $d(x,y)<2\epsilon$. Hence, for each $B_x \in \{B_x^1, B_x^2\}$ and for each $B_y \in \{B_y^1, B_y^2\}$, the intersection $U_\epsilon(B_x)\cap U_\epsilon(B_y)$ is not empty. Thus, the $\epsilon$-condition above implies that $W_1'\coloneqq B_x^1\cap B_y^1 \neq \emptyset$ and that $W_2'\coloneqq B_x^2 \cap B_y^2\neq \emptyset$. Hence, $C\coloneqq(B_x^1, W_1,B_x^2,W_1',B_y^1, W_2, B_y^2, W_2',B_x^2, B_x^2)$ is a closed path in $\mathcal T$. As $W_1 \neq W_2$, the set $\{B_i \mid i \in \{1,2,3,4\}\}$ contains at least three elements. Thus, the underlying graph of $C$ contains a cycle of length at least $3$. This is impossible as $\mathcal T$ is a tree. \end{remark}

\subsection{Itineraries}
\label{sec:itin}

Let $\Sigma$ be a CAT(0) space with treelike block decomposition $\mathcal B$. Let $\mathcal T$ be the adjacency graph of $\mathcal B$. 
From now on, we identify the set of walls $\mathcal W$ with the edges of $\Tblock$. this is possible because of \cref{bijetion}.

\begin{lemma} 
	\label{lem:defitin} Let $\gamma:[a,b] \to \Sigma$ be a geodesic segment that does not start in a wall. Let $I(\gamma)=(B_1,W_1,B_2,\dots,W_{k-1},B_{k})$ be the shortest geodesic path in $\mathcal T$ linking the unique block containing $\gamma(a)$ with one of the (at most two) blocks containing $\gamma(b)$.	
	 Then the times $t_1 \coloneqq a$, $t_i\coloneqq \inf\{t \in [a,b] \mid \gamma(t) \notin B_{i-1}\}$, $i \in \{2,\dots,k\}$ and $t_{k+1} \coloneqq b$ satisfy the following four properties:
	\begin{enumerate}
		\item $\gamma(t_1) \in \open{B_1}$ and $\gamma(t_i) \in W_{i-1} = B_{i-1} \cap B_{i}$ for all $i \in \{2,\dots,k\}$;
		 \item $t_{i+1} - t_{i} \ge \wallepsilon$ for all $i \in \{2,\dots,k-1\}$;
		\item $\gamma([t_{i-1},t_{i}]) \subseteq B_{i-1}$ for all $i \in \{2,\dots,k+1\}$; 
		\item $\gamma([t_{i-1},t_{i}]) \cap \open B_{i-1} \neq \emptyset$ for all $i \in \{2,\dots,k+1\}$.	
	\end{enumerate}	
\end{lemma}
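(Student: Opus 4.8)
\section*{Proof proposal for \cref{lem:defitin}}

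The plan is to show that $\gamma$ passes monotonically through the blocks $B_1,\dots,B_k$ of its itinerary without ever wandering into a block that does not lie on $I(\gamma)$, and then to read off the four properties from the shape of the preimages $J_i\coloneqq\gamma^{-1}(B_i)$. Since blocks and walls are closed and convex (\cref{wallconvex}) and $\gamma$ is a geodesic in a CAT(0) (hence uniquely geodesic) space, each $J_i$ and each $\gamma^{-1}(W_i)$ is a closed subinterval of $[a,b]$. By \cref{passthrough}, the curve $\gamma$ joins a point of $B_1$ to a point of $B_k$, so it crosses every wall corresponding to an edge of $I(\gamma)$; hence $J_i\cap J_{i+1}=\gamma^{-1}(W_i)\neq\emptyset$ for $i\in\{1,\dots,k-1\}$. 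On the other hand, for $|i-j|\ge 2$ the blocks $B_i$, $B_j$ are non-adjacent vertices of the tree $\Tblock$, so $B_i\cap B_j=\emptyset$ and $J_i\cap J_j=\emptyset$.

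The crucial step is to exclude excursions: \emph{if $\gamma(t)$ lies on no wall, then $\gamma(t)\in\open{B_i}$ for some $i\in\{1,\dots,k\}$}. Indeed, otherwise $\gamma(t)$ lies in a unique block $C\notin\{B_1,\dots,B_k\}$ by \cref{welldef}. Let $W'$ be the first wall on the tree-geodesic from $B_1$ to $C$ that is not an edge of $I(\gamma)$; since $\Tblock$ is a tree and $I(\gamma)$ avoids $W'$ while $B_1,B_k$ lie on $I(\gamma)$, the tree-geodesic from $C$ to $B_k$ also passes through $W'$. Applying \cref{passthrough} to $\gamma|_{[a,t]}$ and to $\gamma|_{[t,b]}$ yields times $s_1\le t\le s_2$ with $\gamma(s_1),\gamma(s_2)\in W'$, and convexity of $W'$ forces $\gamma(t)\in W'$ --- contradicting the choice of $t$. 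A variant of this argument shows that if $\gamma(t)$ lies on a wall $W$, then $W$ lies inside one of $B_1,\dots,B_k$ (using that distinct walls are disjoint, \cref{welldef}), so $t\in J_i$ for some $i$. Altogether $[a,b]=\bigcup_{i=1}^{k}J_i$. Finally, $\gamma(a)$ lies on no wall, hence in the single block $B_1$, so $a$ lies only in $J_1$; and by minimality of $I(\gamma)$, no $B_i$ with $i<k$ contains $\gamma(b)$ (otherwise the truncation $(B_1,\dots,B_i)$ would be a shorter path to a block containing $\gamma(b)$), so $b$ lies only in $J_k$.

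Now the $J_i$ are finitely many closed intervals covering $[a,b]$, with consecutive ones meeting, non-consecutive ones disjoint, and the left (resp.\ right) endpoint of $[a,b]$ lying in $J_1$ (resp.\ $J_k$) only. An elementary bookkeeping argument (inducting on $i$, locating $\sup J_{i-1}$ inside $J_i$ via the covering property and the disjointness pattern) then shows that they are linearly ordered, that $[t_i,t_{i+1}]\subseteq J_i$ for the times $t_i$ of the lemma, and that $\gamma(t_i)$ --- being the point at which $\gamma$ leaves $B_{i-1}$ while already lying in $B_i$ --- belongs to $B_{i-1}\cap B_i=W_{i-1}$ for $i\in\{2,\dots,k\}$. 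Together with $\gamma(t_1)=\gamma(a)\in\open{B_1}$ (as $\gamma(a)$ lies on no wall), this gives (1), and $[t_i,t_{i+1}]\subseteq J_i$ gives (3). For (2), the points $\gamma(t_i)\in W_{i-1}$ and $\gamma(t_{i+1})\in W_i$ lie on distinct walls (distinct edges of a geodesic in the tree $\Tblock$), so by the separating property \itemref{deftreelike}{epqilon-cond} we get $t_{i+1}-t_i=d(\gamma(t_i),\gamma(t_{i+1}))\ge d(W_{i-1},W_i)\ge\wallepsilon$. For (4), the subarc $\gamma|_{[t_{i-1},t_i]}$ lies in $B_{i-1}$ by (3) and joins the distinct walls $W_{i-2}$ and $W_{i-1}$ when $3\le i\le k$ (for $i=2$ it starts at $\gamma(a)\in\open{B_1}$; for $i=k+1$ it ends at $\gamma(b)$, which lies in $\open{B_k}$ or on a wall different from $W_{k-1}$, again by minimality of $I(\gamma)$); the argument inside the proof of \cref{curvebetweenwalls} (a curve joining two distinct walls contains a point lying on no wall) then produces a point of this subarc lying on no wall, which by (3) lies in $\open{B_{i-1}}$.

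I expect the main obstacle to be the bookkeeping in the third paragraph and its interaction with the terminal block: one must rigorously rule out that $\gamma$ re-enters a block it has already left or skips over the interior $\open{B_i}$ of an intermediate block, and one must handle with care the case where $\gamma(b)$ sits on a wall contained in $B_k$ --- this is exactly the place where the minimality of the itinerary, not merely \cref{passthrough}, is needed. The remaining steps are routine combinations of \cref{passthrough}, \cref{welldef}, convexity of blocks and walls, and the separating property.
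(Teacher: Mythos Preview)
Your proof is correct and takes a genuinely different route from the paper's. The paper proceeds \emph{constructively}: it builds a path in $\mathcal T$ by an algorithm --- start at $B_1$, and at each step let $t_{i+1}$ be the first exit time of $\gamma$ from the current block $B_i$, read off the wall it lands on via \cref{infimum2}, and pass to the adjacent block $B_{i+1}$ --- then checks that the algorithm terminates (via the separating property), that the resulting path is geodesic in $\mathcal T$, and that it is the shortest path ending at a block containing $\gamma(b)$; properties (1)--(4) then fall out of the construction. You instead take $I(\gamma)$ as given by the statement and argue \emph{analytically} via the preimages $J_i=\gamma^{-1}(B_i)$: your ``no-excursion'' step (every $\gamma(t)$ lies in some $B_i$) is the conceptual heart, proved by a clean sandwiching argument with \cref{passthrough} and convexity of walls, and the rest is interval combinatorics on $[a,b]$. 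The paper's approach is slightly shorter and sidesteps the interval bookkeeping you rightly flag as the delicate point; your approach isolates as a standalone fact that $\gamma$ never leaves $\bigcup_i B_i$, which the paper only establishes implicitly, and makes the role of minimality of $I(\gamma)$ at the terminal block more transparent.

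One small caveat that affects both arguments: the formula $t_i=\inf\{t\in[a,b]\mid\gamma(t)\notin B_{i-1}\}$, read literally, gives $t_i=a$ for $i\ge 3$ since $\gamma(a)\in\open{B_1}$ lies in no other block. What is meant --- and what both your bookkeeping and the paper's algorithm actually use --- is the recursive version $t_i=\inf\{t\ge t_{i-1}\mid\gamma(t)\notin B_{i-1}\}$. In your framework this is just $t_i=\sup J_{i-1}$, and then $[t_i,t_{i+1}]\subseteq J_i$ is exactly $[\sup J_{i-1},\sup J_i]\subseteq J_i$, which follows from $J_{i-1}\cap J_i\ne\emptyset$ once the linear ordering of the $J_i$ is in place.
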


\begin{proof}
	We proof the statement by constructing the path $I(\gamma)$ as follows.
	\begin{itemize}
		\item \textbf{Step 1:} As $\gamma$ does not start in a wall, there exists a unique block $B_1$ such that $\gamma(a) \in \open{B_1}$ by \cref{welldef}. Let $P_1$ be the path consisting of the vertex $B_1$. 
		\item	\textbf{$i+1$-th step:} Let $P=(B_1,W_1,B_2,\dots,W_{i-1},B_{i})$ be the path that we obtain after $i$ steps. 
		\begin{enumerate}
			 	\item[Case 1:] If $\gamma$ ends in $B_i$, $\gamma|_{[t_i,b]}$ is contained in $B_i$ as each block is convex. In this case, let $I(\gamma)\coloneqq P_i$. 
			\item[Case 2:] Suppose that $\gamma$ does not end in $B_i$. Let $t_{i+1} \coloneqq\inf\{t \in [a,b] \mid \gamma(t)\notin B_i\}$. By \cref{infimum2}, $\gamma(t_{i+1})$ is contained in a wall $W_{i+1}$ that intersects $B_i$ non-trivially. We observe that $W_{i+1} \neq W_{i}$ for all $i\ge 2$. Indeed, if $W_i$ and $W_{i+1}$ would coincide, then $\gamma([t_i,t_{i+1}]) \subseteq W_i$ as $W_i$ is convex. Then $\gamma[t_i, t_{i+1}] \subseteq B_{i-1}$ -- a contradiction to the choice of $t_i$. Thus, $W_{i+1} \neq W_{i}$.
			
			 Let $B_{i+1}$ be the block so that $W_{i+1} = B_i \cap B_{i+1}$. By \cref{welldef}, the block $B_{i+1}$ is uniquely determined. Let $P_{i+1} = (B_1,W_1,\dots B_i, W_{i+1}, B_{i+1})$.
		\end{enumerate}		
	\end{itemize}
After the $i^{\text{th}}$ step, $\gamma(t_j)$ and $\gamma(t_{j+1})$ lie in different walls for all $j \in \{2,\dots,i-1\}$. Thus, the separating property \itemref{deftreelike}{epqilon-cond} implies that $t_{j+1} - t_{j} \ge \wallepsilon$ for all $j \in \{2,\dots,i-1\}$ and the algorithm terminates after at most $\frac{b-a}{\wallepsilon} + 2 $ steps.

 Let $P$ be the path we obtain after the algorithm terminates. First, we will show that $P = I(\gamma)$, i.e. that $P$ is the shortest geodesic path linking the unique block containing $\gamma(a)$ with a block containing $\gamma(b)$. At first, we show by induction that $P$ is a geodesic path:
The path $P_1$ is a geodesic path as it consists of a single vertex. We might assume that $P_i$ is a geodesic path and have to show that $P_{i+1}$ is a geodesic path. If we are in case 1, $I(\gamma) = P_i$ and we are done. Otherwise, $P_{i+1}=(B_1,\dots,B_{i+1})$. By induction hypothesis, the path $P_i = (B_1,\dots,B_{i})$ is a geodesic path. Hence, it suffices to show that $B_{i+1} \neq B_j$ for all $j \in \{1,\dots,i\}$. This is the case as otherwise, there exists $j \in \{1,\dots,i\}$ such that $t_{i+1}>t_j$ and $\gamma(t_{i+1})\in B_{j}$. As $B_j$ is convex, that contradicts the choice of $t_{j}$. We conclude that $P$ is a geodesic path. 

By construction, the path $P$ stars in the unique block containing $\gamma(a)$. As $P$ is a geodesic path and since the algorithm above terminates in case (1), $P$ is the shortest geodesic path linking the unique block containing $\gamma(a)$ with a block containing $\gamma(b)$. Thus $P = I(\gamma)$.

It remains to show that $I(\gamma)$ satisfies the four conditions in the claim. 
The construction of $P=I(\gamma)$ directly implies $(1)$ and $(2)$. As each block is convex, $(3)$ is satisfied. By \cref{curvebetweenwalls}, $\gamma([t_{i-1},t_{i}]) \cap \open B_{i-1} \neq \emptyset$ for all $i \in \{2,\dots,k+1\}$.	 This implies $(4)$.\end{proof}

\begin{defn}[Itineraries of geodesic segments]
	\label{lem:defitin1} If $\gamma:[a,b] \to \Sigma$ is a geodesic segment as in \cref{lem:defitin}, the geodesic path $I(\gamma)$ in $\mathcal T$ is called \textit{itinerary of $\gamma$}.
\end{defn}

	The following example shows that a geodesic segment $\gamma$ might intersect a block $B$ that does not occur in $I(\gamma)$.
		In such a case, $\gamma \cap B$ is contained a wall $W$ that does not appear in $I(\gamma)$. In the tree $\mathcal T$, $W$ is an edge that links $B$ to a block that occurs in $I(\gamma)$.

\begin{example}[See \cref{fig:treeex}]
Let $\bar B \coloneqq \{(x,0)\subseteq \R^2 \mid x \in \R\}$. For each $i \in \mathbb Z$ let $B_i \coloneqq \{(i,x) \subseteq \R^2 \mid x \in \R\}$. Let $\Sigma\coloneqq \bar B \cup \bigcup_{i \in \mathbb Z}B_i$. Then $\bar B \cup \bigcup_{i \in \mathbb Z}B_i$ is a treelike block decomposition of $\Sigma$. The set of walls is given by $\mathcal W \coloneqq \{ W_i \mid i \in \Z\}$ where $W_i ={\{(i,0)\} }$. Let $x_1 \coloneqq (0,1)$, $x_2 \coloneqq (4,0)$ and $\gamma$ be the unique geodesic segment connecting $x_1$ and $x_2$. Then $I(\gamma)=(B_0, W_0, \bar B)$ even though $\gamma$ passes through the walls $W_1, W_2 $ and $W_3$ and ends in the wall $W_4$. 
\begin{figure}[h]
	\centering
	\includegraphics[]{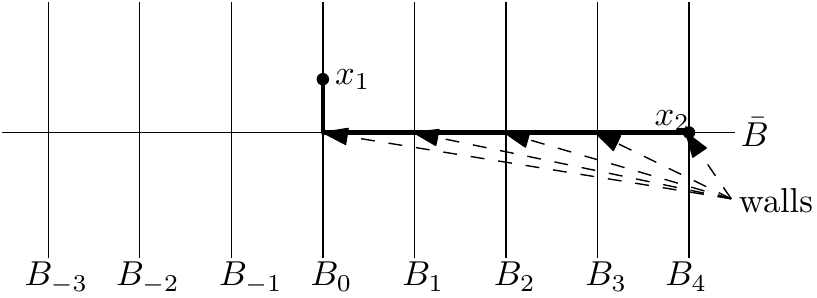}
	\caption{A CAT(0) space with a bock decomposition. Each line is a block. The intersection points of every two lines is a wall. }
	\label{fig:treeex}
\end{figure}
\end{example}

\begin{remark}
	\label{rem}
	Let $\gamma:[a,b]\to \Sigma$ be a geodesic segment as in \cref{lem:defitin}.
	\begin{enumerate}
		\item 	Each itinerary $I(\gamma)$ has an underlying graph $\bar I(\gamma)$ that consists of the vertices and (unoriented) edges in $I(\gamma)$. 
		\item 		If $\gamma$ and $\gamma'$ are two geodesic segments starting at the same point and $\gamma' \subseteq \gamma$, then $I(\gamma')$ is a subpath of $I(\gamma)$. 
	\end{enumerate}

\end{remark}

A \textit{geodesic ray} $\gamma: [0, \infty) \to \Sigma$ is an isometric embedding of $[0,\infty)$ into $\Sigma$.\newpage
\begin{defn}[Itineraries of geodesic rays]
	\label{def:itinrays}
	Let $\gamma:[0,\infty) \to \Sigma$ be a geodesic ray that doe not start in a wall. 
	\begin{enumerate}
		\item If there exists $B_0 \in \mathcal B$, $t_0 \in \R$ such that $\gamma(t) \in B_0$ for all $t \ge t_0$, then $I(\gamma)\coloneqq I(\gamma|_{[0,t_0]}).$ 
		
		\item Otherwise, there exist a sequence of times $t_0=0<t_1<t_2<\dots $ so that for every $i \in \mathbb N$, $I(\gamma|_{[0, t_{i}]})=(B_1,W_1,B_2,\dots B_i)$ with $\gamma(t_i) \in \open{B_i}$ by \cref{lem:defitin} and we define $I(\gamma)$ to be the infinite path $I(\gamma)\coloneqq(B_1,W_1,B_2,\dots).$
	\end{enumerate}
\end{defn}

We say that two geodesic rays $\gamma_1:[0,\infty)\to \Sigma$, $\gamma_2:[0,\infty)\to \Sigma$ are \textit{asymptotic} if there exists $D >0$ such that $d(\gamma_1(t), \gamma_2(t))<D$ for all $t \ge 0$. 
\begin{lemma}
	\label{lem:asymptoticrays}
Let $\gamma$ be a geodesic ray in $\Sigma$ that does not start in a wall. If $\gamma$ is asymptotic to a geodesic ray in a wall, then $I(\gamma)$ is finite. 
\end{lemma}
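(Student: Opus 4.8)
The plan is to argue by contradiction, or rather contrapositive: assume $I(\gamma)$ is infinite and show $\gamma$ cannot be asymptotic to a geodesic ray contained in a single wall. So suppose $I(\gamma) = (B_1, W_1, B_2, W_2, \dots)$ is an infinite path in $\mathcal T$, and suppose toward a contradiction that $\sigma : [0,\infty) \to \Sigma$ is a geodesic ray with $\sigma([0,\infty)) \subseteq W$ for some wall $W$, with $d(\gamma(t), \sigma(t)) < D$ for all $t$. By \cref{2}, $W$ is disjoint from every other wall, and since $I(\gamma)$ is infinite, all but at most one of the walls $W_i$ are distinct from $W$; in particular infinitely many of the $W_i$ satisfy $W_i \neq W$. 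By the separating property \itemref{deftreelike}{epqilon-cond}, each such $W_i$ has $d(W_i, W) \ge \wallepsilon$.

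The key step is to locate, for each $i$, a time $\tau_i$ with $\gamma(\tau_i) \in W_i$ and $\tau_i \to \infty$ (this is exactly the sequence of times $t_i$ furnished by \cref{def:itinrays}(2), via \cref{lem:defitin}). Then for $i$ large with $W_i \neq W$ we have
\[
D > d(\gamma(\tau_i), \sigma(\tau_i)) \ge d(\gamma(\tau_i), W) \ge d(W_i, W) \ge \wallepsilon,
\]
using that $\gamma(\tau_i) \in W_i$ and $\sigma(\tau_i) \in W$. This inequality $D > d(\gamma(\tau_i), W)$ together with $d(\gamma(\tau_i), W) \ge \wallepsilon$ is not yet a contradiction — it only says $D > \wallepsilon$, which may well hold. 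So the naive estimate is too weak, and the real argument must exploit that $\gamma$ genuinely moves to infinity through the tree $\mathcal T$ while staying within bounded distance of the fixed wall $W$.

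The right idea, I expect, is a convexity/projection argument: let $\pi : \Sigma \to W$ be the CAT(0) nearest-point projection onto the closed convex set $W$ (which is closed and convex by \cref{wallconvex}). Since $\gamma$ stays within distance $D$ of $\sigma \subseteq W$, the projection $\pi \circ \gamma$ is a coarse geodesic in $W$ going to infinity, and $\gamma$ lies in the $D$-neighborhood of $W$. Now fix one wall $W_i$ far along $I(\gamma)$ with $W_i \ne W$, say $W_i = B \cap B'$ where $B$ is on the $W$-side and $B'$ on the far side along the tree. By \cref{passthrough}, every curve from a point of $W$ to a point of $B'\setminus W_i$ — in particular the tail of $\gamma$ beyond $\tau_i$, concatenated with a short path from $\sigma(0) \in W$ — must pass through $W_i$; but more usefully, $\gamma$ crosses $W_i$ at the single time $\tau_i$ and then never returns to the $W$-side (since $I(\gamma)$ is a geodesic in the tree $\mathcal T$, it never revisits $B$, hence by \cref{passthrough} $\gamma$ restricted to $[\tau_i,\infty)$ stays in the component of $\Sigma \setminus W_i$ not containing $W$). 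Combined with $\gamma \subseteq N_D(W)$: the entire tail $\gamma([\tau_i,\infty))$ lies in $N_D(W) \cap (\text{far side of } W_i)$. But the walls $W_{i+1}, W_{i+2}, \dots$ are all crossed by this tail and are pairwise $\wallepsilon$-separated, hence there are infinitely many distinct walls inside the ball $B_\Sigma(\sigma(0), D + \text{const})$ — wait, that ball is not finite-radius enough. The honest fix: the times $\tau_i \to \infty$, so $\gamma(\tau_i) \to \infty$ in $\Sigma$, yet $d(\gamma(\tau_i), W) < D$ means $\gamma(\tau_i)$ stays in $N_D(W)$; meanwhile $\sigma(\tau_i) = \pi(\gamma(\tau_i))$ up to bounded error also escapes to infinity in $W$. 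The contradiction must come from the tree structure forcing $\gamma$ to \emph{leave} every bounded neighborhood of $W$: since after crossing $W_i$ the ray never comes back toward $W$, and it must subsequently cross $W_{i+1}$ which is on the far side, iterating shows $d(\gamma(t), W) \to \infty$ as $t \to \infty$, contradicting $d(\gamma(t),W) < D$.

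The main obstacle, which I'd flag as the crux, is making precise the claim that crossing successive walls $W_i, W_{i+1}, \dots$ along a tree-geodesic itinerary forces the distance to $W$ to grow without bound. The clean way is: for the tail $\gamma|_{[\tau_i,\infty)}$, \cref{passthrough} (applied to the path in $\mathcal T$ from $B$ through $W_{i+1}$) says $\gamma$ passes through $W_{i+1}$ after $\tau_i$; by Claim 1 inside the proof of \cref{passthrough}, $\Sigma \setminus W_{i+1}$ separates $W$ (on the $W_i$-side, since the tree-path from $W_{i+1}$ to $W$ runs through $B$) from $\gamma(t)$ for $t > \tau_{i+1}$, so $d(\gamma(t), W) \ge d(\gamma(t), W_{i+1}) + d(W_{i+1}, \text{closest point of } W)$ — no, distances don't add like that in general. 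Instead use: any geodesic from $\gamma(t)$, $t > \tau_{i+1}$, to a point of $W$ must pass through both $W_{i+1}$ and $W_i$ (both edges lie on the tree-path), and these are $\wallepsilon$-apart, so the geodesic has length $\ge$ (length to reach $W_{i+1}$) $+ \wallepsilon + \dots$; summing the $\wallepsilon$-contributions over $W_i, W_{i+1}, \dots, W_{i+k}$, all of which the geodesic to $W$ must traverse in order, gives $d(\gamma(t), W) \ge k\wallepsilon$ for $t$ beyond $\tau_{i+k}$. Letting $k \to \infty$ contradicts $d(\gamma(t), W) < D$. I would phrase this last summation carefully using \cref{passthrough} and the separating property, as that monotone-escape estimate is the heart of the lemma.
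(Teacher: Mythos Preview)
Your final argument is correct and essentially matches the paper's: both use \cref{passthrough} together with the separating property to show that a point of $\gamma$ lying in a block at tree-distance $k$ from a block containing $W$ must be at distance at least $(k-1)\wallepsilon$ from $W$, and this goes to infinity along an infinite itinerary. The paper's version is more direct---it fixes a block $B \supseteq W$, picks a block $B'$ in $I(\gamma)$ more than $D/\wallepsilon$ edges away from $B$ in $\mathcal T$, and concludes $d(\gamma(t'),B)\ge D$ in one stroke---so your projection detour and the initial naive estimate can be discarded.
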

\begin{proof}
		Let $\gamma$ be a geodesic ray that is contained in a wall $W$. Let $\gamma'$ be a geodesic ray that does not start in a wall so that $I(\gamma')$ is infinite. We have to show that $\gamma$ and $\gamma'$ are not asymptotic, i.e. we have to prove that for each $D \in \R_{\ge 0}$ there exists $t'$ such that $d(\gamma(t),\gamma(t'))>D$. Let $D \in \R_{\ge 0}$. As $\gamma \subseteq W$, there exists $t_0 \in \R$ and a block $B \in \mathcal B$ such that $\gamma(t) \in B$ for all $t \ge t_0$. Since $I(\gamma')$ is an infinite path, there exists a block $B'$ in $I(\gamma')$ such that the unique geodesic path in $\mathcal T$ linking $B$ and $B'$ has more than $\frac{D}{\wallepsilon}$ edges. 	By \cref{lem:defitin}, there exists a time $t'$ such that $\gamma(t') \in B'$. Let $\tilde \gamma$ be the geodesic segment connecting $\gamma(t')$ with a point $b \in B$. By \cref{passthrough} and the separating property \itemref{def:blockdec}{epqilon-cond}, the length of $\tilde \gamma$ is at least $D$. Since $b$ was chosen arbitrarily and $\gamma$ ends in $B$, this implies that $d(\gamma(t),\gamma(t'))>D$.\end{proof}
 
\section{The visual boundary of every wall behaves like a cutset}
In \cref{subsec1}, we will recall the definition of visual boundaries of CAT(0) spaces. In \cref{subsec2}, we will study a cutset property of walls in CAT(0) spaces with a treelike block decomposition. 
\label{sec:visprop}

\subsection{The visual boundary of a CAT(0) space}
\label{subsec1}
Let $\Sigma$ be a complete CAT(0) space and $\base$ a chosen basepoint in $\Sigma$. Let 
\begin{align*}
	\partial \Sigma_{\base}&\coloneqq \{\alpha:[0,\infty) \to \Sigma \mid \alpha \text{ is a geodesic ray with }\alpha(0)= \base\}.
\end{align*}

If $\alpha$ is a geodesic ray starting at $\base$ and $ \epsilon>0$ $r\ge 0$, then the following sets define an open neighborhood basis for the \textit{cone topology} $\tau_{\text{cone}}$ on $\rand \Sigma_{\base}$. 

\begin{align}
	\label{defUbase2}
	U_{\base}(\alpha,r,\epsilon) &\coloneqq\{\gamma \in \rand{\Sigma_{\base}}\mid \gamma(0)=\base \text{, }d(\alpha(t), \gamma(t))< \epsilon ~\forall~ t \le r \}.
\end{align}

Recall that two geodesic rays $\gamma_1:[0,\infty)\to \Sigma$, $\gamma_2:[0,\infty)\to \Sigma$ are \textit{asymptotic} if there exists $D >0$ such that $d(\gamma_1(t), \gamma_2(t))<D$ for all $t \ge 0$. Being asymptotic is an equivalence relation and we denote the equivalence class of a geodesic ray $\gamma$ by $\as \gamma$. 
We call such an equivalence class a \textit{boundary point} and denote the set of all boundary points by
\begin{align*}
	\partial \Sigma&\coloneqq \{\alpha(\infty) \mid \alpha:[0,\infty) \to \Sigma \text{ is a geodesic ray }\}.
\end{align*}

Every equivalence class in $\rand{\Sigma}$ is represented by a unique geodesic ray starting at $\base$ (\cite[Prop. 8.2, II]{BH}). Hence, the map 
\begin{align*}
f:\partial &\Sigma_{\base}\to \partial \Sigma \\
&\alpha \to \alpha(\infty) 
\end{align*}
 is a bijection. 
 The \textit{visual boundary} of $\Sigma$ is the topological space that we obtain by pushing the topology of $\partial \Sigma_{\base}$ to $\partial \Sigma$, i.e. $A\subseteq \partial \Sigma$ is open if and only if $f^{-1}(A)$ is open in $\partial \Sigma_{\base}$. For more details see~\cite[Def.8.6 in part II]{BH}. 
 If $\Sigma$ is Gromov-hyperbolic, then the visual boundary of $ \Sigma$ coincides with the Gromov boundary of $\Sigma$. While the Gromov boundary is a quasi-isometry invariant, Croke--Kleiner~\cite{CrokeKleiner} proved that the visual boundary is not a quasi-isometry invariant. 
 
 If $B$ is a complete, convex subspace of $\Sigma$, the canonical embedding $\iota: B \embed \Sigma$ induces a topological embedding $\iota_*:~\vis Z~\to~\vis \Sigma$. For simplicity, we write $\vis B\subset \vis \Sigma$. 
\begin{lemma}[Example 8.11 (4) in Chapter II of \cite{BH}]
	Let $\Sigma$ be a complete CAT(0) space and $Z$ a complete, convex subspace of $\Sigma$. Then 	$\vis Z $ is closed in $\vis{\Sigma}$.
	\label{cor:Zlem}
\end{lemma}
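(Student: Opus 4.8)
The statement asserts that if $Z$ is a complete convex subspace of a complete CAT(0) space $\Sigma$, then $\partial Z$ is closed in $\partial \Sigma$. Since the visual topology is defined by pushing forward the cone topology on $\partial \Sigma_{\base}$ via the basepoint-parametrization bijection, I would work throughout with geodesic rays emanating from a single basepoint. The natural choice is to pick the basepoint $\base$ \emph{inside} $Z$; this is harmless since the visual boundary does not depend on the basepoint, and it makes $\partial Z_{\base}$ literally the set of rays from $\base$ that stay in $Z$ (using convexity and completeness of $Z$, every boundary point of $Z$ is represented by such a ray). So it suffices to show that $\partial Z_{\base}$ is closed in $\partial \Sigma_{\base}$ with respect to the cone topology.

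\textbf{Key steps.} First I would take a sequence (or net) $\gamma_n \in \partial Z_{\base}$ converging in the cone topology to some $\gamma \in \partial \Sigma_{\base}$, and aim to show $\gamma \in \partial Z_{\base}$, i.e. $\gamma([0,\infty)) \subseteq Z$. Fix $t \ge 0$. Convergence in the cone topology means $d(\gamma_n(t), \gamma(t)) \to 0$ (more precisely, for every $r$ and $\eps$ eventually $d(\gamma_n(s),\gamma(s)) < \eps$ for all $s \le r$; applying this with $r = t$ gives the pointwise statement at $s=t$). Each point $\gamma_n(t)$ lies in $Z$ because $\gamma_n$ is a ray in the convex set $Z$ starting at $\base \in Z$. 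Since $Z$ is convex and complete in the CAT(0) space $\Sigma$, it is closed in $\Sigma$ (a convex complete subset of a complete metric space is closed). Therefore the limit $\gamma(t)$ of the sequence $\gamma_n(t) \in Z$ lies in $Z$. As $t$ was arbitrary, $\gamma([0,\infty)) \subseteq Z$, so $\gamma$ is a geodesic ray in $Z$ from $\base$, hence $\gamma \in \partial Z_{\base}$. This shows $\partial Z_{\base}$ is closed, and pushing forward via the homeomorphism $\partial \Sigma_{\base} \cong \partial \Sigma$ gives that $\partial Z$ is closed in $\partial \Sigma$.

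\textbf{Main obstacle.} The only subtle point is justifying the reduction to rays based at a point of $Z$: one must invoke that the visual boundary is independent of basepoint (Bridson--Haefliger, Chapter II), and that, with $\base \in Z$, the topological embedding $\partial Z \hookrightarrow \partial \Sigma$ really is the inclusion of $\partial Z_{\base}$ into $\partial \Sigma_{\base}$ as sets of rays — this uses that for a complete convex $Z$, the unique $\Sigma$-geodesic ray from $\base \in Z$ representing a class in $\partial Z$ already lies in $Z$ (convexity forces the $\Sigma$-geodesic between two points of $Z$ to stay in $Z$, so it agrees with the $Z$-geodesic). Everything else — the pointwise convergence extracted from cone-topology convergence and the closedness of a complete convex subset — is routine. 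If one prefers not to move the basepoint, an alternative is to argue directly with an arbitrary basepoint using that the convex projection onto $Z$ is distance-nonincreasing, but moving the basepoint into $Z$ is cleaner.
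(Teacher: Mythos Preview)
Your argument is correct and is essentially the standard proof found in Bridson--Haefliger. The paper does not give its own proof of this lemma; it simply cites Example~8.11~(4) in Chapter~II of \cite{BH}, so there is nothing further to compare. One small remark: where you write ``a convex complete subset of a complete metric space is closed'', convexity is not needed for that particular step---any complete subset of a metric space is closed---though of course convexity is essential elsewhere in your reduction (to identify $\partial Z_{\base}$ with the rays from $\base$ that stay in $Z$). Also note that the cone topology on $\partial\Sigma_{\base}$ is first countable, so your use of sequences rather than nets is justified.
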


\subsection{The cutset property}
\label{subsec2}
 The goal of this section is to prove the following proposition illustrated in \cref{fig:cutset property}. I would like to thank Emily Stark for the inspiration to study this property. 

\begin{prop}[cutset property]
	\label{cor:diffitinerary} 
	Let $\Sigma$ be a complete CAT(0) space with treelike block decomposition $\mathcal B$ with adjacency graph $\mathcal T$ and $\base$ a chosen basepoint in $\Sigma$ that does not lie in a wall.
		Let $\kappa$ be a connected component of $\vis \Sigma$ that contains two distinct boundary points $\gamma_1(\infty)$ and $\gamma_2(\infty)$. Let $\gamma_1$ and $\gamma_2$ be the corresponding representatives starting at $\base$.
		
		 For every wall $W$ that occurs in $I(\gamma_1)$ or $I(\gamma_2)$ but not in both $I(\gamma_1)$ and $I(\gamma_2)$, there is a geodesic ray $\gamma \subseteq W$ such that $\as \gamma \in \kappa$.\end{prop}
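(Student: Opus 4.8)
The goal is to show that if $W$ is a wall that distinguishes the itineraries $I(\gamma_1)$ and $I(\gamma_2)$ — say $W$ occurs in $I(\gamma_1)$ but not in $I(\gamma_2)$ — then some geodesic ray inside $W$ has its endpoint in the same connected component $\kappa$ of $\vis\Sigma$. The core idea is a separation/cutset argument: since $W$ corresponds to an edge $e$ of the tree $\mathcal T$, deleting $e$ splits $\mathcal T$ into two subtrees $T_1, T_2$, and by \cref{passthrough} (Claim 1 in its proof) the set $W$ separates $\Sigma$ into two open pieces $O_1 = \bigcup_{B \in V(T_1)} B \setminus W$ and $O_2 = \bigcup_{B \in V(T_2)} B \setminus W$. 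The basepoint $\base$ lies in the closure of one side, say $O_1$; since $W$ occurs in $I(\gamma_1)$, the ray $\gamma_1$ eventually enters $O_2$ and stays on that side (by \cref{passthrough}, once a geodesic leaves a block across $W$ it cannot return — it would have to re-cross $W$, but it already used up its unique crossing). Meanwhile $W$ does \emph{not} occur in $I(\gamma_2)$: I would argue that this forces $\gamma_2$ to remain entirely in $\overline{O_1}$ — if $\gamma_2$ crossed into $O_2$, then by \cref{passthrough} applied to any initial segment reaching a block on the $T_2$ side, $W$ would have to appear in $I(\gamma_2)$, a contradiction. (One subtlety: $\gamma_2$ might touch $W$ without crossing, i.e. $\gamma_2(\infty)$ could itself lie in $\vis W$ — but in that degenerate case we are already done, since then $\gamma_2(\infty) \in \kappa \cap \vis W$.)

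\textbf{Carrying it out.}
So generically we have $\gamma_1(\infty)$ and $\gamma_2(\infty)$ in the same connected component $\kappa$, but $\gamma_1(\infty)$ is "behind" $W$ (on the $O_2$ side) and $\gamma_2(\infty)$ is "in front of" $W$ (on the $O_1$ side, or on $W$). The second step is to translate this spatial separation into a separation of the visual boundary. Set $M_1 := \bigcup_{B \in V(T_1)} B$ and $M_2 := \bigcup_{B \in V(T_2)} B$; both are closed and convex by \cref{convex}, and $M_1 \cap M_2 = W$. By \cref{cor:Zlem}, $\vis M_1$ and $\vis M_2$ are closed subsets of $\vis\Sigma$, and their union is all of $\vis\Sigma$: indeed any geodesic ray from $\base$ has an itinerary, and either that itinerary stays on one side of $e$ eventually (so the ray is asymptotic to a ray lying in $M_1$ or $M_2$ — here one uses that once the ray crosses into, say, $T_2$ it stays there, so its tail is in $M_2$ and we can push the whole ray into $M_2$ by convexity), or the itinerary is finite and the ray converges into a single block, hence lies in one of $M_1, M_2$. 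Now $\vis W = \vis M_1 \cap \vis M_2$: "$\subseteq$" is clear, and "$\supseteq$" holds because a ray asymptotic to rays in both $M_1$ and $M_2$ must, by the flat strip / convexity argument (the two parallel rays bound a flat strip meeting both closed convex sets along their intersection), be asymptotic to a ray in $W$. Thus $\{\vis M_1, \vis M_2\}$ is a closed cover of $\vis\Sigma$ whose "overlap" is exactly $\vis W$.

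\textbf{The conclusion and the obstacle.}
Finally, suppose for contradiction that $\kappa \cap \vis W = \emptyset$. Then $\kappa = (\kappa \cap \vis M_1) \cup (\kappa \cap \vis M_2)$ is a decomposition of the connected set $\kappa$ into two relatively closed pieces that are \emph{disjoint} (their intersection would lie in $\vis M_1 \cap \vis M_2 = \vis W$, which misses $\kappa$). Both pieces are nonempty: $\gamma_1(\infty) \in \kappa \cap \vis M_2$ and $\gamma_2(\infty) \in \kappa \cap \vis M_1$ by the separation established in the first step (and if $\gamma_i(\infty) \in \vis W$ for some $i$ we are already done). This contradicts connectedness of $\kappa$. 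Hence $\kappa \cap \vis W \neq \emptyset$, i.e. there is a geodesic ray $\gamma \subseteq W$ with $\as\gamma \in \kappa$, as desired.

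The step I expect to be the main obstacle is the precise control over the tails of rays: showing (a) that $\gamma_1$, after crossing $W$, genuinely has its endpoint in $\vis M_2 \setminus \vis W$ and cannot sneak back — this needs the "unique crossing" consequence of \cref{passthrough} together with an argument that the itinerary's infinite branch on the $T_2$ side is incompatible with $\gamma_1(\infty) \in \vis M_1$; and (b) that $W \notin I(\gamma_2)$ really does confine $\gamma_2$ to $\overline{O_1} = M_1$. The cleanest route to (b) is the contrapositive: if $\gamma_2$ ever reaches a point in some block $B \in V(T_2)$, then applying \cref{passthrough} to the path in $\mathcal T$ from the starting block of $\gamma_2$ (which is in $T_1$) to $B$ — a path that must traverse the edge $e$ — forces $\gamma_2$ to pass through $W$, and then a short argument with \cref{lem:defitin} shows $W$ appears in $I(\gamma_2)$. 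Handling the boundary case where $\gamma_2(\infty)$ or $\gamma_1(\infty)$ lies in $\vis W$ itself is easy but must be stated explicitly to keep the dichotomy clean.
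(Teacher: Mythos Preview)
Your proposal is correct and follows essentially the same route as the paper: split $\Sigma$ along $W$ into the two closed convex halves $M_1,M_2$ coming from the subtrees of $\mathcal T\setminus\{e\}$, place $\gamma_1(\infty)$ and $\gamma_2(\infty)$ on opposite sides, and then run the clopen-decomposition argument on $\vis\Sigma$ to force $\kappa\cap\vis W\neq\emptyset$; the paper simply packages your final separation step as a standalone lemma (\cref{lem:keyproperty}, attributed to Ben-Zvi--Kropholler) and shows $\gamma_2\subseteq M_1$ directly from the fact that $I(\gamma_2)$ is a geodesic path in $\mathcal T$ missing $e$, hence contained in $T_1$, together with \cref{lem:defitin}(3). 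One small fix: your ``flat strip'' justification for $\vis M_1\cap\vis M_2=\vis W$ is not quite right (the flat strip theorem concerns bi-infinite lines), but the conclusion follows immediately by basing the representative ray at a point of $W$ and using convexity of $M_1$ and $M_2$, exactly as in the paper's proof of \cref{lem:keyproperty}.
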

  \begin{figure}[h]
 	\centering
 	\includegraphics{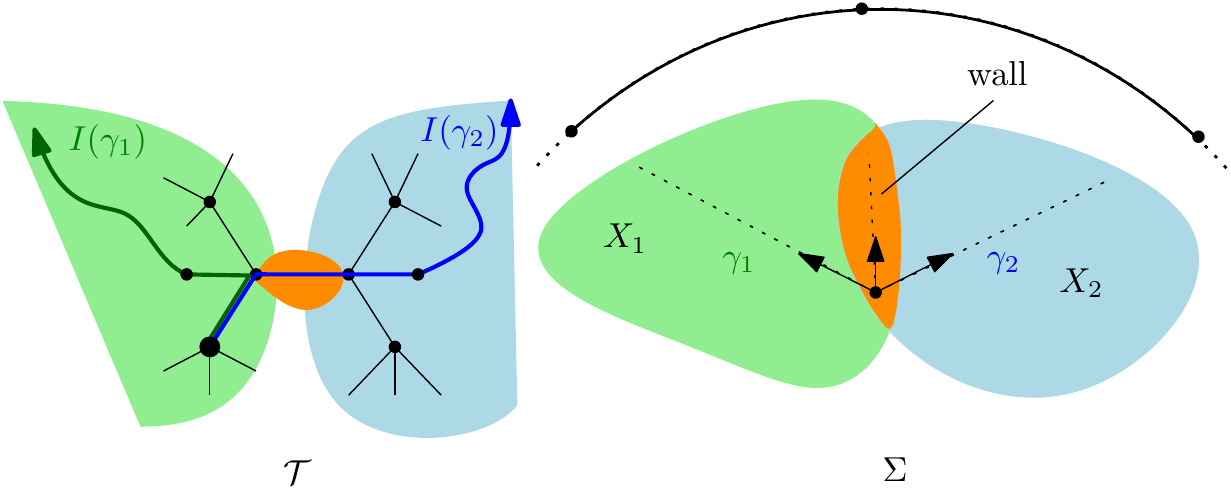}	
 	\caption{Illustration of the cutset property (\cref{cor:diffitinerary}): The orange marked edge $e$ in $\mathcal T$ to the left corresponds to a wall $W$ in $\Sigma$ which is marked orange to the right. The wall $W$ appears in $I(\gamma_2)$ but not in $I(\gamma_1)$. If we delete the edge $e$ from $\mathcal T$, $ I(\gamma_1)$ and $ I(\gamma_2)$ end in different components of the resulting graph. If we delete $W$ form $\Sigma$, the rays $\gamma_1$ and $\gamma_2$ end in different components of $\Sigma \setminus W$ and $\gamma_1(\infty)$ and $\gamma_2(\infty)$ lie in different components of $\partial \Sigma \setminus \partial W$.} 
 	\label{fig:cutset property}
 \end{figure}

For the proof, we use the following lemma for general complete CAT(0) spaces, which is similar to Lemma 3.1 in \cite{BENZVI} which deals with path-components of visual boundaries.
\begin{lemma}[Ben-Zvi--Kropholler]
	\label{lem:keyproperty}
	Let $\Sigma$ be a complete CAT(0) space and $\Sigma^1$, $\Sigma^2$ closed, convex subsets such that the intersection $W = \Sigma^1 \cap \Sigma^2$ is convex and $\Sigma = \Sigma^1 \cup \Sigma^2$. If there exist two geodesic rays $\gamma_1\subseteq \Sigma^1$ and $\gamma_2\subseteq \Sigma^2$ such that $\as{\gamma_1}$, $\as{\gamma_2} $ are contained in a connected component $ \kappa$ of $\vis \Sigma$, then there is a geodesic ray $\gamma$ in $W$ such that $\as \gamma \in \kappa$. 
\end{lemma}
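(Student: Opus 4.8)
The plan is to prove Lemma \ref{lem:keyproperty} by a connectedness argument on the visual boundary, exploiting that $\vis{\Sigma^1}$ and $\vis{\Sigma^2}$ are closed subsets of $\vis \Sigma$ (by \cref{cor:Zlem}, since $\Sigma^1,\Sigma^2$ are complete and convex) and that together with $\vis W$ they control how geodesic rays can move between the two halves.

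First I would set up the decomposition of the boundary. Since $\Sigma = \Sigma^1 \cup \Sigma^2$, every geodesic ray either eventually stays in one of the $\Sigma^i$ or crosses $W$ infinitely often; in the latter case convexity of $W$ together with the fact that $\Sigma^i$ are convex forces the ray to be asymptotic to a ray contained in $W$ (this is the standard flat-strip / convexity argument: a geodesic ray meeting a convex set $W$ in a sequence of points going to infinity must limit onto a ray in $W$). Hence $\vis \Sigma = \vis{\Sigma^1} \cup \vis{\Sigma^2}$, and I claim $\vis{\Sigma^1} \cap \vis{\Sigma^2} = \vis W$. The inclusion $\supseteq$ is clear; for $\subseteq$, a boundary point represented by rays $\alpha_1 \subseteq \Sigma^1$ and $\alpha_2 \subseteq \Sigma^2$ with $\alpha_1(\infty) = \alpha_2(\infty)$ gives two asymptotic rays, and a convexity argument (the function $t \mapsto d(\alpha_1(t), \Sigma^2)$ is convex and bounded, hence non-increasing; if it is ever $0$ we are on $W$, and if it is bounded away from $0$ one builds a ray in $W$ between them via the Sandwich/Flat Strip lemma for CAT(0) spaces) produces a ray in $W$ asymptotic to both.

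Next, suppose for contradiction there is no geodesic ray $\gamma \subseteq W$ with $\as\gamma \in \kappa$, i.e.\ $\kappa \cap \vis W = \emptyset$. Then $\kappa \subseteq \vis\Sigma \setminus \vis W = (\vis{\Sigma^1}\setminus \vis W) \sqcup (\vis{\Sigma^2}\setminus \vis W)$, a disjoint union. Because $\vis{\Sigma^1}$ is closed in $\vis\Sigma$, the set $\vis{\Sigma^1}\setminus\vis W = \vis{\Sigma^1}\setminus(\vis{\Sigma^1}\cap\vis{\Sigma^2})$ is closed in $\vis\Sigma \setminus \vis{\Sigma^2} \supseteq \vis\Sigma\setminus\vis W$, hence relatively closed in $\kappa$; symmetrically for $\vis{\Sigma^2}\setminus\vis W$. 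So $\kappa$ is partitioned into two relatively closed (hence also relatively open) subsets. Since $\as{\gamma_1} \in \kappa\cap(\vis{\Sigma^1}\setminus\vis W)$ and $\as{\gamma_2}\in\kappa\cap(\vis{\Sigma^2}\setminus\vis W)$ — both nonempty by hypothesis — this contradicts connectedness of $\kappa$.

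The main obstacle I expect is the convexity argument identifying $\vis{\Sigma^1}\cap\vis{\Sigma^2}$ with $\vis W$, and in particular showing that a boundary point lying on both sides is actually represented by a ray inside $W$; one has to be careful that $W$ is merely assumed convex (not necessarily complete) so the Flat Strip Lemma may need the rays to be genuinely parallel and one must produce the connecting ray as a limit and check it lands in the closed set $W$. A secondary subtlety is handling rays that are not eventually in one $\Sigma^i$ but oscillate: one needs $d(\gamma(t),W)\to 0$, which follows because outside a bounded set the ray alternates sides and each sign change forces a point near $W$, combined with convexity of the distance function to the convex set $W$. If $W = \emptyset$ the statement is vacuous since $\Sigma^1, \Sigma^2$ would be disjoint closed sets covering the connected space $\Sigma$, so at least one is empty and the hypothesis cannot hold; I would dispatch this trivial case first.
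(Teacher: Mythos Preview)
Your overall strategy---assume $\kappa \cap \vis W = \emptyset$ and split $\kappa$ into two relatively clopen pieces---is exactly the paper's approach, and your argument is essentially correct. However, you are making the key technical step much harder than necessary, and this is worth pointing out because your ``main obstacle'' evaporates with one observation.

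The paper chooses a basepoint $p \in W$. Every boundary point is then represented by a \emph{unique} geodesic ray from $p$, and the set-level identity $\Sigma^1 \setminus W = \Sigma \setminus \Sigma^2$ (immediate from $\Sigma = \Sigma^1 \cup \Sigma^2$ and $W = \Sigma^1 \cap \Sigma^2$) lifts directly to $\vis\Sigma^1 \setminus \vis W = \vis\Sigma \setminus \vis\Sigma^2$: a ray from $p \in W$ that is in $\Sigma^1$ but not in $W$ must, by convexity of $W$, eventually leave $W$ forever and hence leave $\Sigma^2$ forever. This single convexity observation replaces your Flat Strip / Sandwich argument, your oscillation analysis, and your worry about completeness of $W$ (which is in any case closed in a complete space, hence complete). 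With the basepoint in $W$ there are no oscillating rays at all: the sets $\{t : \gamma(t) \in \Sigma^i\}$ are closed intervals containing $0$ and cover $[0,\infty)$, so one of them is all of $[0,\infty)$.

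One small slip: you write ``$\vis\Sigma \setminus \vis\Sigma^2 \supseteq \vis\Sigma \setminus \vis W$'', but the inclusion goes the other way since $\vis W \subseteq \vis\Sigma^2$. This does not affect the argument---what you need is simply that $\vis\Sigma^1$ is closed in $\vis\Sigma$, so $\vis\Sigma^1 \setminus \vis W = \vis\Sigma^1 \cap (\vis\Sigma \setminus \vis W)$ is relatively closed in $\vis\Sigma \setminus \vis W$---but you should correct the sentence.
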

\begin{proof}

Assume for a contradiction that there exists a connected component $\kappa$ in $\vis \Sigma$ containing $\gamma_1(\infty)$ and $\gamma_2(\infty)$ but no element of $\vis W$. Then $\kappa$ is a connected component of the topological subspace $\mathcal Y \coloneqq \vis \Sigma \setminus \vis W$. We use the following observations.

	\begin{enumerate}
		\item By \cref{cor:Zlem}, $\vis \Sigma^1$, $\vis \Sigma^2$ and $\vis W$ are closed in $\vis \Sigma$;\label{closed} 
			\item $\vis \Sigma^1 \setminus \vis W=\vis \Sigma \setminus \vis \Sigma^2$ and $\vis \Sigma^2 \setminus \vis W=\vis \Sigma \setminus \vis \Sigma^1$: By symmetrical reasons it suffices to prove that $\vis \Sigma^1 \setminus \vis W=\vis \Sigma \setminus \vis \Sigma^2$. \label{cor:Zlem2}		
			Let $\base$ be a basepoint in $W$. By definition,
		 \begin{align*}
		  \vis \Sigma_{\base} ^1 \setminus \vis W_{\base}&= \{\gamma \mid \gamma \text{ is a geodesic ray in }\Sigma^1 \text{ so that } \gamma(0) = \base \text{ and } \gamma \nsubseteq W\} \text{ and}\\
		  \vis \Sigma_{\base} \setminus \vis \Sigma_{\base} ^2 &= \{\gamma \mid \gamma \text{ is a geodesic ray in }\Sigma \text{ so that } \gamma(0) = \base \text{ and } \gamma \nsubseteq \Sigma^2\}.
		 \end{align*}
	
			 The space $ \vis \Sigma_{\base} ^1 \setminus \vis W_{\base}$ is a topological subspace of $\vis \Sigma_{\base} ^1$ and $ \vis \Sigma_{\base} \setminus \vis \Sigma_{\base} ^2$ is a topological subspace of $ \vis \Sigma_{\base}$. We have to show that $ \vis \Sigma_{\base} ^1 \setminus \vis W_{\base}$ and $ \vis \Sigma_{\base} \setminus \vis \Sigma_{\base} ^2$ are homeomorphic. As $\Sigma^1$ is a closed, convex subspace of $\Sigma$, 
			 the inclusion $\iota: \Sigma^1_{\base} \hookrightarrow \Sigma_{\base}$ induces a topological embedding $\iota_*: \vis \Sigma^1_{\base}\hookrightarrow \vis\Sigma_{\base}$ by \cref{cor:Zlem}. 
			It remains to show that $\iota_*(\vis \Sigma_{\base} ^1 \setminus \vis W_{\base})= \vis \Sigma_{\base} \setminus \vis \Sigma_{\base} ^2$. Let $\gamma \in \vis \Sigma_{\base} ^1 \setminus \vis W_{\base}$. The geodesic ray $\gamma$ starts in $W$ and is not contained in $W$. Since $W$ is convex, there exists a time $t_0 \in \R$ such that $\gamma(t) \in \Sigma^1\setminus W$ for all $t \ge t_0$. By assumption, $\Sigma^1 \setminus W=\Sigma \setminus \Sigma^2$. Thus, $\iota(\gamma)$ is a geodesic ray in $\Sigma$ that starts in $W$ so that $\gamma(t) \in \Sigma \setminus \Sigma^2$ for all $t \ge t_0$. Hence, $\iota(\gamma) \in \vis \Sigma_{\base} \setminus \vis \Sigma_{\base} ^2$. We conclude that $\iota_*(\vis \Sigma_{\base} ^1 \setminus \vis W_{\base})\subseteq \vis \Sigma_{\base} \setminus \vis \Sigma_{\base} ^2$. An analog argumentation implies that $\vis \Sigma_{\base} \setminus \vis \Sigma_{\base} ^2 \subseteq \iota_*(\vis \Sigma_{\base} ^1 \setminus \vis W_{\base})$: Let $\gamma \in \vis \Sigma_{\base} \setminus \vis \Sigma^2_{\base}$. The geodesic ray $\gamma$ starts in $W$ and is not contained in $W$. Since $W$ is convex, there exists a time $t_0 \in \R$ such that $\gamma(t) \in \Sigma\setminus \Sigma^2$ for all $t \ge t_0$. By assumption, $\Sigma \setminus \Sigma^2=\Sigma^1 \setminus W$. Thus, $\iota^{-1}(\gamma)$ is a geodesic ray in $\Sigma^1$ that starts in $W$ so that $\gamma(t) \in \Sigma^1 \setminus W$ for all $t \ge t_0$. Hence, $\iota^{-1}(\gamma) \in \vis \Sigma_{\base}^1 \setminus \vis W$.
		\item $\vis \Sigma^1 \setminus \vis W$ and $\vis \Sigma^2 \setminus \vis W$ are open in $\vis \Sigma$:\label{open}
		Indeed, let $i,j \in \{1,2\}$, $i \neq j$.
		As $\vis \Sigma^1$, $\vis \Sigma^2$ are closed in $\vis \Sigma$, $\vis \Sigma \setminus \vis \Sigma^1$ and $\vis \Sigma \setminus \vis \Sigma^2$ are open in $\vis \Sigma$. By (\ref{cor:Zlem2}) $\vis \Sigma^i \setminus \vis W=\vis \Sigma \setminus \vis \Sigma^j$ for $i, j \in \{1,2\}$, $i \neq j$. Hence, $\vis \Sigma^1 \setminus \vis W$ and $\vis \Sigma^2 \setminus \vis W$ are open in $\vis \Sigma$. 
	\end{enumerate}
		Let $i \in \{1,2\}$. By (\ref{closed}), $\vis \Sigma^i \cap \mathcal Y$ is closed in $\mathcal Y$. 
	Since $\vis \Sigma^1 \cap \mathcal Y= \vis \Sigma^1 \cap (\vis \Sigma \setminus \vis W)= \vis \Sigma^1 \setminus \vis W $, 	
	$\vis \Sigma^1 \cap \mathcal Y$ is open in $\mathcal Y$ by (\ref{open}). Analogously, $\vis \Sigma^2 \cap \mathcal Y$ is open in $\mathcal Y$. Thus, $\vis \Sigma^1 \cap \mathcal Y$ and $\vis \Sigma^2 \cap \mathcal Y$ are open and closed. This implies that $\gamma_1(\infty)$ and $\gamma_2(\infty)$ lie in different connected components since $\gamma_1(\infty)\in \vis \Sigma^1 \cap \mathcal Y$ and $\gamma_2(\infty)\in \vis \Sigma^2 \cap \mathcal Y$ -- a contradiction to the connectedness of $\mathcal \kappa$.\end{proof}

\begin{proof}[Proof of \cref{cor:diffitinerary} ]
	Let $\gamma_1$, $\gamma_2$ and $\kappa$ as in the claim. 
	Suppose that $W$ is a wall that appears in $I(\gamma_1)$ or $I(\gamma_2)$ but not in both. Let $T_1$ and $T_2$ be the two subtrees of the adjacency graph $\Tblock$ of $\mathcal B$ we obtain by removing $W$ from $\Tblock$. Let $\Sigma^i \coloneqq \bigcup_{B \in V(T_i)}B$. Then $\Sigma = \Sigma^1 \cup \Sigma^2$ and $\Sigma^1\cap \Sigma^2 = W$. By \cref{convex}, the spaces $\Sigma^1$ and $\Sigma^2$ are closed and convex. The wall $W$ is closed and convex by \cref{wallconvex}. We will show that one of the two geodesic rays $\gamma_1$ and $\gamma_2$ ends in $\Sigma^1 $ and that the other one ends in $\Sigma^2$. Then the claim follows by applying \cref{lem:keyproperty} with $\Sigma = \Sigma^1 \cup \Sigma^2$. 
	
 We assume without loss of generality that $W$ appears in $I(\gamma_2)$ but not in $I(\gamma_1)$. Let $B_1 \in V(T_1)$ and $B_2\in V(T_2)$ be the blocks so that $W = B_1 \cap B_2$ and so that $B_2$ occurs after $B_1$ in $I(\gamma_2)$. By \cref{lem:defitin}, there are two times $t_1$, $t_2$, $t_1 < t_2$ so that $\gamma_2(t_1) \in \open{B_1}$ and $\gamma_2(t_2) \in \open{B_2}$. By definition of $\Sigma^1$ and $\Sigma^2$, $\gamma_2(t_1) \in \Sigma^1 \setminus \Sigma^2$ and $\gamma_2(t_2) \in \Sigma^2 \setminus \Sigma^1$. Assume for a contradiction that there exists $t_3> t_2$ such that $\gamma_2(t_3) \in \Sigma^1\setminus \Sigma^2$. Then $\gamma_2(t_1) \in \Sigma^1$, $\gamma_2(t_3) \in \Sigma^1$ and $\gamma_2(t_2) \notin \Sigma^1$ and $\gamma_2([t_1,t_3])$ is a geodesic segment connecting two points in $\Sigma_1$ that contains the point $\gamma_2(t_2)$ outside of $\Sigma_1$ -- a contradiction to the convexity of $\Sigma^1$. Thus, $\gamma_2(t) \in \Sigma^2$ for all $t \ge t_2$, i.e. $\gamma_2(t)$ ends in $\Sigma^2$. 
 
 	It remains to show that $\gamma_1$ ends in $\Sigma^1$. Since geodesic paths in trees are unique, each path linking a block in $T_1$ with a block in $T_2$ passes through the wall $W$. Since $W$ does not appear in $I(\gamma_1)$,  the geodesic path $ I(\gamma_1)$ is either contained in $T_1$ or in $T_2$. As $\gamma_1$ and $\gamma_2$ start at the same point, $I(\gamma_1)$ and $I(\gamma_2)$ start with the same block $B_0$. 
 		Recall that $W= B_1\cap B_2$ and that $B_2$ appears after $B_1$ in $I(\gamma_2)$. As $I(\gamma_2)$ is a geodesic path, $W$ does not occur twice in $I(\gamma_2)$ and the subpath $(B_0,\dots,B_1) \subseteq I(\gamma_2)$ does not contain $W$. Thus, as $B_1 \in T_1$ and $B_0 \in V(T_1)$, the itinerary $I(\gamma_1)$ is a path in $T_1$. By \cref{lem:defitin}, $\gamma_1([0,\infty)) \subseteq \Sigma_1$. In particular, $\gamma_1$ ends in $\Sigma_1$.\end{proof}

\section{Key properties of the Morse boundary}
\label{key_Morse}
In \cref{sec1Morse}, we will recall the definition of Morse boundaries and will transfer the observations of \cref{sec:visprop} to Morse boundaries. In \cref{sec:Morse2}, we will prove that geodesic rays of infinite itinerary are lonely. This is the only point, where we use the Morse-property in this paper. In \cref{subsecMorse3}, we will study relative Morse boundaries.
\subsection{From the visual boundary to the Morse boundary}
\label{sec1Morse}
In this section, we shortly recap the definition of the contracting boundary and the Morse boundary and obtain as a consequence that the cutset property in  \cref{cor:diffitinerary} holds not only for the visual boundary but also for the Morse boundary. 

Let  $\Sigma$ be a complete CAT(0) space and $C$ be  a convex subset that is complete in the induced metric. Then there is a well-defined nearest point projection map $\pi_C:\Sigma \to C$. This projection map is continuous and does not increase distances (See~\cite[Prop. 2.4 in II ]{BH}).
\begin{defn}[contracting geodesics]
	Given a fixed constant $D$, a geodesic ray or geodesic segment $\gamma$ in a complete CAT(0) space $(\Sigma,d)$ is said to be \textit{$D$-contracting} if for all $x$, $y \in \Sigma$, 
	\begin{align*}
		d(x,y) < d(x,\pi_{\gamma}(x)) \Rightarrow d(\pi_{\gamma}(x), \pi_\gamma(y))<D.
	\end{align*}	
	We say that $\gamma$ is \textit{contracting} if it is $D$-contracting for some $D$. 
\end{defn}

Charney--Sultan \cite{CharSul} introduced a quasi-iosmetry invariant of complete CAT(0) spaces, called \textit{contracting boundary}. Let $\Sigma$ be complete CAT(0) space. The underlying set of the \textit{contracting boundary of $\Sigma$} is the set 
\begin{align*}
	\partial_c \Sigma&\coloneqq \{\as \alpha \mid \alpha \text{ is a contracting geodesic ray}\}.
\end{align*} By definition, $\partial_c \Sigma \subseteq \vis \Sigma$ (as sets). Let $(\partial_c \Sigma, \tau_{\text{cone}})$ be the set $\partial_c \Sigma$ equipped with the subspace topology of $\vis \Sigma$. Cashen~\cite{Cashen} proved that $(\partial_c \Sigma, \tau_{\text{cone}})$ isn't a quasi-isometry invariant. For obtaining a quasi-isometry invariant-topology, we choose a basepoint $\base$ in $\Sigma$ and define 
\begin{align*}
	\partial_c \Sigma_{\base}&\coloneqq \{ \alpha:[0,\infty) \to \Sigma \mid \alpha \text{ is a contracting geodesic ray and }\alpha(0)= \base\}.
\end{align*} 
As before in \cref{subsec1}, \[f: \partial_c \Sigma_{\base}\to \partial_c \Sigma,~ \alpha \mapsto \alpha(\infty)\] is a bijection. Let
\begin{align*}
	\partial_c^N\Sigma_{\base}&\coloneqq \{\alpha:[0,\infty) \to \Sigma\mid \alpha\text{ is a }N\text{-contracting geodesic ray and }\alpha(0)=\base\}.
\end{align*}
Now, let
\begin{align*}
	(\partial_c \Sigma_{\base}, \tau_{\text{dirlim}})\coloneqq \lim_{\overrightarrow N}(	\partial_c^N\Sigma_{\base}, \tau_{\text{cone}}),
\end{align*}
i.e. a set a $O$ is open in $\tau_{\text{dirlim}}$ if for each $N \in \N$, $O$ is open in $ O \cap \partial_c^N\Sigma_{\base}$. 

The \textit{contracting boundary} $\partial_c \Sigma$ of $\Sigma$ is the topological space that we obtain by pushing the topology $\tau_{\text{dirlim}}$ of $\partial_c \Sigma_{\base}$ to $\partial_c \Sigma$, i.e. $A\subseteq \partial_c \Sigma$ is open if and only if $f^{-1}(A)$ is open in $\partial_c \Sigma_{\base}$. If $\Sigma$ is Gromov-hyperbolic, then $\partial_c\Sigma$ coincides with the Gromov boundary of $\Sigma$.\\

Cordes \cite{Cordes_properMorse} generalized the contracting boundary to a quasi-isometry invariant of proper geodesic metric spaces, called the \textit{Morse boundary}. This generalization is based on the following characterizations of contracting geodesic rays in complete CAT(0) spaces. 

\begin{defn}
	A function $M:[1, \infty) \times [0, \infty) \to [0,\infty)$ is called a \textit{Morse gauge}. 
\end{defn}

\begin{defn}
	Let $\gamma:[0,\infty)\to \Sigma$ be a geodesic ray in a proper geodesic metric space $\Sigma$. 	Given a Morse-gauge $M$, $\gamma$ is \textit{M-Morse} if, for every $K\ge 1$, $L\ge 0$, every $(K,L)$-quasi-geodesic $\sigma$ with endpoints on $\gamma$ is contained in the $M$-neighborhood of $\gamma$. 
\end{defn}

\begin{defn}
	Let $\Sigma$ be a complete CAT(0) space.
	A geodesic ray $\gamma$ is \textit{slim} if there exists $\delta>0$ such that for all $x \in \Sigma$, $y \in \gamma$, the distance of $\pi_{\gamma}(x)$ and the geodesic segment connecting $x$ and $y$ is less than $\delta$. 
\end{defn}

Sultan~\cite{Sultan} and Charney-Sultan~\cite{CharSul} showed: 
\begin{lemma}[Charney--Sultan]\label{contractingandmorse}\label{slimness}
	Let $\gamma$ be a geodesic ray in a complete CAT(0) space. The following are equivalent: 
	\begin{enumerate}
		\item $\gamma$ is slim 
		\item $\gamma$ is Morse 
		\item $\gamma$ is contracting.
	\end{enumerate} 
\end{lemma}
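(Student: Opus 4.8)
The plan is to prove the three properties equivalent via the cycle $(3)\Rightarrow(1)\Rightarrow(2)\Rightarrow(3)$, using throughout only standard features of CAT(0) geometry: the metric is convex along pairs of geodesics, the function $t\mapsto d(\gamma,c(t))$ is convex along any geodesic $c$, and the nearest point projection $\pi_\gamma$ onto the (complete, convex) image of $\gamma$ is single valued, continuous and $1$-Lipschitz, with the further properties that $\pi_\gamma(a)=\pi_\gamma(x)$ for every $a$ on the geodesic $[x,\pi_\gamma(x)]$ and that the Alexandrov angle at $\pi_\gamma(x)$ between $[\pi_\gamma(x),x]$ and $[\pi_\gamma(x),c]$ is at least $\pi/2$ for every $c$ on $\gamma$, so that CAT(0) comparison gives $d(x,c)^2\ge d(x,\pi_\gamma(x))^2+d(\pi_\gamma(x),c)^2$.

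For $(3)\Rightarrow(1)$, assume $\gamma$ is $D$-contracting. The first step is to upgrade this to a \emph{bounded geodesic image} statement: there is $D'=D'(D)$ such that every geodesic segment $\tau$ with $d(\tau,\gamma)\ge D'$ has $\diam\pi_\gamma(\tau)\le D'$, obtained by walking along $\tau$ in steps no longer than the current distance to $\gamma$, applying the contracting inequality to successive points, and using the comparison estimate to keep the bound independent of the length of $\tau$. Then, given $x\in\Sigma$ and $y\in\gamma$: the convex function $t\mapsto d(\gamma,[x,y](t))$ vanishes at $y$, so there is a first point $z$ on $[x,y]$ with $d(z,\gamma)\le D'$; the sub-segment $[x,z]$ keeps distance $\ge D'$ from $\gamma$, hence $d(\pi_\gamma(x),\pi_\gamma(z))\le D'$, whence $d(\pi_\gamma(x),[x,y])\le d(\pi_\gamma(x),\pi_\gamma(z))+d(\pi_\gamma(z),z)\le 2D'$, i.e.\ $\gamma$ is slim.

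For $(1)\Rightarrow(2)$, assume $\gamma$ is $\delta$-slim and let $\sigma$ be a $(K,L)$-quasi-geodesic with endpoints $p,q$ on $\gamma$; one must bound $\sup_t d(\sigma(t),\gamma)$ in terms of $K,L,\delta$ only. For $x$ on $\sigma$, slimness applied to $(x,p)$ and $(x,q)$ produces $u\in[x,p]$, $v\in[x,q]$ within $\delta$ of $\pi_\gamma(x)$, hence $d(u,v)\le 2\delta$; the path $[p,u]\cup[u,v]\cup[v,q]$ joins $p$ to $q$ with length at most $d(p,x)+d(x,q)-2\,d(x,\pi_\gamma(x))+4\delta$, so
\[
d(x,\gamma)\ \le\ \tfrac12\big(d(p,x)+d(x,q)-d(p,q)\big)+2\delta .
\]
Applied to the point $x$ of $\sigma$ farthest from $\gamma$ this is a first, $d(p,q)$-dependent bound; a standard local-to-global bootstrap — cutting $\sigma$ at the times it comes back within a controlled distance of $\gamma$ and re-running the estimate on the resulting excursions — removes the dependence on $d(p,q)$ and yields a Morse gauge $M=M(K,L)$.

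For $(2)\Rightarrow(3)$ I argue contrapositively: if $\gamma$ is not contracting then, using the equivalence of the contracting condition with the statement that balls disjoint from $\gamma$ have uniformly bounded $\pi_\gamma$-image, one can produce for each $n$ a pair $x_n,q_n$ with $d(x_n,q_n)<d(x_n,\pi_\gamma(x_n))$, with $d(\pi_\gamma(x_n),\pi_\gamma(q_n))\ge n$, and — after selecting the witnesses at the right scale — with $d(x_n,\pi_\gamma(x_n))$ of order $n$; then, by the right-angle comparison at $\pi_\gamma(x_n)$ and $\pi_\gamma(q_n)$ together with the fact that the two vertical pieces have constant $\pi_\gamma$-image, the concatenation $[\pi_\gamma(x_n),x_n]\cup[x_n,q_n]\cup[q_n,\pi_\gamma(q_n)]$ is a $(K,L)$-quasi-geodesic with constants independent of $n$ whose endpoints lie on $\gamma$ but which strays distance $\asymp n$ from $\gamma$, contradicting $(2)$ for $n$ large. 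The main obstacle throughout is exactly this scale-invariance of constants — in the bounded geodesic image step of $(3)\Rightarrow(1)$, in the bootstrap of $(1)\Rightarrow(2)$, and in the choice of witnesses in $(2)\Rightarrow(3)$: the \emph{shape} of every argument is forced by convexity and the right-angle property of $\pi_\gamma$, but making each bound depend only on $D$, resp.\ $\delta$, resp.\ $(K,L)$, and not on the ambient scale, is the real work and is the content of Sultan's and Charney--Sultan's arguments.
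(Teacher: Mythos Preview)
The paper does not give its own proof of this lemma: it is stated as a result of Sultan and Charney--Sultan and cited without argument. So there is nothing in the paper to compare your proposal against, and a one-line ``see \cite{Sultan,CharSul}'' would already match what the paper does.

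As for your sketch on its own merits: the cycle $(3)\Rightarrow(1)\Rightarrow(2)\Rightarrow(3)$ and the individual shapes of the arguments are the right ones, and you are honest that the uniformity of constants is where the work lies. The one place I would flag as more than a routine omission is your $(1)\Rightarrow(2)$: the inequality $d(x,\gamma)\le\tfrac12(d(p,x)+d(x,q)-d(p,q))+2\delta$ is correct, but for a $(K,L)$-quasi-geodesic the right-hand side is of order $(K^2-1)d(p,q)$, and the ``cut at return times and re-run'' bootstrap you describe does not obviously terminate with a bound independent of $d(p,q)$ --- each excursion is itself a quasi-geodesic between points of $\gamma$ only up to an additive error, and iterating can accumulate. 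In practice one usually avoids this step entirely by going $(1)\Rightarrow(3)\Rightarrow(2)$ instead (slim $\Rightarrow$ contracting is a short projection argument in CAT(0), and contracting $\Rightarrow$ Morse is the standard ``project the quasi-geodesic'' computation), which is closer to how Charney--Sultan actually organize things. Your $(3)\Rightarrow(1)$ and $(2)\Rightarrow(3)$ sketches are fine.
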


 The \textit{Morse boundary} $\morse \Sigma$ of a proper geodesic metric space $\Sigma$ is a topological space with underlying set 
\begin{align*}
	\morse \Sigma&\coloneqq \{\as \alpha \mid \alpha \text{ is a Morse geodesic ray}\}.
\end{align*}
The topology of $\morse \Sigma$ is a direct-limit topology so that in the CAT(0)-case, $\morse \Sigma$ and $\partial_c \Sigma$ are homeomorphic: 
 Suppose that $\base$ is a basepoint in the proper geodesic metric space $\Sigma$ and $N$ is a Morse gauge. Let 
\begin{align*}
	\partial^{N}_M\Sigma_{\base}&\coloneqq 	\{\as \alpha \mid \exists \beta \in \alpha(\infty)\text{ that is an $N$-Morse geodesic ray with } \beta(0)=\base\}
\end{align*}
endowed with the compact-open topology.

The following is Lemma 3.1 in \cite{Cordes_properMorse}:
\begin{lemma}\label{sets}
	Let $\Sigma$ be a proper geodesic metric space and $\base \in  \Sigma$. For $N = N(K,L)$ a Morse gauge, let $\delta_N \coloneqq \max\{4N(1,2N(5,0))+2N(5,0), 8N(3,0)\}$. Let 
	$\alpha:[0,\infty) \to \Sigma$ be a $N$-Morse geodesic ray with $\alpha(0)= \base$ and for each positive integer $n$ let $V_n(\alpha)$ be the set of geodesic rays $\gamma$ such that $\gamma(0)= \base$ and $d(\alpha(t), \gamma(t)) < \delta_N$ for all $t < n$. Then 
	\begin{align}
		\{V_n(\alpha) \mid n \in \N\} 
	\end{align}
	is a fundamental system of (not necessarily open) neighborhoods of $\alpha(\infty)$ in $\partial_M^N \Sigma_{\base}$. 
\end{lemma}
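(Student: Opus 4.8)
The statement to prove is Lemma~3.1 of Cordes (\cref{sets}): that the family $\{V_n(\alpha)\}_{n\in\N}$ forms a fundamental system of neighborhoods of $\alpha(\infty)$ in $\partial_M^N\Sigma_{\base}$, which carries the compact--open topology. Since this is quoted from the literature rather than reproved, I would only sketch the argument one expects. The approach is two-sided: first show each $V_n(\alpha)$ actually contains an open neighborhood of $\alpha(\infty)$ in $\partial_M^N\Sigma_{\base}$ (so the $V_n$ are ``neighborhoods''), and second show every compact--open neighborhood of $\alpha(\infty)$ contains some $V_n(\alpha)$ (so they are cofinal, i.e. fundamental).

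\textbf{Step 1: the $V_n$ are neighborhoods.} Recall that in the compact--open topology on maps $[0,\infty)\to\Sigma$, a subbasic open set is $\{\gamma : \gamma(K)\subseteq U\}$ for $K\subseteq[0,\infty)$ compact and $U\subseteq\Sigma$ open; intersecting finitely many of these over $K=[0,m]$ and small balls around $\alpha([0,m])$ yields the standard ``$\eps$-close on $[0,m]$'' basic open sets $\{\gamma : d(\alpha(t),\gamma(t))<\eps\ \forall t\le m\}$. The plan is: given $V_n(\alpha)$, which asks $d(\alpha(t),\gamma(t))<\delta_N$ for $t<n$, simply take the basic open set with $m=n-1$ (or $m$ slightly less than $n$) and $\eps=\delta_N$; this open set is contained in $V_n(\alpha)$ by definition, and contains $\alpha(\infty)$ (represented by $\alpha$ itself, which is $0$-close to itself). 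Hence each $V_n(\alpha)$ is a neighborhood. This step is essentially bookkeeping about the definition of the compact--open topology on a space of rays out of a fixed basepoint in a proper space.

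\textbf{Step 2: cofinality — the real content.} Here one must show: if $O$ is open in $\partial_M^N\Sigma_{\base}$ and contains $\alpha(\infty)$, then $V_n(\alpha)\subseteq O$ for some $n$. Unwinding, it suffices to show that given $\eps>0$ and $m\ge 0$, there is $n$ such that every $N$-Morse ray $\gamma$ from $\base$ with $d(\alpha(t),\gamma(t))<\delta_N$ for all $t<n$ already satisfies $d(\alpha(t),\gamma(t))<\eps$ for all $t\le m$. This is a ``fellow-traveling stabilizes'' phenomenon: two $N$-Morse geodesic rays from a common basepoint that stay within the \emph{fixed} bound $\delta_N$ of each other for a long time $n$ must in fact stay \emph{uniformly} (in fact arbitrarily) close on any fixed initial segment $[0,m]$, once $n\gg m$. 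The key geometric input is the Morse/stability property: an $N$-Morse ray is $N$-slim and contracting, so if $\gamma_1,\gamma_2$ are $\delta_N$-close at the large parameter $n$, the concatenation $\gamma_1|_{[0,n]}\cdot(\text{short geodesic from }\gamma_1(n)\text{ to }\gamma_2(n))\cdot\gamma_2|_{[n,0]}$ is a quasigeodesic with controlled constants, hence fellow-travels $\gamma_2$ within $N(\cdot,\cdot)$, forcing $\gamma_1$ and $\gamma_2$ to be $2N(\cdot,\cdot)$-close on $[0,m]$; and by convexity of the CAT(0) metric (or the slimness estimate packaged into the constant $\delta_N$), this closeness improves to be as small as we like on $[0,m]$ by pushing $n\to\infty$. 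The precise choice of $\delta_N=\max\{4N(1,2N(5,0))+2N(5,0),\,8N(3,0)\}$ is exactly what makes this quasigeodesic-fellow-traveling estimate close up.

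\textbf{Main obstacle.} The genuinely delicate point is Step 2, and within it the bound $\delta_N$: one needs that the fixed threshold $\delta_N$ (the same for all $n$) is \emph{large enough} that the set $V_n(\alpha)$ is nonempty/robust, yet the Morse constraint is \emph{strong enough} that closeness on a long segment forces closeness on every fixed shorter segment with no loss of the constant $N$. Getting the quasigeodesic constants $(K,L)$ in the concatenation argument to land inside the arguments of $N(\cdot,\cdot)$ appearing in $\delta_N$ — i.e.\ checking $(3,0)$ and $(1,2N(5,0))$ and $(5,0)$ are the right parameters — is the calculation the constant was designed for, and it is the part one would actually have to grind through; everything else is formal topology of the compact--open and direct-limit topologies. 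Since the lemma is cited verbatim from \cite{Cordes_properMorse}, in the paper one simply invokes it.
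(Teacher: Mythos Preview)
You correctly identify that the paper does not prove this lemma at all; it is quoted verbatim as Lemma~3.1 of \cite{Cordes_properMorse} and simply invoked. In that sense your ``proof'' matches the paper's: cite and move on.

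That said, your sketch of Step~2 has two genuine issues worth flagging. First, the concatenation you write down, $\gamma_1|_{[0,n]}\cdot(\text{short segment})\cdot\gamma_2|_{[n,0]}$, is a closed loop based at $\base$, not a quasigeodesic, so the Morse property cannot be applied to it as stated. The intended path is just $\gamma_1|_{[0,n]}$ followed by the short jump to $\gamma_2(n)$; this is a $(1,\delta_N)$-quasigeodesic with endpoints on $\gamma_2$, and Morseness of $\gamma_2$ then forces $\gamma_1|_{[0,n]}$ into a uniform neighborhood of $\gamma_2$. But that only yields closeness up to a constant depending on $N$, not an arbitrary $\epsilon$, so it does not by itself give cofinality. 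Second, your fallback to ``convexity of the CAT(0) metric'' is unavailable: the lemma is stated for arbitrary proper geodesic metric spaces.

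Cordes' actual route to cofinality is a compactness argument rather than a direct improvement estimate: if no $V_n(\alpha)$ is contained in the given open set $O$, pick $\gamma_n\in V_n(\alpha)\setminus O$, pass to an Arzel\`a--Ascoli limit $\gamma$, observe that $\gamma$ is $N$-Morse and stays within $\delta_N$ of $\alpha$ forever (hence represents $\alpha(\infty)\in O$), and derive a contradiction from $\gamma_n\to\gamma$ in the compact--open topology. The constant $\delta_N$ is engineered so that two asymptotic $N$-Morse rays from $\base$ are already within $\delta_N$ of each other, which is what makes the $V_n$ genuine neighborhoods of the equivalence class rather than of a particular representative.
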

By Proposition 3.12 in \cite{Cordes_properMorse}, $\partial^{N}_M\Sigma_{\base}$ is compact for each Morse gauge $N \in \mathcal M$.

Let $\mathcal M$ be the set of all Morse gauges. If $N$ and $N'$ are two Morse gauges, we say that $N \le N'$ if and only if $N(\lambda, \epsilon) \le N'(\lambda, \epsilon)$. This defines a partial ordering on $\mathcal M$. 
Corollary 3.2 in \cite{Cordes_properMorse} and the proof of Proposition 4.2 in \cite{Cordes_properMorse} implies 
\begin{lemma}
	Let $N$ and $N'$ two Morse gauges such that $N \le N'$ and $\Sigma$ a proper geodesic metric space with basepoint $\base$. Then the associated inclusion map $\iota_{N,N'}:  \morse^N \Sigma_{\base} \embed  \morse^{N'} \Sigma_{\base}$ is a topological embedding. \label{goodembeddings}
\end{lemma}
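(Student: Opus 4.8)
The plan is to check that $\iota_{N,N'}$ is a well-defined continuous injection and then to upgrade ``continuous injection'' to ``topological embedding'' by a compactness argument, so that no estimate beyond what is already recorded in \cref{sets} is needed. \emph{Well-definedness and injectivity:} since $N\le N'$ we have $N(K,L)\le N'(K,L)$ for all $K\ge 1$, $L\ge 0$, so any $(K,L)$-quasi-geodesic contained in the $N(K,L)$-neighbourhood of a geodesic ray is also contained in its $N'(K,L)$-neighbourhood; hence every $N$-Morse ray is $N'$-Morse. Thus the underlying set of $\morse^{N}\Sigma_{\base}$ is contained in that of $\morse^{N'}\Sigma_{\base}$, and $\iota_{N,N'}$ is the corresponding set-theoretic inclusion, which is injective.

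\emph{Continuity.} I would read this off from the neighbourhood bases of \cref{sets}. Recalling the standard convention that Morse gauges are taken non-decreasing in both variables, the constant $\delta_N$ of \cref{sets} is monotone in $N$, so $\delta_N\le\delta_{N'}$. Fix $\as{\alpha}\in\morse^{N}\Sigma_{\base}$ and let $\alpha$ be an $N$-Morse (hence, by the previous step, $N'$-Morse) representative based at $\base$; let $U\subseteq\morse^{N'}\Sigma_{\base}$ be open with $\iota_{N,N'}(\as{\alpha})\in U$. By \cref{sets} the sets $V_n(\alpha)\cap\morse^{N'}\Sigma_{\base}$, formed with the constant $\delta_{N'}$, form a fundamental system of neighbourhoods of $\as{\alpha}$ in $\morse^{N'}\Sigma_{\base}$, so one of them lies in $U$. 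Since $\delta_N\le\delta_{N'}$ and $\morse^{N}\Sigma_{\base}\subseteq\morse^{N'}\Sigma_{\base}$, the set $V_n(\alpha)\cap\morse^{N}\Sigma_{\base}$ formed with $\delta_N$ is contained in it, hence in $\iota_{N,N'}^{-1}(U)$, and it is a neighbourhood of $\as{\alpha}$ in $\morse^{N}\Sigma_{\base}$. So $\iota_{N,N'}^{-1}(U)$ is a neighbourhood of each of its points and therefore open; thus $\iota_{N,N'}$ is continuous. (Equivalently, $\iota_{N,N'}$ is a restriction of the identity on the space of based geodesic rays with the compact-open topology.)

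\emph{Embedding.} By the compactness statement recorded just above (Proposition 3.12 of \cite{Cordes_properMorse}), $\morse^{N}\Sigma_{\base}$ is compact. The space $\morse^{N'}\Sigma_{\base}$ is Hausdorff: two distinct boundary points have non-asymptotic based representatives, which by \cref{sets} are separated by neighbourhoods of the form $V_n(\cdot)$ (alternatively, the compact-open topology on continuous maps into the Hausdorff space $\Sigma$ is Hausdorff). A continuous injection from a compact space into a Hausdorff space is a topological embedding; hence so is $\iota_{N,N'}$.

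\emph{Where the work is.} The genuine inputs are the compactness of $\morse^{N}\Sigma_{\base}$, which is Cordes' Proposition 3.12 and which we are free to quote, and --- in the embedding step --- the Hausdorffness of $\morse^{N'}\Sigma_{\base}$; because in a general proper geodesic metric space a Morse boundary point need not have a unique representative based at $\base$, this rests on the explicit description of neighbourhoods in \cref{sets}. A ``soft'' proof avoiding compactness would instead have to show directly that at each point the $\delta_N$- and $\delta_{N'}$-neighbourhood bases are mutually cofinal; the difficult half of that is a quantitative comparison of how close two $N$-Morse rays based at $\base$ must remain once they are known to stay $\delta_{N'}$-close for a long time, and it is precisely to sidestep this that one appeals to compactness.
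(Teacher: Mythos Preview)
Your argument is correct. Note, however, that the paper does not give its own proof of this lemma: it simply records that the statement follows from Corollary~3.2 and the proof of Proposition~4.2 in \cite{Cordes_properMorse}. So there is nothing to compare on the paper's side beyond the citation.

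Your route---check continuity from the neighbourhood bases of \cref{sets}, then invoke compactness of $\morse^{N}\Sigma_{\base}$ (Cordes' Proposition~3.12) together with Hausdorffness of $\morse^{N'}\Sigma_{\base}$ to upgrade a continuous injection to an embedding---is a clean self-contained argument that avoids unpacking Cordes' proof of Proposition~4.2. Two small remarks. First, the monotonicity $\delta_N\le\delta_{N'}$ does require Morse gauges to be non-decreasing in their arguments; the paper's \cref{sets} does not state this, so it is worth saying explicitly that one may always replace a gauge by its non-decreasing envelope without changing the class of $N$-Morse rays. Second, your parenthetical alternative for Hausdorffness (``the compact-open topology on maps into a Hausdorff space is Hausdorff'') applies to rays, not to their asymptotic equivalence classes, so it does not finish the job on its own; the $V_n$-argument you give first is the one that actually works, using that any two asymptotic $N'$-Morse rays based at $\base$ stay uniformly $\delta_{N'}$-close (this is exactly the content of Cordes' Corollary~3.2, which the paper cites).
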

Now, let 

	\[\morse \Sigma\coloneqq \underset{\underset{N\in \mathcal M}{\longrightarrow}}{\lim}  \morse^{N} \Sigma_{\base}.\]
	with the induced direct limit topology, ie. a set $U$ is open in $\morse \Sigma$ if and only if $U \cap \partial_M\Sigma_{\base}^{N}$ is open in $\partial_M\Sigma_{\base}^{N}$ for all $N \in \mathcal M$. Cordes proves that this is well-defined, i.e. he shows independence of the basepoint. 




Finally, we will use the following consequence of the Theorem of Arzelà-Ascoli. It is Corollary 1.4 in \cite{Cordes_properMorse} and conform with \cite{Topology}.
\begin{lemma}[Arzelà-Ascoli]
	\label{Ascoli}
	Let $\Sigma$ be a proper metric space and $p \in \Sigma$. Then any sequence of geodesics $\gamma_n:[0,L_n] \to \Sigma$ with $\gamma_n(0)= p$ and $L_n \to \infty$ has a subsequence that converges uniformly on compact sets to a geodesic ray $\gamma:[0,\infty) \to \Sigma$.  
\end{lemma}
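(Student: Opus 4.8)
The plan is to run the classical Arzelà--Ascoli theorem on larger and larger compact intervals, glue the pieces together by a diagonal extraction, and then verify that the limit curve is again a geodesic. The role of properness is precisely to supply compact target spaces on which the classical theorem applies.

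First I would record two elementary facts. Every geodesic $\gamma_n$ is $1$-Lipschitz, so the family $\{\gamma_n\}$ is equicontinuous; and since $\Sigma$ is proper, every closed ball $\bar B(p,R)$ is compact. Now fix $k \in \N$. Since $L_n \to \infty$, all but finitely many $\gamma_n$ are defined on $[0,k]$, and for such $n$ we have $\gamma_n([0,k]) \subseteq \bar B(p,k)$, which is compact. Hence the restrictions $\gamma_n|_{[0,k]}$ form an equicontinuous family of maps from the compact interval $[0,k]$ into the compact metric space $\bar B(p,k)$, and the classical Arzelà--Ascoli theorem produces a subsequence converging uniformly on $[0,k]$. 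Applying this successively, I would extract a subsequence $(\gamma^{(1)}_n)$ of $(\gamma_n)$ converging uniformly on $[0,1]$, then a subsequence $(\gamma^{(2)}_n)$ of $(\gamma^{(1)}_n)$ converging uniformly on $[0,2]$, and so on; the diagonal sequence $\delta_n \coloneqq \gamma^{(n)}_n$ is eventually a subsequence of each $(\gamma^{(k)}_n)$ and therefore converges uniformly on $[0,k]$ for every $k$. Define $\gamma(t) \coloneqq \lim_{n\to\infty}\delta_n(t)$ for $t \in [0,\infty)$; since every compact subset of $[0,\infty)$ lies in some $[0,k]$, this is uniform convergence on compact sets.

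Finally I would check that $\gamma$ is a geodesic ray. For fixed $s,t \ge 0$, pick $k$ with $s,t \le k$; by continuity of the metric, $d(\gamma(s),\gamma(t)) = \lim_n d(\delta_n(s),\delta_n(t))$, and $d(\delta_n(s),\delta_n(t)) = |s-t|$ as soon as $n$ is large enough that $\delta_n$ is defined on $[0,k]$ and is a geodesic there. Hence $d(\gamma(s),\gamma(t)) = |s-t|$, so $\gamma$ is an isometric embedding of $[0,\infty)$ into $\Sigma$, and $\gamma(0) = \lim_n \delta_n(0) = p$. There is no deep obstacle here: the only point requiring care is the bookkeeping of the diagonal argument --- ensuring the tail of $(\delta_n)$ genuinely lies in each $(\gamma^{(k)}_n)$, so that uniform convergence on $[0,k]$ is inherited --- together with the routine observation that properness is exactly what makes the balls $\bar B(p,k)$ compact, which is what lets us invoke the classical theorem in the first place.
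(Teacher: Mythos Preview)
Your argument is correct and is the standard proof: equicontinuity from the $1$-Lipschitz property, pointwise relative compactness from properness of $\Sigma$, classical Arzel\`a--Ascoli on each $[0,k]$, a diagonal extraction, and finally passing the isometry condition to the limit. There is nothing to object to.

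The paper, however, does not prove this lemma at all: it simply quotes it as Corollary~1.4 of Cordes' paper on Morse boundaries (with a further reference to a topology text). So there is no ``paper's own proof'' to compare against; you have supplied what the paper treats as a black-box citation.
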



The topology of the Morse boundary is finer than the subspace topology of the visual boundary, i.e. if a set is open in the subspace topology of the visual boundary, then it is also open in the Morse boundary. Thus, 	\cref{cor:diffitinerary} implies

\begin{cor}[cutset property]
	\label{cor:diffitinerary2}
	Let $\Sigma$ be a complete CAT(0) space with treelike block decomposition $\mathcal B$ with adjacency graph $\mathcal T$ and $\base$ a chosen basepoint in $\Sigma$ that does not lie in a wall. 
	Let $\kappa$ be a connected component of $\morse \Sigma$ that contains two distinct boundary points $\gamma_1(\infty)$ and $\gamma_2(\infty)$. 
	Let $\gamma_1$ and $\gamma_2$ be the corresponding representatives starting at $\base$. 
	
	For every wall $W$ that occurs in $I(\gamma_1)$ or $I(\gamma_2)$ but not in both $I(\gamma_1)$ and $I(\gamma_2)$, there is a geodesic ray $\gamma \subseteq W$ such that $\as \gamma \in \kappa$.
\end{cor}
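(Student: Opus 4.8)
The plan is to rerun the proof of \cref{cor:diffitinerary} essentially word for word, with the visual boundary $\vis\Space$ replaced throughout by the Morse boundary $\morse\Space$. The bulk of that proof is purely geometric and survives the substitution untouched: I would delete from $\Tblock$ the edge labelled by $W$, obtaining subtrees $T_1,T_2$; set $\Space^i\coloneqq\bigcup_{B\in V(T_i)}B$, which is closed and convex by \cref{convex}; note that $W=\Space^1\cap\Space^2$ is closed and convex by \cref{wallconvex}; and use \cref{lem:defitin} together with convexity of $\Space^1$ and $\Space^2$ to conclude that one of $\gamma_1,\gamma_2$ has a tail in $\Space^1$ and the other a tail in $\Space^2$. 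None of this mentions a boundary. The only step in the proof of \cref{cor:diffitinerary} that touches the topology of $\vis\Space$ is the closing invocation of \cref{lem:keyproperty}, so the real work is to establish a Morse-boundary version of that lemma.

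For the Morse analogue I would argue by contradiction exactly as in the proof of \cref{lem:keyproperty}: suppose $\kappa$ is a connected component of $\morse\Space$ containing $\as{\gamma_1}$ and $\as{\gamma_2}$ but meeting no boundary point that is represented by a geodesic ray inside $W$, i.e. $\kappa\cap\vis W=\emptyset$, so that $\kappa\subseteq\mathcal Y_M\coloneqq\morse\Space\setminus\vis W$. That proof already establishes that $\vis\Space^1,\vis\Space^2,\vis W$ are closed in $\vis\Space$ and that $\vis\Space^1\setminus\vis W$ and $\vis\Space^2\setminus\vis W$ are open in $\vis\Space$, with $\vis\Space=\vis\Space^1\cup\vis\Space^2$ and $\vis\Space^1\cap\vis\Space^2=\vis W$. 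Now I would use the fact, recalled just before the corollary, that the topology of $\morse\Space$ is finer than the subspace topology it inherits from $\vis\Space$, i.e. that the inclusion $\iota\colon\morse\Space\hookrightarrow\vis\Space$ is continuous: pulling the above closed and open sets back along $\iota$ shows that $\morse\Space\cap\vis\Space^i$ is closed in $\morse\Space$ and that $\morse\Space\cap(\vis\Space^i\setminus\vis W)$ is open in $\morse\Space$. Intersecting with $\mathcal Y_M$ then exhibits $\mathcal Y_M$ as the disjoint union of the two sets $\morse\Space\cap(\vis\Space^i\setminus\vis W)$, $i=1,2$, each of which is open and closed in $\mathcal Y_M$. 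Since $\gamma_1\subseteq\Space^1$ and $\gamma_2\subseteq\Space^2$, the distinct points $\as{\gamma_1}$ and $\as{\gamma_2}$ land in the two different pieces, so they lie in different connected components of $\mathcal Y_M$; this contradicts the fact that $\kappa$ is connected, is contained in $\mathcal Y_M$, and contains both of them. Hence $\kappa$ does meet $\morse\Space\cap\vis W$, and any geodesic ray in $W$ representing such a point — one exists because $W$ is closed and convex, and it is Morse in $\Space$ because it is asymptotic to a Morse ray — is the ray required by the corollary. Feeding this Morse analogue into the geometric argument of the first paragraph, applied to $\Space=\Space^1\cup\Space^2$, then finishes the proof.

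I expect the only genuine subtlety to be the temptation to quote \cref{cor:diffitinerary} as a black box. That proposition only places a ray of $W$ in the connected component of $\vis\Space$ that \emph{contains} $\kappa$; but that visual component can be strictly larger than $\kappa$ and can already meet $\vis W$ while $\kappa$ does not, so nothing about the Morse component $\kappa$ follows formally. Forcing the endpoint into $\kappa$ is precisely what requires re-running the Ben-Zvi--Kropholler separation argument inside $\morse\Space$, and the continuity of $\iota$ is the single new ingredient that makes this go through. The remaining points — that the two sets genuinely partition $\mathcal Y_M$, and that under the contradiction hypothesis $\kappa\subseteq\mathcal Y_M$ — are bookkeeping that is already carried out, mutatis mutandis, in the proof of \cref{lem:keyproperty}.
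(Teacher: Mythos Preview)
Your proof is correct and is exactly what the paper's one-line argument is gesturing at: the paper simply records that the Morse topology is finer than the subspace topology inherited from $\vis\Sigma$ and declares that \cref{cor:diffitinerary} therefore yields the corollary, whereas you unfold this into the explicit pull-back of the clopen decomposition from the proof of \cref{lem:keyproperty} along the continuous inclusion $\morse\Sigma\hookrightarrow\vis\Sigma$. Your remark that the \emph{statement} of \cref{cor:diffitinerary} alone does not suffice (because the visual component containing $\kappa$ may be strictly larger) is a fair point and is precisely why the finer-topology observation is needed; the paper's ``Thus, \cref{cor:diffitinerary} implies\ldots'' should be read as ``the proof of \cref{cor:diffitinerary} transfers'', which is what you carry out.
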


\subsection{Loneliness of Morse geodesic rays with infinite itinerary}
\label{sec:Morse2}
Let $\Sigma$ be a proper CAT(0) space with treelike block decomposition $\mathcal B$ with adjacency graph $\mathcal T$ and $\base$ a chosen basepoint in $\Sigma$ that does not lie in a wall.
The goal of this subsection is to prove that geodesic rays starting at $\base$ of infinite itinerary are lonely.  I would like to thank Tobias Hartnick for his help to simplify the proof for this property. 	

\begin{prop}[Loneliness property]
	\label{Morse_conn_of_type_Ainf}
	Let $\alpha$ and $\beta$ be two distinct geodesic rays in $\Sigma$ starting at $\base$ that have infinite itinerary. If at least one of both is Morse, then $I(\alpha) \ne I(\beta)$. 	
\end{prop}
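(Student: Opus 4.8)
The plan is to prove the contrapositive by contradiction. Assume $I(\alpha)=I(\beta)=:(B_1,W_1,B_2,W_2,\dots)$ is a common infinite itinerary, that $\alpha$ is $M$-Morse for some Morse gauge $M$, and that nevertheless $\alpha\neq\beta$. Since $\Sigma$ is CAT(0), the function $t\mapsto d(\alpha(t),\beta(t))$ is convex, non-negative and vanishes at $t=0$; hence it is non-decreasing, and if it were bounded it would be identically $0$, forcing $\alpha=\beta$. Thus $d(\alpha(t),\beta(t))\to\infty$, and because $\alpha$ and $\beta$ are unit-speed rays from $\base$ this is the same as $d(\beta(t),\Image\alpha)\to\infty$: a subsequential bound $d(\beta(t_j),\alpha(s_j))\le R$ forces $|t_j-s_j|\le R$ and hence $d(\alpha(t_j),\beta(t_j))\le 2R$. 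So it suffices to derive, from the Morse hypothesis, a uniform bound on $d(\beta(t),\Image\alpha)$.

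By \cref{lem:defitin} and \cref{def:itinrays} there are parameters $\tau_k,\sigma_k\to\infty$ with $a_k:=\alpha(\tau_k)\in W_k$ and $b_k:=\beta(\sigma_k)\in W_k$ for every $k$; since $a_k\in\Image\alpha$ we already obtain $d(a_k,b_k)\ge d(b_k,\Image\alpha)\to\infty$. Now fix a large index $n$. Removing the edge $W_n$ from $\Tblock$ splits $\Sigma$ into the two closed convex half-spaces $\Sigma^1_n\supseteq\beta|_{[0,\sigma_n]}\cup W_n$ and $\Sigma^2_n\supseteq\alpha|_{[\tau_n,\infty)}\cup W_n$ with $\Sigma^1_n\cap\Sigma^2_n=W_n$ (by \cref{convex} and \cref{wallconvex}), and every geodesic from a point of $\Sigma^1_n\setminus W_n$ to a point of $\Sigma^2_n\setminus W_n$ crosses $W_n$ (\cref{passthrough}). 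The heart of the proof is to manufacture, from this data, a $(K,L)$-quasi-geodesic whose two endpoints lie on $\alpha$ and which passes within a bounded distance of $b_n$, with $K,L$ independent of $n$. I would build it from subsegments of $\alpha$ and of $\beta$ glued by short segments run across a bounded neighbourhood of the wall $W_n$; the potential for backtracking at the gluing points would be controlled by routing through nearest-point projections onto the convex sets $\Image\alpha$ and $\Image\beta$ (so that incident geodesic segments meet at angle $\ge\pi/2$), and the overall efficiency of the path would be controlled by the $\wallepsilon$-separation of distinct walls together with the fact that the path crosses $W_n$ exactly once. Given such a quasi-geodesic, the Morse hypothesis applies: it lies in the neighbourhood $N_{M(K,L)}(\alpha)$, whence $d(b_n,\Image\alpha)\le M(K,L)$; letting $n\to\infty$ (so $\sigma_n\to\infty$) contradicts the first paragraph, and we conclude $\alpha=\beta$.

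The main obstacle is precisely this construction. A geodesic connecting a point of $\alpha$ to a point of $\beta$ is long, since by the first paragraph the two rays are far apart; the competing quasi-geodesic must therefore be arranged so that this expenditure is paid back by genuine displacement along $\alpha$, and one must rule out a ``U-turn'' being formed across two successive gluing points (which would destroy the quasi-geodesic inequality even when each individual turning angle is at most $\pi/2$). This is exactly where the treelike hypotheses are indispensable: the convexity of $\Sigma^1_n$ and $\Sigma^2_n$ confines each piece of the path to one side of $W_n$, the fact that $\Sigma^1_n\cap\Sigma^2_n$ is a single wall forces the relevant projections onto $\Image\alpha$ and $\Image\beta$ to land on the correct side of $W_n$, and the uniform separation $\wallepsilon$ between distinct walls prevents the path from lingering near $W_n$. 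As the surrounding text emphasizes, this is the single place in the paper where the Morse property — equivalently, by \cref{slimness}, the contracting property — is invoked.
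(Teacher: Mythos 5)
There is a genuine gap: the step your whole argument hinges on --- producing, for each large $n$, a $(K,L)$-quasi-geodesic with \emph{uniform} constants, endpoints on $\alpha$, and passing within bounded distance of $b_n$ --- is never actually constructed; you yourself flag it as ``the main obstacle.'' As written, the plan is also internally inconsistent: the gluing arcs ``across a bounded neighbourhood of the wall $W_n$'' cannot be short, since any path from $a_n\in W_n\cap\Image\alpha$ to $b_n\in W_n\cap\Image\beta$ has length at least $d(a_n,b_n)$, which your own first paragraph shows tends to infinity. For the natural candidate path (along $\beta$ from $\base$ to $b_n$, across $W_n$ to $a_n$, then along $\alpha$) one would need an a priori comparison between $\sigma_n$, $\tau_n$ and $d(a_n,b_n)$ to get uniform constants, and no such comparison is available without already invoking a contraction-type estimate --- so the Morse hypothesis is being used before it has been earned, and the U-turn problem you mention is not resolved by quoting convexity of the halves $\Sigma^1_n,\Sigma^2_n$ alone.

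The paper avoids this construction entirely. It invokes the Morse property through its equivalent reformulations (\cref{slimness}): first, since $\beta$ is contracting and $\alpha\neq\beta$ start at $\base$, Proposition 3.7(2) of \cite{CharSul} bounds all projections $a_i^*=\pi_\beta(a_i)$ inside a fixed ball $B_R(\base)$; second, slimness of $\beta$ puts the geodesic segment $[a_i,b_i]$ within $\delta$ of $a_i^*$, hence within $B_{R+\delta}(\base)$. Since $a_i,b_i\in W_i$ and each wall is convex (\cref{wallconvex}), the segment $[a_i,b_i]$ lies in $W_i$, so infinitely many distinct walls meet the fixed ball $B_{R+\delta}(\base)$, contradicting \cref{ball} (properness plus the $\wallepsilon$-separation). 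Your opening reduction (divergence of $d(\alpha(t),\beta(t))$ by convexity) is correct but is not needed for this route; if you want to complete your own route, the missing quasi-geodesic estimate would in effect have to reprove the contraction/slimness input that the paper simply quotes.
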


This property is quite remarkable as it is the only point in this paper where we use the Morse-property. The Loneliness property does not hold for non-Morse geodesic rays in general.  The following example shows that visual boundaries might contain infinitely many geodesic rays that are pairwise non-asymptotic and have all the same infinite itinerary.

\begin{example}
	\begin{figure}[h]
	\centering
	\includegraphics[]{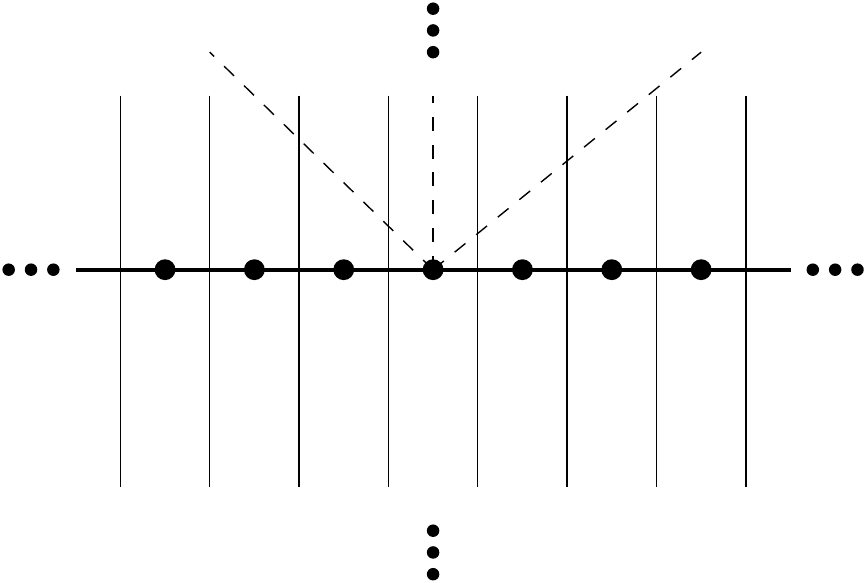}
	\caption{A block decomposition of the Euclidean plane in which the itinerary of each geodesic ray starting in the interior of a block is either trivial (i.e. a path consisting of one vertex) or an infinite path. The dashed lines denote three geodesic rays with different itineraries. }
	\label{fig:planeexample}
\end{figure}
	Let $\Sigma = \R^2$ and $\mathcal B \coloneqq \{[i,i+1]\times \R \mid i \in \Z\}$ as pictured in \cref{fig:planeexample}. For $i \in \Z$, let $B_i \coloneqq [i,i+1]\times \R$. The adjacency graph of $\mathcal B$ is a bi-infinite path of the form $(\dots, B_{-1}, B_{0}, B_{1}, \dots)$. 
	Let $\base \coloneqq (\frac{1}{2},0)$. A geodesic ray $\gamma$ starting at $\base$ can be of three different kinds:
	If $\gamma$ is parallel to the $Y$-axes, $\gamma$ is contained in the block $B_0$, i.e. its itinerary is the path that consists of the block $B_0$. 
	If $\gamma$ intersects the $Y$-axis $\R \times 0$, the itinerary of $\gamma$ is the infinite path $(B_0,B_{-1},\dots)$. 
	In the remaining case, $\gamma$ is not parallel to the $Y$-axis and does not intersect the $Y$-axis. In this situation, the itinerary of $\gamma$ is the infinite path $(B_0,B_{1},\dots)$.
\end{example}

The following proof is inspired by the example of Charney--Sultan discussed in \cref{subsecRACGIntro} and uses methods of the proof of Proposition 3.7 in~\cite{CharSul}.

\begin{proof}[Proof of \cref{Morse_conn_of_type_Ainf}]
	 \begin{figure}[h]
		\centering
		\includegraphics{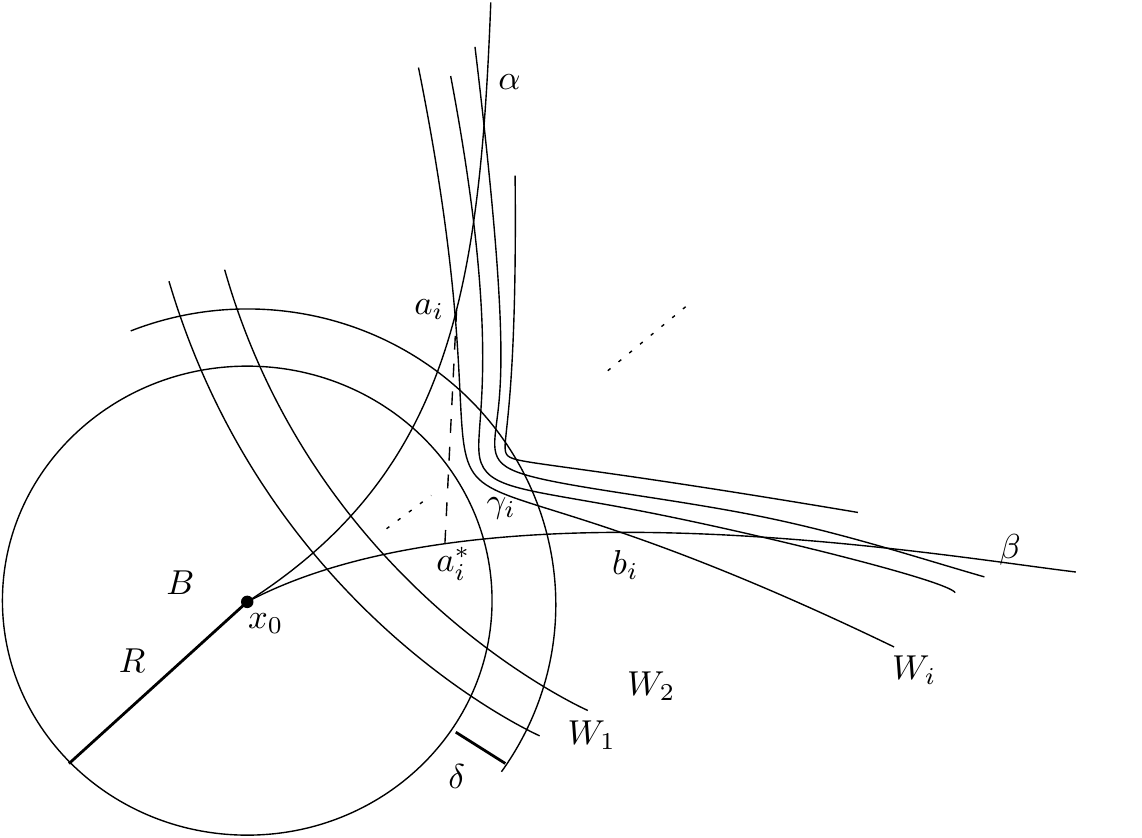}
		
		\caption{Illustration of the proof of the Loneliness property. }
		\label{fig:lonelinessproperty}
	\end{figure}
	The proof is illustrated in \cref{fig:lonelinessproperty}.
	Assume for a contradiction that $\alpha$ and $\beta$ are two geodesic rays starting in $\Sigma$ such that 
	\begin{itemize}
		\item $\alpha(0)=\beta(0)=\base$; and
		\item $\alpha \neq \beta$; and
		\item $I\coloneqq I(\alpha)=I(\beta)$; and
		\item $I$ is an infinite path; and
		\item $\beta$ is Morse.
	\end{itemize}
Let
	\begin{itemize}
	\item $(W_i)_{i \in \N}$ be the sequence of consecutive walls that are contained in $I$;
	\item $(a_i)_{i \in \N}$ a sequence of points such that $a_i \in W_i \cap \alpha$.
	\item $(b_i)_{i \in \N}$ a sequence of points such that $b_i \in W_i \cap \beta$.
	\item $(\gamma_i)_{i \in \N}$ the sequence of geodesic segments $\gamma_i$ connecting $a_i$ with $b_i$.
	\item $(a_i^*)_{i \in \N}$ be the sequence of projection points 	
	 $a_i^*\coloneqq \pi_\beta(a_i)$.
\end{itemize}

By proposition 3.7 (2) in \cite{CharSul}, there exists $R>0$ such that $\{a_i^*\mid i \in \mathbb N\} \subseteq B_R(\base)$, where $B_R(\base)$ denotes the closed $R$-ball about $\base$. 
For $i \in \mathbb N$, let $\Delta_i\coloneqq \Delta(a_i,b_i,a_i^*)$ be the geodesic triangle in $\Sigma$ with corners $a_i$, $b_i$ and $a_i^*$. As $\beta$ is Morse, $\beta$ is slim by \cref{slimness}. Hence, there exists $\delta>0$ such that $U_{\delta}(a_i^*)\cap \gamma_i\neq \emptyset$. As $a_i^* \in B_R(\base)$ it follows that $\gamma_i \cap B_{R+\delta}(\base)\neq \emptyset$. 

Recall that for each $i \in \mathbb N$, $a_i$ and $b_i$ are contained in a wall. As each wall is convex, $\gamma_i \subseteq W_i$. Thus, $W_i \cap B_{R+\delta(\base)}\neq \emptyset$ for all $i \in \N$.
We conclude that infinitely many walls intersects the ball $B_{R + \delta}(x_0)$. But this is impossible because of \cref{ball}.\end{proof}

\subsection{Relative Morse boundaries of convex subspaces}
\label{subsecMorse3}

Let $\Sigma$ be a proper geodesic metric space and $B$ a convex, complete subspace whose Morse boundary is \textit{$\sigma$-compact},  i.e. a union of countably many compact subspaces. Recall that $\relMorse{\Sigma}{B}$ denotes the \textit{relative Morse boundary of $B$ in $\Sigma$}, i.e.  the subset of $\morse \Sigma$ that consists of all equivalence classes of geodesic rays in $B$ that are Morse in the ambient space $\Sigma$. In this section, we study the relation of the two topological spaces that are obtained by endowing $\relMorse{\Sigma}{B}$ with the subspace topology of $\morse B$ and $\morse \Sigma$. Our goal is to proof \cref{blocklemma} and \cref{corOpenset}. 

I would like to thank Nir Lazarovich for his help to improve this section. Moreover,  I would like to thank Elia Fioravanti for sending me an example showing that the inverse of the embedding in the following lemma need not be continuous (see \cref{exNotcontinuous}).



	

\begin{lemma}
	\label{blocklemma}
	Let $\Sigma$ be a proper geodesic metric space with $\sigma$-compact Morse boundary. 
	Let $B$ be a complete, convex subspace of $\Sigma$ that contains all geodesic rays in $\Sigma$ that start in $B$ and are asymptotic to a geodesic ray in $B$. 
If we endow $\relMorse{\Sigma}{B}$ with the subspace topology of $\morse \Sigma$, then the map
	\begin{align*}
		\iota_*:\relMorse{\Sigma}{B} \embed \morse B \\
		\gamma(\infty)\to \gamma(\infty)
	\end{align*}
	is continuous.
\end{lemma}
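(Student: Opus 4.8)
The plan is to exploit the tautology that a geodesic ray contained in $B$ is literally the same curve whether read in $B$ or in $\Sigma$, and that $B$ carries the restricted metric, so the compact--open data defining the topologies of the Morse strata of $B$ and of $\Sigma$ agree on such rays. After fixing a basepoint, the only genuine work is the interaction with the direct-limit topologies. First I would reduce to $\base\in B$: the homeomorphism type of a Morse boundary is basepoint-independent, and $\relMorse{\Sigma}{B}$ and $\iota_*$ are defined on asymptoty classes. With $\base\in B$, the hypothesis on $B$ (it contains every ray of $\Sigma$ starting in $B$ and asymptotic to a ray of $B$), together with uniqueness of the $\base$-based representative, forces the $\base$-based representative of every $\as\gamma\in\relMorse{\Sigma}{B}$ to lie in $B$; and since a $(K,L)$-quasi-geodesic of $B$ is one of $\Sigma$ and $d_B=d_\Sigma|_B$, such a representative is $N$-Morse in $B$ whenever it is $N$-Morse in $\Sigma$, while asymptoty in $\Sigma$ of two rays of $B$ is asymptoty in $B$; hence $\iota_*$ is well defined and injective.

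Second, for each Morse gauge $N$ the map $\iota_*$ restricts to
\[
  Z_N\;:=\;\relMorse{\Sigma}{B}\cap\morse^N\Sigma_\base\;\longrightarrow\;\morse^N B_\base ,
\]
which on $\base$-based representatives is the identity on curves. As both $\morse^N\Sigma_\base$ and $\morse^N B_\base$ carry the compact--open topology and $d_B=d_\Sigma|_B$, this is a homeomorphism onto its image, in particular continuous; composing with the canonical continuous map $\morse^N B_\base\to\morse B$ into the direct limit, $\iota_*$ is continuous on each $Z_N$, where $Z_N$ is given the subspace topology from $\morse^N\Sigma_\base$, equivalently from $\morse\Sigma$, since $\morse^N\Sigma_\base\embed\morse\Sigma$ is a topological embedding by \cref{goodembeddings} together with the compactness of the strata (Proposition~3.12 of \cite{Cordes_properMorse}).

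Third, I would assemble these stratum-wise continuities. The key observation is that $\relMorse{\Sigma}{B}$ is closed in $\morse\Sigma$: the evaluation map $\as\gamma\mapsto\gamma(t)$ is continuous on each stratum for the compact--open topology, hence continuous $\morse\Sigma\to\Sigma$; $B$ is closed in $\Sigma$ (being convex and complete); and $\relMorse{\Sigma}{B}=\bigcap_{t\in\Q_{\ge 0}}\{\as\gamma:\gamma(t)\in B\}$. A closed subspace of a direct limit taken over a directed system of closed embeddings again carries the direct-limit topology (the bonding maps $\morse^N\Sigma_\base\embed\morse^{N'}\Sigma_\base$ are closed embeddings, being continuous injections of compact spaces into Hausdorff ones), so $\relMorse{\Sigma}{B}=\varinjlim_N Z_N$ as topological spaces. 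Since a map out of a direct limit is continuous iff it is continuous on each term, the second step yields that $\iota_*$ is continuous. (The $\sigma$-compactness hypothesis makes $\morse\Sigma$ sequential with a countable exhaustion by compact metrizable strata, which permits an equivalent, perhaps more transparent argument: $\iota_*^{-1}$ of a closed set is sequentially closed, because a convergent sequence together with its limit is a compact subset of $\morse\Sigma$, hence confined to a single stratum on which the topologies of $B$ and $\Sigma$ agree.)

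The step I expect to be the main obstacle is the third: continuity on each stratum is essentially formal, but one has to be sure that the subspace topology $\relMorse{\Sigma}{B}$ inherits from $\morse\Sigma$ is genuinely detected stratum by stratum — i.e. that $\relMorse{\Sigma}{B}$ is closed in $\morse\Sigma$ and that the inclusions $\morse^N\Sigma_\base\embed\morse\Sigma$ are closed topological embeddings — since the direct-limit topology of a Morse boundary is notoriously delicate and continuity into it is not controlled by the strata of an arbitrary subspace. Making these closedness and embedding statements precise (using \cref{goodembeddings}, the compactness and Hausdorffness of the strata, and, if one prefers the sequential route, $\sigma$-compactness of $\morse\Sigma$) is where the real care is needed.
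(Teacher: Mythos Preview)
Your proposal is correct and follows essentially the same route as the paper: establish that $\relMorse{\Sigma}{B}$ is closed in $\morse\Sigma$ (the paper does this as \cref{closedisclosed} via Arzel\`a--Ascoli rather than evaluation maps, which sidesteps the non-uniqueness of $\base$-based representatives in a general proper geodesic space), deduce that its subspace topology coincides with the direct limit of the strata $Z_N$ (the paper isolates and proves exactly this fact as \cref{dirlim2}, using the countability coming from $\sigma$-compactness), and then verify continuity stratum by stratum into $\morse B$ via \cref{dirlim1}. Your identification of the third step as the crux matches the paper's emphasis precisely.
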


\begin{cor}
	\label{corOpenset}
	Let $B$ be a closed convex subspace of a proper CAT(0) space $\Sigma$. 
If  $\relMorse{\Sigma}{B}$ endowed with the subspace topology of $\morse B$ is totally disconnected, then $\relMorse{\Sigma}{B}$ endowed with the subspace topology of $\morse \Sigma$ is totally disconnected.
\end{cor}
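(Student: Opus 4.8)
The plan is to deduce \cref{corOpenset} formally from \cref{blocklemma}, using nothing more than the elementary fact that refining a totally disconnected topology leaves it totally disconnected.

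First I would check that $B$ satisfies the hypotheses of \cref{blocklemma}. A proper CAT(0) space is a proper geodesic metric space, hence complete, and its Morse boundary $\morse\Sigma$ is $\sigma$-compact, being a countable direct limit of the compact strata $\morse^N\Sigma_\base$. A closed subset of a complete space is complete, so $B$ is complete, and it is convex by assumption. It remains to see that $B$ contains every geodesic ray of $\Sigma$ that starts in $B$ and is asymptotic to a geodesic ray in $B$: if $\gamma$ is such a ray, with $\gamma(0)=b\in B$ and $\gamma$ asymptotic to some ray $\gamma'\subseteq B$, then, $B$ being a complete convex subspace of the CAT(0) space $\Sigma$, there is a geodesic ray $\sigma$ in $B$ issuing from $b$ and asymptotic to $\gamma'$; since $B$ is convex, $\sigma$ is also a geodesic ray of $\Sigma$, and it is asymptotic to $\gamma$, so by uniqueness of the geodesic ray from a given point in a prescribed asymptoty class (\cite[Prop.~8.2, II]{BH}) we get $\sigma=\gamma$, whence $\gamma\subseteq B$.

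With the hypotheses verified, \cref{blocklemma} says that the identity map $\iota_*\colon \relMorse{\Sigma}{B}\to\morse B$ is continuous when $\relMorse{\Sigma}{B}$ carries the subspace topology $\tau_\Sigma$ inherited from $\morse\Sigma$; equivalently, on the set $\relMorse{\Sigma}{B}$ the topology $\tau_\Sigma$ is finer than the subspace topology $\tau_B$ inherited from $\morse B$. Now let $C\subseteq\relMorse{\Sigma}{B}$ be connected for $\tau_\Sigma$. Since the identity $(\relMorse{\Sigma}{B},\tau_\Sigma)\to(\relMorse{\Sigma}{B},\tau_B)$ is continuous and continuous images of connected sets are connected, $C$ is connected for $\tau_B$; by hypothesis $(\relMorse{\Sigma}{B},\tau_B)$ is totally disconnected, so $C$ is a single point. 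Hence every connected component of $(\relMorse{\Sigma}{B},\tau_\Sigma)$ is a point, which is exactly the assertion.

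I do not expect a genuine obstacle here: all of the content is already carried by \cref{blocklemma}, and the corollary is a purely formal consequence of it. The only steps requiring a little care are the routine verifications above, namely the $\sigma$-compactness of $\morse\Sigma$ for a proper CAT(0) space and the fact that a closed convex subset of a CAT(0) space absorbs the asymptotic rays it spawns; once these are in hand, the coarse-to-fine transfer of total disconnectedness is immediate.
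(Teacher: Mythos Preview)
Your proposal is correct and follows essentially the same route as the paper: verify the hypotheses of \cref{blocklemma} (completeness and convexity of $B$, $\sigma$-compactness of $\morse\Sigma$, and that $B$ absorbs asymptotic rays issuing from it), then use continuity of $\iota_*$ to push total disconnectedness from the coarser topology to the finer one. One small imprecision: the direct system of Morse gauges is not itself countable, so $\sigma$-compactness is not quite ``being a countable direct limit of the compact strata $\morse^N\Sigma_\base$''; the paper instead invokes the main theorem of \cite{CharSul} (the contracting-boundary description over $N\in\N$) to get $\sigma$-compactness in the CAT(0) case, and your argument should do the same.
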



\begin{proof}[Proof of \cref{corOpenset}]
	If the  assumptions of \cref{blocklemma} are satisfied, then the continuity of $\iota_*$ implies  \cref{corOpenset}. Hence it remains to verify the assumptions of \cref{blocklemma}: The Main theorem in \cite{CharSul} implies that $\morse \Sigma$ is $\sigma$-compact. Moreover, $B$ contains all geodesic rays in $\Sigma$ that start in $B$ and are asymptotic to a geodesic ray in $B$ because $\Sigma$ is CAT(0) and $B$ is closed and convex. 
\end{proof}



For proving \cref{blocklemma}, we need the following facts about direct limits. 
The following lemma is Lemma 3.10 in \cite{CharSul}. For completeness, we cite their proof as well. 
\begin{lemma}[Charney--Sultan]
	\label{dirlim1}
	Let $X=\underset{\longrightarrow}{\lim} X_i$, $Y = \underset{\longrightarrow}{\lim}  Y_i$ with the direct limit topology. Suppose $f:X \to Y$ is a function so that $f(X_i) \subseteq Y_{g(i)}$ where $g: \N \to \N$ is a non-decreasing function. If  $f_i: X_i \to Y_{g(i)}$, $x \mapsto f(x)$ is continuous for all $i$, then $f$ is continuous. 
\end{lemma}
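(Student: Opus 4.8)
The plan is to reduce the continuity of $f$ to the continuity of the individual maps $f_i$ by invoking the universal property of the direct limit topology on the source $X$. Recall that, by definition of that topology, a subset $U \subseteq X$ is open exactly when its preimage under each canonical map $\iota_i^X \colon X_i \to X$ is open in $X_i$; equivalently, a map out of $X$ into an arbitrary topological space is continuous if and only if its precomposition with every $\iota_i^X$ is continuous. So the first step is to observe that it suffices to show that $f \circ \iota_i^X \colon X_i \to Y$ is continuous for every $i$.

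Next I would identify this composite. Since $f(X_i) \subseteq Y_{g(i)}$, the map $f \circ \iota_i^X$ factors as $\iota_{g(i)}^Y \circ f_i$, where $\iota_{g(i)}^Y \colon Y_{g(i)} \to Y$ is the canonical map into the direct limit $Y$ and $f_i$ is the corestriction of $f$ appearing in the hypothesis. The map $\iota_{g(i)}^Y$ is continuous: by the definition of the direct limit topology on $Y$, for any open $V \subseteq Y$ the set $(\iota_{g(i)}^Y)^{-1}(V)$ is open in $Y_{g(i)}$. The map $f_i$ is continuous by assumption. Hence $f \circ \iota_i^X = \iota_{g(i)}^Y \circ f_i$ is a composition of continuous maps, and therefore continuous. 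Feeding this back into the first step, for every open $V \subseteq Y$ and every $i$ we get that $(\iota_i^X)^{-1}(f^{-1}(V)) = (f \circ \iota_i^X)^{-1}(V)$ is open in $X_i$, so $f^{-1}(V)$ is open in $X$ and $f$ is continuous.

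I do not expect any genuine obstacle here; this is essentially the universal property of a colimit unwound by hand. The only point that requires care is to keep straight which of the two direct limits is being used at each stage: the topology on $X$ is what permits testing continuity factor by factor through the $\iota_i^X$, while the topology on $Y$ is what makes the structure maps $\iota_{g(i)}^Y$ continuous. I would note in passing that the monotonicity of $g$ is not needed for the continuity statement itself — it would be relevant only if one wanted $f$ to be induced by a morphism of directed systems — so I would not dwell on it.
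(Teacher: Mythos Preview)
Your argument is correct and is essentially the same as the paper's: both take an open $U\subseteq Y$, use that $U\cap Y_{g(i)}$ is open in $Y_{g(i)}$ (continuity of the structure maps into $Y$), apply continuity of $f_i$ to see $f^{-1}(U)\cap X_i$ is open in $X_i$, and conclude via the direct limit topology on $X$. Your side remark that the monotonicity of $g$ is not actually used for continuity is also correct.
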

\begin{proof}
	Let $U\subseteq Y$ be open. Then $U_{g(i)}= U \cap Y_{g(i)}$ is open in $Y_{g(i)}$ for all $i$. Since $f_i$ is continuous, if follows that $f^{-1}(U)\cap X_i = f^{-1}(U_{g(i)}) \cap X_i = f_i^{-1}(U_{g(i)})$ is open in $X_i$. By definition of the direct limit topology, $f^{-1}(U)$ is open in $X$. 
\end{proof}
In the following lemma, we study the direct limit of countably many topological spaces.
\begin{lemma}\label{dirlim2}
	Let $X=\underset{\underset{i\in \N}{\longrightarrow}}{\lim}  X_i$ be a direct limit of topological spaces $X_i$ with associated topological embeddings $\iota_{i,j}:X_i \to X_j$ where $i, j \in \mathbb N$, $i \le j$. Let $B \subseteq X$ be a closed subspace of $X$. We equip $B \cap X_i$ with the supspace topology of $X_i$ for all $i$.
	Then   $\underset{\underset{i\in \N}{\longrightarrow}}{\lim}  (X_i\cap B)$ is homeomorphic to $B$ equipped with the subspace topology of $X$. \end{lemma}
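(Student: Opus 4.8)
The plan is to exhibit the obvious set-theoretic bijection $\varphi\colon \varinjlim_{i\in\N}(X_i\cap B)\to B$ and show it is a homeomorphism, working directly from the universal property / explicit description of the direct-limit topology. First I would fix notation: since the maps $\iota_{i,j}$ are topological embeddings, I may regard each $X_i$ as a subspace of $X$ with $X=\bigcup_i X_i$, and then $X_i\cap B$ is literally a subset of $B$, with $B=\bigcup_i(X_i\cap B)$; the inclusions $X_i\cap B\hookrightarrow X_j\cap B$ for $i\le j$ are again topological embeddings (a subspace of a subspace is a subspace), so the direct limit $\varinjlim(X_i\cap B)$ makes sense and its underlying set is $\bigcup_i(X_i\cap B)=B$. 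So $\varphi$ is just the identity on underlying sets, and the content is purely about the two topologies on $B$: call $\tau_{\mathrm{dir}}$ the direct-limit topology of the $X_i\cap B$ and $\tau_{\mathrm{sub}}$ the subspace topology inherited from $X$.

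Next I would prove $\tau_{\mathrm{sub}}\subseteq\tau_{\mathrm{dir}}$, i.e.\ that $\varphi$ is continuous. Let $U$ be open in $B$ for the subspace topology, so $U=V\cap B$ with $V$ open in $X$. By definition of the direct limit topology on $X$, $V\cap X_i$ is open in $X_i$ for every $i$; hence $U\cap(X_i\cap B)=(V\cap X_i)\cap B$ is open in the subspace topology of $X_i\cap B$. Since this holds for all $i$, $U$ is open in $\tau_{\mathrm{dir}}$. (Note this direction uses nothing about $B$ being closed.)

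The reverse inclusion $\tau_{\mathrm{dir}}\subseteq\tau_{\mathrm{sub}}$, equivalently continuity of $\varphi^{-1}$, is where the closedness hypothesis enters and is the main obstacle. Let $U\subseteq B$ be open in $\tau_{\mathrm{dir}}$, so $U\cap(X_i\cap B)$ is open in $X_i\cap B$ for every $i$; I must produce $V$ open in $X$ with $V\cap B=U$. Equivalently, setting $F\coloneqq B\setminus U$, I must show $F$ is closed in $X$, i.e.\ that $F\cap X_i$ is closed in $X_i$ for every $i$. Now $F\cap X_i=(B\cap X_i)\setminus U=(B\cap X_i)\setminus(U\cap(B\cap X_i))$ is closed in the subspace $B\cap X_i$ since $U\cap(B\cap X_i)$ is open there; and $B\cap X_i$ is closed in $X_i$ because $B$ is closed in $X$ and $X_i\hookrightarrow X$ is a (topological) embedding, so $B\cap X_i$ is closed in the subspace topology on $X_i$, which coincides with the given topology on $X_i$. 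A closed subset of a closed subspace is closed in the ambient space, so $F\cap X_i$ is closed in $X_i$. As this holds for all $i$, $F$ is closed in $X$, hence $U=B\setminus F=(X\setminus F)\cap B$ is open in $\tau_{\mathrm{sub}}$. This completes the proof that $\varphi$ is a homeomorphism.

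The one subtlety I would flag and double-check is the interplay of the topologies on $X_i$: the statement gives embeddings $\iota_{i,j}$, and in the identification with subspaces of $X$ one needs that the subspace topology $X_i$ inherits from $X$ agrees with its original topology. This is standard for direct limits along closed (or even just topological) embeddings but should be stated; alternatively one can avoid it by running the whole argument intrinsically in terms of the maps $\iota_{i,j}$ and the colimit cone to $X$, at the cost of heavier notation. Either way, no genuinely hard point arises — the lemma is a routine "commuting" of direct limits with closed subspaces — so I would keep the exposition to the two inclusions above.
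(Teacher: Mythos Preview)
Your proof is correct and takes a genuinely different route from the paper's. For the hard direction $\tau_{\mathrm{dir}}\subseteq\tau_{\mathrm{sub}}$, the paper works \emph{directly with open sets}: given $O$ open in $\tau_{\mathrm{dir}}$, it builds an open $\tilde O\subseteq X$ with $\tilde O\cap B=O$ by an explicit inductive construction. For each $i$ it picks $O_i$ open in $X_i$ with $O_i\cap B=O\cap X_i$, then uses the embedding property of the $\iota_{i,j}$ to extend $O_i$ to open sets $O_i^j\subseteq X_j$ for $j\ge i$, and finally sets $\tilde O_i=\bigl(\bigcup_{j\le i}O_j^i\bigr)\setminus(B\setminus O)$ and $\tilde O=\bigcup_i\tilde O_i$. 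Your argument instead passes to the complement $F=B\setminus U$ and checks that $F\cap X_i$ is closed in $X_i$ for each $i$, using only that $B\cap X_i$ is closed in $X_i$ (immediate from the direct-limit definition of closedness) and that a closed subset of a closed subspace is closed. This is shorter and more conceptual; it also makes transparent exactly where the hypothesis ``$B$ closed'' enters.

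One remark on your flagged subtlety: your proof as written never actually uses that the canonical map $X_i\to X$ is a topological \emph{embedding}. You only use (i) that $B$ closed in $X$ implies $B\cap X_i$ closed in $X_i$, and (ii) that $F\cap X_i$ closed in $X_i$ for all $i$ implies $F$ closed in $X$; both are just the definition of the direct-limit topology. So you can drop that caveat. By contrast, the paper's construction does lean on the embeddings $\iota_{i,j}$ to propagate open sets upward, which is why its argument is longer.
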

\begin{proof}
		\phantom\qedhere
	Let $O$ be an open set in $ B$ equipped with the subspace topology of $X$. 
	By \cref{dirlim1}, $O$ is open in the direct limit $\underset{\underset{i\in \N}{\longrightarrow}}{\lim}  (X_i\cap B)$.	
	
	Now let $O \subseteq B$ be an open set in $\underset{\underset{i\in \N}{\longrightarrow}}{\lim}  (X_i\cap B)$.
	We have to find a set $\tilde O$ that is open in $X$ and satisfies $\tilde O \cap B = O \cap B$. Since  each $\iota_{i,j}:X_i \to X_j$ is a topological embedding for each $i, j \in \mathbb N$, we may assume that $X_i \subseteq X_j$ for all $i \le j$. 
	For every $i \in \N$, we will define a set $\tilde O_i \subseteq X_i$ so that
		\setcounter{equation}{0}
	\begin{align}
		&\tilde O_i \text{ is open in }X_i \label{prop1},\\
		&\tilde O_i \subseteq \tilde O_j \text{ for all }i \le j \label{prop2},\\
		&\bigcup_{i \in \N}{\tilde O_i \cap B} = O \cap B \label{prop3}.
	\end{align}

	Then the set  $\tilde O \coloneqq \bigcup_{i \in \N}{O_i}$  is the set, we are looking for. Indeed, $\tilde O \cap B = O \cap B$ by the listed properties. Moreover, $\tilde O$ is open in $X$:  Let $k \in \N$. We have to show that $\tilde O \cap X_k$ is open in $X_k$ for all $k \in \N$. By the listed properties, $\tilde O_i \subseteq \tilde O_k$ for all $i \le k$. Hence, $\tilde O \cap X_k =  \bigcup_{i \ge k }{O_i\cap X_k} $. By assumption, $\iota_{k,i}: X_k \embed X_i$ is continuous for all $i \ge k$. Hence, $\inv{\iota_{k,i}(O_i)}= O_i \cap X_k$ is open in $X_k$ for all $i \ge k$. Hence  $\tilde O \cap X_k =  \bigcup_{i \ge k }{O_i\cap X_k} $ is open in $X_k$ as union of open sets in $X_k$.\\
	
	It remains to define the sets $\tilde O_i, i \in \N$. Let $i \in \mathbb N$. In step 1, we will prepare the definition of $\tilde O_i$. In step 2, we will define $\tilde O_i$. In step 3, we will show that $\tilde O_i$ satisfies (\ref{prop1}),  (\ref{prop2}) and  (\ref{prop3}).
	\begin{enumerate}
		\item[\textbf{Step 1}:] Recall that $i \in \mathbb N$ is arbitrarily chosen. 
		Since $O \subseteq B$ is an open set in $\underset{\underset{i\in \N}{\longrightarrow}}{\lim}  (X_i\cap B)$, there exists an open set $ O_i$ in $X_i$ such that $ O_i  \cap B = O \cap X_i.$ Let $O_i^i \coloneqq O_i$. For each $j \in \N$, $j \ge i$, we will define a set $O_i^j \subseteq X_j$ inductively such that 
		\begin{enumerate}
			\item $O_i^j \cap X_i = O_i$,
			\item $O_i^j$ is an open set in $X_j$,
			\item $O_i^k \subseteq O_i ^j$ for all $k \le j.$\\
			
			\item[Induction base:]Let $O_i^i \coloneqq O_i$. 
			\item[Induction step:]Suppose that $O_i^j$ is a set in $X_j$ with the properties listed above. \\Since $\iota_{j,j+1}~:~X_j~\embed~X_{j+1}$ is a topological embedding, 
			$O_i^j$ is an open set in $X_j$ equipped with the subspace topology of $X_{j+1}$. Thus, there exists a set $O_i^{j+1}$ that is open in $X_{j+1}$ so that 
			$O_i^{j+1}\cap X_j = O_i^j$. In particular, $O_i^j \subseteq O_i^{j+1}$. Moreover, $O_i^{j+1}\cap X_i = O_i^{j+1}\cap X_j \cap X_i = O_i^j \cap X_i = O_i$ where the last equality follows by induction hypothesis. 
		\end{enumerate}

		\item[\textbf{Step 2}:] For each $i \in I$, we define
		\begin{align*}		
			&\tilde O_i\coloneqq \bigcup_{j \le i}  O_j^i\setminus A \text{, where } A \coloneqq B \setminus O \text{.  }
		\end{align*}
		
		\item[\textbf{Step 3}:] Let $i \in \N$. It remains to show that $\tilde O_i$ satisfies (\ref{prop1}), (\ref{prop2}) and (\ref{prop3}).
		\begin{enumerate}
			\item[(\ref{prop1}):]  We have to show that $\tilde O_i$ is open in $X_i$. By definition, \[\tilde O_i = \bigcup_{j \le i}  O_j^i\setminus A = \bigcup_{j \le i}  O_j^i \cap (X_i \setminus A).\]  It remains to show that  $O_j^i$ and  $X_i \setminus A$ are open in $X_i$ for all $j \le i$.  By definition, $O_j^i$ is open in $X_i$ for all $j \le i$. Since $B$ is closed in $X$, $A= B \cap (X \setminus O)$ is closed in $X$. This implies that $X_i \cap A$ is closed in $X_i$  by definition of the direct limit topology. Hence, $X_i \setminus A$ is open in $X_i$. 
			\item[(\ref{prop2}):] Let $i \le k$. Then $\tilde O_i = \bigcup_{j \le i}  O_j^i\setminus A \subseteq   \bigcup_{j \le k}  O_j^k\setminus A = \tilde O_k$ since  $O_j^i \subseteq O_j^k$ for all $j \le i \le k$.  
			\item[(\ref{prop3}):] We have to show that $\bigcup_{i \in \N}{\tilde O_i \cap B} = O \cap B$.
			Since  $B= O\sqcup  A$ is the disjoint union of $O$ and $A$ and $B \subseteq O$, we have that 
			\begin{align*}
				\tilde O \cap B = \bigcup_{i \in\N }{\tilde O_i \cap B} =\bigcup_{i \in \N}{\bigcup_{j \le i}{( O_j^i \setminus A) \cap B}}=\bigcup_{i \in \N}{\bigcup_{j \le i}{ O_j^i \cap O}}.
			\end{align*}
		
			 Thus, $\tilde O \cap B\subseteq O$. 
			On the other hand, $O \subseteq \tilde O \cap B$ because $ \bigcup_{i \in \N}{\bigcup_{j \le i}{ O_j^i B\cap O}} $ contains $O_i^i=O_i $ for all $i \in \N$ and $O = \bigcup_{i \in \N}O_i$.\qed
		\end{enumerate}
	\end{enumerate}
\end{proof}

For applying \cref{dirlim2} to the relative Morse boundary of $B$ in $\Sigma$, we need the following variant of  Example 8.11 (4) in Chapter II of \cite{BH} (see \cref{cor:Zlem}) for Morse boundaries.

\begin{lemma}
	\label{closedisclosed}
	If $B$ is a complete, convex subspace of  a proper geodesic metric space $\Sigma$, then $\relMorse{\Sigma}{B}$ is closed in $\morse \Sigma$.
\end{lemma}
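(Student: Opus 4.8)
The statement asserts that $\relMorse{\Sigma}{B}$ is closed in $\morse \Sigma$. Since the Morse boundary carries the direct limit topology over Morse gauges, a set is closed in $\morse\Sigma$ if and only if its intersection with $\morse^N\Sigma_\base$ is closed in $\morse^N\Sigma_\base$ for every Morse gauge $N$; and by \cref{sets}, $\morse^N\Sigma_\base$ is metrizable (it is compact and the $V_n(\alpha)$ give a countable fundamental system), so it suffices to argue with sequences. The plan is therefore: fix a basepoint $\base \in B$ (we may do so since $B$ is convex and complete, hence contains geodesic rays) and a Morse gauge $N$, take a sequence $\alpha_k$ of $N$-Morse geodesic rays based at $\base$ and contained in $B$, with $\alpha_k(\infty) \to \xi$ in $\morse^N\Sigma_\base$, and show $\xi$ is represented by an $N$-Morse ray lying in $B$.

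First I would extract a limiting ray. By \cref{Ascoli} (Arzelà--Ascoli), after passing to a subsequence the $\alpha_k$ converge uniformly on compact sets to a geodesic ray $\alpha:[0,\infty)\to\Sigma$ with $\alpha(0)=\base$. The limit of $N$-Morse rays based at $\base$ is again $N$-Morse: the $N$-Morse condition is a closed condition under uniform convergence on compacts, because any $(K,L)$-quasigeodesic segment with endpoints on $\alpha$ has endpoints that are uniform limits of points on $\alpha_k$, hence after small perturbation is a $(K,L')$-quasigeodesic with endpoints on $\alpha_k$, forcing it into the $N$-neighborhood of $\alpha_k$ and, in the limit, into the $N$-neighborhood of $\alpha$. (Alternatively one invokes the standard fact, used implicitly in \cite{Cordes_properMorse}, that $\morse^N\Sigma_\base$ is compact, so the limit point $\xi$ is already in $\morse^N\Sigma_\base$, represented by the based ray $\alpha$.) Next, since each $\alpha_k$ lies in the closed set $B$ and $\alpha$ is a uniform-on-compacts limit of the $\alpha_k$, every point $\alpha(t)=\lim_k \alpha_k(t)$ lies in $B$; thus $\alpha \subseteq B$. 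Therefore $\xi = \alpha(\infty) \in \relMorse{\Sigma}{B}$, and $\relMorse{\Sigma}{B}\cap\morse^N\Sigma_\base$ is sequentially closed, hence closed, in the metrizable space $\morse^N\Sigma_\base$.

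Finally I would assemble these pieces: as $N$ ranges over all Morse gauges, each intersection $\relMorse{\Sigma}{B}\cap \morse^N\Sigma_\base$ is closed in $\morse^N\Sigma_\base$, so by definition of the direct limit topology $\relMorse{\Sigma}{B}$ is closed in $\morse\Sigma$. The one point that deserves care — the main obstacle — is the passage from "$\alpha_k(\infty)\to\xi$ in $\morse^N\Sigma_\base$" to "the Arzelà--Ascoli limit $\alpha$ represents $\xi$": one must check that convergence in the compact-open topology on based $N$-Morse rays agrees with the topology generated by the neighborhoods $V_n(\alpha)$ of \cref{sets}, i.e. that a subsequential Arzelà--Ascoli limit cannot represent a different boundary point than the one the sequence converges to. This follows because two based geodesic rays that are uniformly close on $[0,n]$ for all $n$ are equal, so the limit ray is forced; I would spell this out using \cref{sets} to identify the limit in $\morse^N\Sigma_\base$ and then restrict to $B$ as above.
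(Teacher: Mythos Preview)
Your proposal is essentially identical to the paper's proof: both fix a basepoint in $B$, reduce to showing closedness in each stratum $\morse^N\Sigma_\base$, take a sequence of based $N$-Morse representatives lying in $B$, extract an Arzel\`a--Ascoli limit, and verify that the limit ray lies in $B$ and is $N$-Morse. You are in fact slightly more careful than the paper in justifying why sequential closedness suffices (via first-countability of the strata) and in flagging the identification of the Arzel\`a--Ascoli limit with the limiting boundary point.
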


\begin{proof}[Proof of \cref{closedisclosed}]
	Let $p$ a basepoint in $B$ and  $\mathcal M$ be the set of all Morse gauges. We have to show that $\relMorse{\Sigma}{B} \cap \morse^{N}\Sigma_{\base}$ is closed $\morse^{N}\Sigma_{\base}$ for all $N \in \mathcal M$. 
	Let $(\beta_n(\infty))_{n \in \N}$ be a sequence of equivalence classes of geodesic rays in $\relMorse{\Sigma}{B} \cap \morse^{N}\Sigma_{\base}$ and $(\beta_n)_{n \in \N}$ be corresponding representatives that start at $p$ and lie in $B$. By  the Theorem of Arzelà-Ascoli \ref{Ascoli}, the sequence $(\beta_n)_{n \in \N}$ has a convergent subsequence $(\beta_{n_m})_{m \in \N}$. As $B$  is complete, $\beta \coloneqq \lim_{m \to \infty}{\beta_{n_m}}$ is a geodesic ray in $B$. Moreover, $\beta$ is $N$-Morse. Indeed, Let $q$ be a point on a quasi-geodesic with endpoints on $\beta$ and $t \in \R_{\ge 0}$ such that $d(\beta,q) = d(\beta(t), q)$.  The continuity of the distance function $d$ and the $N$-Morseness of $\beta_{n_m}$, $m \in \N$ implies that $d(q, \beta(t)) = d(q, \lim_{m \to \infty}\beta_{n_m}(t))= \lim_{m \to \infty} d(q, \beta_{n_m})\le N$. 
\end{proof}

\begin{proof}[Proof of \cref{blocklemma}:]
	\phantom\qedhere Let $\base$ be a basepoint in $B \subseteq \Sigma$ and $\mathcal M$ be the set of all Morse gauges. We equip $\relMorse{\Sigma}{B}$  with the subspace topology of $ \morse \Sigma=\underset{\underset{N\in \mathcal M}{\longrightarrow}}{\lim}  \morse^{N}\Sigma_{\base}$. We have to show that the map 
		\begin{align*}
	\iota_{*}:\relMorse{\Sigma}{B} &\embed \morse B=\underset{\underset{N\in \mathcal M}{\longrightarrow}}{\lim}  \morse^{N} B_{\base} \\
		\gamma(\infty)&\mapsto \gamma(\infty)
	\end{align*}
	is continuous. 
	We prove the statement in two steps. For any Morse gauge $N \in \mathcal M$, let 	
	\[\morse^N \Sigma_{\base}\cap  \morse B \coloneqq \{ \as \alpha \in \morse^N \Sigma_{\base} \mid \exists \beta \subseteq B \text{ such that } \beta(0)= \base \text{ and } \beta \in \as\alpha\}. \]  
	  We endow $\morse^N \Sigma_{\base}\cap  \morse B$ with the supspace topology of $ \morse^N \Sigma_p$ 
and study the direct limit \begin{align*}
\underset{\underset{\mathcal M}{\longrightarrow}}{\lim} ( \morse^N \Sigma_{\base}\cap  \morse B)
\end{align*} with the induced direct limit topology.
	
	We will show in two steps that the following inclusions are continuous: 
	\setcounter{equation}{0}
	\begin{align}
	\bar\iota:  \relMorse{\Sigma}{B} \embed \underset{\underset{\mathcal M}{\longrightarrow}}{\lim} ( \morse^N \Sigma_{\base}\cap  \morse B)  ,~\gamma(\infty) \mapsto \gamma(\infty)\label{map2}\\
		\iota':  \underset{\underset{\mathcal M}{\longrightarrow}}{\lim} ( \morse^N \Sigma_{\base}\cap  \morse B) \embed \morse{B}   ,~\gamma(\infty) \mapsto \gamma(\infty) \label{map}
	\end{align}

	Since $\iota_* =   \iota' \circ \bar \iota$, this will imply that $\iota_{*}$ is continuous as composition of continuous maps.
	
	\begin{enumerate}
		\item [Step 1:]  
	    	For proving that the inclusion $\bar \iota$ is continuous, it is sufficient to show that $ \morse \Sigma$ and its subspace $\relMorse{\Sigma}{B}$ satisfy the assumptions of \cref{dirlim2}.
		\begin{enumerate}
			\item Since $\morse\Sigma_{\base}$ is $\sigma$-compact, there exists an ascending sequence of natural numbers $N_1 \le N_2\dots$ such that  $\morse\Sigma =\underset{\underset{i\in \N}{\longrightarrow}}{\lim}  \morse^{N_i} \Sigma_{\base}$ by Lemma 2.6 in \cite{Artingroups}. Hence, $\morse \Sigma$ is a direct limit of countably many topological spaces.
			\item By \cref{closedisclosed}, $\relMorse{\Sigma}{B}$ is closed in $\morse \Sigma$,
			\item By \cref{goodembeddings}, the inclusion maps $\iota_{N,N'}:  \morse^N \Sigma_{\base} \embed  \morse^{N'} \Sigma_{\base}$ are topological embeddings for all Morse gauges $N$, $N'$ such that $N \le N'$.
		\end{enumerate}

		\item  [Step 2:] 	For proving that the inclusion $\iota'$ is continuous, it is sufficient to show that all assumptions of \cref{dirlim1} are satisfied.
	
			Let $N$ be a Morse gauge. Since $B$ is a convex subspace of $\Sigma$ and because $B$ contains all geodesic rays in $\Sigma$ that start in $B$ and are asymptotic to a geodesic ray in $B$, \[\iota'(\morse^N \Sigma_{\base}\cap  \morse B) \subseteq \morse^N B_{\base}.\]
		Furthermore, \[\iota'_N: \morse^N \Sigma_{\base}\cap  \morse B \embed \morse^N B_{\base}, ~\gamma(\infty) \mapsto \gamma(\infty)\] is continuous. Indeed, 
		Let $\gamma(\infty) \in  \morse^N \Sigma_{\base}\cap  \morse B$ and $V_n(\iota'_N(\gamma))$ a neighborhood of $\iota'_N(\gamma(\infty)) $ in $\morse^N B_{\base}$ as in \cref{sets}. Note that $\morse^N \Sigma_{\base}$ contains a neighborhood about $\gamma(\infty)$ of the form $V_n(\gamma)$ as well. Since $B$ is a convex subset of $\Sigma$ and because $B$ contains all geodesic rays in $\Sigma$ that start in $B$ and are asymptotic to a geodesic ray in $B$ we have $\iota'_N(V_n(\gamma)\cap \morse B) \subseteq V_n(\iota'_N(\gamma))$. Hence, $\iota'_N$ is continuous. Thus, all assumptions of \cref{dirlim1} are satisfied.\qed\end{enumerate}	 \end{proof}

We finish this section with the following example of Elia Fioravanti showing that the inverse of $\iota_*$ in \cref{blocklemma} need not be continuous.
	\begin{figure}[h]
	\centering
	\includegraphics{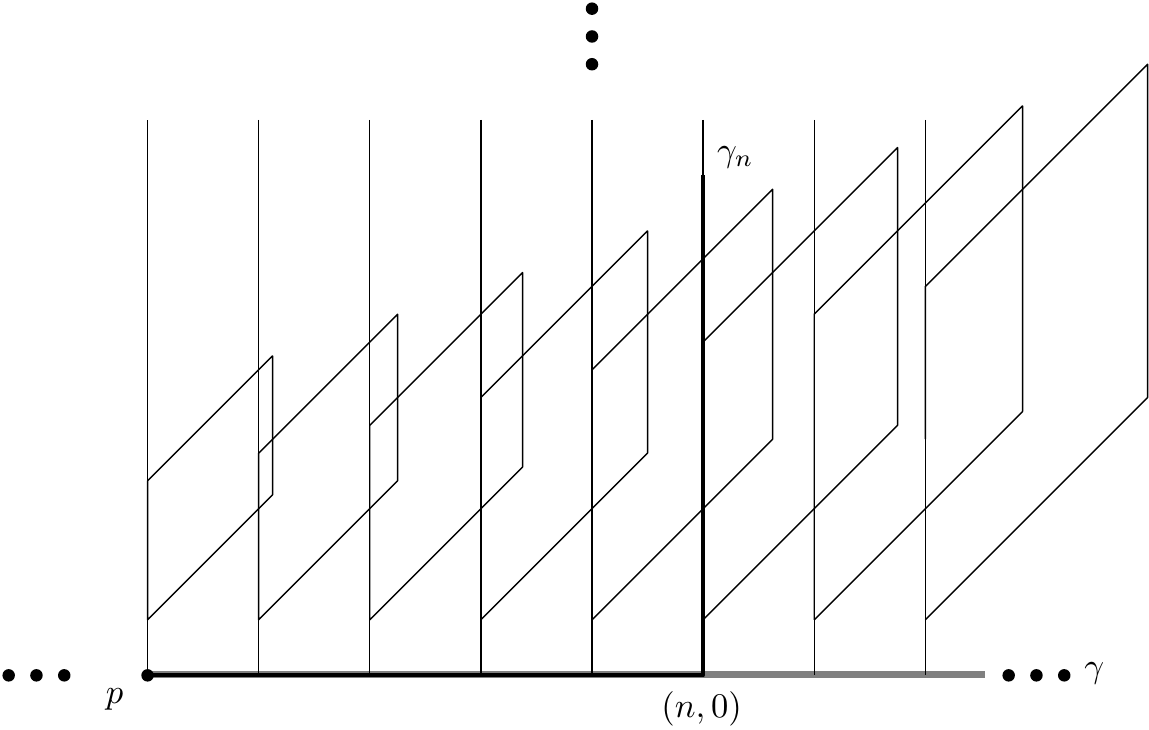}
	\caption{The space $\Sigma$ in \cref{exNotcontinuous}. }
	\label{fig:treeElia}
\end{figure}
\begin{example}[See \cref{fig:treeElia}]
		\label{exNotcontinuous}	
	Let $B$ be the subspace of $\R^2$ that consists of $[0, \infty)$ and all vertical geodesic rays of the form $ \{(n, y) \in \R^2 \mid y \in \R_{\ge 0}\}$ where $n \in \N$. We choose $\base\coloneqq (0,0)$ as basepoint.  Let $ \gamma: \R \to B$,  $\gamma(t) \coloneqq (t,0)$ and  $\gamma_n: \R \to B$, $\gamma_n(t) \coloneqq \gamma(t)$ for all $t \le n$ and $\gamma_n(t)\coloneqq (n,t-n)$ for all $t \ge n$. 
	Then $\gamma$ and $\gamma_n$, $n \in \N$,  are Morse geodesic rays and $\lim_{n \to \infty} \gamma_n(\infty) = \as \gamma$ in $\morse B$. 
	Now, we attach to each geodesic ray $\gamma_n$ a filled square along the geodesic segment connecting $(n,1)$ and $(n,n+1)$ as in the figure. 
	Let $\Sigma$ be the space we obtain this way. The geodesic rays $\gamma_n$, $n \in \N$, and $\gamma$ are still Morse geodesic rays in $\Sigma$. But now, the Morse gauge of $\gamma_n$ grows in $n$. Hence, the sequence $( \gamma_n(\infty))_{n \in \N}$ does not converge to $\as\gamma$ if  we endow $\morse B= \relMorse{\Sigma}{B}$ with the subspace topology of $\morse\Sigma$.   Indeed, for each $N$, the set of $N$-Morse geodesic rays in $\Sigma$  is finite. Hence, $\{ \gamma_n(\infty)\} \cap \morse^N \Sigma_p$ is finite for each Morse gauge and thus, $\{ \gamma_n(\infty)\} $ is a closed subspace of $\morse \Sigma$. In particular, no limit point of  $\{\gamma_n(\infty)\}$ lies outside of this set. 
\end{example}

\section{Proof of \cref{thm2}}
\label{subsec:types}
Let $\Sigma$ be a proper CAT(0) space with treelike block decomposition $\mathcal B$. Let $\Tblock$ be the adjacency graph of $\mathcal B$ and $\base \in \Sigma$ be a basepoint that is not contained in a wall. Now, we apply the cutset property (\cref{cor:diffitinerary2}) and the loneliness property (\cref{Morse_conn_of_type_Ainf}) to study the connected components of $\morse \Sigma$. This will lead to a proof of \cref{thm2}.

\begin{defn}[Itineraries of boundary points]
Let $\gamma(\infty) \in \rand \Sigma$ be a boundary point of $\Sigma$. The \textit{itinerary $I(\gamma(\infty))$} of $\gamma(\infty)$ is the itinerary of the representative of $\gamma(\infty)$ that starts at $\base$.
\end{defn}

\begin{defn}
	\label{def:types}
 We say that a connected component $\kappa$ of $\morse \Sigma$ is of 
	\begin{enumerate}
		\item \textit{type \typeAf} if $\Tblock$ contains a finite path $I$ such that all elements in $\kappa$ have itinerary $I$; 
		\item \textit{type \typeAinf} if $\Tblock$ contains an infinite path $I$ such that all elements in $\kappa$ have itinerary $I$;
		\item \textit{type \typeB} if $\kappa$ contains two elements of distinct itinerary.
	\end{enumerate}
\end{defn}

By definition, every connected component is of exactly one of the types defined above. 

\begin{lemma}
	\label{lem:con_Ainfinite}
	Let $\kappa$ be a connected component of $\morse \Sigma$. The following statements are equivalent: 
	\begin{enumerate}
		\item The connected component $\kappa$ is of type \typeAinf.
		\item If $\gamma(\infty) \in \kappa$, then $I(\gamma)$ is infinite.
		\item If $\gamma$ is a geodesic ray starting at $\base$ that represents an element in $\kappa$, then $\gamma$ does not end in a block. 
	\end{enumerate}
\end{lemma}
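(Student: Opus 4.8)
The plan is to prove $(1)\Rightarrow(2)$ and $(2)\Leftrightarrow(3)$ (both essentially formal) together with $(2)\Rightarrow(1)$, which carries the content.

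For $(1)\Rightarrow(2)$: if $\kappa$ is of type \typeAinf, then by \cref{def:types} there is an infinite path $I$ in $\Tblock$ with $I(\gamma(\infty))=I$ for every $\gamma(\infty)\in\kappa$, so each such itinerary is infinite. For $(2)\Leftrightarrow(3)$: the itinerary $I(\gamma(\infty))$ is by definition that of the representative $\gamma$ of $\gamma(\infty)$ starting at $\base$, and since $\base$ lies in no wall this representative does not start in a wall; by the dichotomy in \cref{def:itinrays}, such a ray either eventually stays in some block $B_0$ --- in which case its itinerary is the finite path $I(\gamma|_{[0,t_0]})$ --- or it does not, in which case its itinerary is an infinite path. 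Hence $I(\gamma)$ is infinite precisely when $\gamma$ does not end in a block, which is the content of $(2)\Leftrightarrow(3)$.

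The main step is $(2)\Rightarrow(1)$. Assume every boundary point of $\kappa$ has infinite itinerary, and suppose toward a contradiction that $\kappa$ is of type \typeB, so there are $\gamma_1(\infty)\ne\gamma_2(\infty)$ in $\kappa$ with distinct itineraries; let $\gamma_1$, $\gamma_2$ be their representatives starting at $\base$. Both $I(\gamma_1)$ and $I(\gamma_2)$ are geodesic paths in the tree $\Tblock$ issuing from the unique block containing $\base$ (unique by \cref{welldef}); since they are distinct, some wall $W$ occurs in exactly one of $I(\gamma_1)$, $I(\gamma_2)$. Applying the cutset property \cref{cor:diffitinerary2} to $\kappa$, $\gamma_1$, $\gamma_2$ and $W$ produces a geodesic ray $\delta\subseteq W$ with $\as\delta\in\kappa$. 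Let $\delta'$ be the representative of $\as\delta$ starting at $\base$; it starts in no wall and is asymptotic to $\delta$, which lies in the wall $W$, so \cref{lem:asymptoticrays} gives that $I(\delta')=I(\as\delta)$ is finite --- contradicting $(2)$ applied to $\as\delta\in\kappa$. Hence any two points of $\kappa$ have the same itinerary; picking any $\gamma_0(\infty)\in\kappa$ (a component is nonempty), that common itinerary equals $I(\gamma_0)$, which is infinite by $(2)$, so $\kappa$ is of type \typeAinf.

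I expect $(2)\Rightarrow(1)$ to be the only real obstacle, and within it the crucial move is that a wall distinguishing the two itineraries must, by the cutset property, carry a boundary ray of $\kappa$; pairing this with \cref{lem:asymptoticrays} (being asymptotic to a ray inside a wall forces a finite itinerary) is exactly what excludes type \typeB under hypothesis $(2)$. The remaining ingredients --- nonemptiness of $\kappa$, the finite/infinite dichotomy of \cref{def:itinrays}, and the elementary fact that distinct geodesic paths issuing from a common vertex of a tree differ in their edge sets, i.e.\ in the walls they contain --- are routine. The Morse hypothesis is used only indirectly, via the fact that $\morse\Sigma$ is assembled from Morse rays and that \cref{cor:diffitinerary2} is the Morse-boundary form of the cutset property.
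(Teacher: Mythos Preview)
Your proof is correct and follows essentially the same approach as the paper: the paper argues $(1)\Rightarrow(2)\Rightarrow(3)\Rightarrow(1)$ while you do $(1)\Rightarrow(2)$, $(2)\Leftrightarrow(3)$, and $(2)\Rightarrow(1)$, but the substantive step is identical --- using the cutset property (\cref{cor:diffitinerary2}) to produce a boundary point represented by a ray in a wall, and then \cref{lem:asymptoticrays} to force that point to have finite itinerary, contradicting the hypothesis.
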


\begin{proof} We show $(1) \Rightarrow (2)\Rightarrow(3) \Rightarrow(1)$.
	
	$(1) \Rightarrow (2)$ If $\kappa$ is of type \typeAinf, then $\Tblock$ contains an infinite path $I$ such that all elements in $\kappa$ have itinerary $I$. In particular, each element in $\kappa$ has infinite itinerary.
	
	 $(2) \Rightarrow (3)$ Suppose that each element in $\kappa$ has infinite itinerary. Let $\gamma$ be a geodesic ray starting at $\base$ that represents an element in $\kappa$. Then $\gamma$ does not end in a block as otherwise, $I(\gamma)= I(\gamma(\infty))$ would be a finite path.
	 	
	$(3) \Rightarrow (1)$: Suppose that every geodesic ray starting at $\base$ that represents an element in $\kappa$ does not end in a block. Then each element in $\kappa$ has infinite itinerary by the definition of itineraries of geodesic rays. It remains to show that $\kappa$ does not contain two elements with distinct itineraries. 
	Assume for a contradiction that there are two distinct geodesic rays $\alpha$ and $\beta$ starting at $\base$ such that $\alpha(\infty)$, $\beta(\infty) \in \kappa$ and $I(\alpha)\neq I(\beta)$. Then there is a wall $W$ that appears in one of both itineraries $I(\alpha)$ and $I(\beta)$ but not in both. By~\cref{cor:diffitinerary2}, there is a geodesic ray $\gamma \subseteq W$ such that $\as \gamma \in \kappa$. Let $\gamma_{\base}$ be the unique geodesic ray starting at $\base$ that is asymptotic to $\gamma$. By \cref{lem:asymptoticrays}, $I(\gamma(\infty))$ is a finite path. Thus, $\kappa$ contains at least one element with finite itinerary. By definition of itineraries, such a geodesic ray ends in a block-- a contradiction.\end{proof}

We are now able to prove the following theorem which directly implies \cref{thm2}. Since  \cref{cor:firstgeneralization} is a direct consequence of \cref{thm2} and \cref{corOpenset} in \cref{subsecMorse3}, the following proof will also complete the proof of \cref{cor:firstgeneralization}.
\begin{thm}\label{thm3}
	Let $\kappa$ be a connected component of $\morse \Sigma$. \begin{enumerate}
		\item If $\kappa$ is of type \typeAf, then there exists a block $B$ so that the representative of every point in $\kappa$ starting at $\base$ ends in a block $B$. Moreover, $\kappa$ is homeomorphic to a connected component of $\relMorse{\Sigma}{B}$ endowed with the subspace topology of $\morse \Sigma$.
		\item If $\kappa$ is of type \typeAinf, then $|\kappa| = 1$. 
		\item If $\kappa$ is of type \typeB,~then it contains an equivalence class of a geodesic ray  in a wall.
	\end{enumerate}
\end{thm}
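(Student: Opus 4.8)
The plan is to treat the three types in the order (2), (3), (1), since the work for type $\typeAinf$ is the shortest and each later case reuses the Morse/itinerary machinery from the previous ones. For \textbf{type $\typeAinf$:} let $\kappa$ be of this type and suppose $\alpha(\infty),\beta(\infty)\in\kappa$ are represented by geodesic rays $\alpha,\beta$ starting at $\base$. By \cref{lem:con_Ainfinite}, both have infinite itinerary, and since $\kappa$ is of type $\typeAinf$ they share the common itinerary $I(\alpha)=I(\beta)=I$. But at least one of $\alpha(\infty),\beta(\infty)$ lies in $\morse\Sigma$, so the corresponding ray is Morse; the Loneliness property (\cref{Morse_conn_of_type_Ainf}) then forces $\alpha=\beta$. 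Hence $|\kappa|=1$. (Strictly, I should note that every point of $\kappa\subseteq\morse\Sigma$ is represented by a Morse ray by definition of the Morse boundary, so the hypothesis of \cref{Morse_conn_of_type_Ainf} is always met.)

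For \textbf{type $\typeB$:} by definition $\kappa$ contains two boundary points $\gamma_1(\infty)\neq\gamma_2(\infty)$ with distinct itineraries, realized by rays $\gamma_1,\gamma_2$ starting at $\base$. Since $I(\gamma_1)\neq I(\gamma_2)$, there is a wall $W$ appearing in exactly one of $I(\gamma_1)$, $I(\gamma_2)$. Applying the cutset property for the Morse boundary (\cref{cor:diffitinerary2}) to $\gamma_1,\gamma_2,\kappa$ and this $W$ yields a geodesic ray $\gamma\subseteq W$ with $\as\gamma\in\kappa$, which is exactly the asserted equivalence class of a ray lying in a wall.

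For \textbf{type $\typeAf$:} here $\Tblock$ contains a finite path $I=(B_1,W_1,\dots,B_m)$ such that every element of $\kappa$ has itinerary $I$; set $B\coloneqq B_m$. First I would check that the representative of any $\eta(\infty)\in\kappa$ starting at $\base$ eventually enters and stays in $B$: since $I(\eta(\infty))=I$ is finite, by \cref{def:itinrays}(1) and \cref{lem:defitin} the ray $\eta$ ends in $B_m=B$ (the itinerary of a ray with a finite itinerary is precisely the itinerary of a truncation that already lands in its terminal block, and convexity of $B$ keeps $\eta$ there). Replacing each such $\eta$ by the unique asymptotic ray starting at the (fixed, once-and-for-all chosen) basepoint $\base$ of $\Sigma$ shows $\kappa\subseteq\relMorse{\Sigma}{B}$, and since rays representing points of $\kappa$ are Morse in $\Sigma$ and lie (eventually, hence up to asymptotics, since $B$ is closed and convex) in $B$, indeed $\kappa\subseteq\relMorse{\Sigma}{B}$ as a subset of $\morse\Sigma$. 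It remains to identify $\kappa$ with a connected component of $\relMorse{\Sigma}{B}$ in the subspace topology of $\morse\Sigma$. The key point is that $\relMorse{\Sigma}{B}$ is \emph{closed} in $\morse\Sigma$ by \cref{closedisclosed}; consequently every connected component of $\relMorse{\Sigma}{B}$ (with the subspace topology from $\morse\Sigma$) is closed in $\morse\Sigma$, and the connected component of $\morse\Sigma$ containing a point of $\relMorse{\Sigma}{B}$, if it happens to lie entirely inside $\relMorse{\Sigma}{B}$, coincides with the connected component of $\relMorse{\Sigma}{B}$ through that point. So I must show $\kappa$ itself lies entirely in $\relMorse{\Sigma}{B}$: any $\zeta(\infty)\in\kappa$ has itinerary $I$ ending in $B$, hence is represented by a ray asymptotic to one in $B$; since $B$ is closed and convex this ray may be taken inside $B$, so $\zeta(\infty)\in\relMorse{\Sigma}{B}$. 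Therefore $\kappa$, being connected and contained in $\relMorse{\Sigma}{B}$, sits inside a single connected component $\kappa'$ of $\relMorse{\Sigma}{B}$; and conversely $\kappa'$, being connected in $\morse\Sigma$, sits inside a single connected component of $\morse\Sigma$, which must be $\kappa$. Hence $\kappa=\kappa'$, giving the desired homeomorphism (the identity).

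The main obstacle I anticipate is the bookkeeping in type $\typeAf$: one must be careful that ``ends in the block $B$'' is uniform over all of $\kappa$ (which is where the hypothesis ``all elements of $\kappa$ have the \emph{same} finite itinerary $I$'' is essential — without it different points of $\kappa$ could terminate in different blocks), and that passing between ``eventually lies in $B$'' and ``asymptotic to a ray in $B$'' is legitimate, which uses convexity and closedness of $B$ in the CAT(0) (more generally, the stated) setting. Once $\kappa\subseteq\relMorse{\Sigma}{B}$ is established, the topological identification is a soft consequence of $\relMorse{\Sigma}{B}$ being closed in $\morse\Sigma$. Types $\typeAinf$ and $\typeB$ are immediate from the Loneliness and cutset properties respectively.
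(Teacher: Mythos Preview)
Your proof is correct and follows essentially the same approach as the paper: the Loneliness property for type $\typeAinf$, the cutset property for type $\typeB$, and the observation $\kappa\subseteq\relMorse{\Sigma}{B}$ for type $\typeAf$. Your type-$\typeAf$ argument is in fact more detailed than the paper's (which simply asserts the conclusion once $\kappa\subseteq\relMorse{\Sigma}{B}$ is established); note, however, that the appeal to \cref{closedisclosed} is unnecessary---the two-way containment $\kappa\subseteq\kappa'$ and $\kappa'\subseteq\kappa$ follows purely from the elementary fact that a subset is connected in the subspace topology if and only if it is connected as a subset of the ambient space, no closedness required.
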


\cref{fig:conn_components_types} summarizes the classification of connected components in \cref{thm3}. 
\begin{figure}[h]
	\centering
	\includegraphics[]{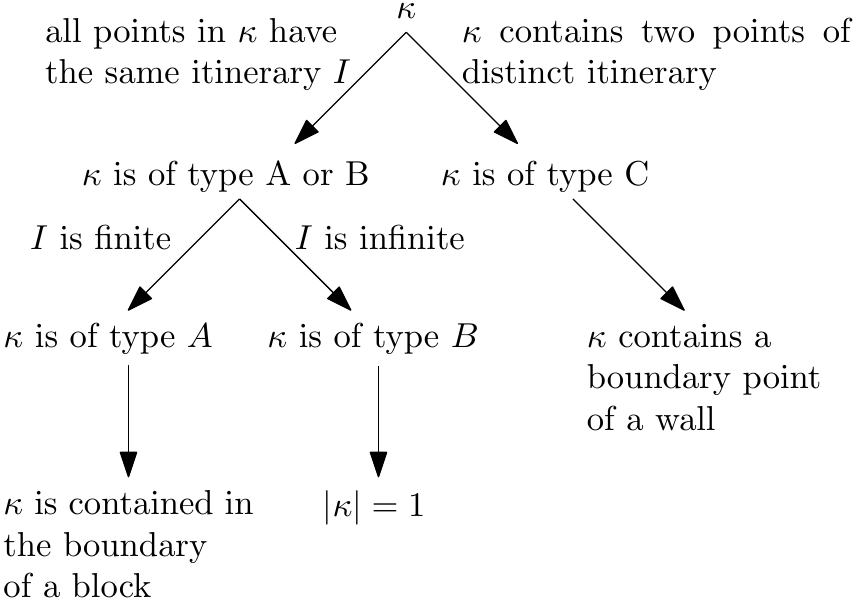}
	\caption{Possible types of a connected component $\kappa$ of $\morse \Sigma$. 
		The arrows denote implications valid under the conditions of the labels at the arrows.}
	\label{fig:conn_components_types}
\end{figure}

\begin{proof}[Proof of \cref{thm3}]	Let $\kappa$ be a connected component of $\morse \Sigma$.
\phantom\qedhere
\begin{enumerate}	
	\item Suppose that $\kappa$ is of type \typeAf. Then there exists a finite path $I$ in $\mathcal T$ such that  all elements in $\kappa$ have itinerary $I$; Then there exists a block $B$ such that each geodesic ray starting at $p$ and representing an element in $\kappa$ ends in $B$. In particular, every equivalence class in $\kappa$ has a representative that is contained in $B$, i.e. $\kappa \subseteq \relMorse{\Sigma}{B}$. Thus, $\kappa$ is homeomorphic to a connected component of $\relMorse{\Sigma}{B}$  equipped with the subspace topology of $\morse \Sigma$.
	
	\item Suppose that $\kappa$ is of type \typeAinf. Then there exists an infinite path $I$ in $\mathcal T$ such that $I(\gamma)=I$ for each geodesic ray $\gamma$ starting at $\base$ such that $\gamma(\infty) \in \kappa$. By the loneliness property (\cref{Morse_conn_of_type_Ainf}), all geodesic rays with itinerary $I$ are asymptotic. Hence, $\kappa$ contains only one element. 	
	\item Suppose that $\kappa$ is of type \typeB. Then $\kappa$ contains two points with different itineraries $I_1$ and $I_2$. According to the cutset property (\cref{cor:diffitinerary2}), $\kappa$ contains the equivalence class of a geodesic ray that is contained in a wall.\qed 
\end{enumerate} \end{proof}

\begin{remark}
	The content of this section is with respect to the direct-limit topology of the Morse boundary. This does not imply the validity of the statements above for the visual boundary. However, the methods of the proofs can be transferred to (path) components of the visual boundary. The arguments used above can be repeated to confirm the following statements.
	\begin{cor}
		If $\kappa$ is a connected component of $\vis\Sigma$ of type \typeAinf~ and contains a boundary point that is Morse, then $|\kappa| = 1$.
	\end{cor}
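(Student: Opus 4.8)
The plan is to deduce this directly from the Loneliness property (\cref{Morse_conn_of_type_Ainf}), exploiting the fact that this proposition is purely geometric: it speaks only about geodesic rays and their itineraries, makes no reference to any boundary topology, and — crucially — requires only \emph{one} of the two rays being compared to be Morse. That one-sided hypothesis is exactly what makes it survive the passage from $\morse\Sigma$ to $\vis\Sigma$.

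First I would unwind the hypothesis. By the visual-boundary analogue of \cref{def:types}, the assumption that $\kappa$ is of type \typeAinf{} means that $\Tblock$ contains an infinite path $I$ such that every point of $\kappa$ has itinerary $I$; here the itinerary of a boundary point $\gamma(\infty)$ is understood, as in the Morse case, to be the itinerary of its unique representative starting at $\base$, which exists by \cite[Prop. 8.2, II]{BH}. Let $\beta$ be the $\base$-representative of the Morse boundary point that $\kappa$ is assumed to contain; then $\beta$ is a Morse geodesic ray with $I(\beta)=I$, and $I$ is infinite.

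Next I would take an arbitrary $\alpha(\infty)\in\kappa$ with $\base$-representative $\alpha$, so that $I(\alpha)=I=I(\beta)$. If $\alpha\neq\beta$, then $\alpha$ and $\beta$ are two distinct geodesic rays starting at $\base$, both with infinite itinerary, and $\beta$ is Morse; \cref{Morse_conn_of_type_Ainf} then forces $I(\alpha)\neq I(\beta)$, contradicting $I(\alpha)=I(\beta)$. Hence $\alpha=\beta$, and since $\base$-representatives are unique, $\alpha(\infty)=\beta(\infty)$. As $\alpha(\infty)\in\kappa$ was arbitrary, $\kappa=\{\beta(\infty)\}$, i.e.\ $|\kappa|=1$.

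I do not expect a genuine obstacle: all the real work was already done in establishing \cref{Morse_conn_of_type_Ainf} (the bounded-projection estimate of Charney--Sultan together with local finiteness of the walls, \cref{ball}), and that statement is phrased in exactly the generality needed here. The only points deserving a line of care are (i) that the trichotomy of \cref{def:types} applies verbatim to connected components of $\vis\Sigma$, which it does since it only uses itineraries of boundary points, and (ii) that the presence of a single Morse point in $\kappa$ is precisely the hook that lets the one-sided Morse hypothesis of \cref{Morse_conn_of_type_Ainf} operate even though $\kappa$ is a component of $\vis\Sigma$ rather than of $\morse\Sigma$.
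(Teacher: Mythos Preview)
Your proposal is correct and follows exactly the route the paper indicates: the paper does not spell out a proof of this corollary but merely remarks that ``the arguments used above can be repeated,'' referring to the proof of \cref{thm3}(2), which invokes the Loneliness property (\cref{Morse_conn_of_type_Ainf}). You carry out precisely that repetition, correctly isolating the key point that \cref{Morse_conn_of_type_Ainf} needs only one of the two rays to be Morse, which is exactly what makes the argument go through for a component of $\vis\Sigma$ containing a single Morse point.
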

The next corollary is related to \cref{lem:con_Ainfinite} and Lemma 7 in section 1.7 of \cite{CrokeKleiner}
		\begin{cor}
	 If a curve $c:[a,b]\to\vis\Sigma$ starts at a point $c(0)$ of infinite itinerary $I$, then $I(c(t)) = I$ for all $t \in [a,b]$ or there is a time $t$ such that $c(t)$ has finite itinerary.
	\end{cor}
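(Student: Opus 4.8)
The plan is to argue by contradiction from the weaker hypothesis that no point $c(t)$ has finite itinerary, and to conclude that then $I(c(t))=I$ for every $t\in[a,b]$; this is exactly the stated dichotomy. So assume in addition that $I(c(t^{*}))\neq I$ for some $t^{*}\in[a,b]$. Under our hypothesis both $I=I(c(a))$ and $I(c(t^{*}))$ are infinite, and the representatives of $c(a)$ and of $c(t^{*})$ starting at $\base$ both begin in the block containing $\base$; hence these two infinite paths in $\Tblock$ agree on a common initial segment and then branch. Let $W$ be the first wall of $I$ occurring after the branch point. Since an itinerary is a geodesic path in $\Tblock$ and a geodesic path never returns to a block it has left, $W$ occurs in $I$ but not in $I(c(t^{*}))$.

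Next I would work locally at $W$. Removing the edge $W$ from $\Tblock$ splits it into two subtrees; let $\Space^{1}$ and $\Space^{2}$ be the unions of the blocks in these subtrees, labelled so that $\base\in\Space^{1}$. By \cref{convex} and \cref{wallconvex} these are closed convex subspaces of $\Space$ with $\Space=\Space^{1}\cup\Space^{2}$ and $\Space^{1}\cap\Space^{2}=W$. The combinatorial heart of the argument, proved exactly as the corresponding step in the proof of \cref{cor:diffitinerary}, is the equivalence, for a geodesic ray $\gamma$ starting at $\base$: $W$ occurs in $I(\gamma)$ iff $\gamma\not\subseteq\Space^{1}$ iff $\as\gamma\notin\vis\Space^{1}$. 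The first equivalence uses \cref{lem:defitin} (a ray lies in the union of the blocks of its itinerary, and it crosses $W$ into $\Space^{2}$ precisely when $W$ appears); the second uses that the $\base$-representative of a boundary point of the convex complete subspace $\Space^{1}$ lies in $\Space^{1}$. Consequently $c(a)\in\vis\Space\setminus\vis\Space^{1}$ while $c(t^{*})\in\vis\Space^{1}$. Moreover $c(t^{*})\notin\vis W$: otherwise the $\base$-representative of $c(t^{*})$, which does not start in a wall, would be asymptotic to a geodesic ray in the wall $W$, forcing $I(c(t^{*}))$ to be finite by \cref{lem:asymptoticrays}, contrary to hypothesis.

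Finally I would separate $\vis\Space$. As established in the proof of \cref{lem:keyproperty}, $\vis\Space^{1}\setminus\vis W=\vis\Space\setminus\vis\Space^{2}$ and $\vis\Space^{2}\setminus\vis W=\vis\Space\setminus\vis\Space^{1}$, and since $\vis\Space^{1}$ and $\vis\Space^{2}$ are closed in $\vis\Space$ by \cref{cor:Zlem}, both of these sets are open. They are disjoint, and their union is $\vis\Space\setminus\vis W$ (because $\vis W\subseteq\vis\Space^{1}$). Now $c(a)$ lies in $\vis\Space\setminus\vis\Space^{1}$ and $c(t^{*})$ lies in $\vis\Space^{1}\setminus\vis W$, so the connected set $c([a,b])$ meets both members of this disjoint open pair and therefore is not contained in their union $\vis\Space\setminus\vis W$. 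Hence $c(t)\in\vis W$ for some $t\in[a,b]$, so the $\base$-representative of $c(t)$ is asymptotic to a geodesic ray in the wall $W$ and \cref{lem:asymptoticrays} forces $I(c(t))$ to be finite — contradicting our hypothesis and completing the argument.

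The step I expect to need the most care is the equivalence in the second paragraph, which repackages the itinerary bookkeeping already carried out for \cref{lem:defitin} and \cref{cor:diffitinerary}. Beyond that, the one conceptual subtlety is that the contradiction must be produced at a point of the curve $c$ itself, so it is not enough to invoke the cutset property \cref{cor:diffitinerary} to obtain a wall ray in the connected component of $\vis\Space$ containing $c([a,b])$; this is precisely why the proof runs through the explicit disjoint open cover $\{\vis\Space\setminus\vis\Space^{1},\ \vis\Space^{1}\setminus\vis W\}$ rather than quoting \cref{cor:diffitinerary} directly. The companion statement in this remark, for a connected component of $\vis\Space$ of infinite-itinerary type containing a Morse point, follows by the same method together with the proof of \cref{Morse_conn_of_type_Ainf}.
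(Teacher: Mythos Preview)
Your argument is correct and is precisely the route the paper intends: the corollary is stated inside a remark that says ``the arguments used above can be repeated,'' meaning one should rerun the separation argument underlying \cref{cor:diffitinerary} (i.e.\ the proof of \cref{lem:keyproperty}) together with \cref{lem:asymptoticrays}, exactly as in the implication $(3)\Rightarrow(1)$ of \cref{lem:con_Ainfinite}, but now applied to the connected set $c([a,b])$ rather than to a full connected component. Your observation that one cannot simply quote \cref{cor:diffitinerary} verbatim---because it only produces a wall-ray in the ambient connected component, not on the curve $c$ itself---and must instead use the underlying clopen decomposition of $\vis\Sigma\setminus\vis W$ is the right refinement, and it is implicit in the paper's phrase ``the methods of the proofs can be transferred.''
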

\end{remark}

\section{Applications to RACGs}
\label{sec:mainproofs}
In \cref{subsec1spaces}, we will determine which totally disconnected topological spaces occur as Morse boundaries of RACGs by applying Theorem 1.4 in \cite{Artingroups}.  
In \cref{subsecblockdec}, we will complete the proof of \cref{thm1} by showing that each Davis complex of infinite diameter has a non-trivial treelike block decomposition. In \cref{sec:ex}, we will introduce a new graph class $\mathcal C$ consisting of graphs that correspond to RACGs with totally disconnected Morse boundaries. We will investigate this graph class by studying some examples of the literature. 
\subsection{Totally disconnected spaces arising as Morse boundaries of RACGs}
\label{subsec1spaces}

In this subsection, we study which totally disconnected topological spaces arise as Morse boundaries of RACGs. We will use Theorem 1.4 in \cite{Artingroups} and I would like to thank  Matthew Cordes for his comment how to apply this theorem to RACGs.

 An \textit{$\omega$-Cantor space}  is the direct limit of a sequence of Cantor spaces $C_1 \subset C_2\subset C_3 \dots$ such that $C_i$ has empty interior in $C_{i+1}$ for all $i \in \N$. By \cite[Thm 3.3]{Artingroups}, any two $\omega$-Cantor spaces are homeomorphic.  
We will apply the following Theorem \cite[Thm 1.4]{Artingroups}.
\begin{thm}[Charney--Cordes--Sisto]
	\label{thm1.4}
	Let $G$ be a finitely generated group. Suppose that $\morse G$ is totally disconnected, $\sigma$-compact, and contains a Cantor subspace. Then $\morse G$ is either a Cantor space or an $\omega$-Cantor space. It is a Cantor space if and only if $G$ is hyperbolic, in which case $G$ is virtually free. 
\end{thm}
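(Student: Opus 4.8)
The plan is to combine the direct-limit structure of $\morse G$ with the dynamics of the $G$-action on it. Fix a basepoint $p$ in a Cayley graph of $G$; each stratum $\morse^N G_p$ is compact and metrizable (\cite[Prop.~3.12]{Cordes_properMorse}), totally disconnected as a subspace of $\morse G$, hence homeomorphic to a closed subset of the Cantor set, and by \cref{goodembeddings} the strata are nested by topological embeddings, so each is closed in every later stratum and therefore closed in $\morse G$. Since $\morse G$ is $\sigma$-compact, by \cite[Lemma 2.6]{Artingroups} there is a countable cofinal sequence of Morse gauges with $\morse G = \varinjlim_i K_i$, $K_i = \morse^{N_i} G_p$. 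The given Cantor subspace $C$ is a Baire space and $C = \bigcup_i (C \cap K_i)$ is a countable union of closed subsets of $C$, so some $C \cap K_{i_0}$ has nonempty interior in $C$; since an open subset of a Cantor set contains a Cantor set, $K_i$ contains a Cantor set for every $i \ge i_0$. In particular $\morse G$ is infinite, so $G$ contains a Morse (contracting) element $h$, which acts on $\morse G$ as a homeomorphism with north--south-type dynamics: fixed points $\eta_+,\eta_-$ with every point $\ne \eta_-$ attracted to $\eta_+$ under forward iteration. Pushing a Cantor set that avoids a neighbourhood of $\eta_-$ forward by high powers of $h$ shows that every neighbourhood of $\eta_+$ contains a Cantor set; translating by $G$ and using density of the fixed points of Morse elements in $\morse G$ (minimality of the $G$-action, its limit set being all of $\morse G$), every nonempty open subset of $\morse G$ contains a Cantor set, and in particular $\morse G$ is perfect.

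Suppose first that $\morse G$ is compact. A compact subset of a direct limit along closed embeddings lies in a single term, so $\morse G = K_i$ for some $i$; being nonempty, compact, metrizable, totally disconnected and perfect, it is a Cantor set by Brouwer's characterization. Moreover a single Morse gauge now suffices for all Morse rays, and combined with the known fact that this forces the group to be hyperbolic, $G$ is hyperbolic; then $\morse G = \partial G$ is a totally disconnected, hence $0$-dimensional, Cantor set. A one-ended hyperbolic group has connected boundary, so $G$ is not one-ended; by Stallings it splits over a finite subgroup, and by Dunwoody accessibility $G$ is a finite graph of groups with finite edge groups and finite or one-ended vertex groups. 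A one-ended vertex group would contribute a nondegenerate connected subset to $\partial G$, which is impossible, so all vertex groups are finite and $G$ is virtually free; it is non-elementary because its boundary is a Cantor set. Conversely, a non-elementary virtually free group is hyperbolic with Cantor Gromov boundary. Hence in this case the three assertions ``$\morse G$ is a Cantor set'', ``$G$ is hyperbolic'' and ``$G$ is virtually free'' coincide.

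Suppose now that $\morse G$ is not compact; then by the previous paragraph $G$ is not hyperbolic, and I would identify $\morse G$ with the $\omega$-Cantor space by realizing it appropriately and applying the uniqueness statement \cite[Thm 3.3]{Artingroups}. Starting from $\morse G = \varinjlim K_i$ with the $K_i$ strictly increasing, I would first enlarge each $K_i$ inside a later stratum so as to absorb its (countably many) isolated points; this is possible because $\morse G$ is perfect, so each isolated point of $K_i$ is non-isolated in $\morse G$ and hence non-isolated in some later stratum. This produces a cofinal chain of perfect compacta, i.e.\ of Cantor spaces $C_j$. Using that every nonempty open subset of $\morse G$ contains a Cantor set, together with the strict growth of the chain, I would then pass to a further cofinal subsequence along which $C_j$ has empty interior in $C_{j+1}$. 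Since cofinal subsequences do not change the direct-limit topology, this exhibits $\morse G$ as a direct limit $\varinjlim C_j$ of Cantor spaces with $C_j$ nowhere dense in $C_{j+1}$, so $\morse G$ is an $\omega$-Cantor space.

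The main obstacle I anticipate is the interface between the direct-limit topology and the coarse geometry: concretely, (i) deducing from compactness of $\morse G$ that a uniform Morse gauge exists and that $G$ is consequently hyperbolic; and (ii) in the non-compact case, simultaneously arranging the compact strata to be perfect and mutually nowhere dense, where the perfectness of $\morse G$ and the self-similarity coming from Morse elements must be deployed carefully, since a priori the strata $\morse^{N_i} G_p$ are neither perfect nor nowhere dense in one another.
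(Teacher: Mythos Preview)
This theorem is not proved in the paper at all: it is quoted verbatim from Charney--Cordes--Sisto \cite[Thm~1.4]{Artingroups} and then applied as a black box in the corollary that follows. There is therefore no ``paper's own proof'' to compare your proposal against.

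That said, your sketch is a plausible outline of the argument actually carried out in \cite{Artingroups}, and the obstacles you flag are the right ones. In particular, the step you label (i)---deducing hyperbolicity of $G$ from compactness of $\morse G$---is not trivial and in \cite{Artingroups} relies on a Morse local-to-global result (if $\morse G$ is compact then a uniform Morse gauge works, and groups whose geodesics are uniformly Morse are hyperbolic). Your step (ii), massaging the strata into Cantor sets that are mutually nowhere dense, is exactly the delicate part of \cite[\S3]{Artingroups}; your idea of using perfectness of $\morse G$ together with the $G$-action to absorb isolated points and force empty interior is along the right lines, but carrying it out rigorously requires some care with the direct-limit topology that you have not supplied. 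For the purposes of this paper, however, none of this is needed: the statement should simply be cited.
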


A \textit{suspension} of a graph $\Delta$ is a join of a graph consisting of two vertices and  $\Delta$. 
\begin{cor}
	Let $W_\Delta$ be a RACG with defining graph $\Delta$ whose Morse boundary is totally disconnected. 
 If $\Delta$ is a clique or a non-trivial join, then  $\morse W_\Delta$ is empty.
 If $\Delta$ consists of two non-adjacent vertices or is a suspension of a clique, then  $\morse W_\Delta$ is virtually cyclic and $\morse W_\Delta$ consists of two points. Otherwise, if $\Delta$ does not contain any induced $4$-cycle then $\morse W_\Delta$ is a Cantor space. 
In the remaining case, $\morse W_\Delta$ is an $\omega$-Cantor space.

\end{cor}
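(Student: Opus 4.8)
The plan is to reduce the corollary to three inputs already at hand: \cref{lem:con_empty}, \cref{thm1.4}, and two classical facts about Coxeter groups — Moussong's hyperbolicity criterion (a RACG $W_\Delta$ is word-hyperbolic if and only if $\Delta$ contains no induced $4$-cycle) and the classification of two-ended right-angled Coxeter groups ($W_\Delta$ is virtually infinite cyclic if and only if $\Delta$ is two non-adjacent vertices or a suspension of a clique). Throughout I would use that $W_\Delta$ acts geometrically on the CAT(0) cube complex $\Sigma_\Delta$, so $\morse W_\Delta\cong\morse\Sigma_\Delta$ is $\sigma$-compact by the Main Theorem of \cite{CharSul} and totally disconnected by hypothesis. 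First I would dispose of the empty case: by \cref{lem:con_empty}, $\morse W_\Delta=\emptyset$ exactly when $\Delta$ is a clique or a non-trivial join, which is the first clause. So from now on assume $\Delta$ is neither, whence $\morse W_\Delta\neq\emptyset$.

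Next I would treat the case that $W_\Delta$ is virtually infinite cyclic. By the quoted classification this is equivalent to $\Delta$ being two non-adjacent vertices or a suspension of a clique $K$, and then $W_\Delta\cong D_\infty\times(\Z/2)^{|V(K)|}$, so $\Sigma_\Delta$ is quasi-isometric to $\R$ and, since the Morse boundary is a quasi-isometry invariant, $\morse W_\Delta\cong\partial\R$ consists of exactly two points; this is the second clause. I would also record the elementary check that such a $\Delta$ is neither a clique nor a non-trivial join and contains no induced $4$-cycle, so that the first two clauses are disjoint and no graph of the second clause accidentally falls under the hypothesis of the third or fourth.

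Now suppose $W_\Delta$ is not virtually infinite cyclic. Since $\morse W_\Delta\neq\emptyset$, rank rigidity for cocompactly cubulated groups (as used in \cite{Sageev_Caprace} to prove \cref{lem:con_empty}) provides a rank-one isometry $g\in W_\Delta$ of $\Sigma_\Delta$, i.e. a Morse element. As $W_\Delta$ does not virtually fix the endpoint pair of $g$, I would pick a conjugate $hgh^{-1}$ with a transverse axis and run the standard ping-pong argument to obtain a quasiconvex free subgroup $F\le W_\Delta$ all of whose nontrivial elements are Morse; its limit set is a Cantor subspace of $\morse W_\Delta$. Hence $\morse W_\Delta$ is totally disconnected, $\sigma$-compact and contains a Cantor subspace, so \cref{thm1.4} applies: $\morse W_\Delta$ is a Cantor space or an $\omega$-Cantor space, and it is a Cantor space if and only if $W_\Delta$ is word-hyperbolic (in which case $W_\Delta$ is virtually free). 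By Moussong's criterion, $W_\Delta$ is hyperbolic precisely when $\Delta$ has no induced $4$-cycle. Combining this with the previous paragraph: once $\Delta$ avoids the first two clauses, $W_\Delta$ is not virtually infinite cyclic, so $\morse W_\Delta$ is a Cantor space when $\Delta$ has no induced $4$-cycle and an $\omega$-Cantor space otherwise — the third and fourth clauses; in particular this shows a posteriori that $\morse W_\Delta$ has exactly two points only for the graphs of the second clause.

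The step I expect to be the main obstacle is the assertion that a right-angled Coxeter group which is not virtually cyclic but has nonempty Morse boundary has a Cantor subspace in its Morse boundary: this combines the existence of a rank-one element (from nonemptiness of $\morse W_\Delta$ via rank rigidity) with the ping-pong construction, and it is precisely what rules out the ``two points'' possibility for all graphs outside the second clause, thereby letting \cref{thm1.4} do the remaining work. The two Coxeter-theoretic ingredients — Moussong's criterion and the classification of two-ended right-angled Coxeter groups — are standard and I would simply cite them, and the disjointness/exhaustiveness of the four clauses is an elementary verification about suspensions of cliques.
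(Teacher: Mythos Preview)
Your proposal is correct and follows the same overall skeleton as the paper: dispose of the empty case via \cref{lem:con_empty}, handle the virtually cyclic case directly, and then feed the remaining case into \cref{thm1.4} using Moussong's criterion for hyperbolicity. There are, however, two genuine differences in how the intermediate steps are executed.

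First, to produce the Cantor subspace, the paper does not run a ping-pong argument. Instead it invokes Osin's theorem that a group acting properly on a proper CAT(0) space with a rank-one isometry is either virtually cyclic or acylindrically hyperbolic, then applies Dahmani--Guirardel--Osin to obtain a hyperbolically embedded free subgroup, and finally uses Sisto's result that such a subgroup is stable, so that its Morse boundary (a Cantor set) embeds in $\morse W_\Delta$. Your ping-pong route is more elementary in spirit, but the phrase ``quasiconvex free subgroup all of whose nontrivial elements are Morse'' is not quite enough on its own: to get an embedded Cantor set in $\morse W_\Delta$ you need the free subgroup to be \emph{stable} (uniformly Morse quasigeodesics between group elements), not merely CAT(0)-quasiconvex with Morse generators. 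This is true for the free group produced by rank-one ping-pong, but it is exactly the content that the paper's Osin--DGO--Sisto citation chain packages cleanly; if you keep the direct argument you should either cite a stability result for rank-one ping-pong or justify uniform Morseness explicitly.

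Second, to rule out virtual cyclicity in the remaining case, the paper does not cite a classification of two-ended RACGs. Instead it argues by induction on the number of vertices that any $\Delta$ outside the first two clauses contains, as an induced subgraph, either three pairwise non-adjacent vertices or an edge together with a disjoint vertex; the corresponding special subgroup is quasi-isometric to a tree of valence at least three, so $W_\Delta$ is not quasi-isometric to $\Z$. Your appeal to a classification theorem is of course equivalent, and arguably cleaner, but the paper's argument has the virtue of being self-contained.
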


\begin{proof}
	If the graph $\Delta$ is a clique or a non-trivial join, then $\morse W_\Delta$ is empty by \cref{lem:con_empty}. 	 Otherwise, $W_\Delta$ contains a rank-one isometry by Corollary B in \cite{Sageev_Caprace}. If $\Delta$ consists of two non-adjacent vertices, then $W_\Delta$ is isomorphic to the infinite Dihedral group. If $\Delta$ is a suspension of a clique, then $W_\Delta$ is the direct product of the infinite Dihedral group with a finite  right-angled Coxeter group. In both cases, $W_\Delta$ is quasi-isometric to $\Z$ and the Morse boundary of $W_\Delta$ consists of two points.
	
	In the remaining case, an induction on the number of vertices shows that $\Delta$ contains  either the graph consisting of three pairwise non-adjacent vertices as induced subgraph or the graph consisting of an edge and a further single vertex. These graphs correspond to special subgroups of $W_\Delta$ that are quasi-isometric to a tree whose vertices have a degree of at least three. Thus, in the remaining case,  $W_\Delta$ is not quasi-isometric to $\Z$. In particular, $W_ \Delta$ is not virtually cyclic. 
	
	It remains the case where $W_\Delta$ is not virtually cyclic and contains a rank-one isometry.  We will show that the assumptions of \cref{thm1.4} are satisfied. If $\Delta$ does not contain an induced $4$-cycle, $W_\Delta$ is hyperbolic (See\cite[Thm 12.2.1, Cor 12.6.3]{Davis}, \cite{Gromov}).  Then \cref{thm1.4} will imply that  $\morse W_\Delta$ is a Cantor space. Otherwise, if $\Delta$ contains an induced $4$-cycle, \cref{thm1.4} will imply that $\morse W_\Delta$ is an $\omega$-Cantor space.	
	
	For proving the assumptions of \cref{thm1.4}, we have to show that $\morse W_\Delta$ is $\sigma$-compact and contains a Cantor space as subspace. The $\sigma$-compactness follows from Proposition 3.6 in \cite{CharSul}. The existence of the Cantor subspace can be proven similarly as in Lemma 4.5 in \cite{Artingroups}: Osin \cite{Osin16} concluded from \cite{Sisto18} that if a group acts properly on a proper CAT(0) space and contains a rank-one isometry, then the group is either virtually cyclic or acylindrically hyperbolic. As $W_\Delta$ is not virtually cyclic, $W_\Delta$ is acylindrically hyperbolic. Thus, Theorem 6.8 and Theorem 6.14 of \cite{DahmaniandCo17} imply the existence of a hyperbolically embedded free group. By \cite{Sisto16}, this free subgroup is quasi-convex and thus stable. The Morse boundary of this stable subgroup is an embedded Cantor space in the Morse boundary of the ambient group. 	
\end{proof}

\subsection{Block decompositions of Davis complexes}
\label{subsecblockdec}

In this subsection, we prove that each Davis complex of infinite diameter has a non-trivial treelike block decomposition. This will complete the proof of \cref{thm1}. 

Recall that the RACG associated to a finite, simplicial graph $\Delta=(V,E)$ is the group 
\[W_\Delta =\langle V\mid v^2=1~\forall v \in V, uv=vu ~\forall ~\{u,v\} \in E \rangle.\]
Let $\Cayley{W_\Graph}{V}$ be the Cayley graph of $W_\Graph$ with respect to the generating set $V$. 
The \textit{Davis complex} $\davislambda$ of $W_\Graph$ is the unique CAT(0) cube complex with $\Cayley{W_\Graph}{V(\Graph)}$ as $1$-skeleton so that 
\begin{itemize}
	\item each $1$-skeleton of a cube in $\davislambda$ is an induced subgraph of $\Cayley{W_\Graph}{V(\Graph)}$ and 
	\item each set of vertices in $\Cayley{W_\Graph}{V(\Graph)}$ that spans an Euclidean cube is the $1$-skeleton of an Euclidean cube in $\davislambda$. 
\end{itemize}
See~\cite[p.9-14]{Davis} for more details and \cite{Sageev,Haglund2008,SageevNotes} for general information of CAT(0) cube complexes.

\begin{defn} Let $\Graph$ be a finite simplicial graph. 
	A subgroup of a RACG $\W{\Graph}$ is \textit{special} if it has an induced subgraph of $\Graph$ as defining graph. 
\end{defn}
\begin{remark} 
	\label{rem_Davis}
	\begin{enumerate}
		\item The trivial graph $(\emptyset, \emptyset)$ is an induced subgraph of $\Graph$. The Davis complex of $(\emptyset, \emptyset)$ consists of a vertex corresponding to the identity of the trivial group.
		\item 	If $\Graph'$ is an induced subgraph of a graph $\Graph$, then $\Cayley{W_{\Graph'}}{V(\Graph')}$ is an induced subgraph of $\Cayley{W_\Graph}{V(\Graph)}$. This extends to a unique isometric embedding $\Sigma_{\Graph'} \embed \Sigma_{\Delta}$ which we call the \textit{canonical embedding} of $\Sigma_{\Graph'}$ into $\Sigma_{\Graph}$. In the following, we identify $\Sigma_{\Graph'}$ with its image. 
		\item Since $g \in W_{\Graph}$ acts by cubical automorphisms on $\Sigma_{\Delta}$, $g \cdot \Sigma_{\Delta'}$ is also an embedded copy of $\Sigma_{\Delta'}$ in $\Sigma_{\Delta}$. 
		\item 	Let $\bar \Graph$, $\Graph'$ be two induced subgraphs of $\Graph$ and $g$, $h \in W_{\Graph}$. Then $gW_{\bar \Graph}\subseteq hW_{\Graph'}$ $($resp.~$gW_{\bar\Graph} = hW_{\Graph'})$ if and only if $\inv{g}h \in W_{\Graph'}$ and $\bar \Graph \subseteq \Graph'$ $($resp. $\bar \Graph = \Graph')$. See \cite[Thm 4.1.6]{Davis}. \label{stabilizer}
	\end{enumerate}

\end{remark}

\begin{prop}
	\label{Prop}
	Let $\Delta$ be a finite, simplicial graph that can be decomposed into two distinct proper induced subgraphs $\Delta_1$ and $\Delta_2$ with the intersection graph $\Lambda=\Delta_1 \cap \Delta_2$. Let $\Sigma_{\graphzero}$, $\Sigma_{\graphone}$, $\Sigma_{\Lambda}$ be the canonically embedded Davis complexes of $\graphzero$, $\graphone$ and $\Lambda$ in $\Sigma_{\Graph}$.
	The collection \[\mathcal B \coloneqq \{g\Sigma_{\graphzero}\mid g \in W_{\Graph}\}\cup \{g\Sigma_{\graphone}\mid g \in W_{\Graph}\}\]
	is a treelike block decomposition of $\Sigma_{\Delta}$. 
	The collection of walls $\mathcal W$ is given by \[\mathcal W = \{g\Sigma_{\Lambda}\mid g \in W_{\Graph}\}.\]
\end{prop}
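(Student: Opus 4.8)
The plan is to verify the three structural requirements in turn: that $\mathcal B$ covers $\Sigma_\Delta$ and consists of closed convex subsets (so it is a block decomposition), that the adjacency graph is a simplicial tree, and that the separating property \itemref{deftreelike}{epqilon-cond} holds; along the way the description of $\mathcal W$ will fall out. First I would recall from \cref{rem_Davis} that each translate $g\Sigma_{\Delta_i}$ is the canonical image of a Davis complex of an induced subgraph, hence a closed convex subcomplex of the CAT(0) cube complex $\Sigma_\Delta$; this immediately gives closedness and convexity of the blocks. For the covering condition, since $\Delta = \Delta_1 \cup \Delta_2$ we have $V(\Delta)=V(\Delta_1)\cup V(\Delta_2)$, so every generator lies in $W_{\Delta_1}$ or $W_{\Delta_2}$; writing an arbitrary $g\in W_\Delta$ as a word and peeling off a prefix in one of the factors, an induction on word length shows the vertex $g$ lies in some coset $hW_{\Delta_1}$ or $hW_{\Delta_2}$, i.e.\ in some block. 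Since the blocks are subcomplexes and every cube of $\Sigma_\Delta$ has its $1$-skeleton spanning a clique of $\Delta$, which is contained in $\Delta_1$ or $\Delta_2$ (a clique cannot be split by a genuine decomposition — here one needs that $\Delta$ is the union with $\Lambda$ the intersection, so a clique lies entirely in one side whenever its vertex set does; if a clique has vertices in both $\Delta_1\setminus\Lambda$ and $\Delta_2\setminus\Lambda$ those vertices would be adjacent yet not both in $\Delta_1$ or both in $\Delta_2$ — but adjacency forces the edge to lie in $E(\Delta_1)$ or $E(\Delta_2)$, contradiction), each cube lies in a translate of $\Sigma_{\Delta_i}$, so $\bigcup_{B\in\mathcal B}B=\Sigma_\Delta$.

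Next I would analyze intersections of blocks via \itemref{rem_Davis}{stabilizer}: for induced subgraphs $\bar\Delta,\Delta'$ and group elements $g,h$, one has $gW_{\bar\Delta}\cap hW_{\Delta'}\neq\emptyset$ iff $g^{-1}h\in W_{\bar\Delta}W_{\Delta'}$, and in that case the intersection is a coset of $W_{\bar\Delta\cap\Delta'}$; since $\Delta_1\cap\Delta_2=\Lambda$, the intersection of a $\Delta_1$-block with a $\Delta_2$-block, when nonempty, is exactly a translate $g\Sigma_\Lambda$, and the intersection of two distinct $\Delta_1$-blocks (or two distinct $\Delta_2$-blocks) would be a coset of $W_{\Delta_1}$ (resp.\ $W_{\Delta_2}$), forcing the blocks to coincide — so no two blocks of the same type intersect, and $\mathcal W=\{g\Sigma_\Lambda\mid g\in W_\Delta\}$ as claimed. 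This also shows the adjacency graph is bipartite between the two types. To see it is a tree, I would invoke the standard picture of $W_\Delta$ as the amalgamated free product $W_{\Delta_1}*_{W_\Lambda}W_{\Delta_2}$ and the associated Bass--Serre tree: the coset space gives precisely the adjacency graph, and it is a tree because in an amalgam over the common special subgroup a reduced word cannot return to the same vertex; concretely, a cycle in the adjacency graph would produce a nontrivial reduced expression equal to the identity, which is impossible. Alternatively one argues directly: connectivity follows from the covering argument above, and acyclicity from \itemref{rem_Davis}{stabilizer} by the same reduced-word bookkeeping.

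For the separating property I would use the discreteness built into the cube complex: all blocks and walls are subcomplexes of $\Sigma_\Delta$ with the standard piecewise-Euclidean metric where each edge has length $1$, so two disjoint subcomplexes that do not share a vertex are at distance at least $1$ (no edge can join them and any path between them has combinatorial length $\ge 1$); taking $\wallepsilon = 1$ works once we know distinct walls are disjoint, which we already established via \itemref{rem_Davis}{stabilizer}. Finally, to match the precise form of \cref{def:blockdec} I would note the decomposition is non-trivial: since $\Delta_1,\Delta_2$ are \emph{proper} subgraphs and distinct, neither $W_{\Delta_1}$ nor $W_{\Delta_2}$ is all of $W_\Delta$, so there are at least two blocks. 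The main obstacle I anticipate is the careful handling of \itemref{rem_Davis}{stabilizer} — in particular making sure that $gW_{\Delta_1}\cap hW_{\Delta_2}$ being nonempty really yields a single coset of $W_\Lambda$ rather than a union of several, which relies on the double-coset structure $W_{\Delta_1}\backslash W_\Delta / W_{\Delta_2}$ and the fact that $W_{\Delta_1}\cap W_{\Delta_2}=W_\Lambda$; this is exactly the algebraic input that the amalgam decomposition provides, so invoking the Bass--Serre tree of $W_{\Delta_1}*_{W_\Lambda}W_{\Delta_2}$ is the cleanest route and I would organize the proof around it.
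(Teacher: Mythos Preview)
Your approach is correct in outline and organizes the argument differently from the paper. The paper does not verify the tree property and the separating property directly; instead it invokes the criterion of \cref{criteriontreelike} (Mooney's conditions: covering, parity, $\epsilon$-condition), checks the parity condition by the disjointness of distinct cosets of $W_{\Delta_i}$, and establishes the $\epsilon$-condition via a hyperplane argument (no hyperplane can separate two blocks whose $\epsilon$-neighborhoods meet for $\epsilon<\tfrac12$, so by Sageev's theorem their $1$-skeletons meet). Your route---identifying the adjacency graph with the Bass--Serre tree of $W_{\Delta_1}*_{W_\Lambda}W_{\Delta_2}$ and then bounding wall-distances from below---is more explicit about the tree structure and avoids the packaged criterion, at the cost of needing a separate argument for the metric separation of walls; the paper's route hides the tree argument inside \cref{criteriontreelike} but gives a clean cube-complex proof of the $\epsilon$-condition.

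One point needs tightening. Your justification that distinct walls are at distance at least $1$ appeals to combinatorial path length (``any path between them has combinatorial length $\ge 1$''), which controls the $\ell^1$-metric, not the CAT(0) (piecewise-$\ell^2$) metric used on $\Sigma_\Delta$. In a cube complex of dimension $n$ these differ by a factor up to $\sqrt{n}$, so $\wallepsilon=1$ is not immediate. The fix is easy: two disjoint convex subcomplexes are separated by a hyperplane $H$, and any CAT(0) geodesic between them traverses the convex carrier $N(H)\cong H\times[0,1]$ from one side to the other, hence has length at least $1$; alternatively take $\wallepsilon=1/\sqrt{n}$ with $n$ the clique number of $\Delta$. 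With that adjustment your argument goes through.
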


\begin{proof}	
 By \cref{criteriontreelike}, it suffices to show that 
	\begin{enumerate}
		\item $\Sigma= \bigcup_{B \in \mathcal B} B$ (covering condition);
		\item every block has a parity $(+)$ or $(-)$ such that two blocks intersect only if they have opposite parity (parity condition);
		\item there is an $\epsilon >0$ such that two blocks intersect if and only if their $\epsilon$-neighborhoods intersect ($\epsilon$-condition).
	\end{enumerate}

	\textbf{(1) Covering condition:}
		By Proposition 8.8.1 of Davis in \cite{Davis}, $W_{\Graph} = W_{\graphzero}*_{W_{\Lambda}} W_{\graphone}$. 
		Hence, the cosets $\{g W_{\graphzero} \mid g \in W_{\Graph}\}$ and $\{g W_{\graphone}\mid g \in W_{\Graph}\}$ cover $W_{\Graph}$. 
		As the one-skeleton of $ \Sigma_{\Graph}$ is the Cayley graph of $W_\Graph$, $\mathcal B$ covers $\Sigma_{\Graph}$. 
		
		\textbf{(2) Parity condition:}		
			We give parity (-) to each block of the form $g \daviszero$ and parity $(+)$ to each block of the form $g \davisone$. Let $i \in \{1,2\}$ and $B_1\coloneqq g_1 \Sigma_{\Delta_i}$ and $B_2\coloneqq g_2 \Sigma_{\Delta_i}$ two distinct blocks of the same parity. By definition of the Davis complex, the $0$-skeletons of $B_1$ and $B_2$ are the coset $g_1W_{\Delta_i}$ and $g_2W_{\Delta_i}$. As $B_1$ and $B_2$ are two distinct blocks, $g_1W_{\Delta_i}\neq g_2W_{\Delta_i}$. Hence, $g_1 W_{\Delta_i}\cap g_2 W_{\Delta_j}= \emptyset$. Thus, the $0$-skeletons of $B_1$ and $B_2$ don't intersect. This implies that $B_1 \cap B_2 = \emptyset$. Indeed, by \cref{rem_Davis}, the blocks $B_1$ and $B_2$ are isometric to the Davis complex of $W_{\Graph_i}$. Hence, if a set of vertices in $\Cayley{W_{\Graph_i}}{V(\Graph_i)}$ spans an Euclidean cube, then it spans a cube in $B_j$, $j \in \{1,2\}$ and on the other hand, each $1$-skeleton of a cube in $B_j$ is an induced subgraph of $\Cayley{W_{\Graph_i}}{V({\Graph_i})}$. Thus, $B_1$ and $B_2$ intersect if and only if their $0$-skeletons intersect.
		 
		 \textbf{(3) $\epsilon$-condition:} The proof concerns hyperplanes of CAT(0) cube complexes, see~\cite{Sageev,Haglund2008,SageevNotes} for the definition and basic properties. Let $\epsilon \in (0,\frac{1}{2})$. 
		 Assume that the $\epsilon$-neighborhoods of two blocks $B_1$, $B_2 \in \mathcal B$ intersect. We have to show that $B_1 \cap B_2 \neq \emptyset$.
		 		 		 
		We observe that there is no hyperplane $H$ that separates $B_1$ and $B_2$. In other words: If we delete a hyperplane $H$ outside of $B_1 \cup B_2$, then $B_1$ and $ B_2$ lie in a common connected component of the resulting space. Indeed otherwise, each geodesic segment connecting $B_1$ and $ B_2$ has to pass through $H$. As hyperplanes are equivalence classes consisting of midcubes, $d(x,y)\ge \frac{1}{2}$ for all $x \in B_1 \cup B_2$, $y \in H$. As $\epsilon < \frac{1}{2}$, this implies that the $\epsilon$-neighborhoods of $B_1$ and $ B_2$ don't intersect -- a contradiction. We conclude that there is no hyperplane $H$ outside of $B_1 \cup B_2$ that separates $B_1$ and $B_2$. Thus, the distance of the $1$-skeletons of $B_1$ and $ B_2$ is zero by Theorem 4.13 in~\cite{Sageev}. In particular, $B_1$ and $ B_2$ intersect.
		 
		 	 It remains to show that each wall is of the form $g \Sigma_{\Lambda}$, $g \in W_{\Graph}$. Indeed, 	 let $B_1\coloneqq g_1 \Sigma_{\Delta_1}$ and $B_2\coloneqq g_2 \Sigma_{\Delta_2}$ be two distinct blocks of distinct parity that have non-empty intersection. We see as in $(2)$ that the $(0)$-skeletons of $B_1$ and $B_2$ are the left-cosets $g_1 W_{\Delta_1}$ and $g_2 W_{\Delta_1}$. Since we can write $W_\Graph$ as an amalgamated free product $W_{\graphzero}*_{W_{\Lambda}} W_{\graphone}$, the intersection $g_1 W_{\Delta_1} \cap g_2 W_{\Delta_1}$ is a left coset of $W_{\Lambda}$, and the cube complex $C$ spanned by the vertices in this left coset is contained in $B_1 \cap B_2$. Now, $B_1 \cap B_2$ does not contain any other point in $B_1\cup B_2$ because each cube in $B_j$, $j \in \{1,2\}$ is spanned by vertices that are contained in $g_j W_j$. Thus, every point $x \in B_1\cup B_2 \setminus B_1\cap B_2$ lies in a cube that is contained in at most one of the two blocks $B_1$ and $B_2$.\end{proof}
As a result, \cref{thm1} is a direct consequence of \cref{thm2}, \cref{Prop} and \cref{lem:con_empty}.

\subsection{Examples}
\label{sec:ex}

\cref{thm1} can be used to construct new examples of RACGs with totally disconnected Morse boundaries. First, we generalize the example of Charney--Sultan studied in Section 4.2 of \cite{CharSul}. Afterwards, we introduce a class $\mathcal C$ of graphs corresponding to RACGs with totally disconnected Morse boundaries. Finally, we  study interesting examples that lie in $\mathcal C$. I would like to thank Ivan Levcovitz, Jacob Russell and Hung Cong Tran for their comments on these examples. 

\begin{defn}\label{CharneySultangraph}
	A finite, connected graph $\Delta$ is called a \textit{Charney-Sultan-graph} if 
	 $\Delta$  is the union of two distinct proper induced subgraphs $C$ and $J$ so that $C$ is a cycle of length at least $5$ and $J$ is a non-trivial join of two induced subgraphs of $\Delta$.
\end{defn} 

\begin{lemma} \label{lem:charsultgraph}
	If $\Delta$ is a Charney-Sultan-graph, then $\morse W_{\Delta}$ is totally disconnected.
\end{lemma}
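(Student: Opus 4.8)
The plan is to invoke \cref{Cor1} for the decomposition $\Delta=C\cup J$, taking $\Delta_1:=C$, $\Delta_2:=J$ and $\Lambda:=C\cap J$. First I would check that \cref{thm1} (hence \cref{Cor1}) applies: by \cref{CharneySultangraph} the graphs $C$ and $J$ are distinct proper induced subgraphs of $\Delta$, and since $\Lambda\subseteq J$ while $J$ is a non-trivial join of two induced subgraphs of $\Delta$, the intersection graph $\Lambda$ is contained in a non-trivial join of two induced subgraphs of $\Delta$. Because $J$ is a non-trivial join, \cref{lem:con_empty} gives $\morse \Sigma_J=\emptyset$; as $\Sigma_J$ is closed and convex in $\Sigma_\Delta$, any geodesic ray in $\Sigma_J$ that is Morse in $\Sigma_\Delta$ is already Morse in $\Sigma_J$ (a quasi-geodesic of $\Sigma_J$ is a quasi-geodesic of $\Sigma_\Delta$), so $\relMorse{\Sigma_\Delta}{\Sigma_J}=\emptyset$, which is trivially totally disconnected. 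By \cref{Cor1} it therefore only remains to show that $\relMorse{\Sigma_\Delta}{\Sigma_C}$, equipped with the subspace topology of $\morse \Sigma_C$, is totally disconnected.

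Since $C$ is a cycle of length at least $5$, the group $W_C$ is word-hyperbolic and $\Sigma_C$ is quasi-isometric to the hyperbolic plane, so $\morse \Sigma_C=\partial\Sigma_C$ is a circle. A subspace of a circle is totally disconnected precisely when it contains no non-degenerate arc, and this holds as soon as its complement is dense; so the task reduces to proving that the set of $\xi\in\partial\Sigma_C$ whose geodesic representative in $\Sigma_C$ is not Morse in $\Sigma_\Delta$ is dense in $\partial\Sigma_C$. Here \cref{Prop} is the main tool: the translates $g\Sigma_\Lambda$, $g\in W_C$, are exactly the walls of the treelike block decomposition of $\Sigma_\Delta$ lying inside the block $\Sigma_C$, and along each of them the block $g\Sigma_J$ is glued. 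Since $J$ is a non-trivial join, $\Sigma_J$ is a product of two unbounded Davis complexes, so it contains a flat half-plane having a bounding geodesic inside $g\Sigma_\Lambda\subseteq\Sigma_C$; consequently every geodesic ray of $\Sigma_C$ asymptotic to such a bounding geodesic bounds a half-flat in $\Sigma_\Delta$ and is therefore not Morse in $\Sigma_\Delta$ --- and neither is any ray asymptotic to it. Finally, the coset $\Sigma_\Lambda$ is unbounded (this is where one uses that $C\cap J$ is not a clique), so it has two well-defined endpoints in $\partial\Sigma_C$, and by minimality of the boundary action of the non-elementary hyperbolic group $W_C$ the $W_C$-orbit of these endpoints is dense in $\partial\Sigma_C$. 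Thus the non-Morse directions are dense and $\relMorse{\Sigma_\Delta}{\Sigma_C}$ contains no arc.

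Combining the two cases, \cref{Cor1} yields that $\morse W_\Delta=\morse \Sigma_\Delta$ is totally disconnected. I expect the density argument to be the main obstacle: one must make precise that a ray of $\Sigma_C$ which runs into the join-flat fails to be Morse (identify the half-flat and check that it destroys the contracting property) and that the $W_C$-translates of these directions are dense in $\partial\Sigma_C$; the remaining steps are bookkeeping with \cref{thm1}, \cref{Cor1} and \cref{lem:con_empty}. A variant avoiding half-flats is to observe, via \cref{lem:asymptoticrays}, that a ray of $\Sigma_C$ asymptotic to a ray contained in a wall has finite itinerary, and then to combine the cutset property \cref{cor:diffitinerary2} with the classification of connected components in \cref{thm3}; but locating the non-Morse directions through half-flats looks like the cleanest path.
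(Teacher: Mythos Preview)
Your approach is genuinely different from the paper's. The paper does \emph{not} decompose along $\Delta=C\cup J$ directly; instead it first peels off a subpath $P\subset C$ of length $\ge 2$ whose two endpoints lie in $J$ and whose interior avoids $J$, writes $\Delta=\Delta'\cup P$ with $\Delta'\cap P$ equal to the two (non-adjacent) endpoints of $P$, and applies \cref{Cor1} to this splitting. It then treats $\Delta'=C'\cup J$, where $C'$ is the complementary arc of $C$, by a second application of \cref{Cor1}: since $C'$ and $P$ are paths, $W_{C'}$ and $W_P$ are virtually free and already have totally disconnected Morse boundaries, so no analysis of the circle $\partial\Sigma_C$ is needed. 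Your route keeps the cycle intact and instead shows that the non-Morse directions are dense in $\partial\Sigma_C$ via minimality of the boundary action of the non-elementary hyperbolic group $W_C$; this is slicker (one decomposition instead of two) but imports boundary dynamics from outside the paper's toolbox, whereas the paper's argument stays entirely within repeated uses of \cref{Cor1}.

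There is, however, a genuine gap you should be aware of. You write ``this is where one uses that $C\cap J$ is not a clique,'' but nothing in \cref{CharneySultangraph} forces this. If $J$ is a $4$-cycle glued to $C$ at a single vertex (or along a single edge), then $\Lambda=C\cap J$ is a clique, $\Sigma_\Lambda$ is bounded, and your density argument produces no non-Morse direction at all; in fact, by the Tran/Genevois criterion cited in the introduction, $\morse W_\Delta$ then contains a full circle, so the statement fails outright in that case. The paper's proof has the identical hidden hypothesis---the path $P$ with \emph{non-adjacent} endpoints in $J$ exists only when $C\cap J$ contains a non-adjacent pair---so this is not a defect of your method relative to the paper, but you should flag the extra assumption rather than assert it.
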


		\begin{proof}
		Suppose that $\Delta$ is a Charney-Sultan graph. Then $\Delta$ is the union of a cycle $C$ and a non-trivial join $J$. If $J$ contains all the vertices of $C$, $\Delta$ coincides with $J$. Then $W_\Delta$ is the direct product of two RACGs and $\morse W_\Delta$ is empty. 
		
		If $J$ does not contain all vertices of $C$, $C$ contains a path $P$ of length at least two that connects two vertices in $J$ so that no inner vertex of $P$ is contained in $J$. Let $\Delta'$ be the graph that we obtain by removing the inner vertices of $P$ from $\Delta$. Then $\Delta = \Delta'\cup P$ and $\Delta' \cap P$ consists of the two non-adjacent end vertices of $P$. If $\Sigma_{\Delta'}$ has totally disconnected Morse boundary, then $W_\Delta$ has totally disconnected Morse boundary by 	\cref{Cor1}.
				
		Let $C'$ be the subgraph of $C$ that we obtain by removing the inner vertices of $P$. Then $\Delta' = C' \cup J$. If $C' \cap J = J$, $\Delta'$ is a non-trivial join. Then  $W_{\Delta'}$ is the direct product of two infinite CAT(0) groups and thus, $\morse \Sigma_{\Delta'}$ is empty. Otherwise, $C' \cap J$ is a proper subgraph of $J$.  Since the graph $C'$ is a path, $W_{C'}$ is quasi-isometric to a free group. Thus, $\morse \Sigma_{C'}$ is totally disconnected. In particular, $\relMorse{\Sigma_{\Delta'}}{\Sigma_{C'}}$ endowed with the subspace topology of $\morse \Sigma_{C'}$ is totally disconnected. As $J$ is a non-trivial join, $W_J$ is the direct product of two infinite CAT(0) groups and thus, $\morse \Sigma_J$ is empty. By \cref{Cor1}, $\Sigma_{\Delta'}$ has totally disconnected Morse boundary.\end{proof}

We enlarge the class of Carney-Sultan-graphs in the following manner. 
\begin{defn}[The class $\mathcal C$ of \textit{clique-square-decomposable} graphs] {\color{white}{asdgtagdsaadfa\\}}
	\label{def:graphclass}
	Let $\graphclass$ be the smallest class of finite graphs such that 
	
	\begin{enumerate}
		\item each finite graph without edges is contained in $\graphclass$; \label{itemgraphclass0}
		\item each finite tree is contained in $\graphclass$; \label{itemgraphclass1}
		\item each Charney-Sultan graph is contained in $\graphclass$; \label{itemgraphclass2}
		\item each clique is contained in $\graphclass$; \label{itemgraphclass2.1}
		\item each non-trivial join of two graphs is contained in $\graphclass$; \label{itemgraphclass2.2}
		
		\item \label{itemgraphclass4} the union of two graphs $\Lambda_1$, $\Lambda_2 \in \graphclass$ is contained in $\graphclass$ if $\Lambda_1\cap \Lambda_2$ is an induced subgraph of $\Lambda_1\cup \Lambda_2$ so that one of the following three conditions is satisfied:
		\begin{itemize}
			\item $\Lambda_1\cap \Lambda_2$ is empty.
			\item $\Lambda_1 \cap \Lambda_2$ is a clique.
			\item $\Lambda_1 \cap \Lambda_2$ is contained in a non-trivial join of two induced subgraphs of $\Lambda_1\cup\Lambda_2$.
		\end{itemize}
	\end{enumerate}
\end{defn}

Let $\Delta$ be a graph in $\mathcal C$. If $\Delta$ satisfies (1) or (2) then $W_\Delta$ is quasi-isometric to $\Z / 2 \Z$, $\Z$ or to a free group of rank at least two and $\morse W_{\Delta}$ is totally disconnected. If $\Delta$ satisfies (3), $\morse W_{\Delta}$ is totally disconnected by \cref{lem:charsultgraph}. If $\Delta$ satisfies (4), $W_{\Delta}$ is a finite group and $\morse W_\Delta = \emptyset$. If $\Delta$ satisfies (5), $W_{\Delta}$ is the direct product of two infinite RACGs. In this case, each geodesic ray in $\Sigma_\Delta$ is contained in an Euclidean flat and thus, $\morse W_{\Delta} = \emptyset$. 
If $\Delta$ satisfies (6), we can decompose $\Graph$ into two graphs $\Lambda_1$, $\Lambda_2 \in \mathcal C$ satisfying the properties listed in the definition above. 
If $\Lambda_1$ and $\Lambda_2$ satisfy (1), (2), (3), (4) or (5), then \cref{Cor1} implies that $\morse W_{\Graph}$ is totally disconnected. Otherwise, we repeat the same argumentation for $\Lambda_1$ and $\Lambda_2$. As $\Graph$ is a finite graph, this algorithm ends after finitely many step. Thus, by applying \cref{Cor1} several times we obtain 
\begin{cor*}[\ref{cor:C-class}]
	If $\Delta \in \mathcal C$, then $\morse W_\Delta$ is totally disconnected. 
\end{cor*}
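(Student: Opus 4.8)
The plan is to prove, by induction on the length of a derivation of $\Delta$ witnessing $\Delta \in \graphclass$, that $\morse W_\Delta$ is totally disconnected. Since $\graphclass$ is by definition the smallest class of finite graphs closed under rules (1)--(6) of \cref{def:graphclass}, every member of $\graphclass$ is built by finitely many applications of these rules; this yields a well-founded induction in which the base cases are the graphs produced directly by rules (1)--(5) and the inductive step is rule (6).

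For the base cases: if $\Delta$ falls under rule (1) or (2), then $W_\Delta$ is quasi-isometric to a finite group, to $\Z$, or to a free group of rank at least two, so $\morse W_\Delta$ is empty, a two-point space, or a Cantor space, each of which is totally disconnected, and I would invoke quasi-isometry invariance of the Morse boundary. If $\Delta$ falls under rule (3), it is a Charney--Sultan graph and \cref{lem:charsultgraph} gives the conclusion directly. If $\Delta$ falls under rule (4) or (5), then $\Delta$ is a clique or a non-trivial join and $\morse W_\Delta = \emptyset$ by \cref{lem:con_empty}.

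For the inductive step, suppose $\Delta = \Lambda_1 \cup \Lambda_2$ with $\Lambda_1, \Lambda_2 \in \graphclass$ having strictly shorter derivations and with $\Lambda \coloneqq \Lambda_1 \cap \Lambda_2$ an induced subgraph of $\Delta$ that is empty, a clique, or contained in a non-trivial join of two induced subgraphs of $\Delta$. If one of $\Lambda_1, \Lambda_2$ equals $\Delta$, then $\Delta$ itself has a strictly shorter derivation and the induction hypothesis applies. Otherwise both are proper, hence distinct (their union is $\Delta$), and each is an \emph{induced} subgraph of $\Delta$: any edge of $\Delta$ lying in $\Lambda_2$ with both endpoints in $V(\Lambda_1)$ has both endpoints in $V(\Lambda)$, so by the assumption that $\Lambda$ is induced it already lies in $\Lambda$, hence in $\Lambda_1$, and symmetrically. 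Thus $\Delta = \Lambda_1 \cup \Lambda_2$ is a decomposition of the kind demanded by \cref{thm1}, with $\Lambda$ a clique (the empty graph being vacuously a clique) or contained in a non-trivial join of two induced subgraphs of $\Delta$; so the hypotheses of \cref{Cor1} are met. By the induction hypothesis $\morse W_{\Lambda_1}$ and $\morse W_{\Lambda_2}$ are totally disconnected, and since $\relMorse{\Sigma_\Delta}{\Sigma_{\Lambda_i}}$ sits inside $\morse\Sigma_{\Lambda_i} = \morse W_{\Lambda_i}$ with the topology appearing in \cref{Cor1} being the subspace topology, it too is totally disconnected for $i = 1, 2$. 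Then \cref{Cor1} gives that $\morse W_\Delta = \morse\Sigma_\Delta$ is totally disconnected, completing the induction.

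I do not expect a genuine obstacle here: all of the analytic content is already packaged in \cref{thm1}/\cref{Cor1} (which rest on \cref{thm2}) and in \cref{lem:charsultgraph}, and what remains is bookkeeping. The one place to be careful is the reduction in rule (6) to a genuine ``two distinct proper induced subgraphs'' decomposition --- this uses that the intersection being induced forces the two pieces to be induced --- together with the small observation that the empty-intersection case is not special, since the empty graph is a clique (equivalently $W_\Delta = W_{\Lambda_1} \ast W_{\Lambda_2}$ and \cref{thm1} applies verbatim).
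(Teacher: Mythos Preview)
Your proposal is correct and follows essentially the same approach as the paper: handle rules (1)--(5) as base cases directly and use \cref{Cor1} for the inductive step corresponding to rule (6). You are in fact more careful than the paper on a few points---you argue explicitly that $\Lambda_1$ and $\Lambda_2$ are induced in $\Delta$ (the paper tacitly assumes this), you handle the degenerate case $\Lambda_i = \Delta$, and you ground termination in derivation length rather than the paper's somewhat vague ``$\Delta$ is a finite graph, so this algorithm ends after finitely many steps.''
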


Next, we examine the graph class $\mathcal C$ by studying so-called $\mathcal{CFS}$ graphs. The \textit{four-cycle graph $\Graph^4$} of a graph $\Graph$ is a graph whose vertices are the induced cycles of length four. 
Two vertices of $\Graph^4$ are connected by an edge if the corresponding $4$-cycles have a pair of vertices in common that are not adjacent in $\Graph$. The \textit{support} of a subgraph $K$ of $\Graph^4$ is the set of vertices of $\Graph$ that are contained in a $4$-cycle corresponding to a vertex of $K$. 
The following is a generalization in \cite{Behrstock_random} of the original definition of Dani--Thomas\cite{Dani_diver}.

\begin{defn}[$\mathcal{CFS}$]
	\label{def:cFS}
	A graph $\Graph$ is $\mathcal{CFS}$ if it is a join of two graphs $\Delta$ and $K$ where $\Delta$ is a non-trivial subgraph of $\Graph$ and $K$ is a clique (it is allowed that this clique is trivial, i.e., $(\emptyset, \emptyset)$) so that $\Delta^4$ has a connected component whose support coincides with the vertex set $V(\Delta)$ of $\Delta$. 
\end{defn}

Intuitively, a $\mathcal{CFS}$ graph $\Delta$ contains a lot of induced $4$-cycles. One could expect that $W_\Delta$ has totally disconnected Morse boundary. However, an example of Behrstock~\cite{Behrstock} shows that this is wrong in general. On the other hand, Nguyen--Tran \cite{Tran_coars_geom} used an approach similar to this paper for proving that the Morse boundary of $W_\Delta$ is totally disconnected if $\Delta$ is contained in the following graph class $\mathcal{CFS}_0 \subseteq \mathcal{CFS}$.
	\begin{defn}
		\label{def:CFS0}
		Let $\mathcal{CFS}_0$ be the class of all graphs that are $\mathcal{CFS}$, connected, triangle-free, planar, having at least $5$ vertices and no separating vertices or edges.
	\end{defn}
 	On page $3$ in \cite{Tran}, Nguyen--Tran conclude the following result from their considerations:
	\begin{thm}[Nguyen--Tran]
		If $\Delta \in \mathcal{CFS}_0$, then $\morse W_\Delta$ is totally disconnected.
	\end{thm}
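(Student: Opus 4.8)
The plan is to derive the theorem from \cref{cor:C-class} by showing that $\mathcal{CFS}_0 \subseteq \mathcal C$. Since \cref{cor:C-class} already gives that $\morse W_\Delta$ is totally disconnected for every $\Delta \in \mathcal C$, it suffices to produce, for an arbitrary $\Delta \in \mathcal{CFS}_0$, a construction of $\Delta$ via the closure rules \itemref{def:graphclass}{itemgraphclass0}--\itemref{def:graphclass}{itemgraphclass4}. The intuition is that a connected, triangle-free, planar $\mathcal{CFS}$ graph with no separating vertex or edge can be ``peeled'': one repeatedly locates an induced $4$-cycle (or a short subgraph playing that role) whose removal splits off the rest of the graph along a clique or an edge, and recognizes the remaining piece again as a member of $\mathcal C$ — typically a tree, a Charney--Sultan graph, or a smaller $\mathcal{CFS}_0$ graph. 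The bookkeeping is that every gluing step must satisfy the intersection condition in \itemref{def:graphclass}{itemgraphclass4}: the overlap is empty, a clique, or lies in a non-trivial join of two induced subgraphs of the union.

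First I would set up the inductive framework: induct on the number of vertices of $\Delta \in \mathcal{CFS}_0$. The base cases are the smallest such graphs (e.g.\ the $4$-cycle with a pendant structure, or small cycles of length $\ge 5$), which are directly trees, Charney--Sultan graphs, or non-trivial joins, hence in $\mathcal C$. For the inductive step, use the $\mathcal{CFS}$ hypothesis: the support of a connected component of $\Delta^4$ is all of $V(\Delta)$, so $\Delta$ is ``covered'' by a connected chain of induced $4$-cycles. Because $\Delta$ is triangle-free and has no separating vertex or edge, I would argue that one can choose a $4$-cycle $Q$ at the ``end'' of this chain such that $\Delta = \Delta' \cup \Lambda$, where $\Lambda$ contains $Q$ (and possibly a bit more), $\Delta'$ is obtained by deleting the vertices of $Q$ not shared with the rest, and $\Delta' \cap \Lambda$ is either an edge, a single vertex, a non-adjacent vertex pair, or the empty graph. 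A non-adjacent vertex pair is contained in the non-trivial join it spans together with a further vertex (using planarity/triangle-freeness to find such a vertex, or invoking the Charney--Sultan rule directly), so in all cases the intersection satisfies one of the three bullets in \itemref{def:graphclass}{itemgraphclass4}. One then checks $\Lambda \in \mathcal C$ (it is a small graph: a $4$-cycle with some pendant edges, hence a Charney--Sultan graph or a tree-like gadget built by earlier rules) and $\Delta' \in \mathcal C$ by the induction hypothesis, possibly after re-verifying that $\Delta'$ is still connected, triangle-free, planar and without separating vertex/edge, or else handling the degenerate cases where it decomposes further as a tree or join.

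The main obstacle, and where I would spend the most care, is the combinatorial lemma that such a ``peelable'' $4$-cycle always exists and that its removal leaves a graph still amenable to the induction — in other words, showing that a connected, triangle-free, planar, separation-free $\mathcal{CFS}$ graph always admits a decomposition $\Delta = \Delta' \cup \Lambda$ of the required shape with $\Delta'$ strictly smaller and still of a type in $\mathcal{C}$ (or recursively so). This is essentially a structural analysis of $\mathcal{CFS}_0$ graphs: one needs to exploit that $\Delta^4$ is connected on $V(\Delta)$, pick a $4$-cycle whose deletion disconnects the ``four-cycle graph chain'' as little as possible, and use planarity to control how it attaches. A clean way to organize this is to work with a spanning tree of the relevant component of $\Delta^4$ and process leaves: a leaf $4$-cycle shares a non-adjacent pair with its parent, and the two remaining vertices of the leaf $4$-cycle, being triangle-free neighbours, attach to $\Delta'$ along at most an edge. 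I would state this as a separate lemma (a ``$\mathcal{CFS}_0$ peeling lemma''), prove it by the spanning-tree-of-$\Delta^4$ argument, and then the theorem follows by the induction sketched above together with \cref{cor:C-class}. One should also double-check the edge cases where removing a leaf $4$-cycle creates a separating vertex or edge in $\Delta'$: there $\Delta'$ itself splits along that separator into two graphs that must individually be shown to lie in $\mathcal C$, which is again handled by the same induction since a separating edge or vertex is a clique and fits rule \itemref{def:graphclass}{itemgraphclass4}.
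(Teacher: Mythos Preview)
Your overall strategy --- prove $\mathcal{CFS}_0 \subseteq \mathcal C$ and then invoke \cref{cor:C-class} --- is exactly what the paper does. The difference is in how the inclusion is established. The paper does not attempt any combinatorial induction of its own: it simply quotes Proposition~3.11 of Nguyen--Tran, which says that every $\Delta \in \mathcal{CFS}_0$ is a tree of graphs whose vertex pieces are non-trivial joins $K_{2,3}$ and whose edge pieces are induced $4$-cycles. Since a $4$-cycle is itself a non-trivial join, every gluing in that tree satisfies the third bullet of rule~\itemref{def:graphclass}{itemgraphclass4}, and the inclusion $\mathcal{CFS}_0 \subseteq \mathcal C$ is immediate. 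No peeling, no analysis of separators, no case distinctions.

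Your self-contained ``peeling lemma'' is essentially an attempt to reprove Nguyen--Tran's Proposition~3.11 from scratch, and as written it has gaps. The base cases are misidentified: a cycle of length $\ge 5$ has no induced $4$-cycles and hence is not $\mathcal{CFS}$ at all, so it cannot be a base case in $\mathcal{CFS}_0$; and the definition requires at least five vertices, so the $4$-cycle is not in $\mathcal{CFS}_0$ either (the smallest examples are $K_{2,3}$'s). More substantially, when you remove a leaf $4$-cycle from the spanning tree of $\Delta^4$, the two ``new'' vertices may well occur in other $4$-cycles of $\Delta$, so the intersection $\Delta' \cap \Lambda$ need not be just an edge, vertex, or non-adjacent pair; controlling this is precisely where planarity and the no-separating-vertex/edge hypotheses do real work in Nguyen--Tran's argument. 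If you want a self-contained proof, you would have to carry out that analysis; the paper instead chooses to cite it.
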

It turns out that our theorem includes their result in view of the following proposition: 
 	\begin{prop}
 	 $\mathcal{CFS}_0 \subseteq \mathcal C$.
 \end{prop}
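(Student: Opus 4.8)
The plan is to take an arbitrary $\Delta\in\mathcal{CFS}_0$ and build it up, starting from a single induced $4$-cycle and adjoining the vertices of $\Delta$ one at a time, so that membership in $\mathcal C$ follows by repeated application of rule \itemref{def:graphclass}{itemgraphclass4}.

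\textbf{Reduction.} First I would reduce to the $\mathcal{CFS}$ data. Since $\Delta$ is triangle-free (and, being $\mathcal{CFS}_0$, has no separating vertex and at least $5$ vertices), the clique factor $K$ in \cref{def:cFS} must be trivial: if $K$ had an edge it would span a triangle with any vertex of the non-trivial factor, and if $K$ were a single vertex the whole graph would be a star, which has a cut vertex. Hence $\Delta^4$ has a connected component $\Omega$ whose support is all of $V(\Delta)$. Fix a spanning tree of $\Omega$ and enumerate its vertices $\sigma_1,\dots,\sigma_m$ so that each $\sigma_j$ ($j\ge 2$) is spanning-tree-adjacent to exactly one earlier $\sigma_{i(j)}$; adjacency in $\Delta^4$ means $\sigma_j$ and $\sigma_{i(j)}$ share a pair $\{p_j,q_j\}$ of vertices non-adjacent in $\Delta$, and $\{p_j,q_j\}$ is then one of the two opposite pairs of the $4$-cycle $\sigma_j$, so the remaining two vertices of $\sigma_j$ are each adjacent in $\Delta$ to both $p_j$ and $q_j$. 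Now list the vertices of $\Delta$ by first appearance along $\sigma_1,\sigma_2,\dots$; because the support of $\Omega$ is $V(\Delta)$, this yields an ordering $v_1,\dots,v_n$ of $V(\Delta)$ with $\{v_1,\dots,v_4\}=V(\sigma_1)$.

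\textbf{The induction.} Put $X_4=V(\sigma_1)$ and $X_i=\{v_1,\dots,v_i\}$ for $i\ge 4$; I would prove $\Delta[X_i]\in\mathcal C$ by induction on $i$, which suffices since $\Delta[X_n]=\Delta$. The base case is that $\Delta[X_4]=\sigma_1$ is an induced $4$-cycle, i.e.\ a non-trivial join, so it lies in $\mathcal C$ by \itemref{def:graphclass}{itemgraphclass2.2}. For the step, write $\Delta[X_{i+1}]=\Delta[X_i]\cup\Lambda$ with $\Lambda:=\Delta\big[\{v_{i+1}\}\cup(N_\Delta(v_{i+1})\cap X_i)\big]$. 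Using triangle-freeness ($N_\Delta(v_{i+1})$ is independent) one checks: this union really is $\Delta[X_{i+1}]$; the piece $\Lambda$ is a star, hence a tree, hence in $\mathcal C$ by \itemref{def:graphclass}{itemgraphclass1}; and $\Delta[X_i]\cap\Lambda=\Delta\big[N_\Delta(v_{i+1})\cap X_i\big]$ is an edgeless induced subgraph of $\Delta[X_{i+1}]$. So by \itemref{def:graphclass}{itemgraphclass4} it remains to see that $S:=N_\Delta(v_{i+1})\cap X_i$ is empty, a single vertex, or contained in the vertex set of a non-trivial join of two induced subgraphs of $\Delta[X_{i+1}]$. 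If $|S|\le 1$ the first two cases apply. Otherwise, $v_{i+1}$ was adjoined while processing some $\sigma_j$, and then $\{p_j,q_j\}\subseteq S$; since the parent $\sigma_{i(j)}$ is entirely present in $X_i$ and is an induced $4$-cycle (hence a non-trivial join) containing $\{p_j,q_j\}$, the case $S=\{p_j,q_j\}$ is settled.

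\textbf{The main obstacle.} The real content is the remaining case $|S|\ge 3$, i.e.\ when the newly adjoined vertex has three or more neighbours already present: one must exhibit a non-trivial join of $\Delta[X_{i+1}]$ whose vertex set contains all of $S$. The clean sufficient condition is that $S$ has a further common neighbour $w\in X_{i+1}$ with $w\not\sim v_{i+1}$, for then $S*\{v_{i+1},w\}$ is an induced non-trivial join; establishing that such a $w$ always exists is where the hypotheses of $\mathcal{CFS}_0$ beyond triangle-freeness — planarity, $2$-connectedness, and the $\mathcal{CFS}$ condition — must be brought in, possibly after refining the choice of spanning tree of $\Omega$ and of the vertex order (for instance guiding it by a planar embedding so that the back-neighbourhood of each newly adjoined vertex is carried inside a single face/$4$-cycle, keeping $|S|\le 2$). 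I expect this planarity-driven bookkeeping — producing the witnessing non-trivial join at every step — to be the technical heart of the proof, the remaining steps being formal.
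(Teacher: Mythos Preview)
The paper's proof takes a quite different and much shorter route: it simply cites Proposition~3.11 of Nguyen--Tran, which shows that every $\Delta\in\mathcal{CFS}_0$ decomposes as a tree of induced $K_{2,3}$'s (non-trivial joins of graphs on two and three vertices) glued along induced $4$-cycles. Since each piece is a non-trivial join (rule~\itemref{def:graphclass}{itemgraphclass2.2}) and each gluing set is itself an induced $4$-cycle, hence a non-trivial join of induced subgraphs of the union, rule~\itemref{def:graphclass}{itemgraphclass4} applies at every stage and $\Delta\in\mathcal C$ follows by induction on the number of pieces. No vertex-by-vertex bookkeeping is needed.

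Your approach is more ambitious in that it tries to be self-contained, but the obstacle you flag in the $|S|\ge 3$ case is a genuine gap, not just a technicality. Because $S\subseteq N(v_{i+1})$ is independent (triangle-freeness), any non-trivial join of $\Delta[X_{i+1}]$ containing $S$ must have $S$ entirely on one side; the other side then needs at least two vertices, each adjacent to all of $S$. One such vertex is $v_{i+1}$, but nothing you have set up guarantees a second one, and planarity by itself does not provide it. Your alternative---refining the order so that $|S|\le 2$ throughout---asks for a $2$-degenerate ordering compatible with the $\Delta^4$-tree; producing such an ordering is essentially what the Nguyen--Tran structural theorem gives you (a leaf of their tree of $K_{2,3}$'s contributes a vertex with exactly two neighbours in the rest). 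So both proposed fixes seem to collapse back to their result, and invoking it directly, as the paper does, is the clean way to close the argument.
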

\begin{proof}
	In Proposition 3.11 of \cite{Tran_coars_geom}, Nguyen--Tran prove that each $\Delta \in \mathcal{CFS}_0$ decomposes as a tree of graphs $\mathcal G$ so that each vertex of $\mathcal G$ corresponds to a non-trivial join of two graphs consisting of two and three vertices respectively and so that each edge of $\mathcal G$ corresponds to an induced $4$-cycle in $\Delta$. Thus, $\Delta$ can be obtained by starting with a non-trivial join and adding finitely many non-trivial joins to it. 
	Unlike Nguyen--Tran, we allow to add not only non-trivial joins and cliques but also other more sophisticated graphs. Hence, $\mathcal{CFS}_0 \subseteq \mathcal C$.\end{proof}

 The class $\mathcal C$ is substantially larger than the class $\mathcal{CFS}_0$. The following lemma shows that $\mathcal C$ contains graphs corresponding to RACGs with polynomial divergence of arbitrary high degree. 
  In contrast, each graph in $\mathcal{CFS}_0 \subseteq \mathcal{CFS}$ corresponds to a RACG of quadratic divergence. Indeed, Dani--Thomas~\cite{Dani_diver} proved that a triangle-free graph $\Graph$ is $\mathcal{CFS}$ if and only if $W_\Graph$ has quadratic divergence, and Levcovitz~\cite[Thm 7.4]{Ivan} proved this statement for general graphs .

 
\begin{figure}[h]
	\centering
	\includegraphics{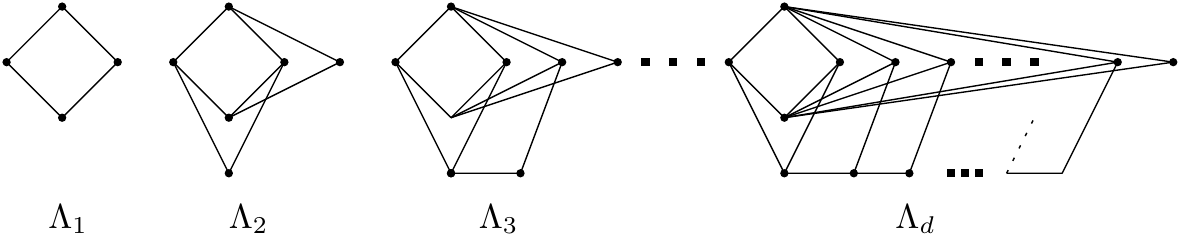}	
	\caption{Dani-Thomas  \cite[Thm 1.2, Sec. 5]{Dani_diver} proved that the pictured graphs $\Lambda_i$, $i \in \N$ correspond to RACGs with polinomial divergence of degree $i$.}
	\label{fig:exDaniThomas}
\end{figure}
\begin{lemma}
	\label{divlem}
	For every $d \in \N$, the graph class $\mathcal C$ contains a graph associated to a RACG whose divergence is of polinomial degree $d$. 
\end{lemma}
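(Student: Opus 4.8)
The plan is to verify that the Dani--Thomas graphs $\Lambda_d$ depicted in \cref{fig:exDaniThomas} all belong to $\mathcal C$; once this is done, \cite[Thm 1.2, Sec. 5]{Dani_diver} says that $W_{\Lambda_d}$ has divergence polynomial of degree $d$, which is exactly what the lemma asserts. I would prove $\Lambda_d\in\mathcal C$ by induction on $d$, using the recursive description of the family $(\Lambda_d)_{d\in\N}$ given in \cite[Sec. 5]{Dani_diver}. For the base case, $\Lambda_1$ is one of the atomic members of $\mathcal C$ from \cref{def:graphclass}: concretely it is a non-trivial join (for instance a $4$-cycle, whose RACG acts geometrically on $\R^2$ and so has linear divergence), so $\Lambda_1\in\mathcal C$ by clause \itemref{def:graphclass}{itemgraphclass2.2}.

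For the inductive step, I would unwind the construction of $\cite[Sec.~5]{Dani_diver}$ to exhibit $\Lambda_d$ as the result of finitely many applications of the union operation of clause \itemref{def:graphclass}{itemgraphclass4}, applied to building blocks that are each an edgeless graph, a tree, a clique, a non-trivial join, or a previously constructed graph $\Lambda_{d'}$ with $d'<d$ (hence in $\mathcal C$ by the inductive hypothesis together with clauses \itemref{def:graphclass}{itemgraphclass0}--\itemref{def:graphclass}{itemgraphclass2.2}). The point is that in the Dani--Thomas construction every gluing is performed along an induced subgraph of the ambient graph that is either empty, or an edge (a clique), or a pair of non-adjacent vertices that together with two further vertices spans an induced $4$-cycle of $\Lambda_d$. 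Since an induced $4$-cycle is itself a non-trivial join of two edgeless $2$-vertex graphs, in the last case the gluing locus is contained in a non-trivial join of two induced subgraphs of $\Lambda_1\cup\Lambda_2=\Lambda_d$. Thus clause \itemref{def:graphclass}{itemgraphclass4} of \cref{def:graphclass} applies at each stage and delivers $\Lambda_d\in\mathcal C$. (Running \cref{Cor1} along the same induction, exactly as in the paragraph after \cref{def:graphclass}, also re-derives that $\morse W_{\Lambda_d}$ is totally disconnected, in accordance with \cref{cor:C-class}.)

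The only genuine obstacle is bookkeeping rather than a new idea: one must look carefully at the explicit graphs in \cite[Sec. 5]{Dani_diver} and confirm the two combinatorial facts used above, namely that each auxiliary piece added at stage $d$ again decomposes -- via further applications of clause \itemref{def:graphclass}{itemgraphclass4} -- into cliques, trees and non-trivial joins, and that the vertex sets along which consecutive pieces are attached always lie in a common induced $4$-cycle of the ambient graph (equivalently, in a non-trivial join of two induced subgraphs of it). Granting the Dani--Thomas divergence computation and \cref{def:graphclass}, no further input is required.
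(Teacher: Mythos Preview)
Your overall strategy---verify that the Dani--Thomas graphs $\Lambda_d$ lie in $\mathcal C$ and then invoke their divergence computation---is exactly right, but the route you take to show $\Lambda_d\in\mathcal C$ is more elaborate than what the paper does. You set up an induction on $d$, feeding $\Lambda_{d'}$ with $d'<d$ back in as building blocks and tracking the gluing loci through the recursive Dani--Thomas construction. The paper instead decomposes each $\Lambda_i$ in a single step: it observes (via \cref{fig:exDaniThomasdecomp}) that every $\Lambda_i$ is the union of a non-trivial join and a tree, so one application of clause~\itemref{def:graphclass}{itemgraphclass4} together with clauses~\itemref{def:graphclass}{itemgraphclass1} and~\itemref{def:graphclass}{itemgraphclass2.2} already gives $\Lambda_i\in\mathcal C$. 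Since the intersection sits inside the join piece, the hypothesis of clause~\itemref{def:graphclass}{itemgraphclass4} is automatic.

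Your inductive argument would also go through, but it carries genuine overhead: you must check that at every stage the attaching set lies in an induced $4$-cycle of the \emph{ambient} graph, and you acknowledge this verification is deferred. The paper's one-shot decomposition sidesteps that entirely and makes the proof essentially a picture.
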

\begin{figure}[h]
	\centering
	\includegraphics{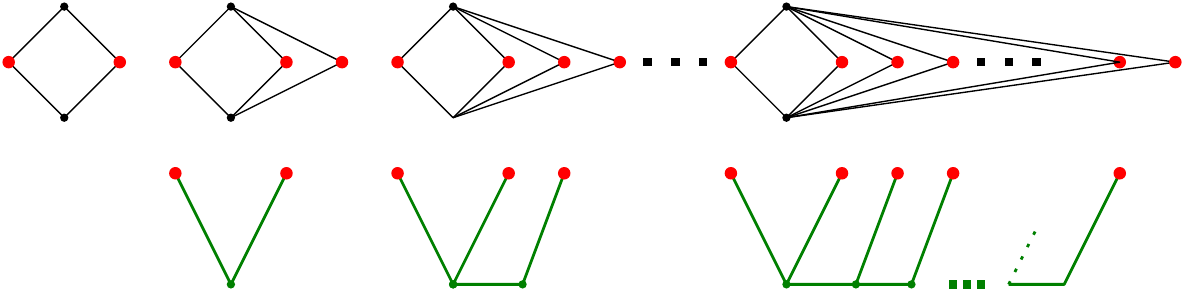}	
	\caption{Decomposition of the graphs in \cref{fig:exDaniThomas} showing that each graph $\Lambda_i$ is contained in $\mathcal C$.  }
	\label{fig:exDaniThomasdecomp}
\end{figure}
\begin{proof}
	Dani-Thomas proved \cite{Dani_diver} that the graphs $\Lambda_i$ in \cref{fig:exDaniThomas} correspond to RACGs with polinomial divergence of degree $i$, $i \in \mathbb N$. Each graph $\Lambda_i$ can be decomposed into a nontrivial join and a tree as shown in \cref{fig:exDaniThomasdecomp}. Each graph in the upper row of \cref{fig:exDaniThomasdecomp} is a nontrivial join and each graph in the lower row in \cref{fig:exDaniThomasdecomp} is a tree. Thus, $\Lambda_i \in \mathcal C$ for all $i \in \mathbb N$. 
\end{proof}

\begin{remark}
	The graphs $\Delta_i$, $i \ge 3$, in \cref{fig:exDaniThomas} correspond to RACGs with totally disconnected Morse boundaries that are not quasi-isometric to any RAAG. Indeed, every right-angled Artin group has either linear or quadratic divergence, as remarked by Behrstock~\cite{Behrstock}. Hence, if a graph is not $\mathcal CFS$ then the corresponding RACG is not  quasi-isometric to a RAAG. 
\end{remark}

Another interesting example is the graph pictured in \cref{fig:exBenZvi} that was studied by Ben-Zvi. 
\begin{lemma}
The graph $\Delta$ pictured in \cref{fig:exBenZvi} is contained in $ \mathcal C \setminus \mathcal{CFS}_0$.
\end{lemma}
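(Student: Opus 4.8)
The statement has two independent parts: that $\Delta$ lies in $\mathcal C$, and that $\Delta$ does not lie in $\mathcal{CFS}_0$.

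\emph{Membership in $\mathcal C$.} The plan is to read off \cref{fig:exBenZvi} an explicit recursive decomposition witnessing \cref{def:graphclass}. Concretely, I would write $\Delta = \Lambda_1 \cup \Lambda_2$ as a union of two proper induced subgraphs whose intersection $\Lambda_1 \cap \Lambda_2$ is empty, a clique, or contained in a non-trivial join of two induced subgraphs of $\Delta$, so that the union-rule \itemref{def:graphclass}{itemgraphclass4} applies. Each piece $\Lambda_j$ should then either be recognizable immediately as one of the base cases of $\mathcal C$ — an edgeless graph, a tree, a Charney--Sultan graph, a clique, or a non-trivial join (items \ref{itemgraphclass0}--\ref{itemgraphclass2.2} of \cref{def:graphclass}) — or be split further in the same way. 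Since $\Delta$ is finite, iterating terminates and places $\Delta$ in $\mathcal C$. I expect the correct cut to isolate the induced cycle(s) of length at least $5$ present in $\Delta$, treating each (possibly after deleting the interior vertices of a subpath, exactly as in the proof of \cref{lem:charsultgraph}) as a Charney--Sultan graph or as a tree, and to glue these onto the remaining join-like part of $\Delta$ along a clique — the same mechanism already used for Charney--Sultan graphs and for the Dani--Thomas graphs $\Lambda_i$.

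\emph{Exclusion from $\mathcal{CFS}_0$.} Here it suffices to point to a single defining condition of \cref{def:CFS0} that fails for $\Delta$: being $\mathcal{CFS}$, being connected, being triangle-free, being planar, having at least five vertices, or having no separating vertex or edge. From \cref{fig:exBenZvi} I would identify the relevant obstruction — most plausibly the existence of a separating vertex or separating edge, which is exactly the kind of small gluing locus that rule \itemref{def:graphclass}{itemgraphclass4} permits but that \cref{def:CFS0} forbids; alternatively the presence of a triangle or a failure of planarity. Verifying whichever of these actually holds for the specific graph is a one-line check.

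\emph{Main obstacle.} The only real work is the first part: producing a decomposition of the concrete graph of \cref{fig:exBenZvi} that simultaneously respects the permitted intersection types in rule \itemref{def:graphclass}{itemgraphclass4} and leaves pieces that themselves lie in $\mathcal C$. This is a finite combinatorial search dictated by the figure, and once the right cut is spotted the argument reduces to iterated application of \cref{Cor1}; the second part is then immediate.
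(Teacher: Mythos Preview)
Your strategy for membership in $\mathcal C$ matches the paper's: split $\Delta$ into induced pieces along an intersection permitted by rule~\itemref{def:graphclass}{itemgraphclass4} and recognise the pieces as base cases. The paper's actual cut, however, is not along a clique but along the unique induced $4$-cycle of $\Delta$ (which is itself a non-trivial join), and the two resulting pieces are already Charney--Sultan graphs --- so no further recursion is needed.

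For the exclusion from $\mathcal{CFS}_0$ your ranked guesses point the wrong way. The paper shows that $\Delta$ fails the very first condition in \cref{def:CFS0}: it is not $\mathcal{CFS}$ at all, because $\Delta$ contains exactly one induced $4$-cycle, so $\Delta^4$ consists of a single vertex whose support cannot cover $V(\Delta)$. The graph is planar and triangle-free, and since the two halves overlap in a full $4$-cycle it has no separating vertex or edge; thus none of the obstructions you flagged as ``most plausible'' would have worked here.
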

	\begin{figure}[h]
	\centering
	\includegraphics{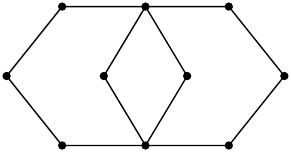}	
	\caption{The defining graph of a RACG studied by Ben-Zvi in \cite[Ex. 2.3]{BenzviFlats}. This graph is contained in $\mathcal C \setminus \mathcal{CFS}_0$.}
	\label{fig:exBenZvi}
\end{figure}
\begin{proof}Let $\Delta_1$ and $\Delta_2$ be the two subgraphs of $\Delta$ pictured in \cref{fig:exBenZvi2}. 
		\begin{figure}[h]
		\centering
		\includegraphics{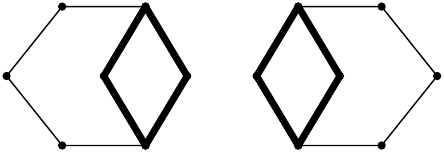}	
		\caption{A decomposition of the graph pictured in \cref{fig:exBenZvi}. The common intersection graph is a $4$-cycle consisting of the bold edges. }
		\label{fig:exBenZvi2}
	\end{figure} Then the intersection $\Lambda$ of $\Delta_1$ and $\Delta_2$ is a $4$-cycle, marked bold in the figure. The subgraphs $\Delta_1$ and $\Delta_2$ are Charney-Sultan graph. The intersection graph $\Lambda$ is an induced $4$-cycle of $\Delta$. In particular, it is contained in a non-trivial join. Hence, $\Delta$ is contained in $\mathcal C$.

It remains to show that $\Delta \notin \mathcal{CFS}_0$.
	The graph $\Delta$ contains one $4$-cycle only and $\Delta^4$ does not have a connected component whose support coincides with the vertex set $V(\Delta)$ of $\Delta$. Hence, $\Delta \notin \mathcal{CFS}$ and as $\mathcal{CFS}_0\subseteq \mathcal{CFS}$, $\Delta \notin \mathcal{CFS}_0$.\end{proof}
\begin{remark}
Ben-Zvi argues that $W_\Delta$ is a CAT(0) group with isolated flats and proves that the visual boundary of the corresponding Davis complex is path-connected. Among other things, the graph $\Delta$ was studied by Ben-Zvi for the following reason: 
	\begin{figure}[h]
	\centering
	\includegraphics{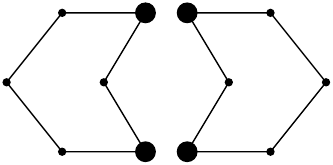}	
	\caption{A decomposition of the graph pictured in \cref{fig:exBenZvi}. The common intersection graph consists of the two vertices which are drawn bold in both pictured graphs.}
	\label{fig:exBenZvi3}
\end{figure}
Let $\Delta_1$ and $\Delta_2$ the two graphs pictured in \cref{fig:exBenZvi3} and $\Lambda = \Delta_1 \cap \Delta_2$. The graph $\Delta$ consists of two vertices. See \cref{fig:exBenZvi3}.
 Ben-Zvi observes that the virtual $\Z^2$ corresponding to the four-cycle in the middle is hidden if we write $W_\Delta$ as $W_{\Delta} = W_{\Delta_1}*_{W_{\Lambda}} W_{\Delta_2}$. In the block decomposition corresponding to this splitting, two geodesic rays might go through the same sequence of hyperbolic planes corresponding to $\W{\Delta_1}$ and $\W{\Delta_2}$ but their rays are not asymptotic. In other words, there might be pairs of distinct points in the visual boundary having the same infinite itinerary. We have proven in \cref{Morse_conn_of_type_Ainf} that such an unpleasant situation does not occur among Morse geodesic rays of infinite itinerary. 
\end{remark}

For completing this section, we show that the graph $\Delta$ in the left upper corner in \cref{fig:exCFS} (mentioned in the introduction, there \cref{fig}) is contained in $\mathcal C$. For the proof, we consider the successive decomposition pictured in \cref{fig:exCFS}.

 	\begin{figure}[h]
	\centering
	\includegraphics{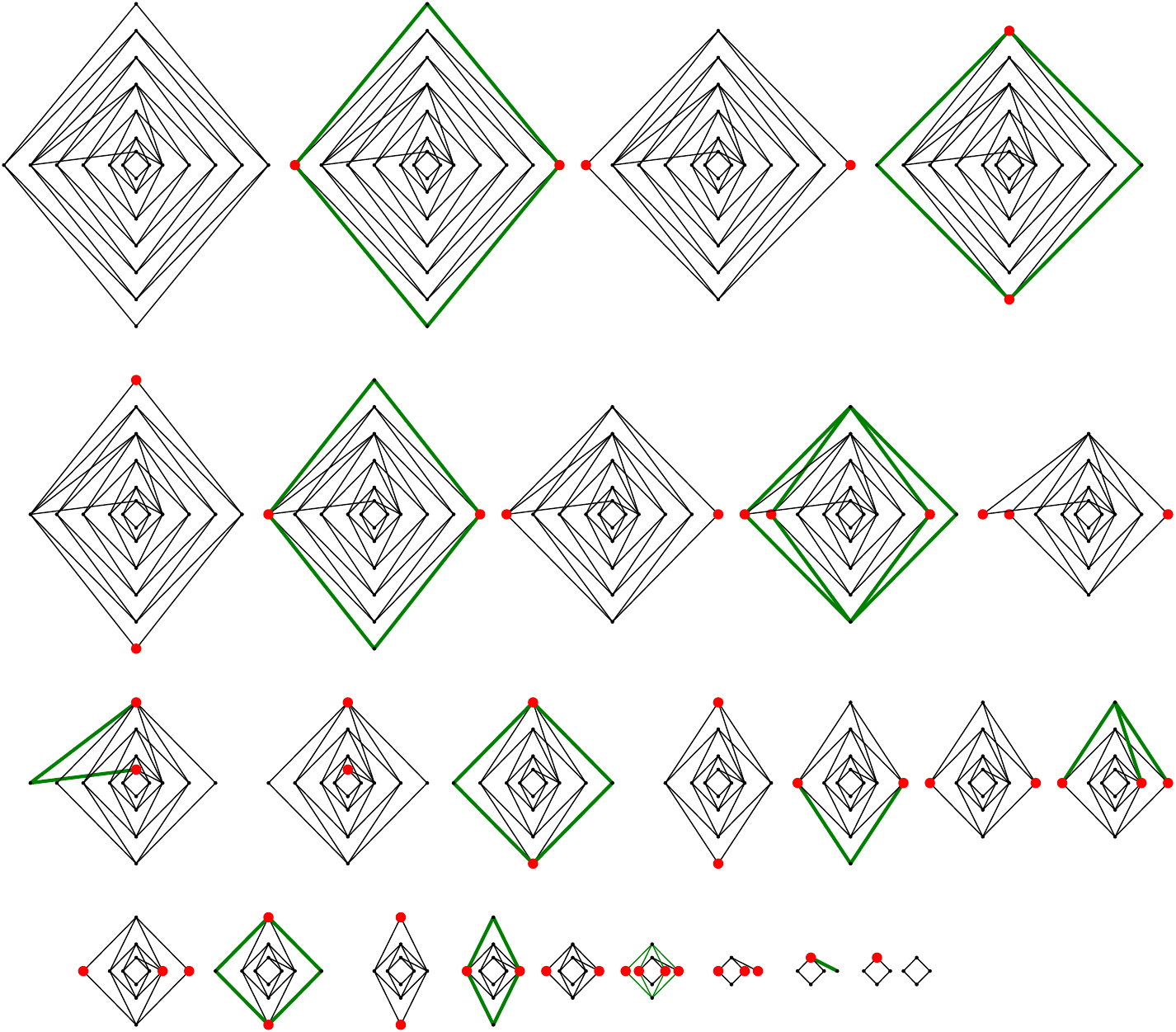}	
	\caption{Decomposition of the graph $\Delta$ in  the left upper corner that was studied in \cite[Example 7.7]{Tran_hierarchically} (see \cref{fig}).  The Decomposition shows that $\Delta$ is contained in $\mathcal C$. }
	\label{fig:exCFS}
\end{figure}
The graph $\Graph$ is seen in the left upper corner. We decompose $\Graph$ from left to right and from above to bottom. 
In each second step, we decompose the graph into a green and a black graph. The intersection of these two graphs always consists of single vertices marked by the thick red points. These red vertices are contained either in a non-trivial join or in a clique. In every second step, we delete the green subgraph and continue to decompose the resulting graph in the next step. Finally, we end up with a $4$-cycle. As a $4$-cycle is contained in $\mathcal C$, we conclude that $\Graph \in \mathcal C$.
\begin{remark}
 The graph $\Delta$ in the left upper corner in \cref{fig:exCFS} is not planar. Thus, it is not contained in $\mathcal{CFS}_0$.
\end{remark}
 \begin{remark}
Behrstock \cite{Behrstock} investigates a graph $\Delta'$ similar to $\Delta$ and shows that $\morse \Sigma_{\Delta'}$ contains a circle. 
		This circle corresponds to an induced $5$-cycle $C$ in $\Delta'$. No pair of non-adjacent vertices of this cycle $C$ is contained in an induced $4$-cycle. In particular, no induced subgraph of $C$ is contained in a non-trivial join (since in a non-trivial join, any pair of non-adjacent vertices is contained in an induced 4-cycle). Hence, no matter how often and in which way we decompose the graph $\Delta'$ along graphs that are contained in non-trivial joins or cliques, the remaining graph will always contain $C$. Interestingly, it is possible to decompose $\Delta'$ similarly to $\Delta$ so that only the $5$-cycle $C$ remains. 
\end{remark}

\section{Beyond RACGs}
 \label{examoplesection}
In this section, we study applications of \cref{thm2} that are not RACGs.
\subsection{Right-angled Artin groups (RAAG)}

The \textit{right-angled Artin group (RAAG)} associated to a finite, simplicial graph $\Delta=(V,E)$ is the group 
\[A_\Delta =\langle V\mid uv=vu ~\forall ~\{u,v\} \in E \rangle.\]
The group $A_\Delta$ acts geometrically on an associated CAT(0) cube complex $\Sigma^A_{\Delta}$, its \textit{Salvetti complex}. Hence, the Morse boundary of $A_\Delta$ is the Morse boundary of $\Sigma^A_{\Delta}$.
\begin{example}[Croke-Kleiner-space~\cite{CrokeKleiner}]
		 	\label{exCrokeKleiner}
		 	\begin{figure}[h]
		\centering
		\includegraphics{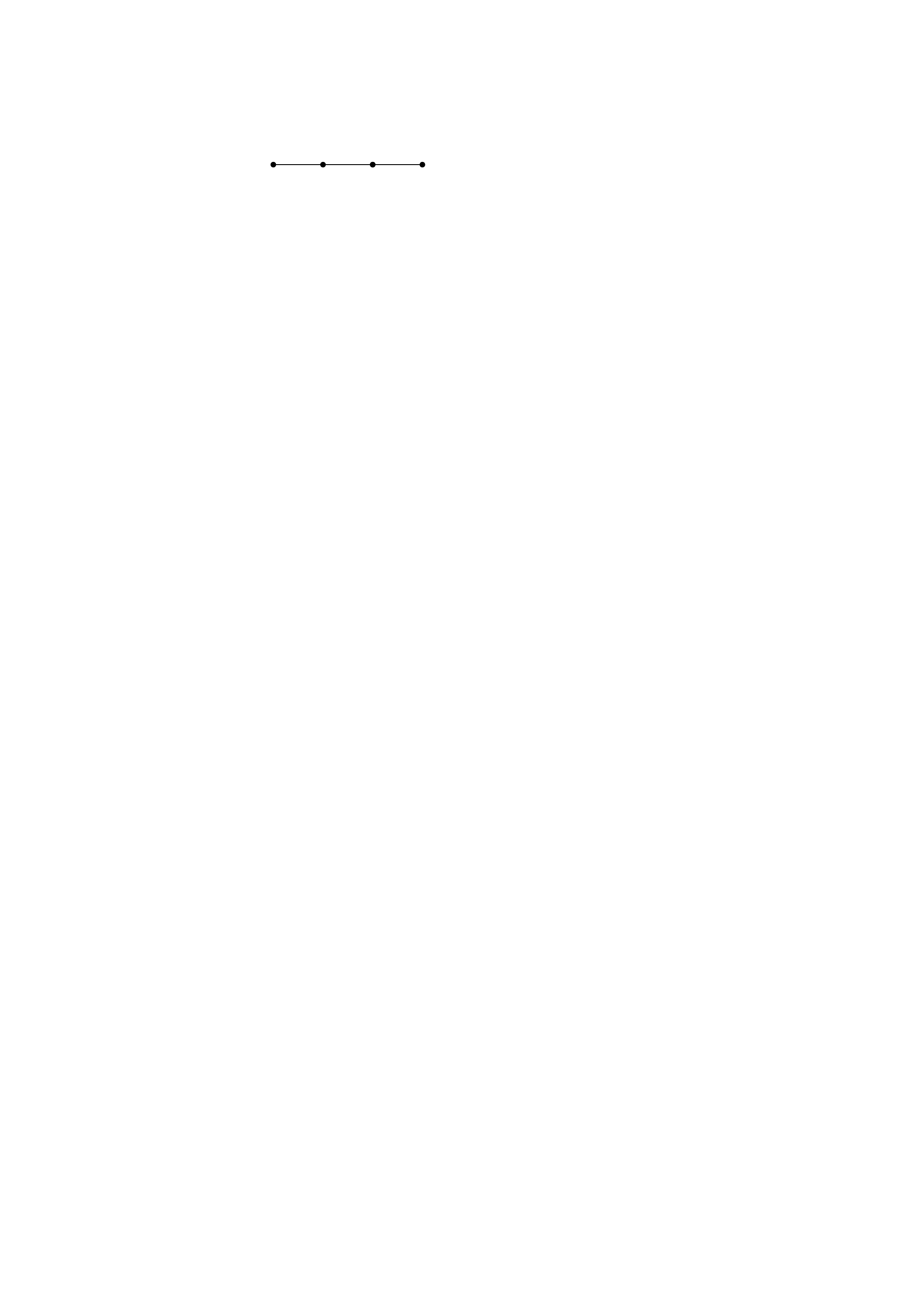}	
		\caption{The RAAG studied by Croke an Kleiner \cite{CrokeKleiner}.}
		\label{fig:CrokeKleiner}
	\end{figure}

Croke--Kleiner study a right-angled Artin group (RAAG) $A_\Delta$ whose defining graph is pictured in \cref{fig:CrokeKleiner}. This group admits a splitting of the form \[A_\Delta = (F_2 \times \Z )*_{\Z^2}(F_2 \times \Z ).\]
This splitting corresponds to a treelike block decomposition of the Salvetti complex $\Sigma^A_{\Delta}$ on which $A_\Delta$ acts geometrically. The walls in this block decomposition are Euclidean flats. Thus, we can apply \cref{thm2}. The Morse boundary of $F_2 \times \Z$ is empty as $F_2 \times \Z$ is a direct product of two infinite CAT(0) groups. We apply \cref{thm2} and conclude that $\morse A_\Delta$ is totally disconnected. Though the factors in the splitting of $A_\Delta$ have empty Morse boundary, the Morse boundary $\morse A_\Delta$ is not empty. It consists of Morse geodesic rays that don't end in a block. Accordingly, each connected component of $\morse A_\Delta$ is of type \typeAinf, i.e. each connected component consists of an equivalence class of a geodesic ray with infinite itinerary. 
\end{example}

More generally, we can transfer the line of argumentation in \cref{sec:mainproofs} to RAAGs by studying Salvetti-complexes of RAAGs instead of Davis complexes of RACGs.
We conclude similarly to the case of RACGs: If $\Delta$ is a join of two graphs, then $A_\Delta$ is a direct product of two RAAGs. As each RAAG is an infinite group, each such RAAG is a direct product of two infinite CAT(0) groups. In such a case, each geodesic ray in $\Sigma_\Delta^A$ is bounded by a Euclidean half-plane, and $A_\Delta$ has empty Morse boundary. Corollary B in \cite{Sageev_Caprace} implies the following lemma:
\begin{lemma}[Sageev--Caprace]
	\label{emptyRaags}
	The Morse boundary of a RAAG $A_\Delta$ is empty if and only if $\Delta$ is the join of two non-empty graphs. 
\end{lemma}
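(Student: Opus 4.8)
The statement is \cref{emptyRaags}, the RAAG analogue of \cref{lem:con_empty}. The goal is to characterize when $\morse A_\Delta = \emptyset$. The plan is to deduce this from Corollary B in \cite{Sageev_Caprace}, exactly as was done for RACGs, via the dichotomy ``rank-one isometry exists'' versus ``every geodesic ray bounds a flat half-plane''. First I would recall that $A_\Delta$ acts geometrically on the CAT(0) Salvetti complex $\Sigma^A_\Delta$, so $\morse A_\Delta = \morse \Sigma^A_\Delta$ and it suffices to work with this CAT(0) space.

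\textbf{Key steps.} First, suppose $\Delta$ is a join $\Delta = \Delta_1 * \Delta_2$ of two non-empty graphs. Then $A_\Delta \cong A_{\Delta_1} \times A_{\Delta_2}$ and $\Sigma^A_\Delta$ splits isometrically as $\Sigma^A_{\Delta_1} \times \Sigma^A_{\Delta_2}$. Since each vertex generator of $A_{\Delta_i}$ has infinite order, each factor $A_{\Delta_i}$ is an infinite group, hence each $\Sigma^A_{\Delta_i}$ contains a geodesic line through the basepoint. Given any geodesic ray $\gamma$ in $\Sigma^A_\Delta$, project it to a ray (or point) in each factor; at least one projection, say to $\Sigma^A_{\Delta_1}$, is a genuine ray, and one can take a geodesic line $\ell$ in $\Sigma^A_{\Delta_2}$ to produce a flat half-plane containing a subray of $\gamma$ (using the product structure: $\gamma$ together with the $\ell$-direction spans an isometrically embedded Euclidean half-plane). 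Hence no geodesic ray is Morse and $\morse \Sigma^A_\Delta = \emptyset$.

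Second, for the converse, suppose $\Delta$ is not a join of two non-empty graphs. Then by Corollary B in \cite{Sageev_Caprace} (the same ingredient used in \cref{lem:con_empty}), $A_\Delta$ acts with a rank-one isometry $g$ on $\Sigma^A_\Delta$; equivalently, $A_\Delta$ is not a nontrivial direct product and $\Sigma^A_\Delta$ is not a nontrivial product nor does it have higher-rank structure obstructing rank-one elements. A rank-one isometry has an axis which is a Morse (contracting) geodesic line, and its two endpoints lie in $\morse \Sigma^A_\Delta$; in particular $\morse A_\Delta \neq \emptyset$. Combining the two directions gives the equivalence.

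\textbf{Main obstacle.} The only delicate point is invoking Corollary B of \cite{Sageev_Caprace} with the correct hypotheses for RAAGs rather than RACGs: one must check that ``$\Delta$ is a join of two non-empty graphs'' is precisely the condition under which the Salvetti complex fails to admit a rank-one isometry. Since $A_\Delta$ is always infinite (unlike $W_\Delta$, which can be finite), the clique case does not appear, so the characterization is cleaner than in \cref{lem:con_empty} — there is no ``clique or non-trivial join'' alternative, just ``join of two non-empty graphs''. The rest is routine: the forward direction is the standard product-implies-flats argument, and the backward direction is the standard fact that rank-one axes are Morse.
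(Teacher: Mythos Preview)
Your proposal is correct and follows essentially the same approach as the paper: the paper argues the forward direction in the prose immediately preceding the lemma (join $\Rightarrow$ direct product of two infinite CAT(0) groups $\Rightarrow$ every ray bounds a half-flat $\Rightarrow$ empty Morse boundary) and attributes the converse to Corollary~B in \cite{Sageev_Caprace}, exactly as you do. Your observation that the clique case disappears because every nontrivial RAAG is infinite is precisely why the RAAG statement is cleaner than \cref{lem:con_empty}.

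One minor imprecision worth tightening: in the forward direction you claim one can always find a flat \emph{half-plane} containing a subray of $\gamma$ by taking a line $\ell$ in the other factor. This works when one projection of $\gamma$ is constant, but if both projections are genuine rays you only get a flat \emph{quarter-plane} $\gamma_1([0,\infty))\times\gamma_2([0,\infty))$ containing $\gamma$, since rays in a Salvetti complex need not extend to lines. This still suffices (a ray in a flat quarter-plane is never Morse, via staircase quasi-geodesics), and the paper's own phrasing ``each geodesic ray in $\Sigma_\Delta^A$ is bounded by a Euclidean half-plane'' is equally informal, so this is not a genuine gap---just a point where a referee might ask for one more sentence.
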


Now, let $\Delta$ be a finite, simplicial graph that can be decomposed into two distinct proper induced subgraphs $\Delta_1$ and $\Delta_2$ with the intersection graph $\Lambda=\Delta_1 \cap \Delta_2$. Repeating the arguments in the proof of \cref{Prop} in the setting of RAAGs yields 

\begin{prop}
	\label{RAAGprop}	
The collection $\mathcal B \coloneqq \{g\Sigma^A_{\graphzero}\mid g \in A_{\Graph}\}\cup \{g\Sigma^A_{\graphone}\mid g \in A_{\Graph}\}$
is a treelike block decomposition of $\Sigma^A_{\Delta}$. 
The collection of walls $\mathcal W$ is given by $\mathcal W = \{g\Sigma^A_{\Lambda}\mid g \in A_{\Graph}\}$.
\end{prop}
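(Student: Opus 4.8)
The plan is to run the proof of \cref{Prop} essentially verbatim, with the Davis complex $\Sigma_{\Graph}$ replaced by the CAT(0) cube complex $\Sigma^A_{\Graph}$ and the RACG $W_\Graph$ replaced by the RAAG $A_\Graph$. For this I would first record the RAAG counterpart of \cref{rem_Davis}: for an induced subgraph $\Graph'$ of $\Graph$ the subgroup $A_{\Graph'}$ is special, the $1$-skeleton of $\Sigma^A_{\Graph}$ is $\Cayley{A_\Graph}{V(\Graph)}$, the inclusion $\Cayley{A_{\Graph'}}{V(\Graph')}\embed\Cayley{A_\Graph}{V(\Graph)}$ extends to a canonical isometric embedding $\Sigma^A_{\Graph'}\embed\Sigma^A_{\Graph}$ whose image is the \emph{full} subcomplex spanned by the coset $A_{\Graph'}$ and is closed and convex, and $gA_{\bar\Graph}\subseteq hA_{\Graph'}$ if and only if $\inv{g}h\in A_{\Graph'}$ and $\bar\Graph\subseteq\Graph'$; in particular $A_{\graphzero}\cap A_{\graphone}=A_{\Lambda}$. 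These are standard facts about RAAGs that follow from the normal form theorem together with the convexity of special subcomplexes of CAT(0) cube complexes. The one further algebraic input is the amalgamated product decomposition $A_\Graph=A_{\graphzero}*_{A_\Lambda}A_{\graphone}$; this replaces the appeal to Proposition 8.8.1 of \cite{Davis} in the proof of \cref{Prop}, and holds because $V(\Graph)=V(\graphzero)\cup V(\graphone)$ and $E(\Graph)=E(\graphzero)\cup E(\graphone)$, so that the presentation of $A_\Graph$ is the pushout of those of $A_{\graphzero}$ and $A_{\graphone}$ over that of $A_\Lambda$.

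With this dictionary in hand I would verify the three hypotheses of \cref{criteriontreelike} exactly as in the proof of \cref{Prop}. For the covering condition, the amalgam decomposition gives that the cosets of $A_{\graphzero}$ and $A_{\graphone}$ cover $A_\Graph$, hence the blocks cover $\Sigma^A_{\Graph}$. For the parity condition I would give parity $(-)$ to each $g\Sigma^A_{\graphzero}$ and $(+)$ to each $g\Sigma^A_{\graphone}$; two distinct blocks $g_1\Sigma^A_{\graphzero}$, $g_2\Sigma^A_{\graphzero}$ of equal parity have disjoint $0$-skeletons $g_1A_{\graphzero}\neq g_2A_{\graphzero}$ (and similarly for the other parity), and since every cube of $\Sigma^A_{\Graph'}$ is based on a clique of $\Graph'$ and hence spanned by vertices lying in a single coset of $A_{\Graph'}$, the two blocks are disjoint. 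The $\epsilon$-condition is the same hyperplane argument as in \cref{Prop}: for $\epsilon\in(0,\tfrac{1}{2})$, if the $\epsilon$-neighborhoods of two blocks meet, then no hyperplane disjoint from their union separates them (a hyperplane is at distance $\ge\tfrac{1}{2}$ from any vertex), so by Theorem 4.13 of \cite{Sageev} the $1$-skeletons of the two blocks are at distance $0$ and the blocks intersect.

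To identify the walls I would take two intersecting blocks $B_1=g_1\Sigma^A_{\graphzero}$ and $B_2=g_2\Sigma^A_{\graphone}$ of opposite parity, whose $0$-skeletons are the cosets $g_1A_{\graphzero}$ and $g_2A_{\graphone}$. In an amalgamated product, a nonempty intersection of a coset of each factor is a single coset of the amalgamated subgroup: if $x\in g_1A_{\graphzero}\cap g_2A_{\graphone}$, then $g_1A_{\graphzero}\cap g_2A_{\graphone}=xA_{\graphzero}\cap xA_{\graphone}=x(A_{\graphzero}\cap A_{\graphone})=xA_{\Lambda}$. The full subcomplex on $xA_{\Lambda}$ is the translate $x\Sigma^A_{\Lambda}$, it is contained in $B_1\cap B_2$, and no further point of $B_1\cup B_2$ lies in the intersection because each cube of $B_j$ is spanned by vertices in $g_jA_{\Graph_j}$; hence $\mathcal W=\{g\Sigma^A_{\Lambda}\mid g\in A_\Graph\}$. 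I do not expect a genuine obstacle here: the only point requiring care is making the RAAG dictionary precise — in particular that the canonical embedding realizes $\Sigma^A_{\Graph'}$ as the \emph{full} convex subcomplex spanned by the coset $A_{\Graph'}$, and that $A_\Graph$ splits as the stated amalgam — after which the geometric content of the argument is literally that of \cref{Prop}.
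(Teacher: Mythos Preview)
Your proposal is correct and is exactly the approach the paper takes: the paper's own justification is simply that repeating the arguments of \cref{Prop} in the setting of RAAGs yields the proposition, and you have carried this out in detail, supplying the RAAG dictionary (canonical embedding of $\Sigma^A_{\Graph'}$ as a full convex subcomplex, the amalgam $A_\Graph=A_{\graphzero}*_{A_\Lambda}A_{\graphone}$, and $A_{\graphzero}\cap A_{\graphone}=A_\Lambda$) that makes the transfer precise.
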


\cref{thm2} combined with \cref{RAAGprop} and \cref{emptyRaags} directly implies 

 \begin{thm}
	\label{thm1RAG}
	Suppose that $\Lambda$ is contained in a join of two induced subgraphs of $\Graph$.
	Then every connected component of $\morse \Sigma^A_\Delta$ is either
	
	\begin{enumerate}
		\item a single point; or
		\item homeomorphic to a connected component of $\relMorse{\Sigma^A_{\Delta}}{\Sigma^A_{\Delta_i}}$ equipped with the subspace topology of  $\morse \Sigma^A_\Delta$ where $i \in \{1,2\}$. 
	\end{enumerate} 	
\end{thm}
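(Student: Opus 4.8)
\textbf{Proof proposal for \cref{thm1RAG}.}

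The plan is to reduce the statement to \cref{thm2} in exactly the same way that \cref{thm1} was deduced from \cref{thm2}, \cref{Prop} and \cref{lem:con_empty}, but now in the RAAG setting with the Salvetti complex replacing the Davis complex. First I would invoke \cref{RAAGprop} to produce, on the Salvetti complex $\Sigma^A_\Delta$, the treelike block decomposition $\mathcal B = \{g\Sigma^A_{\Delta_1}\mid g \in A_\Delta\}\cup\{g\Sigma^A_{\Delta_2}\mid g\in A_\Delta\}$ whose walls are the translates $g\Sigma^A_\Lambda$. The blocks are isometric to $\Sigma^A_{\Delta_1}$ or $\Sigma^A_{\Delta_2}$, and each wall is isometric to $\Sigma^A_\Lambda$. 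The Salvetti complex is a proper CAT(0) cube complex, so $\Sigma^A_\Delta$ is a proper CAT(0) space, and the hypotheses of \cref{thm2} concerning the ambient space are met.

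The key remaining point is to verify that \emph{no wall contains a geodesic ray that is Morse in $\Sigma^A_\Delta$}, which is the one nontrivial hypothesis of \cref{thm2}. Here I would use the assumption that $\Lambda$ is contained in a join of two induced subgraphs of $\Delta$: say $\Lambda \subseteq \Delta'$ where $\Delta'$ is a join of two nonempty induced subgraphs $\Delta'_1$ and $\Delta'_2$ of $\Delta$. Then $\Sigma^A_\Lambda$ is (canonically) isometrically embedded in $\Sigma^A_{\Delta'}$, which by \cref{emptyRaags} has empty Morse boundary since $A_{\Delta'}$ is a direct product of two infinite RAAGs. Thus every geodesic ray inside $\Sigma^A_\Lambda$ is bounded by a Euclidean half-plane already inside $\Sigma^A_{\Delta'}$; a half-flat in $\Sigma^A_{\Delta'}$ persists in $\Sigma^A_\Delta$ because $\Sigma^A_{\Delta'}$ is convex in $\Sigma^A_\Delta$, so no geodesic ray in $\Sigma^A_\Lambda$ can be Morse in the ambient $\Sigma^A_\Delta$. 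The same argument applies verbatim to every translate $g\Sigma^A_\Lambda$, since $g$ acts by an isometry of $\Sigma^A_\Delta$ carrying $\Sigma^A_{\Delta'}$ to an isometric convex copy. Hence no wall contains an ambient-Morse geodesic ray.

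With both hypotheses of \cref{thm2} verified, the theorem yields that every connected component of $\morse \Sigma^A_\Delta$ is either a single point or homeomorphic to a connected component of $\relMorse{\Sigma^A_\Delta}{B}$ (with the subspace topology of $\morse \Sigma^A_\Delta$) for some block $B\in\mathcal B$. Since every block is a translate of $\Sigma^A_{\Delta_1}$ or $\Sigma^A_{\Delta_2}$, and translation by $g\in A_\Delta$ induces a homeomorphism of $\morse\Sigma^A_\Delta$ carrying $\relMorse{\Sigma^A_\Delta}{\Sigma^A_{\Delta_i}}$ onto $\relMorse{\Sigma^A_\Delta}{g\Sigma^A_{\Delta_i}}$, each such relative Morse boundary is homeomorphic (as a subspace of $\morse\Sigma^A_\Delta$) to $\relMorse{\Sigma^A_\Delta}{\Sigma^A_{\Delta_i}}$ for $i\in\{1,2\}$. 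This gives precisely the stated dichotomy. The main obstacle I anticipate is the persistence-of-flats step: one must be careful that a half-flat bounding a ray in $\Sigma^A_\Lambda$ and lying in $\Sigma^A_{\Delta'}$ remains a half-flat (hence an obstruction to Morseness) in $\Sigma^A_\Delta$, which follows from convexity of the canonically embedded sub-Salvetti-complex $\Sigma^A_{\Delta'}\subseteq\Sigma^A_\Delta$ (the analogue of \cref{rem_Davis}(2) for Salvetti complexes).
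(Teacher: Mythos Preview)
Your proposal is correct and follows exactly the route the paper takes: it combines \cref{thm2} with \cref{RAAGprop} and \cref{emptyRaags}, just as \cref{thm1} was deduced from \cref{thm2}, \cref{Prop} and \cref{lem:con_empty} in the RACG case. Your explicit verification that walls contain no ambient-Morse ray (via the half-flat in the convex sub-Salvetti-complex $\Sigma^A_{\Delta'}$) and the reduction from an arbitrary block $g\Sigma^A_{\Delta_i}$ to $\Sigma^A_{\Delta_i}$ by the $A_\Delta$-action are precisely the implicit steps the paper has in mind when it says the result ``directly'' follows.
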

By means of \cref{corOpenset} we obtain
 \begin{cor}
	\label{Cor1Artin}
	Suppose that the assumptions of \cref{thm1RAG} are satisfied. 
	If $\relMorse{\Sigma^A_{\Delta}}{\Sigma_{\Delta_1}}$ and $\relMorse{\Sigma^A_{\Delta}}{\Sigma_{\Delta_2}}$ equipped with the subspace topology of $\morse \Sigma^A_{\Delta_1}$ and $\morse \Sigma^A_{\Delta_2}$ are totally disconnected then 
	$\morse \Sigma^A_\Delta$ is totally disconnected. 
\end{cor}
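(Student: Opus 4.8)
\textbf{Proof proposal for \cref{Cor1Artin}.}

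The plan is to mimic exactly the proof of \cref{Cor1} in the RACG setting, replacing every appeal to \cref{thm1}, \cref{Prop}, and \cref{lem:con_empty} by its RAAG analogue, which is already available in the excerpt. Concretely, \cref{thm1RAG} plays the role of \cref{thm1}, \cref{RAAGprop} plays the role of \cref{Prop}, and \cref{emptyRaags} plays the role of \cref{lem:con_empty}. So the first step is to observe that under the hypothesis that $\Lambda$ is contained in a join of two induced subgraphs of $\Delta$, the block decomposition $\mathcal B$ of $\Sigma^A_\Delta$ from \cref{RAAGprop} is treelike with walls of the form $g\Sigma^A_\Lambda$, and since $\Lambda$ sits inside a join, \cref{emptyRaags} tells us that $\morse \Sigma^A_\Lambda = \emptyset$; in particular no wall contains a geodesic ray that is Morse in $\Sigma^A_\Delta$, so the hypothesis of \cref{thm2} (equivalently of \cref{thm1RAG}) is satisfied.

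Next I would invoke \cref{corOpenset} block by block. The walls and blocks $g\Sigma^A_{\Delta_i}$ are closed and convex subcomplexes of the CAT(0) cube complex $\Sigma^A_\Delta$, so \cref{corOpenset} applies: if $\relMorse{\Sigma^A_{\Delta}}{\Sigma^A_{\Delta_i}}$ equipped with the subspace topology of $\morse \Sigma^A_{\Delta_i}$ is totally disconnected, then $\relMorse{\Sigma^A_{\Delta}}{\Sigma^A_{\Delta_i}}$ equipped with the subspace topology of $\morse \Sigma^A_{\Delta}$ is totally disconnected. (One should note here that the hypothesis of \cref{corOpenset}, namely $\sigma$-compactness of $\morse \Sigma^A_\Delta$, holds because $\Sigma^A_\Delta$ is a proper CAT(0) space, by the Main Theorem of \cite{CharSul}; this is the same verification made inside the proof of \cref{corOpenset} itself.) By $g$-equivariance of the whole picture, it is enough to check this for one representative block $\Sigma^A_{\Delta_1}$ of parity $(-)$ and one representative block $\Sigma^A_{\Delta_2}$ of parity $(+)$; every other block is an isometric translate and has homeomorphic relative Morse boundary.

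Finally, I would run the trichotomy. Let $\kappa$ be any connected component of $\morse \Sigma^A_\Delta$. By \cref{thm1RAG} (whose hypotheses we verified in the first step), $\kappa$ is either a single point, or homeomorphic to a connected component of $\relMorse{\Sigma^A_{\Delta}}{\Sigma^A_{\Delta_i}}$ with the subspace topology of $\morse \Sigma^A_\Delta$ for some $i\in\{1,2\}$. In the first case $\kappa$ is trivially a point. In the second case, by the previous paragraph $\relMorse{\Sigma^A_{\Delta}}{\Sigma^A_{\Delta_i}}$ with the subspace topology of $\morse \Sigma^A_\Delta$ is totally disconnected, hence each of its connected components is a point, hence $\kappa$ is a point. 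Since every connected component of $\morse \Sigma^A_\Delta$ is a point, $\morse \Sigma^A_\Delta$ is totally disconnected, which is the assertion.

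I do not expect a serious obstacle here: this corollary is, by design, a purely formal transcription of the RACG argument, and every ingredient (\cref{thm1RAG}, \cref{RAAGprop}, \cref{emptyRaags}, \cref{corOpenset}) has already been established in the excerpt. The only mild care needed is the observation that blocks and walls of the Salvetti complex are closed convex subcomplexes, so that \cref{corOpenset} is genuinely applicable — but this is immediate from \cref{RAAGprop} together with the standard fact that the Salvetti complex is CAT(0) and that subcomplexes on induced subgraphs embed isometrically and convexly.
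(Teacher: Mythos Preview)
Your proposal is correct and follows exactly the paper's approach: the paper simply writes ``By means of \cref{corOpenset} we obtain'' before stating \cref{Cor1Artin}, so the proof is precisely the combination of \cref{thm1RAG} with \cref{corOpenset} that you spell out. Your additional verifications (walls contain no Morse ray, blocks are closed convex, equivariance of blocks) are all implicit in the setup and do no harm.
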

We need the following lemma for studying Charney-Sultan graphs in the setting of RAAGs.

\begin{lemma}
	If $\Delta$ is a finite tree, then $A_{\Delta}$ has totally disconnected Morse boundary.
	\label{Artintree}
\end{lemma}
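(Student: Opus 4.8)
The plan is to argue by induction on the number of vertices of $\Delta$. If $\Delta$ has at most two vertices, then $A_\Delta$ is trivial, infinite cyclic, or $\Z^2$, and in each case $\morse A_\Delta$ is empty or a pair of points, hence totally disconnected; this is the base of the induction.

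For the inductive step, let $\Delta$ be a tree with $n\ge 3$ vertices and assume the statement for all trees with fewer vertices. A finite tree with at least two vertices has a leaf $\ell$; let $v$ denote its unique neighbour. Set $\Delta_1$ to be the induced subgraph of $\Delta$ spanned by $\{\ell,v\}$ and $\Delta_2$ the induced subgraph spanned by $V(\Delta)\setminus\{\ell\}$. Then $\Delta_1$ and $\Delta_2$ are distinct proper induced subgraphs with $\Delta_1\cup\Delta_2=\Delta$ (the only edge incident to $\ell$ is $\{\ell,v\}$, which lies in $\Delta_1$), the graph $\Delta_2$ is a tree with $n-1$ vertices, and the intersection graph $\Lambda=\Delta_1\cap\Delta_2$ is the single vertex $v$. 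Since $v$ is adjacent to $\ell$, the subgraph spanned by $\{v,\ell\}$ is a copy of $K_2$, i.e.\ the join of the one-vertex graphs on $v$ and on $\ell$; hence $\Lambda$ is contained in a join of two induced subgraphs of $\Delta$, so the hypotheses of \cref{thm1RAG} and \cref{Cor1Artin} are satisfied for this decomposition.

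It remains to verify that $\relMorse{\Sigma^A_{\Delta}}{\Sigma^A_{\Delta_i}}$, with the subspace topology of $\morse\Sigma^A_{\Delta_i}$, is totally disconnected for $i\in\{1,2\}$. For $i=1$ we have $A_{\Delta_1}\cong\Z^2$, so $\morse\Sigma^A_{\Delta_1}=\emptyset$ by \cref{emptyRaags}, and therefore $\relMorse{\Sigma^A_{\Delta}}{\Sigma^A_{\Delta_1}}=\emptyset$. For $i=2$ the graph $\Delta_2$ is a tree with $n-1<n$ vertices, so by the induction hypothesis $\morse\Sigma^A_{\Delta_2}$ is totally disconnected; as $\relMorse{\Sigma^A_{\Delta}}{\Sigma^A_{\Delta_2}}$ is, as a set, a subset of $\morse\Sigma^A_{\Delta_2}$ and carries its subspace topology, it is totally disconnected as well. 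Now \cref{Cor1Artin} gives that $\morse\Sigma^A_{\Delta}=\morse A_\Delta$ is totally disconnected, completing the induction.

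Everything reduces to \cref{thm1RAG} and \cref{Cor1Artin}, so there is no real obstacle here: the only points that require care are choosing the decomposition so that $\Delta_1$ and $\Delta_2$ are both proper (which forces the threshold $n\ge 3$ and the separate treatment of small trees) and noting that a one-vertex intersection graph automatically sits inside a join once that vertex has a neighbour in $\Delta$, which holds here because $v$ is adjacent to $\ell$.
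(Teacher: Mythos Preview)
Your proof is correct and follows essentially the same route as the paper: induction via the decomposition of the tree into a leaf-edge and the remaining subtree, then an appeal to \cref{Cor1Artin}. The only differences are cosmetic---you induct on vertices rather than edges and you spell out more carefully both the small base cases and why the one-vertex intersection $\Lambda=\{v\}$ sits inside a join (namely the edge $\{v,\ell\}$), points the paper leaves implicit.
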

\begin{proof}
	Suppose that $\Delta$ is a finite tree. We show by induction on the numbers of edges in $\Delta$ that $A_\Delta$ has totally disconnected Morse boundary. If $\Delta$ consists of an edge, $A_\Delta$ is isomorphic to $\Z^2$ and $\morse A_\Delta$ is empty. Now suppose that $\Delta$ is a tree with $n$ edges. Then there exists an edge $e$ such that $\Delta$ is obtained by gluing one endvertex of $e$ to a subree $T$ of $\Delta$ which contains $n-1$ vertices. By the induction hypothesis, $A_T$ has totally disconnected Morse boundary. As $A_e$ has empty Morse boundary, it follows from \cref{Cor1Artin} that $A_\Delta$ has totally disconnected Morse boundary.\end{proof}
Adapting the arguments in \cref{lem:charsultgraph} by dint of \cref{Artintree} yields
\begin{lemma}
	\label{lem:CSAAG}
		If $\Delta$ is a Charney-Sultan graph, then $A_{\Delta}$ has totally disconnected Morse boundary.
\end{lemma}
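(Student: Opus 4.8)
The statement to prove is \cref{lem:CSAAG}: if $\Delta$ is a Charney-Sultan graph, then $A_\Delta$ has totally disconnected Morse boundary. Recall a Charney-Sultan graph is the union of two distinct proper induced subgraphs $C$ and $J$, where $C$ is a cycle of length at least $5$ and $J$ is a non-trivial join of two induced subgraphs.

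The plan is to mimic the proof of \cref{lem:charsultgraph} line by line, replacing Davis complexes by Salvetti complexes, $W_\bullet$ by $A_\bullet$, and the RACG-specific inputs by their RAAG counterparts established above — namely \cref{Cor1Artin} (in place of \cref{Cor1}), \cref{emptyRaags} (in place of \cref{lem:con_empty}), and \cref{Artintree} (in place of the fact that $W_{C'}$ is quasi-isometric to a free group). First I would dispose of the degenerate case: if $J$ contains all vertices of $C$, then $\Delta = J$ is a non-trivial join, so $A_\Delta$ is a direct product of two infinite RAAGs and hence has empty (in particular totally disconnected) Morse boundary by \cref{emptyRaags}.

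In the main case, $J$ omits at least one vertex of $C$, so $C$ contains a path $P$ of length at least two joining two vertices of $J$ whose interior vertices avoid $J$. Write $\Delta = \Delta' \cup P$ where $\Delta'$ is obtained by deleting the interior vertices of $P$; then $\Delta' \cap P$ consists of the two non-adjacent endpoints of $P$, which certainly lies in a join of two induced subgraphs of $\Delta$. By \cref{Cor1Artin} it suffices to show $\Sigma^A_{\Delta'}$ has totally disconnected relative Morse boundaries with respect to $\Sigma^A_{\Delta'}$ and $\Sigma^A_P$ endowed with the subspace topologies of their own Morse boundaries; since $P$ is a tree (in fact a path), $\morse A_P$ is totally disconnected by \cref{Artintree}, and this passes to the relative Morse boundary $\relMorse{\Sigma^A_{\Delta'}}{\Sigma^A_P}$. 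So the task reduces to showing $\morse A_{\Delta'}$ is totally disconnected. Now set $C' := C \setminus (\text{interior of } P)$, a path, so $\Delta' = C' \cup J$. If $C' \cap J = J$ then $\Delta'$ is a non-trivial join and $\morse A_{\Delta'} = \emptyset$ by \cref{emptyRaags}; otherwise $C' \cap J$ is a proper subgraph of $J$, hence contained in the non-trivial join $J$, and I apply \cref{Cor1Artin} to the decomposition $\Delta' = C' \cup J$: $\morse A_{C'}$ is totally disconnected by \cref{Artintree} (a path is a tree), $\morse A_J = \emptyset$ by \cref{emptyRaags}, and the relative Morse boundaries inherit total disconnectedness, so $\morse A_{\Delta'}$ is totally disconnected.

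There is no serious obstacle here, since all the heavy lifting has already been done: \cref{Cor1Artin} is the RAAG analogue of \cref{Cor1}, obtained by combining \cref{thm2}, \cref{RAAGprop}, and \cref{corOpenset}, and \cref{Artintree} handles trees. The only points needing a little care are verifying that the intersection graphs that arise (a pair of non-adjacent vertices, or a proper subgraph of a non-trivial join) genuinely satisfy the hypothesis of \cref{thm1RAG} — namely that $\Lambda$ is contained in a join of two induced subgraphs of the ambient graph — which is immediate in both instances, and checking that total disconnectedness of a Morse boundary passes to a relative Morse boundary equipped with the induced subspace topology, which is clear since a subspace of a totally disconnected space is totally disconnected. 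Assembling these observations in the order above yields the lemma.
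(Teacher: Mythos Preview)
Your proposal is correct and follows essentially the same approach as the paper, which simply says to adapt the proof of \cref{lem:charsultgraph} using \cref{Artintree} in place of the observation that the RACG on a path is virtually free. There is a minor slip in wording (``relative Morse boundaries with respect to $\Sigma^A_{\Delta'}$ and $\Sigma^A_P$'' should refer to the relative Morse boundaries \emph{inside} $\Sigma^A_{\Delta}$), but the logic is sound and matches the intended argument.
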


Let $\mathcal C'$ be the graph class obtained by varying the \cref{def:graphclass} in the following manner: 
\begin{itemize}
	\item replace $\mathcal C$ by $\mathcal C'$;
	\item replace the condition \itemref{def:graphclass}{itemgraphclass2.2} in \cref{def:graphclass} by the condition that each (not necessarily non-trivial) join of two non-empty graphs is contained in $\mathcal C'$. 
\end{itemize}
Applying \cref{emptyRaags}, \cref{Cor1Artin}, \cref{Artintree} and  \cref{lem:CSAAG} similarly to the argumentation after \cref{def:graphclass}   leads to the following corollary which gives an alternative prove of \cref{Artingroups} for RAAGs whose defining graphs are contained in $\mathcal C'$. 


\begin{cor}\label{RAAGThm}
	If $\Delta \in \mathcal C'$, then $A_\Delta$ has totally disconnected Morse boundary. 
\end{cor}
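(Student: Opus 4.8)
The plan is to establish the stronger statement that the class $\mathcal D$ of all finite simplicial graphs $\Delta$ for which $\morse\Sigma^A_\Delta$ is totally disconnected contains each of the five generating families of $\mathcal C'$ and is closed under the gluing rule \itemref{def:graphclass}{itemgraphclass4}; since $\mathcal C'$ is by definition the smallest class with those closure properties, this yields $\mathcal C'\subseteq\mathcal D$. Equivalently, one argues by induction on the number of applications of the gluing rule in a construction of a given $\Delta\in\mathcal C'$.

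I would first dispose of the base cases, checking each generating family lies in $\mathcal D$. If $\Delta$ has no edges, then $A_\Delta$ is free of rank $|V(\Delta)|$, hence hyperbolic, so $\morse\Sigma^A_\Delta$ is the Gromov boundary of $A_\Delta$, which is empty, two points, or a Cantor set, and in all cases totally disconnected. If $\Delta$ is a finite tree, then $\Delta\in\mathcal D$ by \cref{Artintree}; if $\Delta$ is a Charney--Sultan graph, then $\Delta\in\mathcal D$ by \cref{lem:CSAAG}. If $\Delta$ is a clique, then $A_\Delta\cong\Z^{|V(\Delta)|}$: for $|V(\Delta)|\ge 2$ the graph $\Delta$ is a join of two non-empty graphs, so $\morse\Sigma^A_\Delta=\emptyset$ by \cref{emptyRaags}, and for $|V(\Delta)|\le 1$ the group is trivial or $\Z$. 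Finally, if $\Delta$ is a join of two non-empty graphs, then $A_\Delta$ is a direct product of two infinite RAAGs, so $\morse\Sigma^A_\Delta=\emptyset$ again by \cref{emptyRaags}.

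For the inductive step, let $\Delta=\Lambda_1\cup\Lambda_2$ with $\Lambda_1,\Lambda_2\in\mathcal C'$ and $\Lambda\coloneqq\Lambda_1\cap\Lambda_2$ an induced subgraph of $\Delta$ that is empty, a clique, or contained in a non-trivial join of two induced subgraphs of $\Delta$. By the inductive hypothesis $\morse\Sigma^A_{\Lambda_i}$ is totally disconnected for $i=1,2$, hence so is every subspace of it, in particular $\relMorse{\Sigma^A_\Delta}{\Sigma^A_{\Lambda_i}}$ endowed with the subspace topology of $\morse\Sigma^A_{\Lambda_i}$. By \cref{RAAGprop}, $\Sigma^A_\Delta$ carries a treelike block decomposition whose blocks are the translates of $\Sigma^A_{\Lambda_1}$ and $\Sigma^A_{\Lambda_2}$ and whose walls are the translates of $\Sigma^A_\Lambda$. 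If $\Lambda$ is contained in a join of two induced subgraphs of $\Delta$ -- which is automatic when $\Lambda$ is a clique on at least two vertices (it is then itself such a join), when $\Lambda$ is a single vertex with a neighbour $w$ in $\Delta$ (it sits inside the edge spanned by $\Lambda$ and $w$), or when $\Lambda$ lies in a non-trivial join -- then the hypothesis of \cref{thm1RAG} holds, and \cref{Cor1Artin} gives that $\morse\Sigma^A_\Delta$ is totally disconnected. In the remaining cases $\Lambda$ is empty or a single vertex isolated in $\Delta$; when $\Lambda$ is empty every wall of the block decomposition is a point, hence contains no geodesic ray, so \cref{thm2} together with \cref{cor:firstgeneralization} apply directly and again $\morse\Sigma^A_\Delta$ is totally disconnected, and the isolated-vertex case is handled separately as indicated below. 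Since the construction tree of $\Delta$ is finite, iterating this step completes the proof.

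The part I expect to be the main obstacle is the verification, inside the inductive step, that no wall of the block decomposition of $\Sigma^A_\Delta$ contains a geodesic ray that is Morse in $\Sigma^A_\Delta$, equivalently that \cref{thm1RAG} is applicable. Here the RAAG setting is genuinely more delicate than the RACG one treated via \cref{Cor1} and \cref{lem:con_empty}: for a RACG a clique -- even a single vertex -- defines a special subgroup with empty Morse boundary, whereas for a RAAG a single-vertex intersection produces a wall isometric to a line, and such a line is invisible to the Morse boundary only when the vertex has a neighbour in $\Delta$ (so that the line bounds a Euclidean half-plane). When $\Lambda$ is a single vertex isolated in $\Delta$ the line is instead a free-factor axis and does carry a Morse ray, so \cref{thm1RAG} cannot be invoked directly; this borderline case must be treated on its own, reducing to the free-product splitting $A_\Delta\cong\Z\ast A_{\Delta-v}$. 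It is precisely this case -- together with the routine bookkeeping that "totally disconnected" passes to subspaces and that the two competing topologies on $\relMorse{\Sigma^A_\Delta}{\Sigma^A_{\Lambda_i}}$ (from $\morse\Sigma^A_{\Lambda_i}$ and from $\morse\Sigma^A_\Delta$) are reconciled by \cref{Cor1Artin}, which in turn rests on \cref{corOpenset} -- where the real work of the argument lies.
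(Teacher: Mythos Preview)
Your approach is exactly the paper's: induct on a $\mathcal C'$-construction of $\Delta$, dispatch the five generating families via \cref{emptyRaags}, \cref{Artintree}, \cref{lem:CSAAG}, and handle the gluing step with \cref{Cor1Artin}. You in fact go further than the paper, which merely says ``similarly to the argumentation after \cref{def:graphclass}'' and does not spell out the case analysis on $\Lambda=\Lambda_1\cap\Lambda_2$.

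There is, however, a genuine gap in the step you yourself flag. When $\Lambda=\{v\}$ with $v$ isolated in $\Delta$, the hypothesis of \cref{thm1RAG} fails and the wall $\Sigma^A_\Lambda\cong\R$ really does carry a Morse ray, so neither \cref{Cor1Artin} nor \cref{cor:firstgeneralization} applies. Your proposed reduction to $A_\Delta\cong\Z\ast A_{\Delta\setminus\{v\}}$ is the right move, but it does not finish the argument: the induction hypothesis gives you that $\morse A_{\Lambda_i}$ is totally disconnected, not that $\morse A_{\Lambda_i\setminus\{v\}}$ is, and $\Delta\setminus\{v\}=(\Lambda_1\setminus\{v\})\sqcup(\Lambda_2\setminus\{v\})$ is not a priori in $\mathcal C'$. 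What is missing is the auxiliary statement that $\mathcal C'$ is closed under deleting isolated vertices; this can be checked by a separate induction over the $\mathcal C'$-construction (the base cases are immediate since non-trivial joins, cliques on $\ge 2$ vertices, trees on $\ge 2$ vertices and Charney--Sultan graphs have no isolated vertices, and the gluing case goes through because an isolated vertex cannot lie in a non-trivial join). Once that is in hand, $\Delta\setminus\{v\}\in\mathcal C'$ has strictly fewer vertices and the free-product block decomposition (walls are points) lets you conclude via \cref{cor:firstgeneralization}. The paper itself does not address this asymmetry between the RACG and RAAG settings at a one-vertex clique, so your identification of the issue is to your credit; you just need to close it.
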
	

\newpage
\subsection{Surface amalgams}
In this section we study examples of surface amalgams that were examined by Ben-Zvi in \cite{BenzviFlats}.	\begin{figure}[h]
	\centering
	\includegraphics{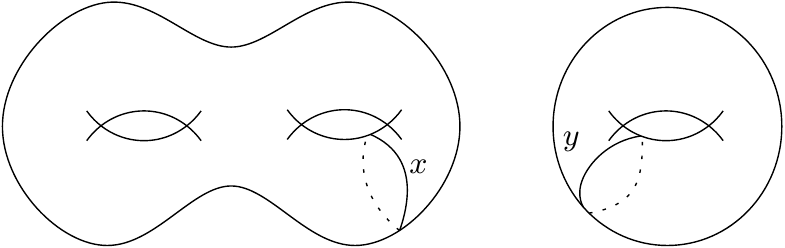}	
	\caption{Example 2.1 in \cite{BenzviFlats}: A torus and a genus 2 surface identified along the curves $x$ and $y$.}
	\label{fig:exBenZvi2.1}
\end{figure} 
\begin{example}[Example 2.1 in \cite{BenzviFlats}]
		
		Let $G_1$ be the fundamental group of the surface amalgam pictured in \cref{fig:exBenZvi2.1}. 
		Its universal cover $\Sigma_1$ admits a treelike block decomposition in blocks that are Euclidean and hyperbolic planes corresponding to the two-torus $T_2$ on the left and the torus $T_1$ on the right. As in the example of Charney--Sultan pictured in \cref{fig:exCharneySultan}, one can argue that the relative Morse boundary $\relMorse{\Sigma_1}{\tilde T_2}$ of the universal cover of the two-torus $\tilde T_2$ on the left endowed with the subspace topology $\morse \tilde T_2$ is totally disconnected. The relative Morse boundary $\relMorse{\Sigma_1}{\tilde T_1}$ of the universal cover of the torus $\tilde T_1$ on the right is empty. By \cref{cor:firstgeneralization}, the Morse boundary of $G_1$ is totally disconnected. 
	\begin{remark}
		Ben-Zvi shows that the group $G_1$ is a CAT(0) group with isolated flats and that $\vis \Sigma_1$ is path-connected. The isolated flat property implies that two rays passing through the same infinite collection of hyperbolic or Euclidean planes are asymptotic. So, 
		in this example, \cref{Morse_conn_of_type_Ainf} is also true for non-Morse geodesic rays.
	\end{remark}
\end{example}

\begin{example}[Example 2.2 in \cite{BenzviFlats}]
Let $G_2$ be the fundamental group of two tori with boundary components identified as shown in \cref{fig:exBenZviSchwer}.
\begin{figure}[h]
	\centering
	\includegraphics{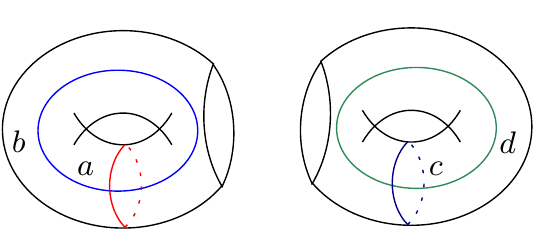}	
	\caption{Example 2.2 in \cite{BenzviFlats}: Two tori with boundary components that we identify via $[a,b]^2=[c,d]^2$. }
	\label{fig:exBenZviSchwer}
\end{figure}
 By applying \cref{thm2} twice, we prove that $\morse G_2$ is totally disconnected:
\begin{figure}[h]
	\centering
	\includegraphics{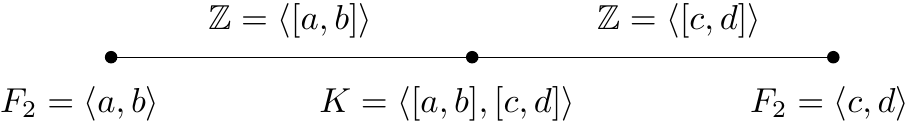}	
	\caption{Example 2.2 in \cite{BenzviFlats}: The maximal peripheral splitting of $G_2$. }
	\label{fig:exBenZviSchwersplitting}
\end{figure}
The identification of $[a,b]^2$ and $[c,b]^2$ creates a Klein bottle group $K$ generated by $[a,b]$ and $[c,d]$ and 
$G_2$ admits the splitting as in \cref{fig:exBenZviSchwersplitting}. Like Ben-Zvi \cite{BenzviFlats}, we use the equivariant gluing theorem of Bridson--Haefliger~\cite[Thm. II.11.18]{BH} for constructing a CAT(0) space $\Sigma_2$ on which $G_2$ acts geometrically: 
First, we take Euclidean planes for the Klein bottle group and trunked hyperbolic planes for the free groups generated by $a$ and $b$ and glue them together as in the equivariant gluing theorem of Bridson--Haefliger. This way, we obtain a space $\Sigma_2'$ on which the group $G' = \langle a,b \rangle \times_{\langle [a,b]\rangle} \times K$ acts geometrically. 
This space is a CAT(0) space with a treelike block decomposition. Its blocks consist of Euclidean planes corresponding to the Klein bottle group and trunked hyperbolic planes corresponding to the free groups. As the Klein bottle group is virtually $\Z^2$, its Morse boundary is empty and no wall of $\Sigma_2'$ contains a Morse geodesic ray. 
Since the Morse boundary of $F_2$ is a Cantor set and the Morse boundary of $K$ is empty, \cref{cor:firstgeneralization} implies that the Morse boundary of $\Sigma_2'$ is totally disconnected. 

Now, we construct the space $\Sigma_2$ on which the group $G_2$ acts geometrically. For that purpose, we glue copies of $\Sigma_2'$ along bi-infinite geodesic rays corresponding to $\Z = \langle [c,d]\rangle$ to copies of trunked hyperbolic planes corresponding to the free group $F_2=\langle c,d \rangle$ as in the equivariant gluing theorem of Bridson--Haefliger. This way, we obtain a CAT(0) space with a treelike block decomposition where each wall is a bi-infinite geodesic ray that is contained in a subspace corresponding to the Kleinian group $K$. Hence, no wall contains a Morse geodesic ray and the Morse boundary of $G_2$ is totally disconnected
by \cref{cor:firstgeneralization}.
\begin{remark}
	 Ben-Zvi shows that $G_2$ is a CAT(0) group with isolated flats and that the visual boundary of $\Sigma_2$ is path-connected.
\end{remark}
\end{example}

\begin{example}[Examples arising from the equivariant gluing theorem of Bridson--Haefliger]
	 The spaces arising from the equivariant gluing theorem \cite[Theorem II.11.18]{BH} of Bridson--Haefliger are CAT(0) spaces with a treelike block decompositions on which amalgamated free products of CAT(0) groups act geometrically as observed by Ben-Zvi in Example 6.8 in \cite{BenzviFlats}. Thus, many other examples can be constructed to which \cref{thm2} and \cref{cor:firstgeneralization} can be applied.
\end{example}


\bibliographystyle{alpha}
\bibliography{Bibliography}

\begin{thebibliography}{BFRHS18}

\bibitem[Beh19]{Behrstock}
Jason Behrstock.
\newblock A counterexample to questions about boundaries, stability, and
  commensurability.
\newblock In {\em Beyond hyperbolicity}, volume 454 of {\em London Math. Soc.
  Lecture Note Ser.}, pages 151--159. Cambridge Univ. Press, Cambridge, 2019.

\bibitem[BFRHS18]{Behrstock_random}
Jason Behrstock, Victor Falgas-Ravry, Mark~F. Hagen, and Tim Susse.
\newblock Global structural properties of random graphs.
\newblock {\em Int. Math. Res. Not. IMRN}, (5):1411--1441, 2018.

\bibitem[BH99]{BH}
Martin~R. Bridson and Andr\'{e} Haefliger.
\newblock {\em Metric spaces of non-positive curvature}, volume 319 of {\em
  Grundlehren der Mathematischen Wissenschaften [Fundamental Principles of
  Mathematical Sciences]}.
\newblock Springer-Verlag, Berlin, 1999.

\bibitem[BZ]{BenzviFlats}
Michael Ben-Zvi.
\newblock Boundaries of groups with isolated flats are path connected.
\newblock arXiv:1909.12360, 2019.

\bibitem[BZK21]{BENZVI}
Michael Ben-Zvi and Robert Kropholler.
\newblock Right-angled {A}rtin group boundaries.
\newblock {\em Proc. Amer. Math. Soc.}, 149(2):555--567, 2021.

\bibitem[Cas16]{Cashen}
Christopher~H. Cashen.
\newblock Quasi-isometries need not induce homeomorphisms of contracting
  boundaries with the {G}romov product topology.
\newblock {\em Anal. Geom. Metr. Spaces}, 4(1):278--281, 2016.

\bibitem[CCS]{Artingroups}
Ruth Charney, Matthew Cordes, and Alessandro Sisto.
\newblock Complete topological descriptions of certain morse boundaries.
\newblock arXiv:1908.03542, 2019. To appear in Groups Geom. Dyn..

\bibitem[CK00]{CrokeKleiner}
Christopher~B. Croke and Bruce Kleiner.
\newblock Spaces with nonpositive curvature and their ideal boundaries.
\newblock {\em Topology}, 39(3):549--556, 2000.

\bibitem[Cor17]{Cordes_properMorse}
Matthew Cordes.
\newblock Morse boundaries of proper geodesic metric spaces.
\newblock {\em Groups Geom. Dyn.}, 11(4):1281--1306, 2017.

\bibitem[CS11]{Sageev_Caprace}
Pierre-Emmanuel Caprace and Michah Sageev.
\newblock Rank rigidity for {CAT}(0) cube complexes.
\newblock {\em Geom. Funct. Anal.}, 21(4):851--891, 2011.

\bibitem[CS15]{CharSul}
Ruth Charney and Harold Sultan.
\newblock Contracting boundaries of {$\mathrm{CAT}(0)$} spaces.
\newblock {\em J. Topol.}, 8(1):93--117, 2015.

\bibitem[Dav08]{Davis}
Michael~W. Davis.
\newblock {\em The geometry and topology of {C}oxeter groups}, volume~32 of
  {\em London Mathematical Society Monographs Series}.
\newblock Princeton University Press, Princeton, NJ, 2008.

\bibitem[DGO17]{DahmaniandCo17}
F.~Dahmani, V.~Guirardel, and D.~Osin.
\newblock Hyperbolically embedded subgroups and rotating families in groups
  acting on hyperbolic spaces.
\newblock {\em Mem. Amer. Math. Soc.}, 245(1156):v+152, 2017.

\bibitem[DT15]{Dani_diver}
Pallavi Dani and Anne Thomas.
\newblock Divergence in right-angled {C}oxeter groups.
\newblock {\em Trans. Amer. Math. Soc.}, 367(5):3549--3577, 2015.

\bibitem[Gen20]{Genevois}
Anthony Genevois.
\newblock Hyperbolicities in {CAT}(0) cube complexes.
\newblock {\em Enseign. Math.}, 65(1-2):33--100, 2020.

\bibitem[GKLS21]{graeber2020surprising}
Marius Graeber, Annette Karrer, Nir Lazarovich, and Emily Stark.
\newblock Surprising circles in morse boundaries of right-angled coxeter
  groups.
\newblock {\em Topol. Appl.}, 294:107645, 2021.

\bibitem[Gro87]{Gromov}
M.~Gromov.
\newblock Hyperbolic groups.
\newblock In {\em Essays in group theory}, volume~8 of {\em Math. Sci. Res.
  Inst. Publ.}, pages 75--263. Springer, New York, 1987.

\bibitem[HW08]{Haglund2008}
Fr{\'e}d{\'e}ric Haglund and Daniel~T. Wise.
\newblock Special cube complexes.
\newblock {\em Geom. Funct. Anal.}, 17(5):1551--1620, 2008.

\bibitem[Lev18]{Ivan}
Ivan Levcovitz.
\newblock Divergence of {$\mathrm CAT(0)$} cube complexes and {C}oxeter groups.
\newblock {\em Algebr. Geom. Topol.}, 18(3):1633--1673, 2018.

\bibitem[Moo10]{Mooney}
Christopher Mooney.
\newblock Generalizing the {C}roke-{K}leiner {C}onstruction.
\newblock {\em Topol. Appl.}, 157:1168--1181, 2010.

\bibitem[Mun00]{Topology}
James~R. Munkres.
\newblock {\em Topology}.
\newblock Prentice Hall, Inc., Upper Saddle River, NJ, 2000.

\bibitem[NT19]{Tran_coars_geom}
Hoang~Thanh Nguyen and Hung~Cong Tran.
\newblock On the coarse geometry of certain right-angled {C}oxeter groups.
\newblock {\em Algebr. Geom. Topol.}, 19(6):3075--3118, 2019.

\bibitem[Osi16]{Osin16}
D.~Osin.
\newblock Acylindrically hyperbolic groups.
\newblock {\em Trans. Amer. Math. Soc.}, 368(2):851--888, 2016.

\bibitem[RST]{Tran_hierarchically}
Jacob Russell, Davide Spriano, and Hung~Cong Tran.
\newblock Convexity in hierarchically hyperbolic spaces.
\newblock arXiv:1809.09303, 2018.

\bibitem[Sag95]{Sageev}
Michah Sageev.
\newblock Ends of group pairs and non-positively curved cube complexes.
\newblock {\em Proc. London Math. Soc. (3)}, 71(3):585--617, 1995.

\bibitem[Sag14]{SageevNotes}
Michah Sageev.
\newblock {$\mathrm CAT(0)$} cube complexes and groups.
\newblock In {\em Geometric group theory}, volume~21 of {\em IAS/Park City
  Math. Ser.}, pages 7--54. Amer. Math. Soc., Providence, RI, 2014.

\bibitem[Sis16]{Sisto16}
Alessandro Sisto.
\newblock Quasi-convexity of hyperbolically embedded subgroups.
\newblock {\em Math. Z.}, 283(3-4):649--658, 2016.

\bibitem[Sis18]{Sisto18}
Alessandro Sisto.
\newblock Contracting elements and random walks.
\newblock {\em J. Reine Angew. Math.}, 742:79--114, 2018.

\bibitem[Sul14]{Sultan}
Harold Sultan.
\newblock Hyperbolic quasi-geodesics in {CAT}(0) spaces.
\newblock {\em Geom. Dedicata}, 169:209--224, 2014.

\bibitem[Tra19]{Tran}
Hung~Cong Tran.
\newblock On strongly quasiconvex subgroups.
\newblock {\em Geom. Topol.}, 23(3):1173--1235, 2019.

\bibitem[Wes01]{West}
Douglas~B. West.
\newblock {\em Introduction to graph theory}.
\newblock Prentice Hall, Upper Saddle River, NJ, 2. ed. edition, 2001.

\end{thebibliography}


\end{document}